\newtheorem{theorem}{Theorem}[subsection]
\newtheorem{lemma}[theorem]{Lemma}
\theoremstyle{definition}
\newtheorem{definition}[theorem]{Definition}
\newtheorem*{acknowledgements}{Acknowledgements}
\newtheorem{example}[theorem]{Example}
\theoremstyle{remark}
\newtheorem{remark}[theorem]{Remark}
\newcommand{\y}{\ding{51}}
\newcommand{\x}{\ding{55}}
\renewcommand{\epsilon}{\varepsilon}
\DeclareMathOperator{\Orb}{Orb}
\title{Preservation of shadowing in discrete dynamical systems}
\author{Chris Good} 
\author{Joel Mitchell}
\author{Joe Thomas}
\date{July 2019}
\begin{document}

\hypersetup{pageanchor=false} 

\subjclass[2000]{37B05, 37B10, 37B20, 54H20}
\keywords{shadowing, limit shadowing, orbital shadowing, inverse limit, semi-conjugacy, factor map, product space, hyperspace}

\begin{abstract} 
We look at the preservation of various notions of shadowing in discrete dynamical systems under inverse limits, products, factor maps and the induced maps for symmetric products and hyperspaces. The shadowing properties we consider are the following: shadowing, h-shadowing, eventual shadowing, orbital shadowing, strong orbital shadowing, the first and second weak shadowing properties, limit shadowing, s-limit shadowing, orbital limit shadowing and inverse shadowing.
\end{abstract}

\maketitle

\hypersetup{pageanchor=true} 

Let $f \colon X\to X$ be a continuous map on a (typically compact) metric space $X$. We say $(X,f)$ is a (discrete) \textit{dynamical system}. A sequence $(x_i)$ in $X$, which might be finite, infinite or bi-infinite, is called a \emph{$\delta$\textit{-pseudo-orbit}} provided $d(f(x_{i}),x_{i+1}) <\delta$ for each $i$. Pseudo-orbits are obviously relevant when calculating an orbit numerically, as rounding errors mean a computed orbit will in fact be a pseudo-orbit. The (finite or infinite) sequence $(y_i)$ in $X$ is said to \emph{$\epsilon$-shadow} the $(x_i)$ provided $d(y_i,x_i)<\epsilon$ for all indices $i$. We then say that the system has \textit{shadowing}, or \textit{the pseudo-orbit tracing property}, if pseudo-orbits are shadowed by true orbits (see Section \ref{ShadowingTypes} for precise definitions). 

Whilst shadowing is clearly important when modelling a system numerically (for example \cite{Corless, Pearson}), it is also been found to have theoretical importance; for example, Bowen \cite{bowen-markov-partitions} used shadowing implicitly as a key step in his proof that the nonwandering set of an Axiom A diffeomorphism is a factor of a shift of finite type. Since then it has been studied extensively, in the setting of numerical analysis \cite{Corless,CorlessPilyugin,Pearson}, as an important factor in stability theory \cite{Pilyugin, robinson-stability,WaltersP}, in understanding the structure of $\omega$-limit sets and Julia sets \cite{BarwellGoodOprochaRaines, BarwellMeddaughRaines2015, BarwellRaines2015, Bowen, MeddaughRaines}, and as a property in and of itself \cite{Coven, GoodMeddaugh2018, LeeSakai, Nusse, Pennings, Pilyugin,Sakai2003}. 

Various other notions of shadowing have since been studied including, for example, ergodic, thick and Ramsey shadowing \cite{brian-oprocha, bmr, Dastjerdi, Fakhari, Oprocha2016}, limit shadowing \cite{BarwellGoodOprocha, GoodOprochaPuljiz2019,
Pilyugin2007}, $s$-limit shadowing \cite{BarwellGoodOprocha,GoodOprochaPuljiz2019, LeeSakai}, orbital shadowing \cite{GoodMeddaugh2016, Pilyugin2007, PiluginRodSakai2002}, and inverse shadowing \cite{CorlessPilyugin, Lee}. 

In the course of showing that systems with shadowing are built up from shifts of finite type, the first author and Meddaugh  \cite{GoodMeddaugh2018} show that an inverse limit of systems with shadowing has shadowing and that factor maps which almost lift pseudo-orbits (see below for a definition) also preserve shadowing. A continuous function map on a compact metric space $f\colon X\to X$ induces a continuous map $2^f$ on the hyperspace of closed subsets of $X$ with the Hausdorff metric. In  \cite{GoodFernandez} it is shown that $2^f$ has shadowing if and only if $f$ has shadowing.  
It is a natural question, therefore, to ask under operations on dynamical systems which notions of shadowing are preserved.  In this paper we systematically address this question for various notions of shadowing, namely shadowing, $h$-shadowing, eventual shadowing, orbital shadowing, first and second weak shadowing, inverse shadowing and various types of limit shadowing. For each of these shadowing types we ask:
\begin{itemize}
    \item Is it preserved in the induced hyperspatial system?
    \item Is it preserved in some, or all, induced symmetric product systems?
    \item Under what conditions is it preserved under semi-conjugacy?
    \item Does an inverse limit system comprised of systems with it also have it?
    \item Does an arbitrary product of systems exhibiting it also exhibit it?
\end{itemize}
We provide definitive answers to many of these questions, although we leave some unanswered; particular difficulties seem to arise when dealing with the limit shadowing properties. Clearly some of these questions have been asked, and answered by others. In such cases we provide references. 

In order to simplify proofs and keep the results as general as possible our setting throughout will be a compact Hausdorff space $X$. In particular this means all our results hold for compact metric spaces and the reader will lose very little assuming that all spaces are compact metric.  

The paper is arranged as follows. We begin with some preliminaries in Section \ref{SectionPreliminaries}, where amongst other things we give the definitions of uniform space, hyperspace, symmetric product, inverse limit space and product space. In Section \ref{ShadowingTypes} we provide the definitions of the shadowing types under consideration. We start with the usual metric definitions, before giving the uniform definitions which coincide with the metric ones when the underlying space is compact. Finally we follow the example of Good and Mac\'{\i}as \cite{GoodMacias} by providing definitions in terms of open covers which coincide with the uniform definitions when the space is compact Hausdorff. We then devote a section to the preservation of each of the aforementioned types of shadowing.

The table below provides a summary of our results.

\enspace

\begin{tabular}{c||c|c|c|c|c|}
 & $2^X$ & $F_n(X)$ & $\varphi$ & $\varprojlim$ & $\prod$  \\
\hline
\hline
shadowing&\y&\x \, (but \y for $F_2(X)$) & iff ALP &\y&\y\\
\hline

h-shad.&\y&\x \, (but \y for $F_2(X)$)&?&?&\y*\\
\hline
eventual shad.&\x&\x \, (but \y for $F_2(X)$)&iff eALP&\y&\y\\
\hline
orbital shad.&\x&\x& iff oALP &\y&\x\\
\hline
strong orb. shad.&\x&\x& iff soALP &\y&\x\\
\hline
1\textsuperscript{st} weak shad. &\x&\x& iff w1ALP &\y&\x\\
\hline
2\textsuperscript{nd} weak shad.&\y&\y&\y&\y&\y\\
\hline
limit shad. &?&\x \, (but \y for $F_2(X)$)&iff ALAP&?&\y\\
\hline
s-limit shad.&?&\x \, (but \y for $F_2(X)$)&iff ALA$\epsilon$P &?&\y\\
\hline
orb. limit shad.&\x&\x& iff oALAP&?&\x\\
\hline
inverse shad.&\y&\y&?&\y&\y\\
\hline

\end{tabular}

\enspace

KEY:
\begin{itemize}
    \item \y - ``is preserved by.''
    \item \x - ``there is a (surjective) counterexample in which it is not preserved.''
    \item \y* - ``iff all but a finite number of the component systems are surjective.''
\end{itemize}


We denote by $\mathbb{Z}$ the set of all integers; the set of positive integers $1,2,3,4,\ldots$ is denoted by $\mathbb{N}$ whilst $\omega \coloneqq\mathbb{N}\cup\{0\}$. The set of all real numbers is denoted $\mathbb{R}$, whilst $\mathbb{Q}$ denotes the set of rational numbers.
\section{Preliminaries}\label{SectionPreliminaries}

\subsection{Dynamical systems}
A \textit{dynamical system} is a pair $(X,f)$ consisting of a topological space $X$ and a continuous function $f\colon X \to X$. We say that the \textit{orbit} of $x$ under $f$ is the set of points $\{x, f(x), f^2(x), \ldots\}$; we denote this set by $\Orb_f(x)$. We define the $\omega$\textit{-limit set} of a sequence $(x_i)_{i \geq 0}$ in $X$ as the set 
\[\omega((x_i)_{i \geq 0})= \bigcap_{N \in \mathbb{N}}\overline{\{x_n \mid n >N\}}.\]
For a point $x \in X$, we define the $\omega$\textit{-limit set} of $x$ under $f$, denoted $\omega(x)$, to be $\omega$-limit set of its orbit sequence: $\left(f^n(x)\right)_{n \in \mathbb{N}}$. Formally
\[\omega(x)= \bigcap_{N \in \mathbb{N}}\overline{\{f^n(x) \mid n >N\}}.\]
If $X$ is compact then $\omega(x) \neq \emptyset$ for any $x \in X$ by Cantor's intersection theorem. 

We say a dynamical system $(X,f)$ is \textit{onto} or \textit{surjective} if $f \colon X \to X$ is a surjection. We do not assume, unless stated, that a dynamical system is necessarily onto. However, since surjective dynamical systems are usually the more interesting from a dynamics viewpoint, we ensure that every counterexample we construct in this paper is surjective (aside from in Example \ref{ExampleHShadNotSurjectiveProduct} where surjectivity is under examination).

If $(X,f)$ and $(Y,g)$ are dynamical systems we call a continuous surjection $\varphi \colon X \to Y$ a \textit{factor map} if 
\[\varphi \circ f = g \circ \phi. \]



\subsection{Uniform spaces}
Let $X$ be a nonempty set and $A \subseteq X \times X$. Let $A^{-1}=\{(y,x) \mid (x,y) \in A\}$; we call this the \textit{inverse} of $A$. The set $A$ is said to be \textit{symmetric} if $A=A^{-1}$. For any $A_1, A_2 \subseteq X \times X$ we define the composite $A_1 \circ A_2$ of $A_1$ and $A_2$ as 
\[A_1 \circ A_2 = \{(x,z) \mid \exists y\in X : (x,y) \in A_1, (y,z) \in A_2\}.\]
For any $n \in \mathbb{N}$ and $A \subseteq X \times X$ we denote by $nA$ the $n$-fold composition of $A$ with itself, i.e. 
\[nA=\underbrace{A\circ A\circ A \cdots A}_{n \text{ times}}.\]
The \textit{diagonal} of $X \times X$ is the set $\Delta= \{(x,x) \mid x \in X\}$. A subset $A \subseteq X\times X$ is called an \textit{entourage} if $A \supseteq \Delta$.

\begin{definition} A \textit{uniformity} $\mathscr{U}$ on a set $X$ is a collection of entourages of the diagonal such that the following conditions are satisfied.

\begin{enumerate}[label=\alph*.]
\item $E_1, E_2 \in \mathscr{U} \implies E_1 \cap E_2 \in \mathscr{U}$.
\item $E \in \mathscr{U}, E \subseteq D \implies D \in \mathscr{U}$.
\item $E \in \mathscr{U} \implies D \circ D \subseteq E$ for some $D \in \mathscr{U}$.
\item $E \in \mathscr{U} \implies D^{-1}\subseteq E$ for some $D \in \mathscr{U}$.
\end{enumerate}

\end{definition}

We call the pair $(X, \mathscr{U})$ a {\em uniform space}. We say $\mathscr{U}$ is {\em separating} if $\bigcap_{E \in \mathscr{U}} E = \Delta$; in this case we say $X$ is {\em separated}. A subcollection $\mathscr{V}$ of $\mathscr{U}$ is said to be a {\em base} for $\mathscr{U}$ if for any $E \in\mathscr{U}$ there exists $D \in \mathscr{V}$ such that $D \subseteq E$. Clearly any base $\mathscr{V}$ for a uniformity will have the following properties:
\begin{enumerate}
\item $E_1, E_2 \in \mathscr{U} \implies$ there exists $D \in \mathscr{V}$ such that $D \subseteq E_1 \cap E_2 $.
\item $E \in \mathscr{U} \implies D \circ D \subseteq E$ for some $D \in \mathscr{V}$.
\item $E \in \mathscr{U} \implies D^{-1}\subseteq E$ for some $D \in \mathscr{V}$.
\end{enumerate}
If $\mathscr{U}$ is separating then $\mathscr{V}$ will satisfy $\bigcap_{E \in \mathscr{V}} E = \Delta$. A {\em subbase} for $\mathscr{D}$ is a subcollection such that the collection of all finite intersections from said subcollection form a base.

\begin{remark}\label{RemarkSymFormBase} It is easy to see that the symmetric entourages of a uniformity $\mathscr{U}$ form a base for said uniformity.
\end{remark}


For an entourage $E \in \mathscr{U}$ and a point $x \in X$ we define the set $B_E(x)= \{y \in X \mid (x,y) \in E\}$; we refer to this set as the $E$-\textit{ball about} $x$. This naturally extends to a subset $A \subseteq X$; $B_E(A)= \bigcup_{x \in A}B_E(x)$; in this case we refer to the set $B_E(A)$ as the $E$-\textit{ball about} $A$. We emphasise that (see \cite[Section 35.6]{Willard}):
\begin{itemize}
\item For all $x \in X$, the collection $\mathscr{B}_x \coloneqq \{ B_E(x) \mid E \in \mathscr{U} \}$ is a neighbourhood base at $x$, making $X$ a topological space. The same topology is produced if any base $\mathscr{V}$ of $\mathscr{U}$ is used in place of $\mathscr{U}$.
\item The topology is Hausdorff if and only if $\mathscr{U}$ is separating.
\end{itemize}



For a compact Hausdorff space $X$ there is a unique uniformity $\mathscr{U}$ which induces the topology and the space is metric if the uniformity has a countable base (see \cite[Chapter 8]{Engelking}). For a metric space, a natural base for the uniformity would be the $1/2^n$ neighbourhoods of the diagonal. 

\subsection{Hyperspaces}

For a uniform space $(X,\mathscr{U})$, set \[2^X=\{A \subseteq X \mid A \text{ is compact and nonempty}\}.\] Let $\mathscr{B}_\mathscr{U}$ be the family of all sets
\[2^V \coloneqq \{(A,A^\prime) \mid A \subseteq B_V(A^\prime) \text{ and } A^\prime \subseteq B_V(A) \}, \, V \in \mathscr{U}.\]
The uniformity on the set $2^X$ generated by the base $\mathscr{B}_\mathscr{U}$ is denoted $2^\mathscr{U}$. If $X$ is a compact Hausdorff space then $2^X$ forms a compact Hausdorff topological space with the topology, known as the Vietoris topology, induced by this uniformity. If $X$ is a compact metric space then $2^X$ is a compact metric space when equipped with the \textit{Hausdorff metric}: \[d_H (A,A^\prime)= \inf \{\epsilon>0 \colon A \subseteq B_\epsilon (A^\prime) \text{ and } A^\prime \subseteq B_\epsilon (A)\}. \]
The topology generated by this metric is the Vietoris topology (see for example \cite{MaciasTop}).

For $n \geq 2$, we denote by $F_n(X)$ the $n$-\textit{fold symmetric product of} $X$, i.e.
\[F_n(X)=\{A \in 2^X \mid A \text{ contains at most }n \text{ points.}\}.\]
$F_n(X)$ is a compact Hausdorff space with the subspace topology from $2^X$.

If $X$ is compact Hausdorff and $f\colon X\to X$, then the image of a closed set $C$ under $f$ is again a closed subset of $X$.  Therefore, a given
dynamical system $(X,f)$ gives rise to an induced system $(2^X,2^f)$ on the hyperspace by $2^f\colon C\mapsto f(C)=\{f(x):x\in C\}$. The restriction of $2^f$ to $F_n$ is denoted $f_n$.

\subsection{Products and inverse limits}

Let $\{X_\lambda:\lambda\in\Lambda\}$ be a family of topological spaces. Given the product 
\[ \prod_{\lambda \in \Lambda} X_\lambda = \left\{ ( x_\lambda )_{\lambda \in \Lambda} \mid \forall \lambda \in \Lambda, x_\lambda \in X_\lambda \right\},\]
for each $\eta \in \Lambda$ the projection  $\pi_\eta \colon \prod_{\lambda \in \Lambda} X_\lambda \to X_\eta$ is defined by $\pi_\eta(( x_\lambda )) =x_\eta$.  As usual the 
\textit{Tychonoff product topology} on $\prod_{\lambda \in \Lambda} X_\lambda$ is the topology generated by basic open sets of the form
\[ \bigcap_{i=1}^n \pi_{\lambda_i}^{-1}(U_{\lambda_i}),\]
for some $n\in \mathbb{N}$ and open $U_{\lambda_i}$ in $X_{\lambda_i}$.



If, in the above, each space $X_\lambda$ is compact Hausdorff with uniformity $\mathscr{U}_\lambda$, then the following is a basic entourage in the uniformity on the product space:
\[ \prod _{\lambda \in \Lambda} E_\lambda, \]
where
$E_\lambda \in \mathscr{U}_\lambda$ for all $\lambda$ and $E_\lambda = X_\lambda \times X_\lambda$ for all but finitely many $\lambda$. The set of all such entourages forms a base for the uniformity on the product space.

Given a collection of dynamical systems $(X_\lambda,f_\lambda)$ we refer to the product system $\left(\prod_{\lambda \in \Lambda} X_\lambda,f\right)$, where $f$ is the induced map given by $f(( x_\lambda )_{\lambda \in \Lambda})=( f_\lambda(x_\lambda))_{\lambda \in \Lambda}$. It is straightforward to check that $f$ is continuous (and onto) if and only if each $f_\lambda$ is continuous (and onto).

\medskip

Let  $(\Lambda, \leq)$ be a directed set. For each $\lambda \in \Lambda$, let $X_\lambda$ be a compact Hausdorff space and, for each pair $\lambda \leq \eta$, let $g_{\lambda}^{\eta} \colon X_\eta \to X_\lambda$ be a continuous map. Suppose further that $g^\lambda_\lambda$ is the identity and that
   for all $\lambda \leq \eta \leq \nu$, $g_{\lambda}^{\nu}=g_{\lambda}^{\eta}\circ g_{\eta}^{\nu}$.

\begin{definition}\label{inverselim}
Let $(\Lambda, \leq)$ be a directed set. For each $\lambda \in \Lambda$, let $( X_\lambda,f_\lambda)$ be a surjective dynamical system on a compact Hausdorff space and, for each pair $\lambda, \eta$, with $\lambda \leq \eta$, let $g_{\lambda}^{\eta}: X_\eta \to X_\lambda$ be a continuous (not necessarily surjective) map. Suppose further that 
\begin{enumerate}
     \item $g^\lambda_\lambda$ is the identity map for all $\lambda \in \Lambda$, and
    \item for all triplets $\lambda \leq \eta \leq \nu$, $g_{\lambda}^{\nu}=g_{\lambda}^{\eta}\circ g_{\eta}^{\nu}$, and
    \item for all pairs $\lambda \leq \eta$, $f_\lambda \circ g_{\lambda}^{\eta}=g_{\lambda}^{\eta} \circ f_\eta$ (i.e. that $g_{\lambda}^{\eta}$ is a semiconjugacy).
\end{enumerate}
Then the \textit{inverse limit} of $(X_\lambda, g_{\lambda}^{\eta})$ is the compact Hausdorff space
\[\varprojlim\{ X_\lambda, g_{\lambda}^{\eta} \}=\{ ( x_\lambda ) \in \prod X_\lambda \mid \forall \lambda, \eta \text{ with } \lambda \leq \eta, x_\lambda=g_{\lambda}^{\eta}(x_\eta)\},\]
with topology inherited as a subspace of the product $\prod X_\lambda$. Moreover, the maps $f_\lambda$ induce a continuous map 
\begin{align*}
    f\colon \varprojlim\{ X_\lambda, g_{\lambda}^{\eta} \}&\to \varprojlim\{ X_\lambda, g_{\lambda}^{\eta} \},\\[3pt]
    ( x_\lambda )_{\lambda \in \Lambda} &\mapsto
    ( f_\lambda(x_\lambda) )_{\lambda \in \Lambda},
\end{align*}
resulting in the inverse system 
$\left((X_\lambda,f_\lambda),g_{\lambda}^{\eta}\right)=\left( \varprojlim \{X_\lambda, g_{\lambda}^{\eta} \}, f \right)$. 
\end{definition}

Given a system $(X,f)$, a frequently studied inverse limit system is that of the shift map $\sigma$ taking  $(x_0,x_1, x_2,\dots)$ to $(x_1,x_2, x_3\dots)$ acting as a homeomorphism on the inverse limit space $\varprojlim(X,f)=\{(x_i):x_i=f(x_{i+1}), 0\leq i\}$. Notice Definition \ref{inverselim} subsumes this definition. Given a dynamical system $(X,f)$ we will refer to the system $(\varprojlim(X,f), \sigma)$ as the \textit{standard inverse limit associated with $(X,f)$}. Note that the preservation of shadowing properties in the standard inverse limit system has been studied by various authors \cite{Barzanouni, ChenLiangShi, GoodOprochaPuljiz2019}.

\begin{definition} We say that an inverse system 
is \textit{surjective} provided that for any $\lambda  \in \Lambda$ and any $\gamma\geq \lambda$, $g^\gamma _\lambda(X_\gamma) =X_\lambda$. We say that the system is \textit{Mittag-Leffler}, provided that for all $\lambda \in \Lambda$ there exists $\gamma \geq \lambda$ such that for all $\eta \geq \gamma$, we have $g^\gamma _\lambda(X_\gamma) =g^\eta _\lambda(X_\eta)$. For such a $\lambda$ and $\gamma$ we say $\gamma$ witnesses the \textit{Mittag-Leffler condition} with respect to $\lambda$.
\end{definition}
Clearly every surjective inverse system is also Mittag-Leffler. A useful fact about Mittag-Leffler systems is that if $\gamma$ witnesses the condition with respect to $\lambda$ and $x \in g^\gamma _\lambda (X_\gamma) \subseteq X_\lambda$ then $\pi_\lambda ^{-1}(x) \cap \varprojlim \{X_\lambda, g_{\lambda}^{\eta} \} \neq \emptyset$.

\section{Shadowing types}\label{ShadowingTypes}

\subsection{Shadowing in metric spaces} Let $(X,f)$ be a dynamical system where $X$ is a metric space. 

\begin{definition}
A sequence $( x_i ) _{i \in \omega}$ in $X$ is said to be a $\delta$\textit{-pseudo-orbit} for some $\delta>0$ if $d(f(x_i),x_{i+1})<\delta$ for each $i \in \omega$.

We say $( x_i ) _{i \in \omega}$ is an \textit{asymptotic pseudo-orbit} provided that $$\lim_{i \rightarrow \infty} d(f^i(x_i), x_{i+1}) =0.$$

We say $( x_i ) _{i \in \omega}$ is an \textit{asymptotic $\delta$-pseudo-orbit} if it is both a $\delta$-pseudo-orbit and an asymptotic pseudo-orbit.
\end{definition}

\begin{definition}
A point $z \in X$ is said to \textit{$\epsilon$-shadow} a sequence $( x_i ) _{i \in \omega}$ for some $\epsilon>0$ if $d(x_i,f^i(z))<\epsilon$ for each $i \in \omega$. It \textit{asymptotically shadows} the sequence if $\lim_{i \rightarrow \infty} d(x_i, f^i(z)) =0$. Finally it \textit{asymptotically $\epsilon$-shadows} the sequence if it both $\epsilon$-shadows and asymptotically shadows it.
\end{definition}

\begin{definition}
The dynamical system $(X, f)$ is said to have \textit{shadowing} if for any $\epsilon >0$ there exists $\delta >0$ such that every $\delta$-pseudo orbit is $\epsilon$-shadowed.
\end{definition}

\begin{definition}
A system $(X,f)$ has the \textit{eventual shadowing} property provided that for all $\epsilon>0$ there exists $\delta>0$ such that for each $\delta$-pseudo-orbit $( x_i)_{i \in \omega}$, there exists $z \in X$ and $N \in \mathbb{N}$ such that $d(f^i(z), x_i) < \epsilon$ for all $i \geq N$.
\end{definition}

\begin{definition}
The system $(X, f)$ is said to have \textit{h-shadowing} if for any $\epsilon >0$ there exists $\delta >0$ such that for every finite $\delta$-pseudo orbit $\{x_0,x_1,x_2, \ldots, x_m\}$ there exists $y \in X$ such that $d(f^i(y), x_i) < \epsilon$ for all $i <m$ and $f^m(y)=x_m$.
\end{definition}

\begin{definition}
The system $(X, f)$ is said to have \textit{limit shadowing} if every asymptotic pseudo-orbit is asymptotically shadowed.
\end{definition}

\begin{definition}
The system $(X, f)$ is said to have \textit{s-limit shadowing} if for any $\epsilon >0$ there exists $\delta >0$ such that the following two conditions hold: 
\begin{enumerate}
\item every $\delta$-pseudo orbit is $\epsilon$-shadowed, and
\item every asymptotic $\delta$-pseudo orbit is asymptotically $\epsilon$-shadowed.
\end{enumerate}
\end{definition}


\begin{definition}
The system $(X,f)$ has the \textit{orbital shadowing} property if for all $\epsilon>0$, there exists $\delta>0$ such that for any $\delta$-pseudo-orbit $( x_i)_{i \in \omega}$, there exists a point $z$ such that 

\[d_H\left(\overline{\{x_i\}_{i\in\omega}}, \overline{\{f^i(z)\}_{i\in\omega}}\right) <\epsilon.\]
\end{definition}

\begin{definition}
The system $(X,f)$ has the \textit{strong orbital shadowing} property if for all $\epsilon>0$, there exists $\delta>0$ such that for any $\delta$-pseudo-orbit $(x_i)_{i \in \omega}$, there exists a point $z$ such that, for all $N \in \omega$,

\[d_H\left(\overline{\{x_{N+i}\}_{i\in\omega}}, \overline{\{f^{N+i}(z)\}_{i\in\omega}}\right) <\epsilon.\]
\end{definition}

\begin{definition}\label{DefnAsymOrbShad}
The system $(X,f)$ has the \textit{asymptotic orbital shadowing} property if for any asymptotic pseudo-orbit $(x_i)_{i\geq 0}$ there exists a point $x\in X$ such that for any $\epsilon>0$ there exists $N \in \mathbb{N}$ such that
	\begin{align*}
		d_H(\overline{\{x_{N+i}\}_{i\geq 0}},\overline{\{f^{N+i}(x)\}_{i\geq 0}})<\epsilon.
	\end{align*}
\end{definition}

This is equivalent (see \cite[Theorem 22]{GoodMeddaugh2016}) to the following definition of orbital limit shadowing studied by Pilyugin and others \cite{Pilyugin2007}.

\begin{definition}
The system $(X,f)$ has the \textit{orbital limit shadowing} property if given any asymptotic pseudo-orbit  $(x_i)_{i\geq 0}\subseteq X$, there exists a point $x\in X$ such that
	\begin{align*}
		\omega((x_i)_{i\geq 0})=\omega(x).
	\end{align*}
\end{definition}

\begin{definition} 
The system $(X,f)$ has the \textit{first weak shadowing} property if for all $\epsilon>0$, there exists $\delta>0$ such that for any $\delta$-pseudo-orbit $( x_i)$, there exists a point $z$ such that

\[\{x_i\}_{i\in\omega} \subseteq B_\epsilon\left( \Orb(z)\right).\]
\end{definition}

\begin{definition}
The system $(X,f)$ has the \textit{second weak shadowing} property if for all $\epsilon>0$, there exists $\delta>0$ such that for any $\delta$-pseudo-orbit $( x_i)_{i \in \omega}$, there exists a point $z$ such that

\[\Orb(z) \subseteq B_\epsilon\left( \{x_i\}_{i\in\omega}\right).\]
\end{definition}



Let $X$ be a compact metric space, and let $f \colon X \to X$ be a continuous onto function. Let $X^\omega$ denote the product space of all infinite sequences; note that this is compact metric. Then, for any given $\delta>0$, let $\Phi_f(\delta) \subseteq X^\omega$ be the set of all $\delta$-pseudo-orbits. We call a mapping $\varphi \colon X \to \Phi_f(\delta)$ such that, for each $x \in X$, $\varphi(x)_0=x$, a $\delta$-method for $f$ where $\varphi(x)_k$ is used to denote the $k$\textsuperscript{th} term in the sequence $\varphi(x)$. We denote by $\mathcal{T}_0(f, \delta)$ the set of all $\delta$-methods.

\begin{definition}
Let $f \colon X \to X$ be a  continuous onto function. We say that $f$ experiences \textit{inverse shadowing with respect to the class $\mathcal{T}_0$} (henceforth simply, \textit{inverse shadowing}) if, for any $\epsilon>0$ there exists $\delta>0$ such that for any $x \in X$ and any $\varphi \in \mathcal{T}_0$ there exists $y \in X$ such that $x$ $\epsilon$-shadows $\varphi(y)$; i.e.
\[ \forall k \in \omega, d(\varphi(y)_k, f^k(x))< \epsilon.\]
\end{definition}

\subsection{Shadowing in uniform spaces}
Let $(X, f)$ be a dynamical system where $X$ is a uniform space with uniformity $\mathscr{U}$. The definitions below coincide with their corresponding ones in the previous subsection when the underlying space $X$ is compact metric. 

\begin{definition} 
A sequence $( x_i ) _{i \in \omega}$ is said to be a \textit{$D$-pseudo-orbit} for some $D \in \mathscr{U}$ if $(f(x_i),x_{i+1})\in D$ for each $i \in \omega$.

We say $( x_i ) _{i \in \omega}$ is an \textit{asymptotic pseudo-orbit} provided that for each $E \in \mathscr{U}$ there exists $N \in \mathbb{N}$ such that for all $i \geq N$ $(f^i(x_i), x_{i+1}) \in E$.

We say $( x_i ) _{i \in \omega}$ is an \textit{asymptotic $D$-pseudo-orbit} if it is both a $D$-pseudo-orbit and an asymptotic pseudo-orbit.
\end{definition}

\begin{definition}
A point $z \in X$ is said to \textit{$E$-shadow} a sequence $( x_i ) _{i \in \omega}$ for some $E \in \mathscr{U}$ if $(x_i,f^i(z))\in E$ for each $i \in \omega$. It \textit{asymptotically shadows} the sequence if for each $E \in \mathscr{U}$ there exists $N \in \mathbb{N}$ such that for all $i \geq N$ $(x_i, f^i(z)) \in E$. Finally it \textit{asymptotically $E$-shadows} the sequence if it both $E$-shadows and asymptotically shadows it.
\end{definition}

\begin{definition}
The dynamical system $(X, f)$ is said to have \textit{shadowing} if for any $E \in \mathscr{U}$ there exists $D \in \mathscr{U}$ such that every $D$-pseudo-orbit is $E$-shadowed.
\end{definition}

\begin{definition}
A system $(X,f)$ has the \textit{eventual shadowing} property provided that for all $E \in \mathscr{U}$ there exists $D \in \mathscr{U}$ such that for each $D$-pseudo-orbit $( x_i)_{i \in \omega}$, there exists $z \in X$ and $N \in \mathbb{N}$ such that $(f^i(z), x_i) \in E$ for all $i \geq N$.
\end{definition}

\begin{definition}
The system $(X, f)$ is said to have \textit{h-shadowing} if for any $E \in \mathscr{U}$ there exists $D \in \mathscr{U}$ such that for every finite $D$-pseudo orbit $\{x_0,x_1,x_2, \ldots, x_m\}$ there exists $y \in X$ such that $(f^i(y), x_i) \in E$ for all $i <m$ and $f^m(y)=x_m$.
\end{definition}

\begin{definition}
The system $(X, f)$ is said to have \textit{limit shadowing} if every asymptotic pseudo-orbit is asymptotically shadowed.
\end{definition}

\begin{definition}
The system $(X, f)$ is said to have \textit{s-limit shadowing} if for any $E\in \mathscr{U}$ there exists $D \in \mathscr{U}$ such that the following two conditions hold: 
\begin{enumerate}
\item every $D$-pseudo-orbit is $E$-shadowed, and
\item every asymptotic $D$-pseudo-orbit is asymptotically $E$-shadowed.
\end{enumerate}
\end{definition}


\begin{definition}
The system $(X,f)$ has the \textit{orbital shadowing} property if for all $E \in \mathscr{U}$, there exists $D \in \mathscr{U}$ such that for any $D$-pseudo-orbit $( x_i)_{i \in \omega}$, there exists a point $z \in X$ such that 

\[\left(\overline{\{x_i\}_{i\in\omega}}, \overline{\{f^i(z)\}_{i\in\omega}}\right) \in 2^E.\]
In this case we say $z$ $E$-orbital shadows $( x_i)_{i \in \omega}$.
\end{definition}

\begin{definition}
The system $(X,f)$ has the \textit{strong orbital shadowing} property if for all $E \in \mathscr{U}$, there exists $D \in \mathscr{U}$ such that for any $D$-pseudo-orbit $( x_i)_{i \in \omega}$, there exists a point $z \in X$ such that, for all $N \in \omega$,

\[\left(\overline{\{x_{N+i}\}_{i\in\omega}}, \overline{\{f^{N+i}(z)\}_{i\in\omega}}\right) \in 2^E.\]
In this case we say $z$ $E$-strong-orbital shadows $( x_i)_{i \in \omega}$.
\end{definition}

\begin{definition}
The system $(X,f)$ has the \textit{asymptotic orbital shadowing} property if for any asymptotic pseudo-orbit $(x_i)_{i\geq 0}$ there exists a point $x\in X$ such that for any $E \in \mathscr{U}$ there exists $N \in \mathbb{N}$ such that
	\begin{align*}
		(\overline{\{x_{N+i}\}_{i\geq 0}},\overline{\{f^{N+i}(x)\}_{i\geq 0}}) \in 2^E.
	\end{align*}
\end{definition}

\begin{definition}
The system $(X,f)$ has the \textit{orbital limit shadowing} property if given any asymptotic pseudo-orbit  $(x_i)_{i\geq 0}\subseteq X$, there exists a point $x\in X$ such that
	\begin{align*}
		\omega((x_i)_{i\geq 0})=\omega(x).
	\end{align*}
\end{definition}

\begin{definition}
The system $(X,f)$ has the \textit{first weak shadowing} property if for all $E \in \mathscr{U}$, there exists $D \in \mathscr{U}$ such that for any $D$-pseudo-orbit $( x_i)_{i \in \omega}$, there exists a point $z$ such that

\[\{x_i\}_{i\in\omega} \subseteq B_E\left( \Orb(z)\right).\]
\end{definition}

\begin{definition}
The system $(X,f)$ has the \textit{second weak shadowing} property if for all $E \in \mathscr{U}$, there exists $D \in \mathscr{U}$ such that for any $D$-pseudo-orbit $( x_i)_{i \in \omega}$, there exists a point $z$ such that

\[\Orb(z) \subseteq B_E\left( \{x_i\}_{i\in\omega}\right).\]
\end{definition}



Let $X$ be a compact Hausdorff space, and let $f \colon X \to X$ be a continuous onto function. Let $X^\omega$ denote the product space of all infinite sequences. Then, for any given $D \in \mathscr{U}$, let $\Phi_f(D) \subseteq X^\omega$ be the set of all $D$-pseudo-orbits. We call a mapping $\varphi \colon X \to \Phi_f(D)$ such that, for each $x \in X$, $\varphi(x)_0=x$, a $D$-method for $f$ where $\varphi(x)_k$ is used to denote the $k$\textsuperscript{th} term in the sequence $\varphi(x)$. We denote by $\mathcal{T}_0(f, D)$ the set of all $D$-methods.

\begin{definition}
Let $f \colon X \to X$ be a  continuous onto function. We say that $f$ experiences \textit{inverse shadowing with respect to the class $\mathcal{T}_0$} (henceforth simply, \textit{inverse shadowing}) if, for any $E \in \mathscr{U}$ there exists $D \in \mathscr{U}$ such that for any $x \in X$ and any $\varphi \in \mathcal{T}_0$ there exists $y \in X$ such that $x$ $E$-shadows $\varphi(y)$; i.e.
\[ \forall k \in \omega, (\varphi(y)_k, f^k(x))\in E.\]
\end{definition}

\begin{remark}\label{RemarkAssumeEntouragesAreSymmetric} It follows from Remark \ref{RemarkSymFormBase} that, without loss of generality, we may assume all entourages referred to in the above definitions are symmetric. Throughout what follows we will make this assumption.
\end{remark}

\subsection{Shadowing with open covers}
Let $X$ be a topological space and $f \colon X \to X$ a continuous function. The definitions below coincide with their corresponding ones in the previous subsection when the underlying space $X$ is compact Hausdorff.

\begin{definition}
A sequence $( x_i ) _{i \in \omega}$ is said to be a \textit{$\mathcal{U}$-pseudo-orbit} for some open cover $\mathcal{U}$ if for any $i \in \omega$ there exists $U \in \mathcal{U}$ with $f(x_i),x_{i+1} \in U$.
\end{definition}

\begin{definition}
A point $z \in X$ is said to \textit{$\mathcal{U}$-shadow} a sequence $( x_i ) _{i \in \omega}$ for some open cover $\mathcal{U}$ if for any $i \in \omega$ there exists $U \in \mathcal{U}$ with $x_i,f^i(z) \in U$. We say $z \in X$ \textit{eventually-$\mathcal{U}$-shadows} a sequence $( x_i ) _{i \in \omega}$ for some open cover $\mathcal{U}$ if there exists $N \in \mathbb{N}$ such that for any $i \geq N$ there exists $U \in \mathcal{U}$ with $x_i,f^i(z) \in U$.
\end{definition}

\begin{definition}
The dynamical system $(X, f)$ is said to have \textit{shadowing} if for any finite open cover $\mathcal{U}$ there exists a finite open cover $\mathcal{V}$ such that every $\mathcal{V}$-pseudo-orbit is $\mathcal{U}$-shadowed. 
\end{definition}

\begin{definition}
The dynamical system $(X, f)$ is said to have \textit{eventual shadowing} if for any finite open cover $\mathcal{U}$ there exists a finite open cover $\mathcal{V}$ such that every $\mathcal{V}$-pseudo-orbit is eventually-$\mathcal{U}$-shadowed. 
\end{definition}

\begin{definition}
The dynamical system $(X, f)$ is said to have \textit{h-shadowing} if for any finite open cover $\mathcal{U}$ there exists a finite open cover $\mathcal{V}$ such that for any finite $\mathcal{V}$-pseudo-orbit $\{x_0,x_1,x_2, \ldots, x_m\}$ there exists $y \in X$ such that for any $i<m$ there exists $U \in \mathscr{U}$ with $f^i(y), x_i \in U$ and $f^m(y)=x_m$.
\end{definition}

\begin{definition}
The system $(X,f)$ has the \textit{orbital shadowing} property if for any finite open cover $\mathcal{U}$ there exists a finite open cover $\mathcal{V}$ such that for any $\mathcal{V}$-pseudo-orbit $( x_i)_{i \in \omega}$ there exists a point $z \in X$ such that 
\[\forall y \in \overline{\Orb(z)} \, \exists U \in \mathcal{U} \, \exists y^\prime \in \overline{\{x_i \mid i \in \omega\}} : y,y^\prime \in U,\]
and
\[\forall y^\prime \in \overline{\{x_i \mid i \in \omega\}} \, \exists U \in \mathcal{U} \, \exists y \in \overline{\Orb(z)} : y,y^\prime \in U.\]
\end{definition}

\begin{definition}\label{DefnOpenCoverStrongOrbShad}
The system $(X,f)$ has the \textit{strong orbital shadowing} property if for any finite open cover $\mathcal{U}$ there exists a finite open cover $\mathcal{V}$ such that for any $\mathcal{V}$-pseudo-orbit $( x_i)_{i \in \omega}$ there exists a point $z \in X$ such that for any $N \in \omega$ 
\[\forall y \in \overline{\Orb(f^{N}(z))} \, \exists U \in \mathcal{U} \, \exists y^\prime \in \overline{\{x_{N+i} \mid i \in \omega\}} : y,y^\prime \in U,\]
and
\[\forall y^\prime \in \overline{\{x_{N+i} \mid i \in \omega\}} \, \exists U \in \mathcal{U} \, \exists y \in \overline{\Orb(f^N(z))} : y,y^\prime \in U.\]
\end{definition}

\begin{definition}
The system $(X,f)$ has the \textit{first weak shadowing} property if for any finite open cover $\mathcal{U}$ there exists a finite open cover $\mathcal{V}$ such that for any $\mathcal{V}$-pseudo-orbit $( x_i)_{i \in \omega}$ there exists a point $z \in X$ such that 
\[\forall y \in \overline{\{x_i \mid i \in \omega\}} \, \exists U \in \mathcal{U} \, \exists y^\prime \in \overline{\Orb(z)} : y,y^\prime \in U.\]

\end{definition}

\begin{definition} 
The system $(X,f)$ has the \textit{second weak shadowing} property if for any finite open cover $\mathcal{U}$ there exists a finite open cover $\mathcal{V}$ such that for any $\mathcal{V}$-pseudo-orbit $( x_i)_{i \in \omega}$ there exists a point $z \in X$ such that 
\[\forall y \in \overline{\Orb(z)} \, \exists U \in \mathcal{U} \, \exists y^\prime \in \overline{\{x_i \mid i \in \omega\}} : y,y^\prime \in U.\]
\end{definition}

Let $X$ be a compact Hausdorff space, and let $f \colon X \to X$ be a continuous onto function. Let $X^\omega$ denote the product space of all infinite sequences. Then, for any given finite open cover $\mathcal{U}$, let $\Phi_f(\mathcal{U}) \subseteq X^\omega$ be the set of all $\mathcal{U}$-pseudo-orbits. We call a mapping $\varphi \colon X \to \Phi_f(\mathcal{U})$ such that, for each $x \in X$, $\varphi(x)_0=x$, a $\mathcal{U}$-method for $f$ where $\varphi(x)_k$ is used to denote the $k$\textsuperscript{th} term in the sequence $\varphi(x)$. We denote by $\mathcal{T}_0(f, \mathcal{U})$ the set of all $\mathcal{U}$-methods.

\begin{definition}
Let $f \colon X \to X$ be a  continuous onto function. We say that $f$ experiences \textit{inverse shadowing with respect to the class $\mathcal{T}_0$} (henceforth simply, \textit{inverse shadowing}) if, for any finite open cover $\mathcal{U}$ there exists a finite open cover $\mathcal{V}$ such that for any $x \in X$ and any $\varphi \in \mathcal{T}_0$ there exists $y \in X$ such that $\varphi(y)$ $\mathcal{U}$-shadows $x$; i.e.
\[ \forall k \in \omega \, \exists U \in \mathcal U : \varphi(y)_k, f^k(x)\in U.\]
\end{definition}

For the rest of this paper, unless otherwise stated, $X$ is taken to be a compact Hausdorff space and $f \colon X \to X$ a continuous function. Similarly, unless otherwise stated, by ``dynamical system'', we are assuming the underlying phase space is compact Hausdorff.

\section{Preservation of Shadowing}
As mentioned in the introduction, Bowen \cite{bowen-markov-partitions} was one of the first to us the property of shadowing in his study of Axiom A diffeomorphisms and since then it has been both used as a tool and studied extensively in a property in its own right (see, for examples, \cite{BarwellGoodOprochaRaines, Bowen, Corless,CorlessPilyugin, Coven,GoodMeddaugh2018, LeeSakai,MeddaughRaines, Nusse,Pearson,Pennings,Pilyugin, robinson-stability, Sakai2003,WaltersP}).

Recall the following definition from the preliminaries:
the dynamical system $(X, f)$ is said to have \textit{shadowing} if for any $E \in \mathscr{U}$ there exists $D \in \mathscr{U}$ such that every $D$-pseudo-orbit is $E$-shadowed.

\subsection{Induced map on the hyperspace of compact sets}
The following theorem was proved in \cite{GoodFernandez} for compact metric systems. The proof easily generalises to compact  Hausdorff systems.
\begin{theorem}\textup{\cite[Theorem 3.4]{GoodFernandez}} Let $X$ be a compact Hausdorff space and let $f \colon X \to X$ be a continuous function. Then $(X,f)$ has shadowing if and only if $(2^X,2^f)$ has shadowing.
\end{theorem}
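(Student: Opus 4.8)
The plan is to prove the two implications separately, using the uniform embedding $x \mapsto \{x\}$ of $X$ into $2^X$ for the easy direction and a point-by-point pseudo-orbit tracing argument for the hard one. Throughout I assume, as permitted by Remark~\ref{RemarkAssumeEntouragesAreSymmetric}, that all entourages are symmetric, and I recall that the sets $2^V$, $V \in \mathscr{U}$, form a base for the uniformity on $2^X$, where $(A,A') \in 2^V$ means $A \subseteq B_V(A')$ and $A' \subseteq B_V(A)$.

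For the direction ``$(2^X,2^f)$ has shadowing $\implies (X,f)$ has shadowing'' I would use the singleton map. Since $2^f(\{x\}) = \{f(x)\}$ and $(\{x\},\{y\}) \in 2^V$ iff $(x,y) \in V$, a $D$-pseudo-orbit $(x_i)$ in $X$ is sent to a $2^D$-pseudo-orbit $(\{x_i\})$ in $2^X$. Given $E$, take $D$ from shadowing of $2^f$ for $2^E$ and shadow $(\{x_i\})$ by the orbit $(f^i(C))$ of some compact $C$, so $(\{x_i\}, f^i(C)) \in 2^E$. The inclusion $f^i(C) \subseteq B_E(\{x_i\}) = B_E(x_i)$ then shows that any chosen $z \in C$ satisfies $(x_i, f^i(z)) \in E$ for all $i$; that is, $z$ $E$-shadows $(x_i)$.

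The substance is the converse. Given symmetric $E$, choose symmetric $E'$ with $E' \circ E' \subseteq E$, and let $D$ be furnished by shadowing of $f$ for $E'$; I claim $2^D$ witnesses shadowing of $2^f$ for $E$. Let $(A_i)$ be a $2^D$-pseudo-orbit, so for each $i$ we have $f(A_i) \subseteq B_D(A_{i+1})$ and $A_{i+1} \subseteq B_D(f(A_i))$. These two inclusions give, respectively, forward and backward tracing: from $a_i \in A_i$ there is $a_{i+1} \in A_{i+1}$ with $(f(a_i),a_{i+1}) \in D$, and from $a_{i+1} \in A_{i+1}$ there is $a_i \in A_i$ with $(f(a_i),a_{i+1}) \in D$. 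Hence through any $a \in A_k$ one can build a full $D$-pseudo-orbit $(a_i)_{i\geq 0}$ with $a_i \in A_i$ and $a_k = a$ (trace backward to reach index $0$, forward thereafter). Let $C_0$ be the set of all $E'$-shadowing points of such pseudo-orbits $(a_i)$ with $a_i \in A_i$ for every $i$, and set $C = \overline{C_0}$, a nonempty compact (hence admissible) element of $2^X$.

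It remains to verify $(A_i, f^i(C)) \in 2^E$ for all $i$. The inclusion $A_i \subseteq B_E(f^i(C))$ is the payoff of tracing: for $a \in A_i$, a pseudo-orbit through $a$ at index $i$ is $E'$-shadowed by some $z \in C_0 \subseteq C$, whence $(a, f^i(z)) \in E' \subseteq E$ and $a \in B_E(f^i(C))$. For the reverse inclusion, every $z \in C_0$ $E'$-shadows some pseudo-orbit lying in the $A_i$, so $f^i(C_0) \subseteq B_{E'}(A_i)$; taking closures and using continuity of $f$, $f^i(C) \subseteq \overline{f^i(C_0)} \subseteq \overline{B_{E'}(A_i)}$. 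The one point needing care is passing from this closure back to $B_E(A_i)$: here I would invoke the standard uniform-space fact that, for symmetric $E'$, $\overline{B_{E'}(A)} \subseteq B_{E' \circ E'}(A) \subseteq B_E(A)$, which is exactly why $E'$ was chosen with $E' \circ E' \subseteq E$. This yields $f^i(C) \subseteq B_E(A_i)$ and completes the verification. The main obstacle is thus not the tracing, which is forced by the pseudo-orbit inclusions, but ensuring the shadowing set is genuinely compact while keeping both Hausdorff inclusions inside the prescribed entourage $E$; the single application of $E' \circ E' \subseteq E$ is what makes passing to the closure harmless.
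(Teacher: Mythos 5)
Your proof is correct. The paper does not actually prove this statement---it cites \cite[Theorem 3.4]{GoodFernandez} and merely remarks that the compact metric proof generalises to compact Hausdorff spaces---and your argument is precisely that generalisation, following the same approach: the singleton embedding $x \mapsto \{x\}$ for the easy direction, and for the converse the standard device of threading $D$-pseudo-orbits through the sets $A_i$ (forward and backward, as forced by the two inclusions defining $2^D$), shadowing each thread, taking the closure of the set of shadowing points to get a compact element of $2^X$, with the metric $\epsilon/2$-slack replaced by the entourage composition $E' \circ E' \subseteq E$ to absorb the closure via $\overline{B_{E'}(A)} \subseteq B_{E' \circ E'}(A)$.
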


\subsection{Symmetric products}
In \cite{GomezIllanesMendez} the authors show that, for any $n \in \mathbb{N}$, if $f_n$ has shadowing then $f$ has shadowing. They also show that if $f$ has shadowing then $f_2$ has shadowing. However they provide an example ($z \mapsto z^2$ on the unit circle $S^1$) for which $f$ has shadowing but $f_n$ does not have shadowing for any $n \geq 3$. The following is another such example and will be recalled later.

\begin{example}\label{ExTentMapDoesNotPreserveShadToSymProducts}
Let $X$ be the closed unit interval and let $f \colon X \to X$ be the standard tent map, i.e.
 \[f(x)=\left\{\begin{array}{lll}
2x & \text{ if } & x \in [0,\frac{1}{2}]
\\2(1-x) & \text{ if } & x \in (\frac{1}{2},1] \,.
\end{array}\right.\]
Then $f$ has shadowing \textup{\cite[Example 3.5]{BarwellGoodOprochaRaines}} but $f_n$ does not have shadowing for any $n \geq 3$.

Fix $n \geq 3$. Let $c= \frac{2}{3}$. Let $\epsilon = \frac{1}{12}$ and let $\delta>0$ be given; without loss of generality $\delta<\frac{1}{12}$. Choose $y\in [0,\delta)$ such that there exists $k \in \mathbb{N}$ such that $f^k(y) =c$ and $f^i(y)< \frac{1}{2}$ for all $i <k$. Construct a $\delta$-pseudo-orbit as follows. For any $i \in \omega$ let $A_i=\{0,f^{i \mod k}(y),c\}$. It is easy to see that $(A_i)_{i \in \omega}$ is a $\delta$-pseudo-orbit. Suppose that $A \in F_n(X)$ $\epsilon$-shadows this pseudo-orbit. First observe that, since the pseudo-orbit is always a subset of the interval $[0,\frac{2}{3}]$, shadowing entails that $f^i(A) \subseteq [0,\frac{3}{4}]$ for any $i \in \omega$. Next notice that, by construction, there exists $k_0 \in \{1,\ldots k-1\}$ such that $A_{mk+k_0} \cap (\epsilon,2\epsilon] \neq \emptyset$ for all $m \in \omega$. By shadowing it follows that for any $m \in \omega$ there exists $a \in A$ such that $f^{mk+k_0}(a)\in (0,3\epsilon)$. Notice \[f_n^{-1}\left((0,3\epsilon)\right)=\left(0,\frac{3\epsilon}{2}\right)\cup \left(1-\frac{3\epsilon}{2}, 1\right)\subseteq \left(0,\frac{1}{4}\right)\cup \left(\frac{3}{4},1\right).\]
Now let $z$ be the least such element of $A \setminus \{0\}$. Let $l\in \omega$ be least such that $f^l(z) > 3\epsilon$. Let $m \in \omega$ be such that $mk+k_0>l$. Let $a \in A$ be such that $f^{mk+k_0}(a) \in (0,3\epsilon)$; notice $a\neq 0$. Since the preimage of $(0,3\epsilon)$ is a subset of $(0,\frac{1}{4})\cup (\frac{3}{4},1)$, since $f_n^i(A) \cap (\frac{3}{4},1]=\emptyset$ for all $i \in \omega$ and since $f$ is strictly increasing on $[0,\frac{1}{2})$, it follows that $a<z$, contradicting the minimality of $z$. Therefore $f_n$ does not have the shadowing property.
\end{example}

\subsection{Factor maps}
In \cite{GoodMeddaugh2018} the authors introduce the concept of factor maps which \textit{almost lift pseudo orbits}. For such maps, pseudo-orbits in the codomain system roughly correlate to pseudo orbits in the domain system - hence they `almost lift'.

\begin{definition} Suppose $X$ and $Y$ are compact Hausdorff spaces, $f\colon X \to X$ and $g \colon Y \to Y$ are continuous. A factor map $\varphi \colon (X,f) \to (Y,g)$ \textit{almost lifts pseudo-orbits (ALP)} if for every $V \in \mathscr{U}_Y$ and every $D \in \mathscr{U}_X$ there exists $W \in \mathscr{U}_Y$ such that for every $W$-pseudo-orbit $(y_i)_{i \in \omega}$ in $Y$, there exists a $D$-pseudo-orbit $(x_i)_{i \in \omega}$ in $X$ such that $(\varphi(x_i),y_i) \in V$ for all $i \in \omega$. 
\end{definition}

If $X$ and $Y$ are compact metric spaces, then $\varphi$ is ALP if and only if for all $\epsilon>0$ and $\eta>0$, there exists $\delta>0$ such that if $(y_i)_{i \in \omega}$ is a $\delta$-pseudo-orbit in $Y$, there exists an $\eta$-pseudo-orbit $(x_i)_{i\in \omega}$ in $X$ with $d(\phi(x_i),y_i)<\epsilon$.

\begin{theorem}\textup{\cite{GoodMeddaugh2018}}\label{ShadowALP}
Let $(X,f)$ and $(Y,g)$ be dynamical systems, where $X$ and $Y$ are compact Hausdorff, and let $\varphi \colon (X,f) \to (Y, g)$ be a factor map. Then the following statements hold:
\begin{enumerate}
    \item If $(X,f)$ has shadowing and $\varphi$ is an ALP map then $(Y,g)$ has shadowing.
    \item If $(Y,g)$ has shadowing then $\varphi$ is an ALP map.
\end{enumerate}
\end{theorem}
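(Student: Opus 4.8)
The plan is to prove the two implications separately, using throughout: the semiconjugacy identity $\varphi\circ f^i = g^i\circ\varphi$ (valid for every $i$ since $\varphi\circ f=g\circ\varphi$), the surjectivity of $\varphi$, the uniform continuity of $\varphi$ that comes for free from compactness of $X$, and the convention (Remark \ref{RemarkAssumeEntouragesAreSymmetric}) that all entourages may be taken symmetric. I expect statement (2) to be almost immediate and statement (1) to be a short entourage chase whose only subtlety is the order in which the entourages are selected.

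For (2), suppose $(Y,g)$ has shadowing and fix $V\in\mathscr{U}_Y$ and $D\in\mathscr{U}_X$. The key observation is that a genuine orbit is trivially a pseudo-orbit, so $D$ is never actually needed: any true orbit $(f^i(z))_{i\in\omega}$ in $X$ satisfies $(f(f^i(z)),f^{i+1}(z))\in\Delta\subseteq D$, hence is a $D$-pseudo-orbit. I would therefore apply shadowing in $Y$ with target $V$ to obtain $W\in\mathscr{U}_Y$ such that every $W$-pseudo-orbit in $Y$ is $V$-shadowed, and claim this $W$ witnesses ALP. Given a $W$-pseudo-orbit $(y_i)$ in $Y$, shadowing yields $w\in Y$ with $(y_i,g^i(w))\in V$; surjectivity of $\varphi$ gives $z\in X$ with $\varphi(z)=w$; and then $x_i:=f^i(z)$ is a $D$-pseudo-orbit with $\varphi(x_i)=g^i(\varphi(z))=g^i(w)$, so $(\varphi(x_i),y_i)\in V$ for all $i$ by symmetry, which is exactly the ALP condition.

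For (1), fix a symmetric $E\in\mathscr{U}_Y$ and aim to produce $W$. The choices cascade backwards from $E$: first pick symmetric $V\in\mathscr{U}_Y$ with $V\circ V\subseteq E$; then, by uniform continuity of $\varphi$, pick $E'\in\mathscr{U}_X$ so that $(a,b)\in E'$ forces $(\varphi(a),\varphi(b))\in V$; then apply shadowing of $(X,f)$ with target $E'$ to get $D\in\mathscr{U}_X$; finally feed this $V$ and $D$ into the ALP hypothesis to obtain $W$. The verification is a chase: a $W$-pseudo-orbit $(y_i)$ in $Y$ lifts via ALP to a $D$-pseudo-orbit $(x_i)$ in $X$ with $(\varphi(x_i),y_i)\in V$; shadowing in $X$ gives $z$ with $(x_i,f^i(z))\in E'$, whence $(\varphi(x_i),\varphi(f^i(z)))\in V$; composing through $\varphi(x_i)$ gives $(y_i,\varphi(f^i(z)))\in V\circ V\subseteq E$. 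Writing $w=\varphi(z)$ and using $\varphi(f^i(z))=g^i(w)$, the point $w$ $E$-shadows $(y_i)$, so $(Y,g)$ has shadowing.

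The only place requiring care — and the main thing to get right — is the dependency ordering in (1): the entourage $D$ governing pseudo-orbits in $X$ is produced by $X$-shadowing from the target $E'$, while $E'$ is in turn dictated by $V$ through uniform continuity of $\varphi$, and $W$ is produced last by ALP from the pair $(V,D)$. Chosen in any other order the chase does not close. Beyond this bookkeeping there is no genuine obstacle: all the analytic content is supplied by the ALP hypothesis and by shadowing in the two systems, and it is compactness of $X$ (hence uniform continuity of $\varphi$) that lets a single $E'$ control $\varphi$ uniformly along the entire orbit.
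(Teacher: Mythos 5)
Your proof is correct, and it follows essentially the same route as the standard argument: the paper itself only cites this theorem to \cite{GoodMeddaugh2018}, but your entourage chase in (1) (uniform continuity of $\varphi$ giving $E'$, shadowing in $X$ giving $D$, ALP giving $W$, then $V\circ V\subseteq E$) and your trivial-lift argument in (2) (a true orbit through a preimage of the shadowing point is a $D$-pseudo-orbit for free) are exactly the pattern used in the paper's own proofs of the analogous results, Theorems \ref{ThmALPeventualShad}--\ref{ThmALPFirstWeakShad}. Your remark about the dependency ordering of the quantifiers is the one genuine subtlety, and you have it right.
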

In particular it follows that a factor map preserves shadowing if and only if it is an ALP map.

\subsection{Inverse limits}
In \cite{GoodMeddaugh2018} the authors prove the following theorem.
\begin{theorem}\textup{\cite{GoodMeddaugh2018}}
Let $(X,f)$ be conjugate to a Mittag-Leffler inverse limit system comprised of maps with shadowing on compact Hausdorff spaces. Then $(X,f)$ has shadowing. 
\end{theorem}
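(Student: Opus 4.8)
The plan is to reduce at once to the case where $(X,f)$ is literally the inverse limit system $\left(\varprojlim\{X_\lambda, g_\lambda^\eta\}, f\right)$. Since each $X_\lambda$ is compact Hausdorff, so is the inverse limit, and a homeomorphism between compact Hausdorff spaces is automatically a uniform isomorphism (carrying the unique compatible uniformity of one onto that of the other). A conjugacy therefore transports $D$-pseudo-orbits and $E$-shadowing points faithfully, so shadowing is a conjugacy invariant and it suffices to prove that the inverse limit itself has shadowing.

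Next I would cut down the collection of entourages we must handle. A basic entourage on $\prod X_\lambda$ involves only finitely many coordinates $\mu_1,\ldots,\mu_n$; choosing $\lambda \geq \mu_j$ for all $j$ (possible since $\Lambda$ is directed) and using uniform continuity of each $g_{\mu_j}^{\lambda}$ on the compact space $X_\lambda$, one produces a single $E_\lambda \in \mathscr{U}_\lambda$ whose associated single-coordinate entourage $\hat{E}_\lambda \coloneqq \{((x_\mu),(y_\mu)) \mid (x_\lambda,y_\lambda)\in E_\lambda\}$ is contained in the given basic entourage (restricted to the inverse limit). Hence the single-coordinate entourages $\hat{E}_\lambda$ form a base for the uniformity on $\varprojlim\{X_\lambda, g_\lambda^\eta\}$, and it is enough to shadow with respect to these.

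Now fix $\hat{E}_\lambda$. I would let $\gamma \geq \lambda$ witness the Mittag-Leffler condition with respect to $\lambda$, use uniform continuity of $g_\lambda^\gamma$ to pick $E_\gamma \in \mathscr{U}_\gamma$ with the property that $(a,b)\in E_\gamma$ implies $(g_\lambda^\gamma(a), g_\lambda^\gamma(b)) \in E_\lambda$, and then invoke shadowing of $(X_\gamma, f_\gamma)$ to obtain $D_\gamma \in \mathscr{U}_\gamma$ such that every $D_\gamma$-pseudo-orbit in $X_\gamma$ is $E_\gamma$-shadowed. Set $D \coloneqq \hat{D}_\gamma$. Given a $D$-pseudo-orbit $(x^{(i)})_{i\in\omega}$ in the inverse limit, its $\gamma$th projection $(x^{(i)}_\gamma)_i$ is a $D_\gamma$-pseudo-orbit (since $\pi_\gamma \circ f = f_\gamma \circ \pi_\gamma$), so there is $w \in X_\gamma$ with $(x^{(i)}_\gamma, f_\gamma^i(w)) \in E_\gamma$ for all $i$. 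Applying $g_\lambda^\gamma$ and using both the coherence relation $x^{(i)}_\lambda = g_\lambda^\gamma(x^{(i)}_\gamma)$ and the semiconjugacy $g_\lambda^\gamma \circ f_\gamma = f_\lambda \circ g_\lambda^\gamma$, this yields $(x^{(i)}_\lambda, f_\lambda^i(g_\lambda^\gamma(w))) \in E_\lambda$ for all $i$.

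The final, and crucial, step is to realise $g_\lambda^\gamma(w)$ as the $\lambda$th coordinate of an actual point of the inverse limit, and this is exactly where the Mittag-Leffler hypothesis is indispensable: since $g_\lambda^\gamma(w) \in g_\lambda^\gamma(X_\gamma)$ and $\gamma$ witnesses the condition with respect to $\lambda$, the useful fact recorded after the definition of Mittag-Leffler systems furnishes a point $z \in \pi_\lambda^{-1}(g_\lambda^\gamma(w)) \cap \varprojlim\{X_\lambda, g_\lambda^\eta\}$. Because $\hat{E}_\lambda$-shadowing constrains only the $\lambda$th coordinate, and $(f^i(z))_\lambda = f_\lambda^i(z_\lambda) = f_\lambda^i(g_\lambda^\gamma(w))$, such a $z$ indeed $\hat{E}_\lambda$-shadows $(x^{(i)})_i$, completing the argument. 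I expect this lifting to be the main obstacle: a shadowing point produced directly at level $\lambda$ need not lie in $\pi_\lambda(X) = g_\lambda^\gamma(X_\gamma)$ and so need not extend to the inverse limit; the device of shadowing one level up at the Mittag-Leffler witness $\gamma$ and then pushing down via $g_\lambda^\gamma$ is precisely what guarantees we land in the liftable set.
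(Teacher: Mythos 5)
Your proposal is correct and follows essentially the same route as the paper's proofs of the analogous inverse-limit theorems (e.g.\ for eventual shadowing): reduce to a single coordinate $\lambda$ (you via single-coordinate entourages $\hat{E}_\lambda$ forming a base, the paper via finite open covers $\pi_\lambda^{-1}(\mathcal{W}_\lambda)$ refining $\mathcal{U}$), shadow one level up at the Mittag-Leffler witness $\gamma$, push the shadowing point down through $g_\lambda^\gamma$, and lift it into the inverse limit using the fact that $\pi_\lambda^{-1}(g_\lambda^\gamma(X_\gamma)) \cap \varprojlim\{X_\lambda, g_\lambda^\eta\} \neq \emptyset$. You correctly identify the lifting step as the crux and place the Mittag-Leffler hypothesis exactly where the paper does.
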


\subsection{Tychonoff product}
The following result is folklore.
\begin{theorem}\label{ThmProductPreserveShadowing}
Let $\Lambda$ be an arbitrary index set and let $(X_\lambda, f_\lambda)$ be a system with shadowing for each $\lambda \in \Lambda$. Then the product system $(X,f)$, where $X=\prod_{\lambda \in \Lambda} X_\lambda$, has shadowing.
\end{theorem}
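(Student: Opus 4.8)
The plan is to reduce the problem to \emph{basic} entourages of the product uniformity and then to exploit the crucial structural fact that such entourages constrain only finitely many coordinates. Thus, although $\Lambda$ may be arbitrary (even uncountable), shadowing in the product will only ever need to be invoked in finitely many factor systems at once, and the remaining coordinates can be handled trivially.

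First I would fix an entourage $E$ of the product uniformity and, using the description of the base of the product uniformity given in the preliminaries, choose a basic entourage $\prod_{\lambda \in \Lambda} E_\lambda \subseteq E$, where each $E_\lambda \in \mathscr{U}_\lambda$ and $E_\lambda = X_\lambda \times X_\lambda$ for every $\lambda$ outside some finite set $F \subseteq \Lambda$. Since $E$-shadowing is implied by $\left(\prod_\lambda E_\lambda\right)$-shadowing, it suffices to produce a suitable $D$ for this basic entourage. For each $\lambda \in F$, shadowing of $(X_\lambda, f_\lambda)$ supplies $D_\lambda \in \mathscr{U}_\lambda$ such that every $D_\lambda$-pseudo-orbit in $X_\lambda$ is $E_\lambda$-shadowed; for $\lambda \notin F$ I set $D_\lambda = X_\lambda \times X_\lambda$. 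Then $D \coloneqq \prod_{\lambda \in \Lambda} D_\lambda$ is again a basic entourage of the product uniformity, since all but finitely many of its factors are the full square.

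Next I would take an arbitrary $D$-pseudo-orbit $(x_i)_{i \in \omega}$ in $X$ and project it. Because $f$ acts coordinatewise (so $\pi_\lambda \circ f = f_\lambda \circ \pi_\lambda$) and because membership in $D = \prod_\lambda D_\lambda$ is exactly membership of each coordinate pair in $D_\lambda$, the sequence $(\pi_\lambda(x_i))_{i \in \omega}$ is a $D_\lambda$-pseudo-orbit in $X_\lambda$ for every $\lambda$. For each $\lambda \in F$, shadowing yields a point $z_\lambda \in X_\lambda$ with $(\pi_\lambda(x_i), f_\lambda^i(z_\lambda)) \in E_\lambda$ for all $i$; for $\lambda \notin F$, I choose $z_\lambda \in X_\lambda$ arbitrarily, which costs nothing because $E_\lambda = X_\lambda \times X_\lambda$ there. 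Setting $z \coloneqq (z_\lambda)_{\lambda \in \Lambda}$ and using $\pi_\lambda(f^i(z)) = f_\lambda^i(z_\lambda)$, I would conclude that $(x_i, f^i(z))$ lies in $E_\lambda$ in every coordinate, hence in $\prod_\lambda E_\lambda \subseteq E$, for all $i$; that is, $z$ $E$-shadows $(x_i)$.

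The step one might expect to be delicate is the handling of an arbitrary (possibly uncountable) index set, but this is precisely where the finite support of basic entourages dissolves the difficulty: no simultaneous control over infinitely many factors is ever required, and no countability or diagonal argument enters. The only points worth verifying carefully are the routine facts that projections of $D$-pseudo-orbits are $D_\lambda$-pseudo-orbits and that coordinatewise $E_\lambda$-shadowing assembles into $\left(\prod_\lambda E_\lambda\right)$-shadowing, both of which follow immediately from the coordinatewise action of $f$ and the coordinatewise characterisation of membership in a product entourage.
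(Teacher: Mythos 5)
Your proposal is correct and is exactly the intended argument: the paper records Theorem \ref{ThmProductPreserveShadowing} as folklore without giving a proof, and your reduction to a basic entourage of finite support, followed by invoking shadowing only in the finitely many constrained coordinates and assembling the shadowing point coordinatewise, is precisely the template the paper itself uses for the analogous product-preservation results (e.g.\ Theorem \ref{thmProductHShad} for h-shadowing and the corresponding theorem for eventual shadowing). Note also that, unlike the h-shadowing case, no surjectivity hypothesis is needed here, since ordinary shadowing imposes no exact-hit condition on the shadowing point, so the free choice of $z_\lambda$ in the unconstrained coordinates is unproblematic.
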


\section{Preservation of h-shadowing}
The property of h-shadowing was introduced in \cite{BarwellGoodOprochaRaines} and was motivated by the fact that certain systems, called shifts of finite type, which are fundamental in the study of shadowing (see \cite{GoodMeddaugh2018}) exhibit a stronger form of shadowing, i.e. h-shadowing, which coincides with the usual form for shift systems but is distinct in general (see \cite[Example 6.4]{BarwellGoodOprocha}).

Recall the definition from Section \ref{ShadowingTypes}: The system $(X, f)$ is said to have h-shadowing if for any $E \in \mathscr{U}$ there exists $D \in \mathscr{U}$ such that for every finite $D$-pseudo orbit $\{x_0,x_1,x_2, \ldots, x_m\}$ there exists $y \in X$ such that $(f^i(y), x_i) \in E$ for all $i <m$ and $f^m(y)=x_m$.

\begin{remark}\label{remarkHshadSurjection}
If $X$ is a perfect space (i.e. it has no isolated points) and $(X,f)$ has h-shadowing then $f$ is a surjection.
\end{remark}

\subsection{Induced map on the hyperspace of compact sets}
The following theorem was proved in \cite{GoodFernandez} for compact metric systems. Their proof generalises to give the result for compact Hausdorff systems.
\begin{theorem}\textup{\cite[Theorem 4.6]{GoodFernandez}} Let $X$ be a compact Hausdorff space and let $f \colon X \to X$ be a continuous function. Then $(X,f)$ has h-shadowing if and only if $(2^X,2^f)$ has h-shadowing.
\end{theorem}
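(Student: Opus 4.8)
The plan is to prove both implications directly from the uniform definition of h-shadowing, moving between $X$ and $2^X$ via singletons in one direction and via closures of unions of point-orbits in the other. Throughout I would invoke Remark \ref{RemarkAssumeEntouragesAreSymmetric} to take all entourages symmetric, the fact that $\{2^E : E \in \mathscr{U}\}$ is a base for $2^\mathscr{U}$ (so it suffices to treat targets of the form $2^E$ and controls of the form $2^D$), and two standard facts about a continuous self-map of a compact Hausdorff space: that $f^i(\overline{S}) = \overline{f^i(S)}$ for every $S \subseteq X$, and that $\overline{B_{E'}(A)} \subseteq B_{E' \circ E'}(A)$ for every symmetric $E'$ and every $A$.

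For the easier implication (h-shadowing of $(2^X,2^f)$ forces h-shadowing of $(X,f)$), I would fix a symmetric $E \in \mathscr{U}$, apply h-shadowing of $2^X$ to the target $2^E$ to obtain a control entourage, and pick symmetric $D \in \mathscr{U}$ with $2^D$ contained in it. Given a finite $D$-pseudo-orbit $(x_i)_{i \le m}$ in $X$, the singleton sequence $(\{x_i\})_{i \le m}$ is a $2^D$-pseudo-orbit (symmetry of $D$ makes the two containments defining $2^D$ each equivalent to $(f(x_i),x_{i+1}) \in D$). h-shadowing then yields $B \in 2^X$ with $(f^i(B),\{x_i\}) \in 2^E$ for $i<m$ and $f^m(B) = \{x_m\}$; any $y \in B$ satisfies $(f^i(y),x_i) \in E$ for $i<m$ and $f^m(y) = x_m$.

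For the harder implication, fix symmetric $E$, choose symmetric $E'$ with $E' \circ E' \subseteq E$, and apply h-shadowing of $(X,f)$ to $E'$ to obtain symmetric $D$; I claim $2^D$ witnesses h-shadowing of $2^X$ for target $2^E$. Let $(A_i)_{i \le m}$ be a finite $2^D$-pseudo-orbit, so for each $i$ we have $f(A_i) \subseteq B_D(A_{i+1})$ and $A_{i+1} \subseteq B_D(f(A_i))$. The first inclusion lets me propagate a chosen point forward inside the $A_j$'s, the second lets me propagate backward; so for any $i \le m$ and any $a \in A_i$, running backward from $a$ (via $A_{j+1} \subseteq B_D(f(A_j))$) and forward from $a$ (via $f(A_j) \subseteq B_D(A_{j+1})$) produces a finite $D$-pseudo-orbit $(w_j)_{j\le m}$ with $w_i = a$, every $w_j \in A_j$, and $w_m \in A_m$. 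Applying h-shadowing of $X$ gives $u_{a,i} \in X$ with $(f^j(u_{a,i}),w_j) \in E'$ for $j<m$ and $f^m(u_{a,i}) = w_m \in A_m$; in particular $(f^i(u_{a,i}),a) \in E'$ when $i<m$, while for $i=m$ the backward-only chain gives $f^m(u_{a,m}) = a$ exactly. I then set $B = \overline{\{u_{a,i} : 0 \le i \le m,\ a \in A_i\}}$, a nonempty compact set. Every generator has $f^m$-image in $A_m$ and the generators $u_{a,m}$ realise every point of $A_m$, so $f^m(B) = \overline{A_m} = A_m$; for $i<m$ each generator satisfies $f^i(u_{a,i}) \in B_{E'}(A_i)$, whence $f^i(B) \subseteq \overline{B_{E'}(A_i)} \subseteq B_{E'\circ E'}(A_i) \subseteq B_E(A_i)$, while for each $a \in A_i$ the generator $u_{a,i}$ gives $a \in B_{E'}(f^i(u_{a,i})) \subseteq B_E(f^i(B))$. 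Together these containments say $(f^i(B),A_i) \in 2^E$.

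The hard part will be the reverse containment $A_i \subseteq B_E(f^i(B))$ together with the exactness $f^m(B) = A_m$. A naive construction (shadowing only backward chains issued from points of $A_m$) readily gives $f^i(B) \subseteq B_E(A_i)$ but need not cover every point of $A_i$ nor reproduce all of $A_m$. The device of routing a pseudo-orbit through each point $a$ of each $A_i$ — forward and backward — is precisely what secures coverage at all times simultaneously, and the use of backward-only chains from $A_m$ is what pins the terminal set down exactly; the only remaining technicality, the passage to closures, is absorbed uniformly by the estimate $\overline{B_{E'}(A)} \subseteq B_{E'\circ E'}(A)$.
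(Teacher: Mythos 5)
Your proof is correct. A point of comparison: the paper does not actually prove this theorem --- it cites \cite[Theorem 4.6]{GoodFernandez} and merely records that the compact metric proof generalises --- so the only in-house argument to measure against is its proof that h-shadowing passes to $(F_2(X),f_2)$, where the pseudo-orbit $\{A_0,\dots,A_m\}$ is split by relabelling into two pseudo-orbits of points. That device is useless for arbitrary compact $A_i$, and the two ideas you add are precisely the right replacements. First, routing through every point $a$ of every $A_i$ a single finite $D$-pseudo-orbit $(w_j)_{j\leq m}$ that lies in $A_j$ at \emph{every} time $j$ (backward via $A_{j+1}\subseteq B_D(f(A_j))$, forward via $f(A_j)\subseteq B_D(A_{j+1})$, symmetry of $D$ covering the orientation of the forward step) is what makes each tracing point's entire orbit stay $E'$-close to the $A_j$'s, giving $f^i(B)\subseteq B_E(A_i)$, while simultaneously securing the coverage $A_i\subseteq B_E(f^i(B))$ via the generator $u_{a,i}$. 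Second, taking $B=\overline{S}$ for $S$ the set of all tracing points handles exactness correctly: h-shadowing pins $f^m(u_{a,i})=w_m\in A_m$ for every generator, the backward-only chains from points of $A_m$ give $A_m\subseteq f^m(S)$, and compactness yields $f^m(\overline{S})=\overline{f^m(S)}=A_m$. The closure loss on the inclusion side is soundly absorbed by $\overline{B_{E'}(A)}\subseteq B_{E'\circ E'}(A)\subseteq B_E(A)$ for symmetric $E'$, which holds because $B_{E'}(x)$ is a neighbourhood of $x$. Incidentally, the fact that your $B$ is a closure of a union over all times and all points --- typically far larger than the $A_i$ themselves --- is exactly the freedom $2^X$ enjoys that $F_n(X)$ lacks, consistent with the paper's tent-map example showing failure of (h-)shadowing in $F_n(X)$ for $n\geq 3$; in essence your argument is the technique of \cite{GoodFernandez} transplanted verbatim to the uniform setting, which is what the paper asserts can be done.
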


\subsection{Symmetric products}
The following theorem is stated in \cite{GoodFernandez} for compact metric systems. The result generalises to compact Hausdorff systems.

\begin{theorem}\textup{\cite[Theorem 4.3]{GoodFernandez}} Let $X$ be a compact Hausdorff space and let $f \colon X \to X$ be a continuous function. For any $n \geq 2$, if $(F_n(X), f_n)$ has h-shadowing then $(X,f)$ has h-shadowing.
\end{theorem}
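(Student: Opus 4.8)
The plan is to exploit the natural copy of $(X,f)$ sitting inside $(F_n(X),f_n)$ as the singletons, and to use the fact that the \emph{exact} endpoint condition in the definition of h-shadowing collapses a shadowing set to a point. Concretely, consider the embedding $\iota\colon X \to F_n(X)$, $\iota(x)=\{x\}$. Since $f_n(\{x\})=\{f(x)\}$, this is a conjugacy of $(X,f)$ onto the subsystem of singletons, and I would first record the correspondence between pseudo-orbits: for symmetric $V \in \mathscr{U}_X$ (which we may assume by Remark \ref{RemarkAssumeEntouragesAreSymmetric}) we have $(\{u\},\{v\}) \in 2^V$ if and only if $(u,v)\in V$, because $2^V$-closeness of two singletons reduces to $\{u\}\subseteq B_V(\{v\})$ and $\{v\}\subseteq B_V(\{u\})$. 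Hence a sequence $(x_i)$ is a $V$-pseudo-orbit in $X$ precisely when $(\{x_i\})$ is a $2^V$-pseudo-orbit in $F_n(X)$.

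Next I would translate the h-shadowing hypothesis. Fix $E \in \mathscr{U}_X$ and apply h-shadowing of $(F_n(X),f_n)$ to the entourage $\widehat{E}=2^E \in \mathscr{U}_{F_n(X)}$, obtaining some $\widehat{D} \in \mathscr{U}_{F_n(X)}$. Since the sets $2^V$ ($V \in \mathscr{U}_X$) form a base for the hyperspace uniformity, I can choose a symmetric $D \in \mathscr{U}_X$ with $2^D \subseteq \widehat{D}$; this $D$ will be the entourage witnessing h-shadowing for $(X,f)$. Given any finite $D$-pseudo-orbit $\{x_0,x_1,\dots,x_m\}$ in $X$, the preceding correspondence shows that $\{A_0,\dots,A_m\}$ with $A_i=\{x_i\}$ is a finite $2^D$-pseudo-orbit, hence a $\widehat{D}$-pseudo-orbit in $F_n(X)$. h-shadowing therefore yields a set $B \in F_n(X)$ with $(f_n^i(B),A_i)\in 2^E$ for all $i<m$ and $f_n^m(B)=A_m=\{x_m\}$.

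Finally I would extract the required point of $X$. The crucial observation is the endpoint condition: $f_n^m(B)=f^m(B)=\{f^m(b):b\in B\}$ equals the singleton $\{x_m\}$, which forces $f^m(b)=x_m$ for every $b \in B$. Picking any $y \in B$ thus gives $f^m(y)=x_m$. For the intermediate indices, $(f_n^i(B),\{x_i\})\in 2^E$ gives in particular $f^i(B)\subseteq B_E(x_i)$, so $f^i(y)\in B_E(x_i)$, i.e. $(f^i(y),x_i)\in E$ for all $i<m$. Hence $y$ witnesses h-shadowing for the original pseudo-orbit, and $(X,f)$ has h-shadowing. The only point requiring any care is the collapse step — that the exact terminal equality in $F_n(X)$ descends to an exact terminal equality in $X$ — and this is exactly where the ``h'' (the demand $f^m(y)=x_m$ rather than mere $\epsilon$-closeness) does the work; everything else is the routine singleton bookkeeping sketched above, so I do not expect a genuine obstacle here.
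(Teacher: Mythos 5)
Your proof is correct and is essentially the paper's approach: the paper itself only cites \cite[Theorem 4.3]{GoodFernandez} without reproducing a proof, but its proofs of the analogous ``$F_n(X)$ implies $X$'' statements (e.g.\ for eventual shadowing) use exactly your singleton-embedding argument, in which a pseudo-orbit of singletons is shadowed in $F_n(X)$ and $2^E$-closeness to a singleton controls every point of the shadowing set. The one genuinely h-specific step --- that the exact terminal condition $f_n^m(B)=\{x_m\}$ forces $f^m(b)=x_m$ for every $b\in B$, so any $y\in B$ works --- is identified and handled correctly.
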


\begin{theorem}
Let $X$ be a compact Hausdorff space and let $f \colon X \to X$ be a continuous function. If $(X,f)$ has h-shadowing then $(F_2(X),f_2)$ has h-shadowing.
\end{theorem}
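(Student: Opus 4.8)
The plan is to reduce h-shadowing of $(F_2(X),f_2)$ to h-shadowing of $(X,f)$ by splitting an $F_2$-pseudo-orbit into two ordinary pseudo-orbits in $X$, h-shadowing each separately, and then recombining the two shadowing points into a single element of $F_2(X)$. Fix a symmetric $E \in \mathscr{U}$ and let $D \in \mathscr{U}$ be the entourage supplied by h-shadowing of $(X,f)$ for $E$. I claim the same $D$ witnesses h-shadowing of $(F_2(X),f_2)$ for the hyperspace entourage $2^E$, recalling that the sets $2^V$ form a base $\mathscr{B}_\mathscr{U}$ for the uniformity on $2^X$ and hence on $F_2(X)$.

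First I would record, for a finite $2^D$-pseudo-orbit $\{A_0,\ldots,A_m\}$ in $F_2(X)$, what the pseudo-orbit condition says pointwise. Writing $A_i=\{a_i,b_i\}$ (allowing $a_i=b_i$ when $|A_i|=1$), the relation $(f_2(A_i),A_{i+1}) \in 2^D$ unfolds into two statements: each of $f(a_i),f(b_i)$ lies in $B_D(A_{i+1})$, and each of $a_{i+1},b_{i+1}$ lies in $B_D(f_2(A_i))$.

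The key step, and the main obstacle, is a threading lemma: from such a pseudo-orbit one can extract two finite $D$-pseudo-orbits $(u_i)_{i=0}^{m}$ and $(v_i)_{i=0}^{m}$ in $X$ with $\{u_i,v_i\}=A_i$ for every $i$. At each step $i$ I would form the bipartite graph on the two tokens $\{f(a_i),f(b_i)\}$ and the two tokens $\{a_{i+1},b_{i+1}\}$, joining an image token to a next token whenever they are $D$-related; the two unfolded conditions say precisely that every vertex of this graph has degree at least one. On a $2+2$ bipartite graph with no isolated vertices Hall's condition holds trivially, so a perfect matching $\mu_i$ exists. Composing the bijections $\mu_0,\ldots,\mu_{m-1}$ threads the tokens into two strands; because each $\mu_i$ is a bijection the strands remain disjoint as tokens, and the matching edges guarantee $(f(u_i),u_{i+1}) \in D$ and $(f(v_i),v_{i+1}) \in D$ for all $i<m$. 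The care needed here is purely bookkeeping at degenerate steps, where a set is a singleton or where $f$ identifies the two points: treating each $A_i$ as a multiset of exactly two tokens rather than as a set keeps the matching argument uniform and avoids spurious collapses.

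Finally I would apply h-shadowing of $(X,f)$ to each strand to obtain $y,z \in X$ with $(f^i(y),u_i)\in E$ and $(f^i(z),v_i)\in E$ for $i<m$, together with the exact endpoint conditions $f^m(y)=u_m$ and $f^m(z)=v_m$. Setting $B=\{y,z\}\in F_2(X)$, the endpoint conditions give $f_2^m(B)=\{u_m,v_m\}=A_m$, while symmetry of $E$ converts the two pointwise relations into $f_2^i(B) \subseteq B_E(A_i)$ and $A_i \subseteq B_E(f_2^i(B))$, that is $(f_2^i(B),A_i) \in 2^E$ for all $i<m$. This is exactly the assertion that $B$ h-shadows the pseudo-orbit $\{A_0,\ldots,A_m\}$, so $(F_2(X),f_2)$ has h-shadowing.
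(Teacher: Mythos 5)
Your proposal is correct and takes essentially the same route as the paper's proof: split the finite $2^D$-pseudo-orbit in $F_2(X)$ into two finite $D$-pseudo-orbits in $X$, apply h-shadowing of $(X,f)$ to each, and recombine the two shadowing points into an element of $F_2(X)$, working with basic entourages $2^E$ since these form a base for the uniformity. The only difference is that your $2{+}2$ bipartite matching lemma rigorously justifies the step the paper dispatches with the phrase ``relabelling the $x$'s and $y$'s where necessary,'' which is a correct and careful filling-in of that detail rather than a different argument.
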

\begin{proof}
Let $E \in 2^\mathscr{U}$ be given. (Recall the standing assumption made in Remark \ref{RemarkAssumeEntouragesAreSymmetric}. This is, we assume, without loss of generality, that all entourages we make reference to are symmetric.) Let $E_0 \in \mathscr{U}$ be such that $2^{E_0} \subseteq E$. Let $D \in \mathscr{U}$ correspond to $E_0$ in h-shadowing for $f$. We claim $2^D$ satisfies the h-shadowing condition for $E$. Suppose that $\{A_0,A_1,\ldots A_m\}$ is a finite $2^D$-pseudo-orbit in $F_2(X)$. Write $A_i=\{x_i, y_i\}$; it is possible that, for some $i$, $x_i=y_i$. Relabelling the $x$'s and $y$'s where necessary, $\{x_0,\ldots x_m\}$ and $\{y_0,\ldots, y_m\}$ are finite $D$-pseudo-orbits in $X$. By h-shadowing there exist $x,y \in X$ such that $f^m(x)=x_m$, $f^m(y)=y_m$ and, for all $i \in \{0,\ldots, m-1\}$, $(f^i(x),x_i) \in E_0$ and $(f^i(y),y_i) \in E_0$. Write $A=\{x,y\} \in F_2(X)$. Notice $f_2^m(A)=A_m$. By the above, for each $i \in \{0,\ldots, m-1\}$, $A_i \subseteq B_{E_0}\left(f^i(A)\right)$ and $f^i(A) \subseteq B_{E_0}\left(A_i\right)$. It follows that $(f^i(A),A_i) \in 2^{E_0}$. Since $2^{E_0}\subseteq E$ we get that $(f^i(A),A_i) \in E$ for each $i \in \{0,\ldots, m-1\}$.
\end{proof}

\begin{remark}
Example \ref{ExTentMapDoesNotPreserveShadToSymProducts} shows that, in general, symmetric products do not preserve h-shadowing for $n\geq3$. The standard tent map $(X,f)$ has h-shadowing \textup{\cite[Example 3.5]{BarwellGoodOprochaRaines}} however $(F_n(X),f_n)$ does not have shadowing for any $n\geq3$. Since h-shadowing implies shadowing on compact spaces (see \cite{BarwellGoodOprochaRaines}) it follows that $(F_n(X),f_n)$ does not possess h-shadowing either.
\end{remark}

\subsection{Factor maps}

Clearly it follows from Theorem \ref{ShadowALP} that if $\varphi\colon (X,f)\to(Y,g)$ is a factor map and $Y$ has h-shadowing, then $\varphi$ is ALP. It is unclear, however, whether ALP is strong enough to preserve shadowing.

\subsection{Tychonoff product}
Recall Remark \ref{remarkHshadSurjection}: if $X$ is a perfect space and $(X,f)$ has h-shadowing then $f$ must be a surjection. For this reason an arbitrary product of dynamical systems with h-shadowing need not itself have h-shadowing (see Example \ref{ExampleHShadNotSurjectiveProduct}).

\begin{theorem}\label{thmProductHShad}
Let $\Lambda$ be an arbitrary index set and let $(X_\lambda, f_\lambda)$ be a surjective compact Hausdorff system with h-shadowing for each $\lambda \in \Lambda$. Then the product system $(X,f)$, where $X=\prod_{\lambda \in \Lambda} X_\lambda$, has h-shadowing.
\end{theorem}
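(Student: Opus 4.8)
The plan is to reduce to a basic entourage of the product uniformity and then split the coordinates into the finitely many on which the target entourage actually constrains the shadowing orbit, handled by h-shadowing of the factors, and the remaining coordinates, where only the exact endpoint must be matched, handled by surjectivity. First I would note that, since basic entourages form a base for the product uniformity, it suffices to verify the h-shadowing condition for a basic entourage $E = \prod_{\lambda \in \Lambda} E_\lambda$, where there is a finite set $F \subseteq \Lambda$ with $E_\lambda = X_\lambda \times X_\lambda$ for every $\lambda \notin F$: if $D$ witnesses h-shadowing for such an $E' \subseteq E$, it witnesses it for $E$ as well. For each $\lambda \in F$ I would apply h-shadowing of $(X_\lambda, f_\lambda)$ to $E_\lambda$ to obtain a witnessing entourage $D_\lambda \in \mathscr{U}_\lambda$, and then define $D = \prod_{\lambda} D_\lambda$ by setting $D_\lambda = X_\lambda \times X_\lambda$ for $\lambda \notin F$; this is again a basic entourage, being trivial off the finite set $F$. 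The claim is that $D$ witnesses h-shadowing for $E$.

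Given a finite $D$-pseudo-orbit $(p_i)_{i=0}^m$ in $X$, I would unpack the defining condition coordinatewise: for each $\lambda \in F$ the projected sequence $((p_i)_\lambda)_{i=0}^m$ is a $D_\lambda$-pseudo-orbit in $X_\lambda$, while for $\lambda \notin F$ the condition is vacuous. For $\lambda \in F$, h-shadowing then supplies $y_\lambda \in X_\lambda$ with $(f_\lambda^i(y_\lambda), (p_i)_\lambda) \in E_\lambda$ for all $i < m$ and $f_\lambda^m(y_\lambda) = (p_m)_\lambda$. For $\lambda \notin F$ I only need to match the endpoint: since $f_\lambda$ is surjective, so is $f_\lambda^m$, hence I may choose $y_\lambda \in X_\lambda$ with $f_\lambda^m(y_\lambda) = (p_m)_\lambda$. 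Setting $y = (y_\lambda)_{\lambda}$, one checks that $f^m(y) = p_m$ exactly, and that for each $i < m$ the pair $(f^i(y), p_i)$ lies in $E = \prod_\lambda E_\lambda$, because the required containment holds on $F$ by h-shadowing and is automatic off $F$, where $E_\lambda$ is the full square.

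The only genuinely load-bearing point, and the reason surjectivity is hypothesised, is the exact endpoint requirement $f^m(y) = p_m$, which must hold on \emph{every} coordinate, including those $\lambda \notin F$ over which $E_\lambda$ exerts no control on the intermediate orbit. On such coordinates the tracking is free, but landing on $(p_m)_\lambda$ on the nose forces $f_\lambda^m$ to be onto; this is precisely where each $f_\lambda$ being surjective is used, and, as indicated by Remark \ref{remarkHshadSurjection} and the discussion preceding the theorem, it cannot be dropped in general. Everything else is routine bookkeeping with the product uniformity, so I expect no further obstacles.
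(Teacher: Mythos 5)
Your proposal is correct and follows essentially the same route as the paper's proof: refine $E$ to a basic entourage trivial off a finite set $F$, apply h-shadowing coordinatewise on $F$, and use surjectivity of $f_\lambda^m$ off $F$ solely to hit the endpoint $(p_m)_\lambda$ exactly. Your explicit remark about where surjectivity is load-bearing matches the paper's use of it (there packaged into the choice of $z$ with $\pi_\lambda(f^m(z))=\pi_\lambda(x_m)$ for all $\lambda$) and needs no changes.
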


\begin{proof}
Let $E \in \mathscr{U}$ be given; this entourage is refined by one of the form
\[ \prod _{\lambda \in \Lambda} E_\lambda,\]
where $E_\lambda \in \mathscr{U}_\lambda$ for all $\lambda \in \Lambda$ and $E_\lambda =X_\lambda \times X_\lambda$ for all but finitely many of the $\lambda$'s. Let $\lambda_i$, for $1\leq i \leq k$, be precisely those elements in $\Lambda$ for which $E_\lambda  \neq X_\lambda \times X_\lambda$ (if there are no such elements then we are done). By h-shadowing in each component space, there exist entourages $D_{\lambda_i} \in \mathscr{U}_{\lambda_i}$ such that every $D_{\lambda_i}$-pseudo-orbit is $E_{\lambda_i}$-\textit{h}-shadowed.
Let 
\[D \coloneqq \prod_{\lambda \in \Lambda} D_\lambda \]
where
\[D_\lambda = \left\{\begin{array}{lll}
X\times X  & \text{ if } & \forall i \, \lambda \neq \lambda_i
\\D_{\lambda_i} & \text{ if } & \exists i : \lambda=\lambda_i\,.
\end{array}\right.\]
Now let $\{x_0,x_1,\ldots, x_m\}$ be a finite $D$-pseudo-orbit. Then $\{\pi_{\lambda_i}(x_1),\pi_{\lambda_i}(x_2),\ldots, \pi_{\lambda_i}(x_m)\}$ is a $D_{\lambda_i}$-pseudo-orbit in $X_{\lambda_i}$, which is $E_{\lambda_i}$-h-shadowed by a point $z_i \in X_{\lambda_i}$. Pick a point $z \in X$ such that $\pi_{\lambda_i}(z)= z_i$ for each $1\leq i \leq k$ and $\pi_{\lambda}(f^m(z))=\pi_\lambda(x_m)$ for all $\lambda \in \Lambda$. It follows that $z$ $E$-h-shadows $\{x_0,x_1,\ldots, x_m\}$.
\end{proof}

\begin{remark}
It is easy to see that if only a finite number of the component systems involved in Theorem \ref{thmProductHShad} were not surjective the result would still hold.
\end{remark}

\begin{example}\label{ExampleHShadNotSurjectiveProduct}
For each $i \in \omega$ let $X_i = \{2\} \cup [0,1]$ and $f_i \colon X \to X$ be given by
\[f_i(x)=\left\{\begin{array}{lll}
2x & \text{ if }& x \in [0,\frac{1}{2}]
\\2(1-x) & \text{ if } & x \in (\frac{1}{2},1]
\\ 1 & \text{ if } & x=2\,.
\end{array}\right.\]
Thus, each system is comprised of the standard tent map together with an isolated point which maps to the fixed point $1$.  The standard tent map has h-shadowing \textup{\cite[Example 3.5]{BarwellGoodOprochaRaines}} and the it is obvious that the additional point in these systems does nothing to contradict that. Thus each system $(X_i,f_i)$ has h-shadowing. The product system
$(X,f)$, where 
\[X=\prod _{i \in \omega} X_i,\]
has no isolated points. However, the point given by $x_i=2$ for all $i \in \omega$ has no preimage; the system is not onto. Hence, by Remark \ref{remarkHshadSurjection}, the system $(X,f)$ does not have h-shadowing.
\end{example}

\section{Preservation of Eventual Shadowing}
Eventual shadowing was introduced in \cite{GoodMeddaugh2016} in the authors' journey to characterise when the set of $\omega$-limit sets of a system coincides with the set of closed internally chain transitive sets. As remarked upon in \cite{GoodMeddaugh2016}, the property of eventual shadowing is equivalent with the $(\mathbb{N}, \mathcal{F}_{cf})$-shadowing property of Oprocha \cite{Oprocha2016}.

Recall that a system $(X,f)$ has the \textit{eventual shadowing} property provided that for all $E \in \mathscr{U}$ there exists $D \in \mathscr{U}$ such that for each $D$-pseudo-orbit $( x_i)_{i \in \omega}$, there exists $z \in X$ and $N \in \mathbb{N}$ such that $(f^i(z), x_i) \in E$ for all $i \geq N$.

\subsection{Induced map on the hyperspace of compact sets}

\begin{theorem}\label{thmEventualShadHyperspace}Let $X$ be a compact Hausdorff space and let $f \colon X \to X$ be a continuous function. If the hyperspace system $(2^X, 2^f)$ has eventual shadowing then $(X,f)$ has eventual shadowing.
\end{theorem}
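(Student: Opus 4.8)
The plan is to exploit the natural embedding of $X$ into $2^X$ via singletons, $x \mapsto \{x\}$, which conjugates $(X,f)$ to the subsystem of singletons inside $(2^X, 2^f)$. Given a symmetric entourage $E \in \mathscr{U}$ (invoking Remark \ref{RemarkAssumeEntouragesAreSymmetric}) for which we seek eventual shadowing in $X$, I would first apply eventual shadowing in the hyperspace to the entourage $2^E \in 2^\mathscr{U}$, obtaining some entourage $\mathcal{D} \in 2^\mathscr{U}$. Since the sets $2^V$ (for $V \in \mathscr{U}$) form a base for $2^\mathscr{U}$, I may fix $D \in \mathscr{U}$ with $2^D \subseteq \mathcal{D}$, and I claim this $D$ witnesses eventual shadowing in $X$ with respect to $E$.

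The second step is to transfer a pseudo-orbit up to the hyperspace. Given a $D$-pseudo-orbit $(x_i)_{i \in \omega}$ in $X$, set $A_i = \{x_i\}$. Because $D$ is symmetric, the condition $(f(x_i), x_{i+1}) \in D$ says exactly that $\{f(x_i)\} \subseteq B_D(\{x_{i+1}\})$ and $\{x_{i+1}\} \subseteq B_D(\{f(x_i)\})$, i.e. $(2^f(A_i), A_{i+1}) \in 2^D \subseteq \mathcal{D}$; hence $(A_i)_{i \in \omega}$ is a $\mathcal{D}$-pseudo-orbit in $2^X$. Eventual shadowing in $(2^X, 2^f)$ then supplies some $A \in 2^X$ and $N \in \mathbb{N}$ with $((2^f)^i(A), A_i) \in 2^E$ for all $i \geq N$.

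The final step, and the only place any care is needed, is the descent from the shadowing \emph{set} $A$ to a shadowing \emph{point}. The key observation is that the relation $((2^f)^i(A), A_i) \in 2^E$ already contains the one-sided containment $f^i(A) = (2^f)^i(A) \subseteq B_E(A_i) = B_E(x_i)$ for every $i \geq N$. Thus, choosing \emph{any} point $z \in A$ (which is nonempty as an element of $2^X$), we obtain $f^i(z) \in f^i(A) \subseteq B_E(x_i)$, so that $(x_i, f^i(z)) \in E$, and by symmetry $(f^i(z), x_i) \in E$, for all $i \geq N$. This is precisely the eventual shadowing condition for $(X,f)$, and the proof is complete.

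I do not expect a serious obstacle here. The usual difficulty in such descent arguments is extracting a single tracking point from a shadowing compact set, but the asymmetry of the definitions works in our favour: the containment half of the $2^E$-closeness automatically makes \emph{every} point of $A$ an eventual shadow, so no selection, diagonal, or compactness argument is required beyond what is already packaged into the hyperspace hypothesis.
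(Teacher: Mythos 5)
Your proposal is correct and follows essentially the same route as the paper's proof: lift the pseudo-orbit to singletons, apply eventual shadowing in $(2^X,2^f)$, and observe that the containment $(2^f)^i(A)\subseteq B_E(\{x_i\})$ inside the $2^E$-relation makes every point of the nonempty set $A$ an eventual shadow. Your only addition is the (harmless, slightly more careful) step of refining the hyperspace entourage to a basic one of the form $2^D$, which the paper elides by directly taking $D$ with $2^D$ corresponding to $2^E$.
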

\begin{proof}
Let $E \in \mathscr{U}$. Let $D \in \mathscr{U}$ be such that $2^D$ corresponds to $2^E$ for eventual shadowing for $2^f$. Let $(x_i)_{i \in \omega}$ be a $D$-pseudo-orbit in $X$. Then $\left(\{x_i\}\right)_{i \in \omega}$ is a $2^D$-pseudo-orbit in $2^X$. By eventual shadowing there exists $A \in 2^X$ and $N \in \mathbb{N}$ such that $\left((2^f)^i(A),\{x_i\}\right) \in 2^E$ for all $i \geq N$. It follows that, for any $a \in A$, $\left(f^i(a),x_i\right) \in E$ for all $i \geq N$. Since $A \neq \emptyset$ the result holds.
\end{proof}

The following example shows that the converse to Theorem \ref{thmEventualShadHyperspace} is not true: the hyperspatial system of a system with eventual shadowing need not have eventual shadowing.

\begin{example}\label{ExNotPreserveEventualInHyperspaces}
Let $X=[-1,1]$ and let $f \colon X \to X$ be given by
\[f(x)=\left\{\begin{array}{lll}
 x^3 & \text{ if } & x \in [-1,0]
\\ 2x & \text{ if } & x \in (0,\frac{1}{2}]
\\2(1-x) & \text{ if } & t \in (\frac{1}{2},1] \,.
\end{array}\right.\]
As observed in \textup{\cite{GoodMeddaugh2016}} $f$ has eventual shadowing but not shadowing. We claim $2^f$ does not have eventual shadowing. Let $\epsilon=\frac{1}{12}$ and fix $\delta>0$; without loss of generality assume $\delta<\epsilon$. Choose a point $y \in (-1,-1+\delta)$ such that there exists $m \in \omega$ with $f^m(y)=-\frac{1}{2}$; let $k \in \omega$ be such that $f^k(y) \in (-\frac{\delta}{2}, 0]$ (notice that $m<k$). Let $p \in (0,\frac{\delta}{2})$ be periodic with period $n$ and such that there exists $n_0<n$ with $f^{n_0}(p) \in (1-\frac{\delta}{2},1)$. We may now construct a $\delta$-pseudo-orbit in $2^X$ as follows. For any $i \in \omega$:
\begin{itemize}
  \item if $i \mod (k+n) =0 $ then let $A_{i}=\{-1,y,0\}$,
    \item if $j=i \mod (k+n) \in \{1,\ldots, k-1\}$ then let $A_i=\{-1, f^{j \mod k}(y),0\}$,
    \item if $i \mod (k+n) =k$ then let $A_i=\{-1, p\}$,
    \item if $j=i \mod (k+n) \in \{k+1,\ldots, k+n-1\}$ then let $A_i=\{-1, f^{j \mod n}(p)\}$.
\end{itemize}
We claim $(A_i)_{i\in \omega}$ cannot be eventually $\epsilon$-shadowed in $2^X$. Indeed suppose $A \in 2^X$ eventually $\epsilon$-shadows this pseudo-orbit. Let $N \in \omega$ be such that \[d_H\left((2^f)^{N+i}(A), A_{N+i}\right) < \epsilon\] for all $i \in \omega$. Let $l>N$ be such that $A_l \ni f^m(y)=-\frac{1}{2}$. Then there exists $a \in A$ such that $f^l(a)\in (-\frac{1}{2}-\epsilon, -\frac{1}{2}+\epsilon)$. Now let $l_0>l$ be such that $l_0 \mod k+n =k+n_0$. Then $A_{l_0}=\{-1, f^{n_0}(p)\}$ but $f^{l_0}(a) \in (-\frac{1}{2}-\epsilon, 0)$. Since 
\[\left(-\frac{1}{2}-\epsilon, 0\right) \cap \left(B_\epsilon(-1) \cup B_\epsilon( f^{n_0}(p))\right) =\emptyset,\] 
we have a contradiction: $A$ does not eventually $\epsilon$-shadow $(A_i)_{i\in \omega}$.
\end{example}

\subsection{Symmetric products}

\begin{theorem}Let $X$ be a compact Hausdorff space and let $f \colon X \to X$ be a continuous function. For any $n \geq 2$, if $(F_n(X),f_n)$ has eventual shadowing then $(X,f)$ has eventual shadowing.
\end{theorem}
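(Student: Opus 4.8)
The plan is to mirror the proof of Theorem \ref{thmEventualShadHyperspace}, exploiting the natural embedding of $X$ into $F_n(X)$ via singletons. This embedding is available for every $n \geq 2$, since the singletons constitute $F_1(X) \subseteq F_n(X)$, and $f_n$ restricts to $f$ on them because $f_n(\{x\}) = f(\{x\}) = \{f(x)\}$. First I would fix an arbitrary (symmetric, by Remark \ref{RemarkAssumeEntouragesAreSymmetric}) entourage $E \in \mathscr{U}$ and use eventual shadowing of $f_n$ to obtain $D \in \mathscr{U}$ such that the entourage $2^D$ on $F_n(X)$ corresponds to $2^E$ in the eventual shadowing property for $f_n$. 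Here $2^D$ and $2^E$ denote the standard entourages on $F_n(X)$ inherited as a subspace of $2^X$.

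Next, given any $D$-pseudo-orbit $(x_i)_{i \in \omega}$ in $X$, I would observe that the sequence of singletons $(\{x_i\})_{i \in \omega}$ is a $2^D$-pseudo-orbit in $F_n(X)$: since $f_n(\{x_i\}) = \{f(x_i)\}$, the condition $(f(x_i), x_{i+1}) \in D$ immediately gives $(\{f(x_i)\}, \{x_{i+1}\}) \in 2^D$. Applying eventual shadowing in $F_n(X)$ then yields a set $A \in F_n(X)$ and an $N \in \mathbb{N}$ such that $((f_n)^i(A), \{x_i\}) \in 2^E$ for all $i \geq N$.

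Finally, since $(f_n)^i(A) = f^i(A)$, the membership in $2^E$ gives in particular $f^i(A) \subseteq B_E(x_i)$, so that every point $a \in A$ satisfies $(f^i(a), x_i) \in E$ for all $i \geq N$. Because $A$ is nonempty, I may choose any such $a$ and set $z \coloneqq a$; this $z$ together with $N$ witnesses eventual shadowing for $(X,f)$.

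I do not expect a genuine obstacle: the argument is structurally identical to the hyperspace case, and the only points to verify—that singletons lie in $F_n(X)$ for every $n \geq 2$ and that $f_n$ agrees with $f$ on them—are immediate. The one mild subtlety worth flagging, exactly as in the hyperspace proof, is that a single shadowing point is extracted from the (at most $n$-point) shadowing set $A$ using only the containment $f^i(A) \subseteq B_E(x_i)$; the reverse containment $x_i \in B_E(f^i(A))$ supplied by $2^E$ plays no role in producing the shadowing point.
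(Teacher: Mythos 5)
Your proof is correct and takes essentially the same route as the paper's own: both pass to the singleton pseudo-orbit $(\{x_i\})_{i\in\omega}$ in $F_n(X)$, apply eventual shadowing there with the subspace entourages $2^D\cap\left(F_n(X)\times F_n(X)\right)$ and $2^E\cap\left(F_n(X)\times F_n(X)\right)$, and extract any point $a$ of the nonempty shadowing set $A$ using only the containment $f^i(A)\subseteq B_E(x_i)$. Even the subtlety you flag (that the reverse containment supplied by $2^E$ is not needed) matches the paper's argument, which concludes in exactly this way.
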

\begin{proof}
Let $E \in \mathscr{U}$. Let $D \in \mathscr{U}$ be such that $2^D\cap \left(F_n(X) \times F_n(X)\right)$ corresponds to $2^E\cap \left(F_n(X) \times F_n(X)\right)$ for eventual shadowing for $f_n$. Let $(x_i)_{i \in \omega}$ be a $D$-pseudo-orbit in $X$. Then $\left(\{x_i\}\right)_{i \in \omega}$ is a $2^D$-pseudo-orbit in $F_n(X)$. By eventual shadowing there exists $A \in F_n(X)$ and $N \in \mathbb{N}$ such that $\left(f_n ^i(A),\{x_i\}\right) \in 2^E$ for all $i \geq N$. It follows that, for any $a \in A$, $\left(f^i(a),x_i\right) \in E$ for all $i \geq N$. Since $A \neq \emptyset$ the result holds.
\end{proof}

\begin{theorem}
Let $X$ be a compact Hausdorff space and let $f \colon X \to X$ be a continuous function. If $(X,f)$ has eventual shadowing then $(X,f_2)$ has eventual shadowing.
\end{theorem}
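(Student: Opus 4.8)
The plan is to mimic the $F_2$-argument already used for h-shadowing, feeding in the eventual shadowing hypothesis on $f$ in place of finite h-shadowing. Given a target entourage $E \in 2^{\mathscr{U}}$, I would first reduce it: using Remark \ref{RemarkAssumeEntouragesAreSymmetric} choose a symmetric $E_0 \in \mathscr{U}$ with $2^{E_0} \subseteq E$, and let $D \in \mathscr{U}$ be the entourage that eventual shadowing of $f$ associates to $E_0$. I then claim $2^D$ witnesses eventual shadowing for $f_2$ against $E$. So let $(A_i)_{i \in \omega}$ be a $2^D$-pseudo-orbit in $F_2(X)$ and write $A_i = \{x_i, y_i\}$, allowing $x_i = y_i$.

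The crucial step is to split this single $2^D$-pseudo-orbit of pairs into two genuine $D$-pseudo-orbits in $X$. The relation $(f_2(A_i), A_{i+1}) \in 2^D$ unpacks into: both $f(x_i)$ and $f(y_i)$ lie in $B_D(A_{i+1})$, and both points of $A_{i+1}$ lie in $B_D(\{f(x_i),f(y_i)\})$. I would relabel the points of each $A_{i+1}$ inductively in $i$: having fixed labels for $A_i$, view $D$-closeness as a bipartite graph between $\{f(x_i),f(y_i)\}$ and $\{x_{i+1},y_{i+1}\}$. The two unpacked conditions say every vertex on each side has degree at least one, so for a $2\times 2$ bipartite graph Hall's condition holds and a perfect matching exists; the degenerate cases $f(x_i)=f(y_i)$ or $A_{i+1}$ a singleton are handled directly. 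Choosing labels according to the matching yields $(f(x_i),x_{i+1}) \in D$ and $(f(y_i),y_{i+1}) \in D$, so after relabelling both $(x_i)_{i\in\omega}$ and $(y_i)_{i\in\omega}$ are $D$-pseudo-orbits. This is exactly the ``relabelling the $x$'s and $y$'s where necessary'' step from the h-shadowing proof, except that here it runs forward through an infinite sequence; this is the main point requiring care, and I would stress that because the matching is performed one step at a time there is no global consistency obstruction.

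With the two $D$-pseudo-orbits in hand the rest is routine. Eventual shadowing of $f$ gives $z_1, N_1$ with $(f^i(z_1),x_i) \in E_0$ for all $i \geq N_1$, and $z_2, N_2$ with $(f^i(z_2),y_i) \in E_0$ for all $i \geq N_2$. Set $\mathcal{A}=\{z_1,z_2\} \in F_2(X)$ and $N=\max\{N_1,N_2\}$. For $i \geq N$ we have $f_2^i(\mathcal{A})=\{f^i(z_1),f^i(z_2)\}$, and since $E_0$ is symmetric both inclusions $f_2^i(\mathcal{A}) \subseteq B_{E_0}(A_i)$ and $A_i \subseteq B_{E_0}(f_2^i(\mathcal{A}))$ hold, whence $(f_2^i(\mathcal{A}),A_i) \in 2^{E_0} \subseteq E$. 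Thus $\mathcal{A}$ eventually $E$-shadows $(A_i)$, and $(F_2(X),f_2)$ has eventual shadowing. The only genuinely delicate ingredient is the forward matching that produces the two component $D$-pseudo-orbits; everything else is bookkeeping with the definition of $2^{E_0}$.
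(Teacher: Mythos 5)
Your proof is correct and takes essentially the same route as the paper's: reduce $E$ to a symmetric $E_0$ with $2^{E_0} \subseteq E$, split the $2^D$-pseudo-orbit in $F_2(X)$ into two $D$-pseudo-orbits in $X$ by relabelling, eventually shadow each by points $z_1, z_2$, and combine them into $\{z_1,z_2\}$ with $N = \max\{N_1,N_2\}$. The only difference is that you spell out, via the $2\times 2$ bipartite matching argument, the step the paper dismisses with ``relabelling the $x$'s and $y$'s where necessary'' --- a worthwhile verification, and your forward-inductive matching is sound, since degree at least one on both sides forces Hall's condition in the $2\times 2$ case and each relabelling depends only on the previous stage.
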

\begin{proof}
Let $E \in 2^\mathscr{U}$ be given. Let $E_0 \in \mathscr{U}$ be such that $2^{E_0} \subseteq E$. Let $D \in \mathscr{U}$ correspond to $E_0$ in eventual shadowing for $f$. We claim $2^D \cap \left(F_2(X) \times F_2(X)\right)$ satisfies the eventual shadowing condition for $f_2$ and $E \cap \left(F_2(X) \times F_2(X)\right)$. Suppose that $(A_i)_{i \in \omega}$ is a $2^D$-pseudo-orbit in $F_2(X)$. Write $A_i=\{x_i, y_i\}$; it is possible that, for some $i$, $x_i=y_i$. Relabelling the $x$'s and $y$'s where necessary, $(x_i)_{i \in \omega}$ and $(y_i)_{i\in \omega}$ are $D$-pseudo-orbits in $X$. By eventual shadowing for $f$ there exist $x,y \in X$ and $N_1, N_2 \in \mathbb{N}$ such that for all $i \geq N_1$, $(f^i(x),x_i) \in E_0$ and for all $i \geq N_2$ and $(f^i(y),y_i) \in E_0$. Take $N = \max \{N_1, N_2\}$. Then, for all $i \geq N$, $A_i \subseteq B_{E_0}\left(f_2^i(A)\right)$ and $f_2^i(A) \subseteq B_{E_0}\left(A_i\right)$. It follows that $(f_2 ^i(A),A_i) \in 2^{E_0}\cap \left(F_2(X) \times F_2(X)\right)$. Since $2^{E_0}\subseteq E$ we get that $(f_2 ^i(A),A_i) \in E\cap \left(F_2(X) \times F_2(X)\right)$ for each $i \geq N$.
\end{proof}

\begin{remark}
Example \ref{ExNotPreserveEventualInHyperspaces} shows that, in general, symmetric products do not preserve eventual shadowing for $n\geq3$.
\end{remark}

\subsection{Factor maps}

\begin{definition}
A factor map $\varphi\colon X\to Y$ is \emph{eALP} iff for every $E \in \mathscr{U}_Y$ and $D \in \mathscr{U}_X$ there is $V \in \mathscr{U}_Y$ such that for every $V$-pseudo-orbit $(y_i)_{i \in \omega}$ in $Y$ is a $D$-pseudo-orbit $(x_i)_{i \in \omega}$ in $X$ such that $(\varphi(x_i))_{i \in \omega}$ eventually-$E$-shadows $(y_i)_{i \in \omega}$.
\end{definition}

If $X$ and $Y$ are compact metric spaces, then $\varphi$ is eALP if and only if for all $\epsilon>0$ and $\eta>0$, there exists $\delta>0$ such that if $(y_i)_{i \in \omega}$ is a $\delta$-pseudo-orbit in $Y$, there exists an $\eta$-pseudo-orbit $(x_i)_{i \in \omega}$ in $X$ which eventually $\epsilon$-shadows $(y_i)_{i \in \omega}$.

\begin{theorem}\label{ThmALPeventualShad}
Suppose that $\varphi\colon (X,f)\to (Y,g)$ is a factor map.
\begin{enumerate}
	\item If $(X,f)$ has eventual shadowing and $\varphi$ eALP, then $(Y,g)$ has eventual shadowing.
	\item If $(Y,g)$ has eventual shadowing, then $\varphi$ eALP.
\end{enumerate}
\end{theorem}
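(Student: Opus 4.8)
The plan is to follow the template of Theorem~\ref{ShadowALP}, treating the two implications separately; part~(2) is the soft direction and part~(1) is a three-step ``lift, shadow in $X$, push forward'' argument.

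For part~(2), I would assume $(Y,g)$ has eventual shadowing and verify the eALP condition directly. Fix $E \in \mathscr{U}_Y$ and $D \in \mathscr{U}_X$. Applying eventual shadowing for $g$ to $E$ produces $V \in \mathscr{U}_Y$ such that every $V$-pseudo-orbit in $Y$ is eventually $E$-shadowed by a genuine $g$-orbit. Given a $V$-pseudo-orbit $(y_i)$, choose $w \in Y$ and $N$ with $(g^i(w),y_i)\in E$ for $i \geq N$; then use surjectivity of $\varphi$ to pick $x \in \varphi^{-1}(w)$ and set $(x_i) = (f^i(x))$. This is a genuine $f$-orbit, hence a $D$-pseudo-orbit for \emph{any} $D$, so the $D$-pseudo-orbit requirement is automatic. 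Since $\varphi\circ f^i = g^i\circ\varphi$, we get $\varphi(x_i)=g^i(w)$, so $(\varphi(x_i))$ eventually $E$-shadows $(y_i)$. No real work is needed here: the point is that the required pseudo-orbit in $X$ may be taken to be an actual orbit.

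For part~(1), assume $(X,f)$ has eventual shadowing and $\varphi$ is eALP, and fix a target $E \in \mathscr{U}_Y$. First I would choose $E_1 \in \mathscr{U}_Y$ (symmetric, via Remark~\ref{RemarkAssumeEntouragesAreSymmetric}) with $E_1 \circ E_1 \subseteq E$. Since $X$ is compact, $\varphi$ is uniformly continuous, so I can pick $D_0 \in \mathscr{U}_X$ with $(a,b) \in D_0 \Rightarrow (\varphi(a),\varphi(b)) \in E_1$. Eventual shadowing for $f$ then supplies $D \in \mathscr{U}_X$ so that every $D$-pseudo-orbit in $X$ is eventually $D_0$-shadowed, and finally eALP applied to the pair $(E_1, D)$ supplies $V \in \mathscr{U}_Y$. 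I claim this $V$ witnesses eventual shadowing for $(Y,g)$ against $E$. To check it, take a $V$-pseudo-orbit $(y_i)$ in $Y$; eALP gives a $D$-pseudo-orbit $(x_i)$ in $X$ with $(\varphi(x_i),y_i) \in E_1$ for $i \geq N_1$, and eventual shadowing for $f$ gives $z \in X$ with $(f^i(z),x_i) \in D_0$ for $i \geq N_2$. Applying $\varphi$ and uniform continuity, $(g^i(\varphi(z)),\varphi(x_i)) = (\varphi(f^i(z)),\varphi(x_i)) \in E_1$ for $i \geq N_2$, and composing through $\varphi(x_i)$ gives $(g^i(\varphi(z)),y_i) \in E_1 \circ E_1 \subseteq E$ for all $i \geq \max\{N_1,N_2\}$, so $\varphi(z)$ eventually $E$-shadows $(y_i)$.

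The main obstacle, though it amounts to bookkeeping rather than a genuine difficulty, is synchronising the two ``eventual'' thresholds: eALP only controls $(\varphi(x_i),y_i)$ beyond some $N_1$, while the shadowing in $X$ only controls $(f^i(z),x_i)$ beyond some $N_2$, and both must hold simultaneously, forcing the passage to $\max\{N_1,N_2\}$. The only non-formal ingredient is the uniform continuity of $\varphi$, which is exactly where compactness of $X$ enters.
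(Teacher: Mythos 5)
Your proposal is correct and takes essentially the same route as the paper's proof: part (2) lifts the eventually shadowing true orbit through $\varphi^{-1}$ and observes that a genuine $f$-orbit is trivially a $D$-pseudo-orbit, and part (1) makes the identical choices in the same order (an entourage $E_1$ with $E_1\circ E_1\subseteq E$, which is the paper's $2E_0\subseteq E$; a uniform-continuity entourage $D_0$; eventual shadowing in $X$ at $D_0$; then eALP), finishing with the composition of entourages past $\max\{N_1,N_2\}$, exactly as the paper takes $l=\max\{M,N\}$. There is no gap.
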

\begin{proof}
 For (1), let $E \in \mathscr{U}_Y$ be given. Select $E_0 \in \mathscr{U}_Y$ with $2E_0 \subseteq E$. By the uniform continuity of $\varphi$ there exists $D_1 \in \mathscr{U}_X$ such that for all $a,b\in X$ with $(a,b)\in D_1$ one has $(\varphi(a),\varphi(b))\in E_0$. Next, let $D_2 \in \mathscr{U}_X$ be chosen so that $D_2$-pseudo-orbits in $X$ are $D_1$-eventually-
shadowed. Extract $W \in \mathscr{U}_Y$ from the definition of eALP using $E_0$ and $D_2$, we claim that $W$-pseudo-orbits of $(Y,g)$ are then eventually $E$-shadowed in $(Y,g)$. Indeed, given a $W$-pseudo-orbit $(y_i)_{i\in\omega}\subseteq Y$, there exists a $D_2$-pseudo-orbit $(x_i)_{i\in\omega}\subseteq X$ and $N \in \mathbb{N}$ such that, for all $i \geq N$, $(y_i,\varphi(x_i)) \in E_0$.
Consider $z\in X$ that $D_1$-eventually shadows $(x_i)_{i\in\omega}$. Let $M \in \mathbb{N}$ be such that  $(f^i(z),x_i) \in D_1$ for all $i\geq M$. Take $l=\max \{M,N\}$. Then, using uniform continuity and the triangle inequality, $(g^i(\varphi(z)), y_i) \in E$ for all $i \geq l$. Hence $\varphi(z)$ eventually $E$-shadows $(y_i)_{i\in\omega}$.

To see (2), fix $E \in \mathscr{U}_Y$ and $D \in \mathscr{U}_X$ and take $V\in \mathscr{U}$ to correspond to $E$ for eventual shadowing in $(Y,g)$. Let $(y_i)_{i\in\omega}$ be a $V$-pseudo-orbit in $(Y,g)$ and let $z\in Y$ eventually $E$-shadow it; let $N \in \mathbb{N}$ be such that $g^N(z)$ $E$-shadows $(y_{N+i})_{i \in \omega}$. Consider $x\in\varphi^{-1}(z)$ and define $x_i=f^i(x)$ for each $i\in\omega$ so that $(x_i)_{i\in\omega}$ is a $D$-pseudo-orbit in $(X,f)$. In particular, one then has that for all $i \geq N$\[	(\varphi(x_i),y_i)=(g^i(z),y_i)\in E.\]

\end{proof}

\subsection{Inverse limits}

\begin{theorem}
Let $(X,f)$ be conjugate to a Mittag-Leffler inverse limit system comprised of maps with eventual shadowing on compact Hausdorff spaces. Then $(X,f)$ has eventual shadowing.
\end{theorem}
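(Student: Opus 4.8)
The plan is to mirror the structure of the shadowing theorem for Mittag-Leffler inverse limits (already cited in the excerpt) but track the extra bookkeeping that eventual shadowing introduces, namely the tail index $N$ past which shadowing holds. Since $(X,f)$ is conjugate to the inverse limit system, it suffices to prove eventual shadowing for $\varprojlim\{X_\lambda, g_\lambda^\eta\}$ with the induced map $f$. I would work with the standard base for the product uniformity: given a basic entourage $E$ on the inverse limit, it is refined by one depending on only finitely many coordinates $\lambda_1, \dots, \lambda_k$, each contributing an entourage $E_{\lambda_j} \in \mathscr{U}_{\lambda_j}$. The key simplification is that the directed-set structure lets me replace these finitely many coordinates by a single coordinate: choosing an upper bound and then applying the Mittag-Leffler condition, I can find one index $\mu$ that witnesses the Mittag-Leffler condition simultaneously for all the $\lambda_j$, so that controlling the $\mu$-coordinate controls all the relevant projections via the bonding maps $g_{\lambda_j}^\mu$.

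First I would fix such a $\mu$ and use uniform continuity of the (finitely many) bonding maps $g_{\lambda_j}^\mu$ to pull $E_{\lambda_j}$ back to an entourage $E_\mu \in \mathscr{U}_\mu$, so that $(a,b) \in E_\mu$ forces $(g_{\lambda_j}^\mu(a), g_{\lambda_j}^\mu(b)) \in E_{\lambda_j}$ for every $j$. Then I would apply eventual shadowing in the single system $(X_\mu, f_\mu)$ to extract a $D_\mu \in \mathscr{U}_\mu$ so that every $D_\mu$-pseudo-orbit is eventually $E_\mu$-shadowed. The entourage $D$ on the inverse limit witnessing eventual shadowing would then be built to force its $\mu$-th projection to be a $D_\mu$-pseudo-orbit. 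Given an arbitrary $D$-pseudo-orbit $(\mathbf{x}_i)$ on the inverse limit, its $\mu$-coordinate $(\pi_\mu(\mathbf{x}_i))$ is a $D_\mu$-pseudo-orbit in $X_\mu$, so there is $z_\mu \in X_\mu$ and $N \in \mathbb{N}$ with $(f_\mu^i(z_\mu), \pi_\mu(\mathbf{x}_i)) \in E_\mu$ for all $i \geq N$.

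The final step is to promote $z_\mu$ to an actual point $\mathbf{z}$ of the inverse limit whose orbit eventually shadows $(\mathbf{x}_i)$. Here I would invoke the fact recorded after the Mittag-Leffler definition: because $\mu$ witnesses the Mittag-Leffler condition and $z_\mu$ lies in (or can be taken in) the stable image, the fibre $\pi_\mu^{-1}(z_\mu) \cap \varprojlim\{X_\lambda, g_\lambda^\eta\}$ is nonempty, so such a $\mathbf{z}$ exists. For $i \geq N$ and each $j$, applying $g_{\lambda_j}^\mu$ to the pair $(f_\mu^i(z_\mu), \pi_\mu(\mathbf{x}_i)) \in E_\mu$ and using the semiconjugacy identity $g_{\lambda_j}^\mu \circ f_\mu = f_{\lambda_j} \circ g_{\lambda_j}^\mu$ together with $\pi_{\lambda_j} = g_{\lambda_j}^\mu \circ \pi_\mu$ yields $(f_{\lambda_j}^i(\pi_{\lambda_j}(\mathbf{z})), \pi_{\lambda_j}(\mathbf{x}_i)) \in E_{\lambda_j}$, which is exactly the condition placing $(f^i(\mathbf{z}), \mathbf{x}_i)$ in the basic entourage refining $E$. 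Thus $\mathbf{z}$ eventually $E$-shadows $(\mathbf{x}_i)$ with the same tail index $N$.

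The main obstacle I expect is the reduction to a single coordinate $\mu$: the finitely many relevant indices $\lambda_1, \dots, \lambda_k$ need a common upper bound in the directed set, and that upper bound must simultaneously witness the Mittag-Leffler condition for all of them so that the lift $\mathbf{z}$ genuinely exists. This is routine given that $\Lambda$ is directed and Mittag-Leffler is a per-index existence statement (choose witnesses for each $\lambda_j$ and then an upper bound of those witnesses), but it is the one place where the argument depends essentially on the Mittag-Leffler hypothesis rather than just on per-factor eventual shadowing, and it is where care is needed to keep the single tail index $N$ uniform across all coordinates.
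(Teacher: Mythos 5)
Your skeleton is essentially the paper's: reduce the basic entourage to a single coordinate using directedness, apply eventual shadowing in one factor system, and lift via the Mittag--Leffler fibre fact, with the semiconjugacy identity transferring the tail estimate. The entourage bookkeeping (pulling the $E_{\lambda_j}$ back through the finitely many bonding maps, keeping one tail index $N$) is fine, and your observation that an upper bound of per-index witnesses is itself a simultaneous witness is correct. The genuine gap is the final lifting step. The fact recorded after the Mittag--Leffler definition guarantees $\pi_\lambda^{-1}(x)\cap\varprojlim\{X_\lambda,g_\lambda^\eta\}\neq\emptyset$ only for $x$ in the \emph{stable image} $g_\lambda^\gamma(X_\gamma)\subseteq X_\lambda$, where $\gamma$ witnesses the condition with respect to $\lambda$. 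Your point $z_\mu$ is whatever eventual shadowing in $(X_\mu,f_\mu)$ hands you, and nothing places it in the stable image at level $\mu$; the parenthetical ``or can be taken in'' is precisely the unproven step, since eventual shadowing gives no control over where the shadowing point lies, and restricting to the (forward-invariant) stable image does not help because eventual shadowing of $(X_\mu,f_\mu)$ need not pass to a subsystem. The fibre really can be empty: if all bonding maps equal a fixed $g$ with $g(X_0)=Y\subsetneq X_0$ and $g(Y)=Y$ (with $f_\lambda=\operatorname{id}$, say), the system is Mittag--Leffler, yet $\pi_\mu^{-1}(z)\cap X=\emptyset$ for every $z\in X_\mu\setminus Y$.

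The paper's proof avoids this by shadowing one level \emph{above} the coordinate it needs to control: having refined $\mathcal{U}$ by a cover $\mathcal{W}_\lambda$ pulled back from a single coordinate $\lambda$, it chooses $\gamma$ witnessing the Mittag--Leffler condition with respect to $\lambda$, performs the eventual shadowing in $X_\gamma$ against the pulled-back cover $\{(g_\lambda^\gamma)^{-1}(W) : W\in\mathcal{W}_\lambda\}$, and then pushes the shadowing point \emph{down}: $g_\lambda^\gamma(z)$ lies in $g_\lambda^\gamma(X_\gamma)$ by construction, so the fibre fact applies and a lift $y\in\pi_\lambda^{-1}(g_\lambda^\gamma(z))\cap X$ exists; the identity $f_\lambda\circ g_\lambda^\gamma=g_\lambda^\gamma\circ f_\gamma$ then transfers the tail estimate exactly as in your last computation. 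Your write-up is repaired by inserting this descent: treat your $\mu$ in the role of the paper's $\lambda$ (your simultaneous-witness reduction is harmless but unnecessary --- an upper bound of $\lambda_1,\dots,\lambda_k$ suffices), choose a fresh $\gamma\geq\mu$ witnessing the condition with respect to $\mu$, apply eventual shadowing in $X_\gamma$ with target entourage $(g_\mu^\gamma\times g_\mu^\gamma)^{-1}(E_\mu)$, and lift $g_\mu^\gamma(z)$ rather than $z$ itself. With that change the remainder of your argument, including the single uniform tail index $N$, goes through verbatim.
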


\begin{proof}
Let $(\Lambda, \geq)$ be a directed set. For each $\lambda \in \Lambda$, let $(X_\lambda, f_\lambda)$ be a dynamical system on a compact Hausdorff space with eventual shadowing and let $((X_\lambda, f_\lambda), g^\eta _\lambda)$ be a Mittag-Leffler inverse system. Without loss of generality $(X,f)=(\varprojlim \{ X_\lambda, g_\lambda ^\eta\}, f )$.

Let $\mathcal{U}$ be a finite open cover of $X$. Since $X =\varprojlim \{ X_\lambda, g_\lambda ^\eta\}$ there exist $\lambda \in \Lambda$ and a finite open cover $\mathcal{W}_\lambda$ of $X_\lambda$ such that $\mathcal{W}\coloneqq \{\pi_\lambda ^{-1}(W) \cap X \mid W \in \mathcal{W}_\lambda \}$ refines $\mathcal{U}$. Now let $\gamma \in \Lambda$ witness the Mittag-Leffler condition with respect to $\lambda$. Let $\mathcal{W}_\gamma \coloneqq \{ g ^\gamma _ \lambda {}^{(-1)} (W) \colon W \in \mathcal{W}_\lambda\}$. By eventual shadowing for $(X_\gamma, f_\gamma)$ there exists a finite open cover $\mathcal{V}_\gamma$ of $X_\gamma$ such that every $\mathcal{V}_\gamma$-pseudo-orbit in $X_\gamma$ is eventually $\mathcal{W}_\gamma$-shadowed. Take $\mathcal{V}=\{ \pi_\gamma ^{-1}(V) \cap X \mid V \in \mathcal{V}_\gamma \}$ and suppose $(x_i)_{i \in \omega}$ is a $\mathcal{V}$-pseudo-orbit in $X$. It follows that $(\pi_\gamma(x_i))_{i \in \omega}$ is a $\mathcal{V}_\gamma$-pseudo-orbit in $X_\gamma$, which means there is a point $z \in X_\gamma$ which eventually $\mathcal{W}_\gamma$-shadows it. By construction, it follows that $g^\gamma _\lambda (z)$ eventually $\mathcal{W}_\lambda$-shadows $(\pi_\lambda(x_i))_{i \in \omega}$. Since the system is Mittag-Leffler there exists $y \in \pi_\lambda ^{-1}(g^\gamma _\lambda (z)) \cap X$. It follows that $y$ eventually $\mathcal{W}$-shadows $(x_i)_{i \in \omega}$. Since $\mathcal{W}$ is a refinement of $\mathcal{U}$ the result follows.
\end{proof}

\subsection{Tychonoff product}

\begin{theorem}
Let $\Lambda$ be an arbitrary index set and let $(X_\lambda, f_\lambda)$ be a compact Hausdorff system with eventual shadowing for each $\lambda \in \Lambda$. Then the product system $(X,f)$, where $X=\prod_{\lambda \in \Lambda} X_\lambda$, has eventual shadowing.
\end{theorem}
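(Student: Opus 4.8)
The plan is to follow the template of the product arguments for shadowing (Theorem \ref{ThmProductPreserveShadowing}) and h-shadowing (Theorem \ref{thmProductHShad}), exploiting the fact that a basic entourage in the product uniformity is nontrivial in only finitely many coordinates. First I would fix $E \in \mathscr{U}$ and refine it by a basic entourage $\prod_{\lambda \in \Lambda} E_\lambda$, where $E_\lambda = X_\lambda \times X_\lambda$ for all but finitely many indices, say $\lambda_1, \ldots, \lambda_k$. For each such $\lambda_j$, eventual shadowing in $(X_{\lambda_j}, f_{\lambda_j})$ furnishes an entourage $D_{\lambda_j} \in \mathscr{U}_{\lambda_j}$ witnessing eventual $E_{\lambda_j}$-shadowing. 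I would then assemble $D = \prod_{\lambda} D_\lambda$, setting $D_\lambda = X_\lambda \times X_\lambda$ off the finite set $\{\lambda_1, \ldots, \lambda_k\}$ and $D_\lambda = D_{\lambda_j}$ when $\lambda = \lambda_j$.

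Next, given a $D$-pseudo-orbit $(x_i)_{i \in \omega}$ in $X$, I would project into each relevant coordinate: $(\pi_{\lambda_j}(x_i))_{i \in \omega}$ is a $D_{\lambda_j}$-pseudo-orbit in $X_{\lambda_j}$, since the projection of a product pseudo-orbit is a pseudo-orbit in each factor. Eventual shadowing then yields, for each $j$, a point $z_j \in X_{\lambda_j}$ and an index $N_j \in \mathbb{N}$ with $(f_{\lambda_j}^i(z_j), \pi_{\lambda_j}(x_i)) \in E_{\lambda_j}$ for all $i \geq N_j$. I would choose any $z \in X$ with $\pi_{\lambda_j}(z) = z_j$ for each $j$ and set $N = \max\{N_1, \ldots, N_k\}$. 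For $i \geq N$ every nontrivial coordinate is $E_{\lambda_j}$-shadowed and every trivial coordinate lies in $X_\lambda \times X_\lambda = E_\lambda$ automatically, so $(f^i(z), x_i) \in \prod_{\lambda} E_\lambda \subseteq E$, as required.

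The only subtle point -- and the reason the argument goes through at all -- is that the tail index $N$ must be chosen uniformly, yet eventual shadowing supplies a possibly different $N_j$ in each factor. Taking the maximum is legitimate precisely because a basic product entourage constrains only finitely many coordinates, so only finitely many $N_j$ are in play. Had infinitely many coordinates mattered, no common tail would be available and the proof would break; this finiteness is exactly what makes the product operation compatible with eventual shadowing, in contrast to the hyperspace case (Example \ref{ExNotPreserveEventualInHyperspaces}). Unlike the h-shadowing product (Theorem \ref{thmProductHShad}), no surjectivity hypothesis is needed here, since eventual shadowing imposes no exact terminal condition that must be attained in every coordinate.
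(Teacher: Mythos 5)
Your proposal is correct and takes essentially the same approach as the paper's own proof: refine $E$ by a basic product entourage $\prod_{\lambda} E_\lambda$ that is nontrivial in only finitely many coordinates $\lambda_1,\ldots,\lambda_k$, pull back eventual shadowing coordinatewise to build $D$, project the $D$-pseudo-orbit to obtain points $z_j$ with tail indices $N_j$, and take $N = \max_j N_j$ over the finitely many relevant coordinates. Your added remarks on why the finiteness of the basic entourage makes the common tail available, and on the absence of any surjectivity hypothesis, are accurate commentary rather than a deviation from the paper's argument.
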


\begin{proof}
Let $E \in \mathscr{U}$ be given; this entourage is refined by one of the form
\[ \prod _{\lambda \in \Lambda} E_\lambda,\]
where $E_\lambda \in \mathscr{U}_\lambda$ for all $\lambda \in \Lambda$ and $E_\lambda =X_\lambda \times X_\lambda$ for all but finitely many of the $\lambda$'s. Let $\lambda_i$, for $1\leq i \leq k$, be precisely those elements in $\Lambda$ for which $E_\lambda  \neq X_\lambda \times X_\lambda$ (if there are no such elements then we are done). By eventual shadowing in each component space, there exist entourages $D_{\lambda_i} \in \mathscr{U}_{\lambda_i}$ such that every $D_{\lambda_i}$-pseudo-orbit is eventually $E_{\lambda_i}$-shadowed.
Let 
\[D \coloneqq \prod_{\lambda \in \Lambda} D_\lambda \]
where
\[D_\lambda = \left\{\begin{array}{lll}
X\times X  & \text{ if } & \forall i \, \lambda \neq \lambda_i
\\D_{\lambda_i} & \text{ if } & \exists i : \lambda=\lambda_i\,.
\end{array}\right.\]
Now let $(x_j)_{j \in \omega}$ be a $D$-pseudo-orbit. Then $(\pi_{\lambda_i}(x_j))_{j \in \omega}$ is a $D_{\lambda_i}$-pseudo-orbit in $X_{\lambda_i}$, which is eventually $E_{\lambda_i}$-shadowed by a point $z_i \in X_{\lambda_i}$; there exist $N_i$ such that $(\pi_{\lambda_i}(x_j))_{j \geq N_i}$ is $E_{\lambda_i}$-shadowed by $z_i$. Pick a point $z \in X$ such that $\pi_{\lambda_i}(z)= z_i$ for each $1\leq i \leq k$. Take $N= \max _{1\leq i \leq k} N_i$. Then $z$ $E$-shadows $(x_j)_{j \geq N}$. Thus, by definition, $z$ eventually $E$-shadows $(x_j)_{j \in \omega}$.
\end{proof}

\section{Preservation of Orbital Shadowing}

The orbital shadowing property was introduced in \cite{PiluginRodSakai2002} where the authors studied its relationship to classical stability properties, such as structural stability and 
$\Omega$-stability. It has since been studied by various other authors (e.g \cite{Pilyugin2007,GoodMeddaugh2016}).

Recall, a system $(X,f)$ has the \textit{orbital shadowing} property if for all $E \in \mathscr{U}$, there exists $D \in \mathscr{U}$ such that for any $D$-pseudo-orbit $( x_i)_{i \in \omega}$, there exists a point $z$ such that 

\[\left(\overline{\{x_i\}_{i\in\omega}}, \overline{\{f^i(z)\}_{i\in\omega}}\right) \in 2^E.\]

\subsection{Induced map on the hyperspace of compact sets}

\begin{theorem}\label{OrbHyp}
Let $X$ be a compact Hausdorff space, and let $f \colon X \to X$ be a continuous function. If the hyperspace system $(2^X, 2^f)$ witnesses orbital shadowing then the system $(X,f)$ experiences orbital shadowing.
\end{theorem}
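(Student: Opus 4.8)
The plan is to mimic the proof of Theorem \ref{thmEventualShadHyperspace}: feed a singleton pseudo-orbit into the hyperspace, apply orbital shadowing there, and then extract a single shadowing point. The novelty is that orbital shadowing is a two-sided (Hausdorff) condition on orbit closures, so a single point of the shadowing set will not obviously cover the whole pseudo-orbit; overcoming this is the main obstacle, and it is resolved by a ``small diameter'' estimate that falls out of the hyperspace condition.

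First I would fix $E \in \mathscr{U}$ and (using Remark \ref{RemarkAssumeEntouragesAreSymmetric} together with the uniformity axioms) choose a symmetric $E_0 \in \mathscr{U}$ with $4E_0 \subseteq E$. Apply orbital shadowing of $(2^X, 2^f)$ with target entourage $2^{E_0}$ on $2^X$; since the sets $2^D$ form a base for $2^\mathscr{U}$, the corresponding entourage may be taken of the form $2^D$ for some $D \in \mathscr{U}$. I claim this $D$ witnesses orbital shadowing for $(X,f)$ and $E$. Given a $D$-pseudo-orbit $(x_i)_{i\in\omega}$ in $X$, the sequence of singletons $(\{x_i\})_{i\in\omega}$ is a $2^D$-pseudo-orbit in $2^X$ (indeed $2^f(\{x_i\})=\{f(x_i)\}$, and $2^{E}$-closeness of singletons reduces to closeness of their points). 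Orbital shadowing then yields $\mathcal{A} \in 2^X$ with
\[\left(\overline{\{\{x_i\}\}_{i\in\omega}},\ \overline{\{f^i(\mathcal{A})\}_{i\in\omega}}\right) \in 2^{2^{E_0}}.\]

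Writing $Y=\overline{\{x_i\}_{i\in\omega}}$, I would next unpack this single membership into two usable facts. The containment of $\overline{\{f^i(\mathcal{A})\}}$ in the $2^{E_0}$-ball of $\overline{\{\{x_i\}\}}$ tells us that each image $f^j(\mathcal{A})$ lies in $B_{E_0}(w_j)$ for some $w_j \in Y$; in particular every $f^j(\mathcal{A})$ has $E_0\circ E_0$-small ``diameter.'' The reverse containment tells us that every $w \in Y$ is $E_0$-close to a point $p$ of some $K \in \overline{\{f^i(\mathcal{A})\}}$. Now fix any $z \in \mathcal{A}$ (nonempty, as an element of $2^X$). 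The inclusion $f^i(\mathcal{A}) \subseteq B_{E_0}(w_i)$ immediately gives $f^i(z) \in B_{E_0}(Y)$ for all $i$, so (absorbing the closure into the gap between $E_0$ and $E$) $\overline{\Orb(z)} \subseteq B_E(Y)$; this settles the ``orbit-of-$z$ inside $Y$'' half of the required $2^E$-condition, for any choice of $z$.

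The main obstacle is the other half, namely covering all of $Y$ by $B_E(\overline{\Orb(z)})$ using a single $z$, and this is exactly where the diameter estimate is decisive. Given $w \in Y$, write $K = \lim_k f^{j_k}(\mathcal{A})$ and choose $p_k \in f^{j_k}(\mathcal{A})$ with $p_k \to p$, where $(w,p) \in E_0$. Since both $p_k$ and $f^{j_k}(z)$ lie in $f^{j_k}(\mathcal{A}) \subseteq B_{E_0}(w_{j_k})$, they are $E_0 \circ E_0$-related; chaining $(w,p)\in E_0$, $(p,p_k)\in E_0$ for large $k$, and $(p_k, f^{j_k}(z)) \in E_0\circ E_0$ yields $(w, f^{j_k}(z)) \in 4E_0 \subseteq E$, so $w \in B_E(\Orb(z))$. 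Hence $Y \subseteq B_E(\overline{\Orb(z)})$, and together with the previous paragraph $\left(\overline{\{x_i\}}, \overline{\Orb(z)}\right) \in 2^E$, which is precisely orbital shadowing for $(X,f)$. I expect the only delicate points to be the passage to closures (handled uniformly by the slack between $E_0$ and $E$) and the verification that the nearest element of $\overline{\{f^i(\mathcal{A})\}}$ may be realised as a genuine limit of images $f^{j_k}(\mathcal{A})$, both of which are routine in a compact Hausdorff uniform space.
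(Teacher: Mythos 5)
Your proposal is correct and follows essentially the same route as the paper: feed the singleton pseudo-orbit into $(2^X,2^f)$, pick any $z$ in the shadowing set $\mathcal{A}$, and exploit the fact that Hausdorff $2^{E_0}$-proximity to singletons confines each image $f^j(\mathcal{A})$ (hence $f^j(z)$) to an $E_0$-ball about a single point, with the same $4E_0 \subseteq E$ budget. The paper merely organises the same chaining as a proof by contradiction in two cases (its Case i and Case ii are exactly the contrapositives of your two inclusions), whereas you argue directly via the explicit ``small diameter'' estimate; the closure/net points you flag as routine are indeed routine in the compact Hausdorff setting.
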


\begin{proof}
Let $E \in \mathscr{U}$ be given and let $E_0 \in \mathscr{U}$ be such that $4E_0 \subseteq E$. Let $D \in \mathscr{U}$ be such that $2^D$ satisfies the condition for $2^{E_0}$ in orbital shadowing for the hyperspace. Let $( x_i ) _{i \in \omega}$ be a $D$-pseudo orbit in $X$. Then $( \{x_i\} ) _{i \in \omega}$ is a $2^D$-pseudo orbit in $2^X$. Then there exists $A \in 2^X$ such that
\[\left( \overline{\{\{x_i\}\}_{i\in\omega}} , \overline{\{(2^f)^i(A)\}_{i\in\omega}}\right) \in 2^{2^{E_0}}.\]
Equivalently
\begin{equation}\label{EquationOrb1}
    \overline{\{\{x_i\}\}_{i\in\omega}} \subseteq B_{2^{E_0}}\left(\overline{\{(2^f)^i(A)\}_{i\in\omega}}\right)
\end{equation}
and
\begin{equation}\label{EquationOrb2} \overline{\{(2^f)^i(A)\}_{i\in\omega}}\subseteq B_{2^{E_0}}\left(\overline{\{\{x_i\}\}_{i\in\omega}}\right).
\end{equation}
Pick $z \in A$. It can be verified that
\[\left(\overline{\{x_i\}_{i\in\omega}}, \overline{\{f^i(z)\}_{i\in\omega}}\right) \in 2^{4E_0}.\]
Indeed, suppose not. 

\textbf{Case i).} There exists $a \in \overline{\{x_i\}_{i\in\omega}}$ such that for any $b \in \overline{\{f^i(z)\}_{i\in\omega}}$ we have $(a,b) \notin 4E_0$. It follows that there exists $k \in \omega$ such that $(x_k, f^i(z))\notin 2E_0$ for all $i \in \omega$. We have from Equation (\ref{EquationOrb1}) that there exists $l \in \omega$ such that $\left(f^l(A), \{x_k\}\right) \in 2^{E_0}$; in particular, for any $y \in f^l(A)$, $(y, x_k)\in E_0$, a contradiction.

\textbf{Case ii).} There exists $b \in \overline{\{f^i(z)\}_{i\in\omega}}$ such that for any $a \in \overline{\{x_i\}_{i\in\omega}}$ we have $(b,a) \notin 4E_0$. It follows that there exists $k \in \omega$ such that $(f^k(z),x_i)\notin 2E_0$ for all $i \in \omega$. We have from Equation (\ref{EquationOrb2}) that there exists $l \in \omega$ such that $\left(f^k(A), \{x_l\}\right) \in 2^{E_0}$; in particular, for any $y \in f^k(A)$, $(y, x_l)\in E_0$, a contradiction.

It follows that 

\[\left(\overline{\{x_i\}_{i\in\omega}}, \overline{\{f^i(z)\}_{i\in\omega}}\right) \in 2^{4E_0} \subseteq 2^E.\]

\end{proof}
 
The following example shows that the converse to Theorem \ref{OrbHyp} is false.

\begin{example}\label{OrbHypExample}
Let $X$ be the circle $\mathbb{R}/\mathbb{Z}$ and let $f\colon X \to X$ be given by $x \mapsto x + \alpha$, where $\alpha$ is some fixed irrational number. Since $(X,f)$ is minimal it has strong orbital shadowing, and thereby orbital shadowing and first weak shadowing, by \cite[Corollary 2.7]
{Mitchell}. 
Let $x_0$ and $y_0$ be two antipodal points and let $\epsilon>0$ be given, with $\epsilon<\frac{1}{20}$. Suppose $\delta \in \mathbb{Q}$ with $0<\delta<\epsilon$. Then construct a $\delta$-pseudo orbit in $2^X$ recursively by the following rule: Let $A_0=\{x_0,y_0\}$ and, for all $i\in \omega\setminus\{0\}$, let $A_i=\{x_i,y_i\}:=\{f(x_{i-1})+\frac{\delta}{2}, f(y_{i-1}) +\frac{\delta}{3}\}$. We claim that this is not first weak shadowed. 
Suppose $A$ $\epsilon$-first-weak-shadows $( A_i ) _{i \in \omega}$; i.e.
\[B_{\epsilon}\left(\Orb(A)\right) \supseteq \{A_i\}_{i \in \omega}.\]
Then there exists $n \in \omega$ such that $d_H((2^f)^n(A), \{x_0, y_0\})< \epsilon$; thus $(2^f)^n(A) \subseteq B_\epsilon(x_0) \cup B_\epsilon(y_0)$, $(2^f)^n(A) \cap B_\epsilon(x_0) \neq \emptyset$ and $(2^f)^n(A) \cap B_\epsilon(y_0) \neq \emptyset$. Since $x_0$ and $y_0$ are antipodal and $f$ is an isometry, it follows that $A$ is a subset of a union of two antipodal arcs of length $\epsilon$ and that $A$ meets both these arcs; the same holds true of $(2^f)^i(A)$ for all $i \in \omega$. Now let $l \in \omega$ be least such that $d(x_l, y_l) \leq \frac{\delta}{6}$; such an $l$ exists by construction. 
We claim $\{x_l, y_l\} \notin B_\epsilon\left(\Orb(A)\right)$. Suppose not, then there exists $m \in \omega$ such that $d_H\left((2^f)^m(A), \{x_l, y_l\} \right) < \epsilon$. In particular, 
\[ (2^f)^m(A) \subseteq B_{\epsilon} \left(\{x_l, y_l\} \right). \]
But
\[B_{\epsilon} \left(\{x_l, y_l\} \right) \subseteq  B_{2\epsilon} \left(x_l \right),\]
and $B_{2\epsilon} \left(x_l \right)$ is an arc of length less than $\frac{4}{20}$ by construction, which does not contain any pair of antipodal points, contradicting our analysis of $(2^f)^i(A)$. Hence the hyperspatial system does not have first weak shadowing. Since 
\[\text{Strong orbital shadowing }\implies \text {orbital shadowing } \implies \text{first weak shadowing},\]
it also follows that the system has neither strong orbital shadowing nor orbital shadowing.
\end{example}

\subsection{Symmetric products}

The proof of Theorem \ref{OrbSymProd} is very similar to that of Theorem \ref{OrbHyp} and is thereby omitted.

\begin{theorem}\label{OrbSymProd}
Let $X$ be a compact Hausdorff space, and let $f \colon X \to X$ be a continuous function. For any $n \geq 2$, if the symmetric product system $(F_n(X), f_n)$ witnesses orbital shadowing then the system $(X,f)$ experiences orbital shadowing.
\end{theorem}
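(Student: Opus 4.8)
The plan is to follow the proof of Theorem~\ref{OrbHyp} almost verbatim, substituting the symmetric product $F_n(X)$ for the hyperspace $2^X$ and the induced map $f_n$ for $2^f$. The one structural fact that makes this substitution legitimate is that every singleton $\{x\}$ lies in $F_n(X)$ for every $n \geq 2$; consequently a pseudo-orbit in the base can be lifted to a pseudo-orbit of singletons inside $F_n(X)$, and any element $A \in F_n(X)$ returned by orbital shadowing is nonempty, so we may project back down by choosing a point of $A$.

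In detail, given $E \in \mathscr{U}$ I would first select $E_0 \in \mathscr{U}$ with $4E_0 \subseteq E$, and then invoke orbital shadowing of $(F_n(X), f_n)$ to obtain $D \in \mathscr{U}$ such that the restricted entourage $2^D \cap (F_n(X) \times F_n(X))$ satisfies the orbital shadowing condition for $2^{E_0} \cap (F_n(X) \times F_n(X))$. Given a $D$-pseudo-orbit $(x_i)_{i \in \omega}$ in $X$, the sequence of singletons $(\{x_i\})_{i \in \omega}$ is a $2^D$-pseudo-orbit in $F_n(X)$, since $f_n(\{x_i\}) = \{f(x_i)\}$ and $(f(x_i), x_{i+1}) \in D$ forces $(\{f(x_i)\}, \{x_{i+1}\}) \in 2^D$. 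Orbital shadowing in $F_n(X)$ then produces $A \in F_n(X)$ for which the closures $\overline{\{\{x_i\}\}_{i\in\omega}}$ and $\overline{\{f_n^i(A)\}_{i\in\omega}}$ are $2^{E_0}$-close in the hyperspace of $F_n(X)$, yielding the two inclusions exactly as in equations~(\ref{EquationOrb1}) and~(\ref{EquationOrb2}) of the hyperspace proof.

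With these inclusions in hand, I would pick any $z \in A$ and verify that $\left(\overline{\{x_i\}_{i\in\omega}}, \overline{\{f^i(z)\}_{i\in\omega}}\right) \in 2^{4E_0} \subseteq 2^E$ by the same two-case contradiction argument used for Theorem~\ref{OrbHyp}: if some $a \in \overline{\{x_i\}_{i\in\omega}}$ were $4E_0$-far from every point of $\overline{\{f^i(z)\}_{i\in\omega}}$, one extracts an index $k$ with $(x_k, f^i(z)) \notin 2E_0$ for all $i$, contradicting the inclusion coming from the first containment; the symmetric situation is handled by the second containment. This shows that $z$ $E$-orbital-shadows $(x_i)_{i \in \omega}$, giving orbital shadowing of $(X,f)$.

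The argument presents no genuine new obstacle beyond the hyperspace case; the only point requiring care is bookkeeping the subspace uniformity on $F_n(X)$, namely intersecting the induced entourages $2^D$ and $2^{E_0}$ with $F_n(X) \times F_n(X)$. I would emphasise that the proof only ever uses singletons on the pseudo-orbit side and a single representative $z \in A$ on the shadowing side, so the cardinality bound $n$ plays no role beyond $n \geq 1$; this is precisely why the identical argument transfers from $2^X$ to every $F_n(X)$, and is what justifies omitting the routine repetition.
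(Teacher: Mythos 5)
Your proposal is correct and takes exactly the route the paper intends: the paper omits this proof precisely because it is ``very similar to that of Theorem~\ref{OrbHyp}'', and your adaptation --- lifting the $D$-pseudo-orbit to singletons in $F_n(X)$, working with the restricted entourages $2^D \cap (F_n(X)\times F_n(X))$ and $2^{E_0} \cap (F_n(X)\times F_n(X))$, and projecting back by choosing any $z \in A$ before running the same two-case $4E_0$ contradiction --- is that argument verbatim. Your closing observations (that only singletons and one representative point are ever used, and that $F_n(X)$ being closed in $2^X$ makes the subspace-uniformity bookkeeping harmless) correctly identify why the transfer is routine.
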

\begin{proof}
Omitted.
\end{proof}

\begin{remark}
The converse of Theorem \ref{OrbSymProd} is false. It is clear that Example \ref{OrbHypExample} may be suitably adjusted to provide a counterexample. Indeed, with sufficient adjustments, one can see that, for any $n \geq 2$, $(X,f)$ witnessing orbital shadowing does not generally imply that $(F_n(X),f_n)$ has orbital shadowing.
\end{remark}

\subsection{Factor maps}


\begin{definition}
Let $(X,f)$ and $(Y,g)$ be dynamical systems where $X$ and $Y$ are compact Hausdorff spaces. A factor map $\varphi \colon X\to Y$ is \emph{oALP} if for every $V \in \mathscr{U}_Y$ and $D \in \mathscr{U}_X$ there exists $W \in \mathscr{U}_Y$ such that for all $W$-pseudo-orbits $(y_i)\subseteq Y$, there exists a $D$-pseudo-orbit $(x_i)\subseteq X$ for which 
	\begin{align*}
    	(\varphi(\overline{\{x_i\}_{i\in\omega}}),\overline{\{y_i\}_{i\in\omega}}) \in 2^V.
     \end{align*}
\end{definition}

If $X$ and $Y$ are compact metric spaces, then $\varphi$ is oALP if and only if for all $\epsilon>0$ and $\eta>0$, there exists $\delta>0$ such that for all $\delta$-pseudo-orbits $(y_i)_{i \in \omega}$ in $Y$, there exists an $\eta$-pseudo-orbit $(x_i)_{i \in \omega}$ in $X$ such that the Hausdorff distance  $d_H(\varphi(\overline{\{x_i\}_{i\in\omega}}),\overline{\{y_i\}_{i\in\omega}})<\epsilon$.

\begin{theorem}\label{ThmALPOrbShad}
Suppose that $\varphi \colon (X,f)\to (Y,g)$ is a factor map.
	\begin{enumerate}
		\item If $(X,f)$ exhibits orbital shadowing and $\varphi$ is oALP, then $(Y,g)$ exhibits orbital shadowing.
		\item If $(Y,g)$ exhibits orbital shadowing, then $\varphi$ is oALP.
	\end{enumerate}
\end{theorem}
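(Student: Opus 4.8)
The plan is to prove both directions by the ``transfer'' technique already used for shadowing in Theorem \ref{ShadowALP} and for eventual shadowing in Theorem \ref{ThmALPeventualShad}, pushing orbital pseudo-orbit data across $\varphi$ and leaning on two hyperspace facts. First, since $X$ is compact Hausdorff and $\varphi$ is continuous, $\varphi(\overline{A})=\overline{\varphi(A)}$ for every $A\subseteq X$; combined with the semiconjugacy $\varphi\circ f=g\circ\varphi$ this gives $\varphi(\overline{\{f^i(z)\}_{i\in\omega}})=\overline{\{g^i(\varphi(z))\}_{i\in\omega}}$. Second, uniform continuity of $\varphi$ lifts to the hyperspace: for every $V\in\mathscr{U}_Y$ there is $D\in\mathscr{U}_X$ with $(A,B)\in 2^D\implies(\varphi(A),\varphi(B))\in 2^V$. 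Throughout I use the standing convention (Remark \ref{RemarkAssumeEntouragesAreSymmetric}) that all entourages are symmetric, together with the composition estimate $2^{E_0}\circ 2^{E_0}\subseteq 2^{2E_0}$, which follows from the definition of $2^{E_0}$ and symmetry.

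For (2), the easier direction, given $V\in\mathscr{U}_Y$ and $D\in\mathscr{U}_X$ I would take $W\in\mathscr{U}_Y$ to witness orbital shadowing in $(Y,g)$ for $V$. Given a $W$-pseudo-orbit $(y_i)$ in $Y$, orbital shadowing supplies $w\in Y$ with $(\overline{\{y_i\}_{i\in\omega}},\overline{\{g^i(w)\}_{i\in\omega}})\in 2^V$. Using surjectivity of $\varphi$, choose $x\in\varphi^{-1}(w)$ and let $(x_i)=(f^i(x))$ be its genuine orbit; this is automatically a $D$-pseudo-orbit for every $D\in\mathscr{U}_X$ since $\Delta\subseteq D$. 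Then the hyperspace facts give $\varphi(\overline{\{x_i\}_{i\in\omega}})=\overline{\{g^i(w)\}_{i\in\omega}}$, so $(\varphi(\overline{\{x_i\}_{i\in\omega}}),\overline{\{y_i\}_{i\in\omega}})\in 2^V$ as required. Note that $D$ plays no genuine role here, precisely because a true orbit is a pseudo-orbit for any entourage.

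For (1), given $E\in\mathscr{U}_Y$ I would first pick symmetric $E_0\in\mathscr{U}_Y$ with $2E_0\subseteq E$. Using the lifted uniform continuity, choose $D\in\mathscr{U}_X$ so that $2^D$-close sets have $2^{E_0}$-close images; by orbital shadowing in $(X,f)$ choose $D'\in\mathscr{U}_X$ so that every $D'$-pseudo-orbit in $X$ is $D$-orbital-shadowed; finally feed $V=E_0$ and $D'$ into oALP to obtain $W\in\mathscr{U}_Y$. I claim $W$ witnesses orbital shadowing in $(Y,g)$ for $E$. Given a $W$-pseudo-orbit $(y_i)$, oALP yields a $D'$-pseudo-orbit $(x_i)$ in $X$ with $(\varphi(\overline{\{x_i\}_{i\in\omega}}),\overline{\{y_i\}_{i\in\omega}})\in 2^{E_0}$; orbital shadowing in $X$ yields $z$ with $(\overline{\{x_i\}_{i\in\omega}},\overline{\{f^i(z)\}_{i\in\omega}})\in 2^D$; and the lifted uniform continuity converts the latter into $(\varphi(\overline{\{x_i\}_{i\in\omega}}),\overline{\{g^i(\varphi(z))\}_{i\in\omega}})\in 2^{E_0}$. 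Composing the two $2^{E_0}$-relations through the common term $\varphi(\overline{\{x_i\}_{i\in\omega}})$ gives $(\overline{\{y_i\}_{i\in\omega}},\overline{\{g^i(\varphi(z))\}_{i\in\omega}})\in 2^{2E_0}\subseteq 2^E$, so $\varphi(z)$ orbital-shadows $(y_i)$.

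The routine ingredients are the two hyperspace facts and the composition estimate. The main thing to get right is the bookkeeping of the three entourage choices in (1) and their order of selection: the uniform-continuity entourage $D$ must be fixed \emph{before} applying orbital shadowing in $X$, so that the shadowing tolerance produced in $X$ is exactly what uniform continuity can then push forward to $E_0$ in $Y$, and the oALP entourage $W$ must be extracted \emph{last}, from the already-fixed pair $(E_0,D')$. I expect this dependency chain, rather than any single estimate, to be the only real obstacle.
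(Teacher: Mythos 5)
Your proposal is correct and follows essentially the same route as the paper's proof: in part (1) the same dependency chain (pick $E_0$ with $2E_0\subseteq E$, then the uniform-continuity entourage, then the orbital-shadowing entourage in $X$, then extract $W$ from oALP) and in part (2) the same lift of the shadowing point through $\varphi^{-1}$ followed by taking its genuine orbit, using $\varphi(\overline{A})=\overline{\varphi(A)}$. The only difference is cosmetic: where the paper invokes ``uniform continuity and the triangle inequality,'' you make this explicit via the hyperspace lift $2^D$ and the composition estimate $2^{E_0}\circ 2^{E_0}\subseteq 2^{2E_0}$, which is a slightly cleaner write-up of the identical step.
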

\begin{proof}
For (1), let $E \in \mathscr{U}_Y$ be given. Select $E_0 \in \mathscr{U}_Y$ with $2E_0 \subseteq E$. By the uniform continuity of $\varphi$ there exists $D_1 \in \mathscr{U}_X$ such that for all $a,b\in X$ with $(a,b)\in D_1$ one has $(\varphi(a),\varphi(b))\in E_0$. Next, let $D_2 \in \mathscr{U}_X$ be chosen so that $D_2$-pseudo-orbits in are $D_1$ orbital shadowed. Extract $W \in \mathscr{U}_Y$ from the definition of oALP using $E_0$ and $D_2$, we claim that $W$-pseudo-orbits of $(Y,g)$ are then $E$-orbital shadowed in $(Y,g)$. Indeed, given a $W$-pseudo-orbit $(y_i)_{i\in\omega}\subseteq Y$, there exists a $D_2$-pseudo-orbit $(x_i)_{i\in\omega}\subseteq X$ for which 
	\begin{align*}
		(\overline{\{y_i\}_{i\in\omega}},\varphi(\overline{\{x_i\}_{i\in\omega}}))\in 2^{E_0}.
	\end{align*}
Let $z\in X$ $D_1$-orbital shadow $(x_i)_{i\in\omega}$. Then, using uniform continuity and the triangle inequality, one may conclude that $\varphi(z)$ $E$-orbital shadows $(y_i)_{i\in\omega}$ as required.

For (2)fix $E \in \mathscr{U}_Y$ and $D \in \mathscr{U}_X$ and take $V\in \mathscr{U}_Y$ to correspond to $E$ for orbital shadowing in $(Y,g)$. Let $(y_i)_{i\in\omega}$ to be a $V$-pseudo-orbit in $(Y,g)$ and let $z\in Y$ $E$-orbital shadow it. Consider $x\in\varphi^{-1}(z)$ and define $x_i=f^i(x)$ for each $i\in\omega$ so that $(x_i)_{i\in\omega}$ is a $D$-pseudo-orbit in $(X,f)$. In particular, one then has that
	\begin{align*}
		(\varphi(\overline{\{x_i\}_{i\in\omega}}),\overline{\{y_i\}_{i\in\omega}})&=(\overline{\varphi(\{x_i\}_{i\in\omega})},\overline{\{y_i\}_{i\in\omega}})\\
		&=(\overline{\{g^i(z)\}_{i\in\omega}},\overline{\{y_i\}_{i\in\omega}})\in 2^E.
	\end{align*}
\end{proof}

\subsection{Inverse limits}
\begin{theorem} 
Let $(X,f)$ be conjugate to a Mittag-Leffler inverse limit system comprised of maps with orbital shadowing on compact Hausdorff spaces. Then $(X,f)$ has orbital shadowing. 
\end{theorem}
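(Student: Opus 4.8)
The plan is to imitate the open-cover argument already used for eventual shadowing, adapting it to the two-sided covering condition that defines orbital shadowing. After reducing, without loss of generality, to the case $(X,f)=(\varprojlim\{X_\lambda,g_\lambda^\eta\},f)$, I would begin with an arbitrary finite open cover $\mathcal{U}$ of $X$. Since $X$ is an inverse limit, the sets $\pi_\lambda^{-1}(W)\cap X$ (single coordinate, using directedness to collapse finite intersections) form a base for the topology, so by compactness there exist a single index $\lambda$ and a finite open cover $\mathcal{W}_\lambda$ of $X_\lambda$ for which $\mathcal{W}:=\{\pi_\lambda^{-1}(W)\cap X \mid W\in\mathcal{W}_\lambda\}$ refines $\mathcal{U}$. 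As before, it then suffices to produce a finite open cover $\mathcal{V}$ of $X$ witnessing $\mathcal{W}$-orbital-shadowing, since $\mathcal{W}$-orbital-shadowing implies $\mathcal{U}$-orbital-shadowing whenever $\mathcal{W}$ refines $\mathcal{U}$.

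Next I would let $\gamma\geq\lambda$ witness the Mittag-Leffler condition with respect to $\lambda$, pull the cover back to $X_\gamma$ by setting $\mathcal{W}_\gamma=\{(g_\lambda^\gamma)^{-1}(W)\mid W\in\mathcal{W}_\lambda\}$ (a finite open cover of $X_\gamma$), and apply orbital shadowing in $(X_\gamma,f_\gamma)$ to obtain a finite open cover $\mathcal{V}_\gamma$ of $X_\gamma$ whose $\mathcal{V}_\gamma$-pseudo-orbits are $\mathcal{W}_\gamma$-orbital-shadowed. Setting $\mathcal{V}=\{\pi_\gamma^{-1}(V)\cap X\mid V\in\mathcal{V}_\gamma\}$, any $\mathcal{V}$-pseudo-orbit $(x_i)_{i\in\omega}$ in $X$ projects to a $\mathcal{V}_\gamma$-pseudo-orbit $(\pi_\gamma(x_i))_{i\in\omega}$ in $X_\gamma$, which is $\mathcal{W}_\gamma$-orbital-shadowed by some $z\in X_\gamma$.

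The heart of the argument is transporting the orbital-shadowing relation twice: first \emph{down} from $X_\gamma$ to $X_\lambda$ along $g_\lambda^\gamma$, then \emph{across} to $X$ through the fibre of $\pi_\lambda$. The workhorse facts are that for a continuous map $h$ on compact Hausdorff spaces one has $h(\overline{A})=\overline{h(A)}$, together with the intertwining identities $g_\lambda^\gamma\circ f_\gamma^i=f_\lambda^i\circ g_\lambda^\gamma$, $\pi_\lambda\circ f^i=f_\lambda^i\circ\pi_\lambda$, and $g_\lambda^\gamma\circ\pi_\gamma=\pi_\lambda$ on $X$. These yield $g_\lambda^\gamma(\overline{\Orb_{f_\gamma}(z)})=\overline{\Orb_{f_\lambda}(g_\lambda^\gamma(z))}$ and $g_\lambda^\gamma(\overline{\{\pi_\gamma(x_i)\}})=\overline{\{\pi_\lambda(x_i)\}}$. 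From these equalities I would verify both inclusions in the definition of orbital shadowing for the point $g_\lambda^\gamma(z)$ against $\mathcal{W}_\lambda$, being careful that the membership $a\in(g_\lambda^\gamma)^{-1}(W)$ is exactly $g_\lambda^\gamma(a)\in W$, so that the cover element $W\in\mathcal{W}_\lambda$ and the witnessing point transfer cleanly in each direction.

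Finally I would invoke the Mittag-Leffler lifting fact recorded in the preliminaries: since $g_\lambda^\gamma(z)\in g_\lambda^\gamma(X_\gamma)$, there exists $y\in\pi_\lambda^{-1}(g_\lambda^\gamma(z))\cap X$. Using $\pi_\lambda(\overline{\Orb_f(y)})=\overline{\Orb_{f_\lambda}(g_\lambda^\gamma(z))}$ and $\pi_\lambda(\overline{\{x_i\}})=\overline{\{\pi_\lambda(x_i)\}}$, and lifting the witnessing points through $\pi_\lambda$ exactly as in the first transfer, I would conclude that $y$ $\mathcal{W}$-orbital-shadows $(x_i)_{i\in\omega}$, and hence $\mathcal{U}$-orbital-shadows it. The hard part will be purely the bookkeeping: because orbital shadowing is a bidirectional covering condition, each of the two transfers (down along $g_\lambda^\gamma$ and across along $\pi_\lambda$) must be checked in both directions, and one must ensure that projecting orbit- and pseudo-orbit-closures and then re-lifting witnessing points respects cover membership — which is precisely where the image-of-closure equalities do all the genuine work.
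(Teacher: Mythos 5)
Your proposal is correct and follows essentially the same route as the paper's own proof: the same open-cover reduction to a single coordinate $\lambda$ with $\mathcal{W}$ refining $\mathcal{U}$, the pullback to a Mittag-Leffler witness $\gamma$, the application of orbital shadowing in $(X_\gamma,f_\gamma)$, and the two transfers of the bidirectional covering condition (down along $g_\lambda^\gamma$, then into the fibre of $\pi_\lambda$ via the lifting fact). The image-of-closure identities you make explicit (e.g.\ $\pi_\lambda(\overline{\Orb_f(y)})=\overline{\Orb_{f_\lambda}(g_\lambda^\gamma(z))}$) are exactly what the paper's terser verification of the two inclusions implicitly uses, so your write-up is if anything more careful than the published one.
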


\begin{proof}
We use the reformulation of orbital shadowing given in Definition \ref{DefnOpenCoverStrongOrbShad}.

Let $(\Lambda, \geq)$ be a directed set. For each $\lambda \in \Lambda$, let $(X_\lambda, f_\lambda)$ be a dynamical system on a compact Hausdorff space with strong orbital shadowing and let $((X_\lambda, f_\lambda), g^\eta _\lambda)$ be a Mittag-Leffler inverse system. Without loss of generality $(X,f)=(\varprojlim \{ X_\lambda, g_\lambda ^\eta\}, f )$.

Let $\mathcal{U}$ be a finite open cover of $X$. Since $X =\varprojlim \{ X_\lambda, g_\lambda ^\eta\}$ there exist $\lambda \in \Lambda$ and a finite open cover $\mathcal{W}_\lambda$ of $X_\lambda$ such that $\mathcal{W}\coloneqq \{\pi_\lambda ^{-1}(W) \cap X \mid W \in \mathcal{W}_\lambda \}$ refines $\mathcal{U}$. Now let $\gamma \in \Lambda$ witness the Mittag-Leffler condition with respect to $\lambda$. Let $\mathcal{W}_\gamma \coloneqq \{ g ^\gamma _ \lambda {}^{(-1)} (W) \colon W \in \mathcal{W}_\lambda\}$. By orbital shadowing for $(X_\gamma, f_\gamma)$ there exists a finite open cover $\mathcal{V}_\gamma$ of $X_\gamma$ such that every $\mathcal{V}_\gamma$-pseudo-orbit in $X_\gamma$ is $\mathcal{W}_\gamma$-orbital-shadowed. Take $\mathcal{V}=\{ \pi_\gamma ^{-1}(V) \cap X \mid V \in \mathcal{V}_\gamma \}$ and suppose $(x_i)_{i \in \omega}$ is a $\mathcal{V}$-pseudo-orbit in $X$. It follows that $(\pi_\gamma(x_i))_{i \in \omega}$ is a $\mathcal{V}_\gamma$-pseudo-orbit in $X_\gamma$, which means there is a point $z \in X_\gamma$ which $\mathcal{W}_\gamma$-orbital-shadows it.
By construction, it follows that $g^\gamma _\lambda (z)$ $\mathcal{W}_\lambda$-orbital-shadows $(\pi_\lambda(x_i))_{i \in \omega}$, i.e.

\[ \forall y \in \overline{\Orb(g^\gamma _\lambda (z))} \exists W \in \mathcal{W}_\lambda \exists y^\prime \in \overline{\{\pi _\lambda \left(x_i\right)\}_{i \in \omega}} \colon y, y^\prime \in W,\]
and
\[ \forall y \in \overline{\{\pi _\lambda \left(x_i\right)\}_{i \in \omega}} \exists W \in \mathcal{W}_\lambda \exists y^\prime \in \overline{\Orb(g^\gamma _\lambda (z))} \colon y, y^\prime \in W.\]

Since the system is Mittag-Leffler there exists $z^\prime  \in \pi_\lambda ^{-1}(g^\gamma _\lambda (z)) \cap X$. We claim $z^\prime$ $\mathcal{U}$-orbital-shadows $(x_i)_{i \in \omega}$.


Let $y^\prime \in \overline{\Orb\left(z^\prime\right)}$. There exist $W \in W_\lambda$ and $k\in \omega$  
such that $\pi_\lambda(y^\prime), \pi_\lambda (x_k) \in W$. Then $a^\prime, x_k \in \pi _\gamma ^{-1}(W) \cap X \subseteq U$ for some $U \in \mathcal{U}$.

Now suppose $y^\prime \in \overline{\{ \left(x_i\right)\}_{i \in \omega}}$. There exist $W \in \mathcal{W}_\lambda$ and $k\in \omega$ such that $\pi_\lambda(y^\prime), f_{\lambda}^k(g^\gamma _\lambda (z)) \in W$. Then $y^\prime, f^k(z^\prime) \in \pi _\gamma ^{-1}(W) \cap X \subseteq U$ for some $U \in \mathcal{U}$.

\end{proof}

\subsection{Tychonoff product}

A product of systems with orbital shadowing does not necessarily have orbital shadowing. The following example demonstrates this.

\begin{example}\label{ExampleProdOrbNotOrb}
For $i \in \{1,2\}$ let $X_i= \mathbb{R}/\mathbb{Z}$, $d_i$ be the shortest arc length metric on $X_i$ and $f_i \colon X_i \to X_i \colon x \mapsto x + \alpha \mod 1$, where $\alpha$ is some fixed irrational number. Consider the product space $X=X_1 \times X_2$ with distance $d$ given by $d((a,b), (c,d))= \sup\{d_1(a, c), d_2(b,d)\}$. Recall that $(X_i,f_i)$ has strong orbital shadowing, and therefore shadowing, by \cite[Corollary 2.7]{Mitchell}.
It will be useful to define \[\lvert \cdot , \cdot \rvert \colon \mathbb{R}/\mathbb{Z} \times \mathbb{R}/\mathbb{Z} \to \mathbb{R} \colon (a,b) \mapsto \min \{\lvert a- b \rvert, \lvert b- a \rvert \}.\]

Now consider the product system $(X,f)$. Let $x_0 =0$ and $y_0 =\frac{1}{2}$ and let $\epsilon>0$ be given, with $\epsilon<\frac{1}{20}$. Suppose $\delta \in \mathbb{Q}$ with $0<\delta<\epsilon$. Then construct a $\delta$-pseudo orbit in $X$ recursively by the following rule: Let $z_0=(x_0 ,y_0)$ and, for all $i\in \omega\setminus\{0\}$, let $z_i=(f(x_{i-1})+\frac{\delta}{2}, f(y_{i-1}) +\frac{\delta}{3})$. We claim that this is not first weak shadowed. 
Suppose $z=(x,y)$ $\epsilon$-first-weak-shadows $( z_i ) _{i \in \omega}$; i.e.
\[B_\epsilon(\Orb(z)) \supseteq \{z_i\}_{i \in \omega}.\]
Then there exists $n \in \omega$ such that $d(f^n(z), (x_0 , y_0))< \epsilon$; that is, 
\[d((f_1 ^n(x), f_2 ^n(y)), (x_0 , y_0)) = \sup\{d_1(f_1 ^n(x), x_0), d_2(f^n _2 (y),y_0)\} < \epsilon.\]
In particular $d_1(f_1 ^n(x), x_0)< \epsilon$ and $d_2(f^n_2(y), y_0) <\epsilon$; hence $f^n(z) \in B_\epsilon(z_0)=B_\epsilon(x_0) \times B_\epsilon(y_0)$. 
It follows by the triangle inequality that $\lvert f_1 ^n(x), f_2 ^n(y) \rvert \geq \frac{1}{2} -2 \epsilon > \frac{3}{5}$.

Now let $l \in \omega$ be least such that $\lvert x_l, y_l \rvert \leq \frac{\delta}{6}$; such an $l$ exists by construction. 
We claim $(x_l , y_l) \notin B_\epsilon(\Orb(z))$. Suppose not, then there exists $m \in \omega$ such that $d\left(f^m(z), (x_l, y_l)\} \right) < \epsilon$; thus $d_1(f_1 ^m(x), x_l)< \epsilon$ and $d_2(f^m_2(y), y_l) <\epsilon$. 
It follows that $\lvert f^m _1(x), f^m _2(y) \rvert \leq  2\epsilon + \frac{\delta}{6}$. Since $f_1$ and $f_2$ are the same isometries it follows that $\lvert f_1 ^n(x), f_2 ^n(y) \rvert \leq  2\epsilon + \frac{\delta}{6} < \frac{1}{10}+\frac{1}{120} < \frac{1}{5}$. But we know $\lvert f_1 ^n(x), f_2 ^n(y) \rvert>\frac{3}{5}$, so we have a contradiction. It follows that $(x_l , y_l) \notin B_\epsilon(\Orb(z))$. Hence the product system does not have first weak shadowing (and thereby nor does it have orbital (resp. strong orbital) shadowing).
\end{example}

\section{Preservation of Strong Orbital Shadowing}
Strong orbital shadowing, a strengthening of orbital shadowing as the name suggests, was introduced in \cite{GoodMeddaugh2016} in the authors' pursuit of a characterisation of when the set of $\omega$-limit sets of a system coincides with the set of closed internally chain transitive sets.

The system $(X,f)$ has the \textit{strong orbital shadowing} property if for all $E \in \mathscr{U}$, there exists $D \in \mathscr{U}$ such that for any $D$-pseudo-orbit $( x_i)_{i \in \omega}$, there exists a point $z \in X$ such that, for all $N \in \omega$,

\[\left(\overline{\{x_{N+i}\}_{i\in\omega}}, \overline{\{f^{N+i}(z)\}_{i\in\omega}}\right) \in 2^E.\]

\subsection{Induced map on the hyperspace of compact sets}

\begin{theorem}\label{StrongOrbHyp}
Let $X$ be a compact Hausdorff space, and let $f \colon X \to X$ be a continuous function. If the hyperspace system $(2^X, 2^f)$ has strong orbital shadowing then the system $(X,f)$ has strong orbital shadowing.
\end{theorem}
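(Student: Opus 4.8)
The plan is to mirror the proof of Theorem \ref{OrbHyp}: lift the given pseudo-orbit in $X$ to the pseudo-orbit of singletons in $2^X$, extract a single strong-orbital-shadowing set $A$ there, and then fix one point $z \in A$ which strong-orbitally shadows the original pseudo-orbit for every tail index $N$ simultaneously.

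First I would fix $E \in \mathscr{U}$ and choose a symmetric $E_0 \in \mathscr{U}$ with $4E_0 \subseteq E$. Applying strong orbital shadowing of $(2^X,2^f)$ to the hyperspace entourage $2^{E_0}$ yields $D \in \mathscr{U}$ such that $2^D$ witnesses the strong orbital shadowing condition for $2^{E_0}$. Given any $D$-pseudo-orbit $(x_i)_{i\in\omega}$ in $X$, the sequence of singletons $(\{x_i\})_{i\in\omega}$ is a $2^D$-pseudo-orbit in $2^X$, so there is a single set $A \in 2^X$ with
\[\left(\overline{\{(2^f)^{N+i}(A)\}_{i\in\omega}},\, \overline{\{\{x_{N+i}\}\}_{i\in\omega}}\right) \in 2^{2^{E_0}}\]
for every $N \in \omega$. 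I then fix one point $z \in A$; since $z \in A$ forces $f^{N+l}(z) \in (2^f)^{N+l}(A)$ for all $N,l \in \omega$, this single $z$ is the candidate point.

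The core verification is to show, for each fixed $N$, that $\left(\overline{\{x_{N+i}\}_{i\in\omega}}, \overline{\{f^{N+i}(z)\}_{i\in\omega}}\right) \in 2^{4E_0} \subseteq 2^E$, and this I would carry out exactly as in Theorem \ref{OrbHyp} but on the shifted sequences. Unravelling the displayed membership into the two containments
\[\overline{\{\{x_{N+i}\}\}_{i\in\omega}} \subseteq B_{2^{E_0}}\!\left(\overline{\{(2^f)^{N+i}(A)\}_{i\in\omega}}\right), \quad \overline{\{(2^f)^{N+i}(A)\}_{i\in\omega}} \subseteq B_{2^{E_0}}\!\left(\overline{\{\{x_{N+i}\}\}_{i\in\omega}}\right),\]
I would argue by contradiction in two symmetric cases. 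If some $a \in \overline{\{x_{N+i}\}}$ were $4E_0$-far from every point of $\overline{\{f^{N+i}(z)\}}$, then some genuine term $x_{N+k}$ (within $E_0$ of $a$) would be $2E_0$-far from every $f^{N+i}(z)$; the first containment then produces an index $l$ with $\{x_{N+k}\}$ within $2^{E_0}$ of $(2^f)^{N+l}(A)$, and applying $f^{N+l}(A) \subseteq B_{E_0}(x_{N+k})$ to the point $f^{N+l}(z) \in (2^f)^{N+l}(A)$ gives $(f^{N+l}(z), x_{N+k}) \in E_0$, a contradiction. The second case is handled symmetrically via the second containment.

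The passage from a limit point $a \in \overline{\{x_{N+i}\}}$ to a nearby genuine term $x_{N+k}$ (together with regarding each $f^{N+i}(z)$ as lying in its own closure) is what the slack between $E_0$, $2E_0$ and $4E_0$ absorbs; the extraction of an honest index $l$ is by contrast clean, following from the openness of the basic Vietoris sets $\{K : K \subseteq B_{E_0}(x_{N+k})\}$ and $\{K : K \cap B_{E_0}(x_{N+k}) \neq \emptyset\}$. The only genuinely new feature relative to Theorem \ref{OrbHyp} — and the step I expect to require the most care — is confirming that the \emph{same} $A$, hence the \emph{same} $z \in A$, serves for every $N$ at once; this is exactly what strong orbital shadowing in the hyperspace supplies, so no new point need be chosen as $N$ varies.
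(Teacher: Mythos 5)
Your proposal is correct and follows the paper's own proof essentially line for line: the same choice of $E_0$ with $4E_0 \subseteq E$, the same lift of $(x_i)$ to the singleton pseudo-orbit $(\{x_i\})$, the same single strong-orbitally-shadowing set $A$ working for every tail index $N$, the same fixed $z \in A$, and the same two-case contradiction argument on the shifted sequences. Your added remarks on passing from closure points to genuine terms and on extracting an honest index $l$ via open basic Vietoris sets only make explicit steps the paper leaves implicit, so nothing further is needed.
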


\begin{proof}
Let $E \in \mathscr{U}$ be given and let $E_0 \in \mathscr{U}$ be such that $4E_0 \subseteq E$. Let $D \in \mathscr{U}$ be such that $2^D$ satisfies the the condition for $2^{E_0}$ in orbital shadowing for the hyperspace. Let $( x_i ) _{i \in \omega}$ be a $D$-pseudo orbit in $X$. Then $( \{x_i\} ) _{i \in \omega}$ is a $2^D$-pseudo orbit in $2^X$ and there exists $A \in 2^X$ such that for any $N \in \omega$, $f^N(A)$ we have
\[\left( \overline{\{\{x_{N+i}\}\}_{i\in\omega}} , \overline{\{(2^f)^{N+i}(A)\}_{i\in\omega}}\right) \in 2^{2^{E_0}}.\]
Equivalently, for any $N \in \omega$
\begin{equation}\label{EquationStrongOrb1}
    \overline{\{\{x_{N+i}\}\}_{i\in\omega}} \subseteq B_{2^{E_0}}\left(\overline{\{(2^f)^{N+i}(A)\}_{i\in\omega}}\right)
\end{equation}
and
\begin{equation}\label{EquationStrongOrb2} \overline{\{(2^f)^{N+i}(A)\}_{i\in\omega}}\subseteq B_{2^{E_0}}\left(\overline{\{\{x_{N+i}\}\}_{i\in\omega}}\right).
\end{equation}
Pick $z \in A$. It can be verified that for any $N \in \omega$
\[\left(\overline{\{x_{N+i}\}_{i\in\omega}}, \overline{\{f^{N+i}(z)\}_{i\in\omega}}\right) \in 2^{4E_0}.\]
Indeed, suppose not. 

\textbf{Case i).} There exist $N \in \omega$ and $a \in \overline{\{x_{N+i}\}_{i\in\omega}}$ such that for any $b \in \overline{\{f^{N+i}(z)\}_{i\in\omega}}$ we have $(a,b) \notin 4E_0$. It follows that there exists $k \in \omega$ such that $(x_{N+k}, f^{N+i}(z))\notin 2E_0$ for all $i \in \omega$. We have from Equation (\ref{EquationStrongOrb1}) that there exists $l \in \omega$ such that $\left(f^{N+l}(A), \{x_{N+k}\}\right) \in 2^{E_0}$; in particular, for any $y \in f^{N+l}(A)$, $(y, x_k)\in E_0$, a contradiction.

\textbf{Case ii).} There exist $N \in \omega$ and $b \in \overline{\{f^{N+i}(z)\}_{i\in\omega}}$ such that for any $a \in \overline{\{x_{N+i}\}_{i\in\omega}}$ we have $(b,a) \notin 4E_0$. It follows that there exists $k \in \omega$ such that $(f^{N+k}(z),x_{N+i})\notin 2E_0$ for all $i \in \omega$. We have from Equation (\ref{EquationStrongOrb2}) that there exists $l \in \omega$ such that $\left(f^{N+k}(A), \{x_{N+l}\}\right) \in 2^{E_0}$; in particular, for any $y \in f^{N+k}(A)$, $(y, x_{N+l})\in E_0$, a contradiction.

It follows that for any $N \in \omega$

\[\left(\overline{\{x_{N+i}\}_{i\in\omega}}, \overline{\{f^{N+i}(z)\}_{i\in\omega}}\right) \in 2^{4E_0} \subseteq 2^E.\]
\end{proof}

\begin{remark}
Example \ref{OrbHypExample} shows that the converse to Theorem \ref{StrongOrbHyp} is not true. The hyperspatial system does not have orbital shadowing, therefore nor does it have strong orbital shadowing.
\end{remark}

\subsection{Symmetric products}

The proof of Theorem \ref{StrongOrbSymProd} is very similar to that of Theorem \ref{StrongOrbHyp} and is thereby omitted.

\begin{theorem}\label{StrongOrbSymProd}
Let $X$ be a compact Hausdorff space, and let $f \colon X \to X$ be a continuous function. For any $n \geq 2$, if the symmetric product system $(F_n(X), f_n)$ witnesses orbital shadowing then system $(X,f)$ experiences orbital shadowing.
\end{theorem}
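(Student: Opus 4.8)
The plan is to mirror the proof of Theorem~\ref{OrbHyp}, replacing the hyperspace $(2^X,2^f)$ by the symmetric product $(F_n(X),f_n)$. The only structural facts that make the transfer work are that every singleton $\{x\}$ lies in $F_n(X)$ (as $n\ge 2\ge 1$) and that a point selected from a set $A\in F_n(X)$ keeps every subsequent construction inside $F_n(X)$; consequently each step of the hyperspace argument has a verbatim analogue, with the minor bookkeeping that the relevant entourages are intersected with $F_n(X)\times F_n(X)$.

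First I would fix $E\in\mathscr{U}$ and choose a symmetric $E_0\in\mathscr{U}$ with $4E_0\subseteq E$. Invoking orbital shadowing for $(F_n(X),f_n)$, I would extract $D\in\mathscr{U}$ so that $2^D\cap(F_n(X)\times F_n(X))$ furnishes the orbital-shadowing correspondence against $2^{E_0}\cap(F_n(X)\times F_n(X))$. Given a $D$-pseudo-orbit $(x_i)_{i\in\omega}$ in $X$, the sequence of singletons $(\{x_i\})_{i\in\omega}$ is then a $2^D$-pseudo-orbit in $F_n(X)$, and orbital shadowing produces some $A\in F_n(X)$ with
\[\left(\overline{\{\{x_i\}\}_{i\in\omega}},\ \overline{\{f_n^i(A)\}_{i\in\omega}}\right)\in 2^{2^{E_0}},\]
which unpacks into the two inclusions exactly as in Equations~(\ref{EquationOrb1}) and~(\ref{EquationOrb2}). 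I would then pick any $z\in A$ (possible since $A\neq\emptyset$) and claim
\[\left(\overline{\{x_i\}_{i\in\omega}},\ \overline{\{f^i(z)\}_{i\in\omega}}\right)\in 2^{4E_0}\subseteq 2^E,\]
which is precisely orbital shadowing for $(X,f)$.

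The main obstacle, and the only genuine content, is verifying this last membership, which I would do by contradiction in the same two symmetric cases as in Theorem~\ref{OrbHyp}. In Case~(i) one supposes some $a\in\overline{\{x_i\}_{i\in\omega}}$ has no $4E_0$-close companion in $\overline{\{f^i(z)\}_{i\in\omega}}$, extracts an index $k$ with $(x_k,f^i(z))\notin 2E_0$ for all $i$, and derives a contradiction from the inclusion~(\ref{EquationOrb1}); Case~(ii) is dual, using~(\ref{EquationOrb2}). The decisive link in both cases is that $z\in A$ forces $f^i(z)\in f_n^i(A)$ for every $i$, so that Hausdorff-closeness at the level of sets in $F_n(X)$ transfers down to closeness between the point orbit of $z$ and the pseudo-orbit; the factor of four is exactly what absorbs the two nested layers of the Hausdorff metric as one descends from set-level to point-level comparisons.

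Since none of the estimates depends on the cardinality bound $n$, and singletons and the selected point $z$ never leave $F_n(X)$, the argument is uniform across all $n\ge 2$, giving the stated implication.
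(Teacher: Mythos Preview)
Your proposal is correct and follows exactly the approach the paper indicates: the paper omits the proof as ``very similar'' to the corresponding hyperspace result, and you have carried out precisely that transfer from Theorem~\ref{OrbHyp} to $F_n(X)$, with the only bookkeeping change being the restriction of entourages to $F_n(X)\times F_n(X)$. (As an aside, the printed statement appears to contain a typo: given its label \texttt{StrongOrbSymProd}, its placement in the strong orbital shadowing section, and the paper's explicit reference to Theorem~\ref{StrongOrbHyp} rather than Theorem~\ref{OrbHyp}, it almost certainly should read ``strong orbital shadowing'' throughout---in which case your argument would need the obvious ``for all $N\in\omega$'' quantifier inserted, mirroring Theorem~\ref{StrongOrbHyp} instead; the mechanics are otherwise identical.)
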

\begin{proof}
Omitted.
\end{proof}

\begin{remark}
The converse of Theorem \ref{StrongOrbSymProd} is false. It is clear that Example \ref{OrbHypExample} may be suitably adjusted to provide a counterexample. Indeed, with sufficient adjustments, one can see that, for any $n\geq 2$, $(X,f)$ witnessing strong orbital shadowing does not generally imply that $(F_n(X),f_n)$ has strong orbital shadowing.
\end{remark}

\subsection{Factor maps}


\begin{definition}
Let $(X,f)$ and $(Y,g)$ be dynamical systems where $X$ and $Y$ are compact Hausdorff spaces. A surjective semiconjugacy $\varphi\colon X\to Y$ is \emph{soALP} if  for every $E \in \mathscr{U}_Y$ and $D \in \mathscr{U}_X$ there exists $W \in \mathscr{U}_Y$ such that for all $W$-pseudo-orbits $(y_i)_{i \in \omega}\subseteq Y$, there exists a $D$-pseudo-orbit $(x_i)_{i \in \omega}\subseteq X$ such that for all $N \in \omega$,
	\begin{align*}
    	(\varphi(\overline{\{x_{N+i}\}_{i\in\omega}}),\overline{\{y_{N+i}\}_{i\in\omega}}) \in 2^E.
     \end{align*}
\end{definition}

If $X$ and $Y$ are compact metric spaces, then $\varphi$ is soALP if and only if for all $\epsilon>0$ and $\eta>0$, there exists $\delta>0$ such that for all $\delta$-pseudo-orbits $(y_i)_{i \in \omega}$ in $Y$, there exists an $\eta$-pseudo-orbit $(x_i)_{i \in \omega}$ in $X$
such that for all $N \in \omega$, the Hausdorff distance  $d_H(\varphi(\overline{\{x_{N+i}\}_{i\in\omega}}),\overline{\{y_{N+i}\}_{i\in\omega}})<\epsilon$.

\begin{theorem}\label{ThmALPStrongOrbShad}
Suppose that $\varphi\colon (X,f)\to (Y,g)$ is a surjective semiconjugacy.
	\begin{enumerate}
		\item If $(X,f)$ exhibits strong orbital shadowing and $\varphi$ is soALP, then $(Y,g)$ exhibits strong orbital shadowing.
		\item If $(Y,g)$ exhibits strong orbital shadowing, then $\varphi$ is soALP.
	\end{enumerate}
\end{theorem}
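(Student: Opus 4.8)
The plan is to mirror the proof of Theorem \ref{ThmALPOrbShad}, upgrading each statement so that it holds simultaneously for every tail index $N \in \omega$. Both the soALP condition and the strong orbital shadowing property are already phrased with a ``for all $N$'' quantifier following a single choice of witness, so the shift structure carries through the argument without forcing any new choices; the proof is essentially the orbital argument with every relation kept uniform over $N$.

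For part (1) I would begin exactly as in the orbital case. Given $E \in \mathscr{U}_Y$, fix $E_0 \in \mathscr{U}_Y$ with $2E_0 \subseteq E$, and use uniform continuity of $\varphi$ to obtain $D_1 \in \mathscr{U}_X$ with $(a,b) \in D_1 \implies (\varphi(a),\varphi(b)) \in E_0$. Choose $D_2 \in \mathscr{U}_X$ witnessing strong orbital shadowing in $(X,f)$ for $D_1$, and extract $W \in \mathscr{U}_Y$ from soALP applied to $E_0$ and $D_2$. Given a $W$-pseudo-orbit $(y_i)$ in $Y$, soALP yields a $D_2$-pseudo-orbit $(x_i)$ in $X$ with $(\varphi(\overline{\{x_{N+i}\}}),\overline{\{y_{N+i}\}}) \in 2^{E_0}$ for every $N$, and strong orbital shadowing yields $z \in X$ with $(\overline{\{f^{N+i}(z)\}},\overline{\{x_{N+i}\}}) \in 2^{D_1}$ for every $N$. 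I would then push the second relation through $\varphi$: by the semiconjugacy $\varphi \circ f = g \circ \varphi$ and compactness one has $\varphi(\overline{\{f^{N+i}(z)\}}) = \overline{\{g^{N+i}(\varphi(z))\}}$, and the $D_1$-closeness of the two compact sets transfers to $E_0$-closeness of their images. Composing with the soALP relation via $2E_0 \subseteq E$ gives $(\overline{\{g^{N+i}(\varphi(z))\}},\overline{\{y_{N+i}\}}) \in 2^E$ for all $N$, i.e.\ $\varphi(z)$ $E$-strong-orbital shadows $(y_i)$.

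For part (2) I would argue as in Theorem \ref{ThmALPOrbShad}(2), now retaining the tails. Fix $E$ and $D$, take $V$ corresponding to $E$ for strong orbital shadowing in $(Y,g)$, let $(y_i)$ be a $V$-pseudo-orbit, and let $z$ strong-orbital shadow it. Choose $x \in \varphi^{-1}(z)$ (possible since $\varphi$ is surjective) and set $x_i = f^i(x)$, so $(x_i)$ is a genuine orbit and hence a $D$-pseudo-orbit. For each $N$ the semiconjugacy gives $\varphi(\overline{\{x_{N+i}\}}) = \overline{\{g^{N+i}(z)\}}$, whence $(\varphi(\overline{\{x_{N+i}\}}),\overline{\{y_{N+i}\}}) \in 2^E$ by the choice of $z$, which is precisely the soALP condition.

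The main obstacle, insofar as there is one, is purely bookkeeping: verifying that applying the uniformly continuous $\varphi$ to a pair of compact sets that are $2^{D_1}$-close produces a pair that is $2^{E_0}$-close, i.e.\ that ``$A \subseteq B_{D_1}(B)$ implies $\varphi(A) \subseteq B_{E_0}(\varphi(B))$'', and that $\varphi$ commutes both with the closure operation (by compactness) and with iteration (by the semiconjugacy relation). None of these require entourage manipulations beyond those already used in the orbital case; the only genuinely new feature is that every relation must hold uniformly over the shift index $N$, and both the soALP definition and the strong orbital shadowing property supply this directly.
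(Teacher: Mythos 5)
Your proposal is correct and takes essentially the same route as the paper's proof: the identical chain of choices $2E_0 \subseteq E$, then $D_1$ from uniform continuity of $\varphi$, then $D_2$ from strong orbital shadowing in $(X,f)$, then $W$ from soALP for part (1), and the same lift $x \in \varphi^{-1}(z)$ with $x_i = f^i(x)$ giving $\varphi(\overline{\{x_{N+i}\}_{i\in\omega}}) = \overline{\{g^{N+i}(z)\}_{i\in\omega}}$ for part (2). The only difference is that you spell out the hyperspace bookkeeping (that $\varphi$ commutes with closures by compactness and with iteration by the semiconjugacy, and that $2^{D_1}$-closeness of compact sets pushes forward to $2^{E_0}$-closeness) which the paper compresses into the phrase ``using uniform continuity and the triangle inequality.''
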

\begin{proof}
For (1), let $E \in \mathscr{U}_Y$ be given. Select $E_0 \in \mathscr{U}_Y$ with $2E_0 \subseteq E$. By the uniform continuity of $\varphi$ there exists $D_1 \in \mathscr{U}_X$ such that for all $a,b\in X$ with $(a,b)\in D_1$ one has $(\varphi(a),\varphi(b))\in E_0$. Next, let $D_2 \in \mathscr{U}_X$ be chosen so that $D_2$-pseudo-orbits in $X$ are $D_1$ strong orbital shadowed. Using $E_0$ and $D_2$ in the definition of soALP then provides $W \in \mathscr{U}_Y$ and we claim that $W$-pseudo-orbits in $(Y,g)$ are $E$-strong-orbital-shadowed. Indeed, given a $W$-pseudo-orbit $(y_i)_{i\in\omega}\subseteq Y$, there exists a $D_2$-pseudo-orbit $(x_i)_{i\in\omega}\subseteq X$ such that, for all $N \in \omega$, 
	\begin{align*}
		(\overline{\{y_{N+i}\}_{i\in\omega}},\varphi(\overline{\{x_{N+i}\}_{i\in\omega}}))\in 2^{E_0}.
	\end{align*}
Let $z\in X$ $D_1$-strong-orbital shadows $(x_i)_{i\in\omega}$. Then, using uniform continuity and the triangle inequality, one may conclude that $\varphi(z)$ $E$-strong-orbital shadows $(y_i)_{i\in\omega}$ as required.

For (2), fix $E \in \mathscr{U}_Y$ and $D \in \mathscr{U}_X$ and take $V\in \mathscr{U}$ to correspond to $E$ for strong orbital shadowing in $(Y,g)$. Let $(y_i)_{i\in\omega}$ to be an $V$-pseudo-orbit in $(Y,g)$ and let $z\in Y$ $E$-strong-orbital shadow it. Consider $x\in\varphi^{-1}(z)$ and define $x_i=f^i(x)$ for each $i\in\omega$ so that $(x_i)_{i\in\omega}$ is a $D$-pseudo-orbit in $(X,f)$. In particular, for any $N \in \omega$, one then has that
	\begin{align*}
		(\varphi(\overline{\{x_{N+i}\}_{i\in\omega}}),\overline{\{y_{N+i}\}_{i\in\omega}})&=(\overline{\varphi(\{x_{N+i}\}_{i\in\omega})},\overline{\{y_{N+i}\}_{i\in\omega}})\\
		&=(\overline{\{g^{N+i}(z)\}_{i\in\omega}},\overline{\{y_{N+i}\}_{i\in\omega}})\in 2^E.
	\end{align*}
\end{proof}

\subsection{Inverse limits}
\begin{theorem} 
Let $(X,f)$ be conjugate to a Mittag-Leffler inverse limit system comprised of maps with strong orbital shadowing on compact Hausdorff spaces. Then $(X,f)$ has strong orbital shadowing. 
\end{theorem}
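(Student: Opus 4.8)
The plan is to run essentially the same argument as the inverse-limit theorem for orbital shadowing, but carrying the tail parameter $N$ through at every stage. As there, I would work with the open-cover reformulation of strong orbital shadowing given in Definition \ref{DefnOpenCoverStrongOrbShad}, set up a directed set $(\Lambda,\geq)$ with each $(X_\lambda,f_\lambda)$ having strong orbital shadowing, assume without loss of generality that $(X,f)=(\varprojlim\{X_\lambda,g_\lambda^\eta\},f)$, and fix a finite open cover $\mathcal{U}$ of $X$. The basic-open-set structure of the inverse limit gives $\lambda\in\Lambda$ and a finite open cover $\mathcal{W}_\lambda$ of $X_\lambda$ with $\mathcal{W}\coloneqq\{\pi_\lambda^{-1}(W)\cap X\mid W\in\mathcal{W}_\lambda\}$ refining $\mathcal{U}$. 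I would then choose $\gamma\geq\lambda$ witnessing the Mittag-Leffler condition, set $\mathcal{W}_\gamma\coloneqq\{(g_\lambda^\gamma)^{-1}(W)\mid W\in\mathcal{W}_\lambda\}$, and invoke strong orbital shadowing in $(X_\gamma,f_\gamma)$ to obtain a finite open cover $\mathcal{V}_\gamma$ whose $\mathcal{V}_\gamma$-pseudo-orbits are $\mathcal{W}_\gamma$-strong-orbital-shadowed. Finally I set $\mathcal{V}=\{\pi_\gamma^{-1}(V)\cap X\mid V\in\mathcal{V}_\gamma\}$.

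Given a $\mathcal{V}$-pseudo-orbit $(x_i)_{i\in\omega}$ in $X$, the projected sequence $(\pi_\gamma(x_i))_{i\in\omega}$ is a $\mathcal{V}_\gamma$-pseudo-orbit in $X_\gamma$ and so is $\mathcal{W}_\gamma$-strong-orbital-shadowed by some $z\in X_\gamma$; using the Mittag-Leffler fact recorded after the definition, I then lift $g_\lambda^\gamma(z)$ to a point $z'\in\pi_\lambda^{-1}(g_\lambda^\gamma(z))\cap X$. The claim is that $z'$ then $\mathcal{U}$-strong-orbital-shadows $(x_i)_{i\in\omega}$. The identities that make this work, which I would state once and reuse for every $N\in\omega$, are $\pi_\lambda(f^N(z'))=f_\lambda^N(\pi_\lambda(z'))=f_\lambda^N(g_\lambda^\gamma(z))=g_\lambda^\gamma(f_\gamma^N(z))$, coming from the fact that $\pi_\lambda$ is a semiconjugacy onto $(X_\lambda,f_\lambda)$ together with condition (3) of Definition \ref{inverselim}; and the compactness/continuity identity $g(\overline{A})=\overline{g(A)}$ for a continuous $g$ on a compact Hausdorff space, which yields $\pi_\lambda(\overline{\Orb(f^N(z'))})=\overline{\Orb(f_\lambda^N(g_\lambda^\gamma(z)))}$ and $\pi_\lambda(\overline{\{x_{N+i}\}_{i\in\omega}})=\overline{\{\pi_\lambda(x_{N+i})\}_{i\in\omega}}$, with the analogous statements one level up under $g_\lambda^\gamma$.

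With these in hand the verification is bookkeeping. First one pushes the $X_\gamma$-shadowing forward by $g_\lambda^\gamma$: since every element of $\mathcal{W}_\gamma$ is a $g_\lambda^\gamma$-preimage of a member of $\mathcal{W}_\lambda$, the two membership conditions of Definition \ref{DefnOpenCoverStrongOrbShad} transport, for each $N$, to show that $g_\lambda^\gamma(z)$ $\mathcal{W}_\lambda$-strong-orbital-shadows $(\pi_\lambda(x_i))_{i\in\omega}$. Then for $z'$ in $X$, given $N\in\omega$ and $y\in\overline{\Orb(f^N(z'))}$, the projected point $\pi_\lambda(y)$ lies in $\overline{\Orb(f_\lambda^N(g_\lambda^\gamma(z)))}$, so there are $W\in\mathcal{W}_\lambda$ and $v\in\overline{\{\pi_\lambda(x_{N+i})\}_{i\in\omega}}$ with $\pi_\lambda(y),v\in W$; the projection identity writes $v=\pi_\lambda(y')$ for some $y'\in\overline{\{x_{N+i}\}_{i\in\omega}}$, whence $y,y'\in\pi_\lambda^{-1}(W)\cap X\subseteq U$ for some $U\in\mathcal{U}$ since $\mathcal{W}$ refines $\mathcal{U}$. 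The symmetric membership condition is handled identically with the roles of orbit and pseudo-orbit reversed.

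I expect the main obstacle to be purely organisational rather than conceptual: one must ensure that a \emph{single} shadowing point $z$ is produced in $X_\gamma$ that simultaneously handles all tails $N$ (which is exactly what strong orbital shadowing, as opposed to orbital shadowing, provides), and that the lift $z'$ inherits this uniformly-in-$N$ property. The reason it goes through is that $f$ commutes with both $\pi_\lambda$ and $g_\lambda^\gamma$, so the $N$-th tail of $\Orb(z')$ always projects onto the $N$-th tail of $\Orb(g_\lambda^\gamma(z))$; thus no new choice of shadowing point is needed as $N$ varies, and the only care required is in the closure-versus-projection interchanges, which hold because all spaces involved are compact Hausdorff.
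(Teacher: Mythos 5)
Your proposal is correct and is essentially the paper's own proof: the same open-cover setup, the same choice of $\lambda$, $\gamma$, $\mathcal{W}_\lambda$, $\mathcal{W}_\gamma$, $\mathcal{V}_\gamma$, $\mathcal{V}$, the same pushforward of the shadowing point through $g_\lambda^\gamma$ and Mittag-Leffler lift to $z'\in\pi_\lambda^{-1}(g_\lambda^\gamma(z))\cap X$. The only difference is that you spell out the semiconjugacy and closure-versus-projection identities that the paper compresses into ``by construction, it follows that,'' which is a faithful expansion rather than a different route.
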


\begin{proof}
Let $(\Lambda, \geq)$ be a directed set. For each $\lambda \in \Lambda$, let $(X_\lambda, f_\lambda)$ be a dynamical system on a compact Hausdorff space with strong orbital shadowing and let $((X_\lambda, f_\lambda), g^\eta _\lambda)$ be a Mittag-Leffler inverse system. Without loss of generality $(X,f)=(\varprojlim \{ X_\lambda, g_\lambda ^\eta\}, f )$.

Let $\mathcal{U}$ be a finite open cover of $X$. Since $X =\varprojlim \{ X_\lambda, g_\lambda ^\eta\}$ there exist $\lambda \in \Lambda$ and a finite open cover $\mathcal{W}_\lambda$ of $X_\lambda$ such that $\mathcal{W}\coloneqq \{\pi_\lambda ^{-1}(W) \cap X \mid W \in \mathcal{W}_\lambda \}$ refines $\mathcal{U}$. Now let $\gamma \in \Lambda$ witness the Mittag-Leffler condition with respect to $\lambda$. Let $\mathcal{W}_\gamma \coloneqq \{ g ^\gamma _ \lambda {}^{(-1)} (W) \colon W \in \mathcal{W}_\lambda\}$. By strong orbital shadowing for $(X_\gamma, f_\gamma)$ there exists a finite open cover $\mathcal{V}_\gamma$ of $X_\gamma$ such that every $\mathcal{V}_\gamma$-pseudo-orbit in $X_\gamma$ is $\mathcal{W}_\gamma$-strong-orbital-shadowed. Take $\mathcal{V}=\{ \pi_\gamma ^{-1}(V) \cap X \mid V \in \mathcal{V}_\gamma \}$ and suppose $(x_i)_{i \in \omega}$ is a $\mathcal{V}$-pseudo-orbit in $X$. It follows that $(\pi_\gamma(x_i))_{i \in \omega}$ is a $\mathcal{V}_\gamma$-pseudo-orbit in $X_\gamma$, which means there is a point $z \in X_\gamma$ which $\mathcal{W}_\gamma$-strong-orbital-shadows it. By construction, it follows that $g^\gamma _\lambda (z)$ $\mathcal{W}_\lambda$-strong-orbital-shadows $(\pi_\lambda(x_i))_{i \in \omega}$. Since the system is Mittag-Leffler there exists $y \in \pi_\lambda ^{-1}(g^\gamma _\lambda (z)) \cap X$. It follows that $y$ $\mathcal{W}$-strong-orbital-shadows $(x_i)_{i \in \omega}$. Since $\mathcal{W}$ is a refinement of $\mathcal{U}$ the result follows.
\end{proof}

\subsection{Tychonoff product}

\begin{remark}
A product of systems with strong orbital shadowing need not have strong orbital shadowing. The component systems in Example \ref{ExampleProdOrbNotOrb} have strong orbital shadowing however their product does not have strong orbital shadowing (since it does not have orbital shadowing).
\end{remark}
\section{First Weak Shadowing}

First weak shadowing was introduced by the authors in \cite{CorlessPilyugin} where it was called weak shadowing. The name was revised to first weak shadowing in \cite{PiluginRodSakai2002} to accommodate another similar natural weakening of shadowing, i.e. second weak shadowing.

As stated in Section \ref{ShadowingTypes}, a dynamical system $(X,f)$ has the \textit{first weak shadowing} property if for all $E \in \mathscr{U}$, there exists $D \in \mathscr{U}$ such that for any $D$-pseudo-orbit $( x_i)_{i \in \omega}$, there exists a point $z$ such that

\[\{x_i\}_{i\in\omega} \subseteq B_E\left( \Orb(z)\right).\]

\subsection{Induced map on the hyperspace of compact sets}

\begin{theorem}\label{FirstWeakHyp}
Let $X$ be a compact Hausdorff space, and let $f \colon X \to X$ be a continuous function. If the hyperspace system $(2^X, 2^f)$ witnesses the first weak shadowing property then system $(X,f)$ has first weak shadowing.
\end{theorem}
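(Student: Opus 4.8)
The plan is to follow the template of the earlier hyperspace results (e.g.\ Theorems~\ref{thmEventualShadHyperspace} and~\ref{OrbHyp}): lift a pseudo-orbit in $X$ to a pseudo-orbit of singletons in $2^X$, apply first weak shadowing in the hyperspace to obtain a compact shadowing set $A$, and then extract from $A$ a single point $z$ that first-weak-shadows the original pseudo-orbit in $X$.

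First I would fix $E \in \mathscr{U}$, taken to be symmetric by Remark~\ref{RemarkAssumeEntouragesAreSymmetric}. Since $\{2^D : D \in \mathscr{U}\}$ is a base for $2^\mathscr{U}$, applying first weak shadowing in $(2^X,2^f)$ to the entourage $2^E$ yields some $\mathcal{D} \in 2^\mathscr{U}$, and I may then choose $D \in \mathscr{U}$ with $2^D \subseteq \mathcal{D}$. I claim this $D$ witnesses first weak shadowing for $E$ in $(X,f)$. Indeed, given a $D$-pseudo-orbit $(x_i)_{i\in\omega}$ in $X$, the sequence of singletons $(\{x_i\})_{i\in\omega}$ is a $2^D$-pseudo-orbit in $2^X$ (here one uses symmetry of $D$ to verify both inclusions defining membership in $2^D$), and hence a $\mathcal{D}$-pseudo-orbit. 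First weak shadowing in the hyperspace then provides $A \in 2^X$ with $\{\{x_i\}\}_{i\in\omega} \subseteq B_{2^E}(\Orb(A))$.

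The crux is to produce a single point out of the compact set $A$. Unpacking the containment, for each $i \in \omega$ there is some $j_i \in \omega$ with $(\{x_i\}, (2^f)^{j_i}(A)) \in 2^E$, that is, $\{x_i\} \subseteq B_E(f^{j_i}(A))$ and $f^{j_i}(A) \subseteq B_E(x_i)$. The naive attempt---choosing a point $a_i \in A$ realising the first inclusion---fails because $a_i$ depends on $i$, whereas first weak shadowing demands one point whose orbit approximates \emph{every} $x_i$. This is the only genuine obstacle, and it is dissolved by using the \emph{second} inclusion instead: $f^{j_i}(A) \subseteq B_E(x_i)$ says that \emph{every} point of $f^{j_i}(A)$ is $E$-close to $x_i$. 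Thus, fixing any $z \in A$ once and for all (possible since $A \neq \emptyset$), symmetry of $E$ gives $(x_i, f^{j_i}(z)) \in E$, so that $x_i \in B_E(f^{j_i}(z)) \subseteq B_E(\Orb(z))$ for every $i$. Hence $\{x_i\}_{i\in\omega} \subseteq B_E(\Orb(z))$, as required. I expect no further difficulties: unlike the orbital-shadowing arguments, no refinement of the form $4E_0 \subseteq E$ and no case analysis are needed, because one of the two inclusions packaged in $2^E$ already hands over control of the whole set $f^{j_i}(A)$, and therefore of any fixed $z \in A$.
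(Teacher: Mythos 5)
Your proposal is correct and takes essentially the same route as the paper: the paper likewise lifts $(x_i)_{i\in\omega}$ to the singleton pseudo-orbit $(\{x_i\})_{i\in\omega}$, obtains $A \in 2^X$ with $\{\{x_i\}\}_{i\in\omega} \subseteq B_{2^E}(\Orb(A))$, fixes an arbitrary $a \in A$, and exploits exactly the inclusion $f^m(A) \subseteq B_E(x_l)$ packaged in $(\{x_l\}, f^m(A)) \in 2^E$ to get $(x_l, f^m(a)) \in E$ for every $l$. Your additional care about passing from $2^E$ to a basic entourage $2^D \subseteq \mathcal{D}$ and about symmetry of entourages merely makes explicit what the paper leaves to Remark~\ref{RemarkAssumeEntouragesAreSymmetric} and the fact that $\{2^D : D \in \mathscr{U}\}$ is a base for $2^{\mathscr{U}}$.
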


\begin{proof}
Let $E \in \mathscr{U}$ be given. Let $D \in \mathscr{U}$ be such that $2^D$ corresponds to $2^E$ in first weak shadowing for $2^f$. Let $(x_i)_{i \in \omega}$ be a $D$-pseudo-orbit in $X$. We then have that $\left(\{x_i\}\right)_{i\in \omega}$ is a $2^D$-pseudo-orbit in $2^X$; let $A \in 2^X$ be such that 
\[\left\{\{x_i\}\right\}_{i\in \omega} \subseteq B_{2^E}\left(\Orb(A)\right).\]
Fix $a \in A$. Pick $l \in \omega$ arbitrarily. There exists $m \in \omega$ such that $\{x_l\} \in B_{2^E}\left(f^m(A)\right)$, i.e. $\left(\{x_l\}, f^m(A)\right) \in 2^E$. This implies $(x_l,f^m(a)) \in E$. Since $l \in \omega$ was picked arbitrarily it follows that 
\[\{x_i\}_{i\in \omega} \subseteq B_E\left(\Orb(a)\right).\]

\end{proof}

\begin{remark}
Example \ref{OrbHypExample} shows that the converse to Theorem \ref{FirstWeakHyp} is not true; in general hyperspace systems do not preserve the first weak shadowing property.
\end{remark}

\subsection{Symmetric products}

The proof of Theorem \ref{FirstWeakSymProd} is very similar to that of Theorem \ref{FirstWeakHyp} and is thereby omitted.

\begin{theorem}\label{FirstWeakSymProd}
Let $X$ be a compact Hausdorff space, and let $f \colon X \to X$ be a continuous onto function. For any $n \geq 2$, if the symmetric product system $(F_n(X), f_n)$ has first weak shadowing then $(X,f)$ has first weak shadowing.
\end{theorem}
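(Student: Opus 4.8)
The plan is to follow the proof of Theorem~\ref{FirstWeakHyp} almost verbatim, the only change being that everything now takes place inside the subspace $F_n(X)\subseteq 2^X$. The single observation that makes this transfer work is that a singleton $\{x\}$ lies in $F_n(X)$ for every $n\geq 1$, so any pseudo-orbit $(x_i)_{i\in\omega}$ in $X$ lifts canonically to the pseudo-orbit of singletons $(\{x_i\})_{i\in\omega}$ in $F_n(X)$, and the dynamics match since $f_n(\{x\})=\{f(x)\}$.

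Concretely, given $E\in\mathscr{U}$ I would first choose $D\in\mathscr{U}$ so that $2^D\cap(F_n(X)\times F_n(X))$ corresponds to $2^E\cap(F_n(X)\times F_n(X))$ in the first weak shadowing property for $(F_n(X),f_n)$. Given a $D$-pseudo-orbit $(x_i)_{i\in\omega}$ in $X$, the lifted sequence $(\{x_i\})_{i\in\omega}$ is then a $2^D$-pseudo-orbit in $F_n(X)$, so first weak shadowing supplies a set $A\in F_n(X)$ with $\{\{x_i\}\}_{i\in\omega}\subseteq B_{2^E}(\Orb(A))$, where the orbit is taken under $f_n$.

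The crux, and the only step requiring any care, is to pass from a shadowing \emph{set} $A$ to a single shadowing \emph{point}. I would fix an arbitrary $a\in A$ and argue that $\Orb(a)$ already $E$-covers $(x_i)$. Fix an index $l$; first weak shadowing yields $m\in\omega$ with $(\{x_l\},f_n^m(A))\in 2^E$. The key point is that one side of the relation $2^E$ reads $f^m(A)=f_n^m(A)\subseteq B_E(\{x_l\})=B_E(x_l)$, and because the target is a \emph{single} point this forces \emph{every} element of $f^m(A)$ to be $E$-close to $x_l$; in particular $(x_l,f^m(a))\in E$. Using that all entourages may be taken symmetric (Remark~\ref{RemarkAssumeEntouragesAreSymmetric}) gives $x_l\in B_E(f^m(a))\subseteq B_E(\Orb(a))$. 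This collapsing phenomenon---that an $E$-ball about a singleton traps the whole image set $f^m(A)$---is exactly what lets one coordinate $a$ of $A$ carry the shadowing, and it is the heart of the argument; I expect the only real pitfall to be tracking which of the two inclusions defining $2^E$ is being invoked.

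Since $l$ was arbitrary, $\{x_i\}_{i\in\omega}\subseteq B_E(\Orb(a))$, which is precisely first weak shadowing for $(X,f)$. Note that the surjectivity hypothesis on $f$ plays no role in this argument and is presumably included only for uniformity with the rest of the section.
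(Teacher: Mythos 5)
Your proposal is correct and is essentially the paper's own argument: the paper omits the proof of Theorem \ref{FirstWeakSymProd}, stating it is very similar to that of Theorem \ref{FirstWeakHyp}, and your adaptation --- lifting $(x_i)_{i\in\omega}$ to the singleton pseudo-orbit $(\{x_i\})_{i\in\omega}$ in $F_n(X)$, extracting a shadowing set $A \in F_n(X)$, and using the inclusion $f_n^m(A)\subseteq B_E(\{x_l\})$ together with the standing symmetry assumption on entourages to pass to a single point $a\in A$ --- is precisely that proof transplanted into the subspace, with the relativised entourages $2^D\cap\left(F_n(X)\times F_n(X)\right)$ handled just as in the paper's other symmetric-product theorems. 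Your side remark that surjectivity of $f$ plays no role in the argument is also accurate.
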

\begin{proof}
Omitted.
\end{proof}

\begin{remark}
The converse of Theorem \ref{FirstWeakSymProd} is false. It is clear that Example \ref{OrbHypExample} may be suitably adjusted to provide a counterexample. Indeed, with sufficient adjustments, one can see that, for any $n \geq 2$, $(X,f)$ witnessing first weak shadowing does not generally imply that $(F_n(X),f_n)$ has first weak shadowing.
\end{remark}

\subsection{Factor maps}


\begin{definition}
Let $(X,f)$ and $(Y,g)$ be dynamical systems where $X$ and $Y$ are compact Hausdorff spaces. A factor map $\varphi\colon X\to Y$ between compact Hausdorff spaces $X$ and $Y$ is \emph{w1ALP} if for every $V \in \mathscr{U}_Y$ and $D \in \mathscr{U}_X$ there exists $W \in \mathscr{U}_Y$ such that for all $W$-pseudo-orbits $(y_i)\subseteq Y$, there exists a $D$-pseudo-orbit $(x_i)\subseteq X$ for which $\{y_i\}_{i \in \omega} \subseteq B_{E}(\{\varphi(x_i)\}_{i \in \omega})$.
\end{definition}

If $X$ and $Y$ are compact metric spaces, then $\varphi$ is w1ALP if and only if for every $\epsilon>0$ and $\eta>0$ there exists $\delta>0$ such that for any $\delta$-pseudo-orbit in $Y$, there exists an $\eta$-pseudo-orbit in $X$ for which 
$\{y_i\}_{i \in \omega} \subseteq B_{\epsilon}(\{\varphi(x_i)\}_{i \in \omega})$.

\begin{theorem}\label{ThmALPFirstWeakShad}
Suppose that $\varphi \colon (X,f)\to (Y,g)$ is a factor map.
	\begin{enumerate}
		\item If $(X,f)$ exhibits first weak shadowing and $\varphi$ is w1ALP, then $(Y,g)$ exhibits first weak shadowing.
		\item If $(Y,g)$ exhibits first weak shadowing, then $\varphi$ is w1ALP.
	\end{enumerate}
\end{theorem}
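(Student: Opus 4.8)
The plan is to follow the same template as the proofs of Theorems \ref{ThmALPOrbShad} and \ref{ThmALPStrongOrbShad}, adapting it to the \emph{one-sided} containment that characterises first weak shadowing (only $\{x_i\}_{i\in\omega} \subseteq B_E(\Orb(z))$ is required, rather than a two-sided $2^E$ condition). Throughout I would use Remark \ref{RemarkAssumeEntouragesAreSymmetric} to assume entourages are symmetric, the standard fact that $B_{E_0}(B_{E_0}(A)) \subseteq B_{2E_0}(A)$, and the semiconjugacy identity $\varphi(\Orb(z)) = \Orb(\varphi(z))$, which follows from $\varphi \circ f = g \circ \varphi$.

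For part (1), given $E \in \mathscr{U}_Y$ I would first pick $E_0 \in \mathscr{U}_Y$ with $2E_0 \subseteq E$, then use uniform continuity of $\varphi$ to obtain $D_1 \in \mathscr{U}_X$ with $(a,b) \in D_1 \Rightarrow (\varphi(a), \varphi(b)) \in E_0$. Next I would invoke first weak shadowing of $(X,f)$ to get $D_2 \in \mathscr{U}_X$ such that every $D_2$-pseudo-orbit is $D_1$-first-weak-shadowed, and finally feed $E_0$ and $D_2$ into the definition of w1ALP to extract $W \in \mathscr{U}_Y$. The claim is that $W$ works for $E$: given a $W$-pseudo-orbit $(y_i)$ in $Y$, w1ALP yields a $D_2$-pseudo-orbit $(x_i)$ in $X$ with $\{y_i\}_{i\in\omega} \subseteq B_{E_0}(\{\varphi(x_i)\}_{i\in\omega})$; first weak shadowing yields $z \in X$ with $\{x_i\}_{i\in\omega} \subseteq B_{D_1}(\Orb(z))$; applying $\varphi$ and using uniform continuity gives $\{\varphi(x_i)\}_{i\in\omega} \subseteq B_{E_0}(\Orb(\varphi(z)))$. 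Chaining the two containments gives $\{y_i\}_{i\in\omega} \subseteq B_{E_0}(B_{E_0}(\Orb(\varphi(z)))) \subseteq B_{2E_0}(\Orb(\varphi(z))) \subseteq B_E(\Orb(\varphi(z)))$, so $\varphi(z)$ $E$-first-weak-shadows $(y_i)$.

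For part (2), given $V \in \mathscr{U}_Y$ and $D \in \mathscr{U}_X$, I would take $W \in \mathscr{U}_Y$ to correspond to $V$ in first weak shadowing for $(Y,g)$. For a $W$-pseudo-orbit $(y_i)$ in $Y$, let $z \in Y$ first-weak-shadow it, so $\{y_i\}_{i\in\omega} \subseteq B_V(\Orb(z))$. Using surjectivity of $\varphi$, choose $x \in \varphi^{-1}(z)$ and set $x_i = f^i(x)$; this is a genuine orbit, hence trivially a $D$-pseudo-orbit for every $D$, and $\varphi(x_i) = g^i(z)$ so that $\{\varphi(x_i)\}_{i\in\omega} = \Orb(z)$. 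Then $\{y_i\}_{i\in\omega} \subseteq B_V(\Orb(z)) = B_V(\{\varphi(x_i)\}_{i\in\omega})$, which is exactly the w1ALP condition.

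The routine verifications (closing up the ball compositions via the triangle-type inequality for entourages and confirming that $\varphi$ maps orbits to orbits) are straightforward; the only point needing care is that first weak shadowing is asymmetric, so I must track which set is contained in a neighbourhood of the other and apply the uniform-continuity estimate to the correct containment. Unlike the orbital-shadowing proofs there is no second (``reverse'') containment to establish, which actually simplifies the argument rather than complicating it.
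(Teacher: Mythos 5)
Your proposal is correct and follows essentially the same route as the paper's own proof: in part (1) you make the identical choices ($E_0$ with $2E_0 \subseteq E$, then $D_1$ by uniform continuity, $D_2$ by first weak shadowing, $W$ by w1ALP) and chain the same one-sided containments $\{y_i\}_{i\in\omega} \subseteq B_{E_0}(\{\varphi(x_i)\}_{i\in\omega}) \subseteq B_{2E_0}(\Orb(\varphi(z))) \subseteq B_E(\Orb(\varphi(z)))$, and in part (2) you lift the shadowing point through $\varphi^{-1}$ and use the genuine orbit as the trivial $D$-pseudo-orbit, exactly as the paper does. Your closing observation that the asymmetry of first weak shadowing removes the reverse containment needed in the orbital cases is also how the paper's argument simplifies relative to Theorems \ref{ThmALPOrbShad} and \ref{ThmALPStrongOrbShad}.
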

\begin{proof}
For (1), let $E \in \mathscr{U}_Y$ be given. Select $E_0 \in \mathscr{U}_Y$ with $2E_0 \subseteq E$. By the uniform continuity of $\varphi$ there exists $D_1 \in \mathscr{U}_X$ such that for all $a,b\in X$ with $(a,b)\in D_1$ one has $(\varphi(a),\varphi(b))\in E_0$. Next, let $D_2 \in \mathscr{U}_X$ be chosen so that $D_2$-pseudo-orbits in $X$ are $D_1$ first weak shadowed. Extract $W \in \mathscr{U}_Y$ from the definition of w1ALP using $E_0$ and $D_2$, we claim that $W$-pseudo-orbits of $(Y,g)$ are then $E$ first weak shadowed in $(Y,g)$. Indeed, let $(y_i)_{i\in\omega}\subseteq Y$ be a $W$-pseudo-orbit and let $(x_i)_{i\in\omega}\subseteq X$ be a $D_2$-pseudo-orbit lifted through $\varphi$, that is,
	\begin{align*}
		\{y_i\}_{i\in\omega}\subseteq B_{E_0}(\{x_i\}_{i\in\omega}).
	\end{align*}
Suppose $z\in X$ $D_1$ first weak shadows $(x_i)_{i\in\omega}$ so that for each $i\in\omega$, there exists $j\in\omega$ such that $(x_i,f^j(z)) \in D_1$. Then
	\begin{align*}
		(\varphi(x_i),\varphi(f^j(z)))=(\varphi(x_i),g^j(\varphi(z))) \in E_0.
	\end{align*}
In turn, this provides
	\begin{align*}
		\{\varphi(x_i)\}_{i\in\omega}\subseteq B_{E_0}(\Orb(\varphi(z))),
	\end{align*}
and hence,
	\begin{align*}
		\{y_i\}_{i\in\omega}\subseteq B_{E_0}(\{x_i\}_{i\in\omega})\subseteq B_{E}(\Orb(\varphi(z))).
	\end{align*}

For (2), let $E \in \mathscr{U}_Y$ and $D \in \mathscr{U}_X$ be given and let $V \in \mathscr{U}_Y$ exhibit $E$ first weak shadowing in $(Y,g)$. Consider a $V$-pseudo-orbit $(y_i)_{i\in\omega}\subseteq Y$ and let $z\in Y$ $E$ first weak shadow it. By surjectivity of $\varphi$, there exists $x\in\varphi^{-1}(z)$ so one may construct the orbit $(x_i)_{i\in\omega}=(f^i(x))_{i\in\omega}$ which is trivially a $D$-pseudo-orbit. Then, for all $i\in\omega$ there exists $j\in\omega$ such that 
	\begin{align*}
		(y_i,g^j(z))=(y_i,\varphi(x_j))\in E,
	\end{align*}
and hence,
	\begin{align*}
		\{y_i\}_{i\in\omega}\subseteq B_E(\{\varphi(x_i)\}_{i\in\omega})
	\end{align*}
so that $\varphi$ is w1ALP.
\end{proof}

\subsection{Inverse limits}

\begin{theorem} 
Let $(X,f)$ be conjugate to a Mittag-Leffler inverse limit system comprised of maps with first weak shadowing on compact Hausdorff spaces. Then $(X,f)$ has first weak shadowing. 
\end{theorem}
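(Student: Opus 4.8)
The plan is to follow the same template as the inverse-limit arguments already given for eventual, orbital and strong orbital shadowing, working throughout with the open-cover formulation of first weak shadowing. First I would reduce to the case $(X,f)=(\varprojlim\{X_\lambda,g^\eta_\lambda\},f)$, transporting first weak shadowing across the conjugacy (immediate, since it is a conjugacy invariant). Fix a finite open cover $\mathcal{U}$ of $X$. Because the cylinder sets $\pi_\lambda^{-1}(W)\cap X$ form a base for the inverse-limit topology, I can choose a single index $\lambda$ and a finite open cover $\mathcal{W}_\lambda$ of $X_\lambda$ with $\mathcal{W}\coloneqq\{\pi_\lambda^{-1}(W)\cap X\mid W\in\mathcal{W}_\lambda\}$ refining $\mathcal{U}$; it then suffices to produce a point that $\mathcal{W}$-first-weak-shadows any sufficiently fine pseudo-orbit.

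Next I would invoke the Mittag-Leffler hypothesis to select $\gamma\geq\lambda$ witnessing the condition with respect to $\lambda$, and pull $\mathcal{W}_\lambda$ back to a finite open cover $\mathcal{W}_\gamma\coloneqq\{(g^\gamma_\lambda)^{-1}(W)\mid W\in\mathcal{W}_\lambda\}$ of $X_\gamma$. Applying first weak shadowing in the component system $(X_\gamma,f_\gamma)$ yields a finite open cover $\mathcal{V}_\gamma$ of $X_\gamma$ such that every $\mathcal{V}_\gamma$-pseudo-orbit is $\mathcal{W}_\gamma$-first-weak-shadowed; I set $\mathcal{V}\coloneqq\{\pi_\gamma^{-1}(V)\cap X\mid V\in\mathcal{V}_\gamma\}$. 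Given a $\mathcal{V}$-pseudo-orbit $(x_i)_{i\in\omega}$ in $X$, its projection $(\pi_\gamma(x_i))_{i\in\omega}$ is a $\mathcal{V}_\gamma$-pseudo-orbit, so some $z\in X_\gamma$ $\mathcal{W}_\gamma$-first-weak-shadows it.

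The substance is then to push this shadowing point down to $X_\lambda$ and lift it back into $X$. Applying $g^\gamma_\lambda$ and using $g^\gamma_\lambda\circ\pi_\gamma=\pi_\lambda$ together with the semiconjugacy relation $g^\gamma_\lambda\circ f_\gamma=f_\lambda\circ g^\gamma_\lambda$, I would verify that $g^\gamma_\lambda(z)$ $\mathcal{W}_\lambda$-first-weak-shadows $(\pi_\lambda(x_i))_{i\in\omega}$: from $\pi_\gamma(x_i),f_\gamma^j(z)\in(g^\gamma_\lambda)^{-1}(W)$ one obtains $\pi_\lambda(x_i),f_\lambda^j(g^\gamma_\lambda(z))\in W$. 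Since $\gamma$ witnesses the Mittag-Leffler condition and $g^\gamma_\lambda(z)\in g^\gamma_\lambda(X_\gamma)$, the fact recorded after the Mittag-Leffler definition supplies a point $y\in\pi_\lambda^{-1}(g^\gamma_\lambda(z))\cap X$. Finally, for each $i$ I choose $W\in\mathcal{W}_\lambda$ and $j$ with $\pi_\lambda(x_i),f_\lambda^j(g^\gamma_\lambda(z))\in W$; noting $f_\lambda^j(g^\gamma_\lambda(z))=\pi_\lambda(f^j(y))$ places both $x_i$ and $f^j(y)$ in $\pi_\lambda^{-1}(W)\cap X\subseteq U$ for some $U\in\mathcal{U}$, so $y$ $\mathcal{U}$-first-weak-shadows $(x_i)_{i\in\omega}$.

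I expect no genuinely hard step. First weak shadowing asks only for the single inclusion $\{x_i\}_{i\in\omega}\subseteq B_E(\Orb(z))$, so unlike the two-sided orbital shadowing argument I need transport only one containment through the semiconjugacy and the Mittag-Leffler lift. The only point requiring care is confirming that the open-cover first-weak-shadowing relation is genuinely preserved by the factor map $g^\gamma_\lambda$ and then survives the lift to $y$; this is exactly the bookkeeping with $g^\gamma_\lambda\circ\pi_\gamma=\pi_\lambda$ and the commutation with $f_\lambda$, identical in spirit to the strong orbital shadowing proof above.
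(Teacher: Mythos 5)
Your proposal is correct and follows essentially the same argument as the paper's proof, which applies verbatim the inverse-limit template used for eventual and strong orbital shadowing: base the cover at a single coordinate $\lambda$, pass to a Mittag-Leffler witness $\gamma$, shadow the projected pseudo-orbit in $X_\gamma$, push the shadowing point down via $g^\gamma_\lambda$, and lift it into the inverse limit. The only cosmetic difference is that your final verification quantifies over the points $x_i$ rather than the closure $\overline{\{x_i\mid i\in\omega\}}$ appearing in the open-cover definition, but this is repaired immediately since $\pi_\lambda$ and $g^\gamma_\lambda$ are continuous maps of compact Hausdorff spaces (so image of closure equals closure of image), a detail the paper's proof elides in exactly the same way.
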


\begin{proof}
Let $(\Lambda, \geq)$ be a directed set. For each $\lambda \in \Lambda$, let $(X_\lambda, f_\lambda)$ be a dynamical system on a compact Hausdorff space with first weak shadowing and let $((X_\lambda, f_\lambda), g^\eta _\lambda)$ be a Mittag-Leffler inverse system. Without loss of generality $(X,f)=(\varprojlim \{ X_\lambda, g_\lambda ^\eta\}, f )$.

Let $\mathcal{U}$ be a finite open cover of $X$. Since $X =\varprojlim \{ X_\lambda, g_\lambda ^\eta\}$ there exist $\lambda \in \Lambda$ and a finite open cover $\mathcal{W}_\lambda$ of $X_\lambda$ such that $\mathcal{W}\coloneqq \{\pi_\lambda ^{-1}(W) \cap X \mid W \in \mathcal{W}_\lambda \}$ refines $\mathcal{U}$. Now let $\gamma \in \Lambda$ witness the Mittag-Leffler condition with respect to $\lambda$. Let $\mathcal{W}_\gamma \coloneqq \{ g ^\gamma _ \lambda {}^{(-1)} (W) \colon W \in \mathcal{W}_\lambda\}$. By first weak shadowing for $(X_\gamma, f_\gamma)$ there exists a finite open cover $\mathcal{V}_\gamma$ of $X_\gamma$ such that every $\mathcal{V}_\gamma$-pseudo-orbit in $X_\gamma$ is $\mathcal{W}_\gamma$-first-weak-shadowed. Take $\mathcal{V}=\{ \pi_\gamma ^{-1}(V) \cap X \mid V \in \mathcal{V}_\gamma \}$ and suppose $(x_i)_{i \in \omega}$ is a $\mathcal{V}$-pseudo-orbit in $X$. It follows that $(\pi_\gamma(x_i))_{i \in \omega}$ is a $\mathcal{V}_\gamma$-pseudo-orbit in $X_\gamma$, which means there is a point $z \in X_\gamma$ which $\mathcal{W}_\gamma$-first-weak-shadows it. By construction, it follows that $g^\gamma _\lambda (z)$ $\mathcal{W}_\lambda$-first-weak-shadows $(\pi_\lambda(x_i))_{i \in \omega}$. Since the system is Mittag-Leffler there exists $y \in \pi_\lambda ^{-1}(g^\gamma _\lambda (z)) \cap X$. It follows that $y$ $\mathcal{W}$-first-weak-shadows $(x_i)_{i \in \omega}$. Since $\mathcal{W}$ is a refinement of $\mathcal{U}$ the result follows.
\end{proof}

\subsection{Tychonoff product}

\begin{remark}
A product of systems with first weak shadowing need not have first weak shadowing. Example \ref{ExampleProdOrbNotOrb} demonstrates this.
\end{remark}
\section{Preservation of Second Weak Shadowing}

The compact metric version of second weak shadowing was first introduced in \cite{PiluginRodSakai2002}. 
Recall that a system $(X,f)$ has the second weak shadowing property if for all $E \in \mathscr{U}$, there exists $D \in \mathscr{U}$ such that for any $D$-pseudo-orbit $( x_i)_{i \in \omega}$, there exists a point $z$ such that

\[\Orb(z) \subseteq B_E\left( \{x_i\}_{i\in\omega}\right).\]

Pilyugin \textit{et al} \cite{PiluginRodSakai2002} show that every compact metric system exhibits this property. This result extends to a compact Hausdorff setting \cite{Mitchell}. Since the hyperspace, symmetric product, inverse limit and tychonoff product of (a) compact Hausdorff system(s) are themselves compact Hausdorff it follows that any of these induced systems will also have the second weak shadowing property.
\section{Preservation of Limit Shadowing}

Limit shadowing was introduced in \cite{EirolarNevanlinnaPilyugin} with reference to hyperbolic sets. Recall that a system $(X, f)$ is said to have \textit{limit shadowing} if every asymptotic pseudo-orbit is asymptotically shadowed.

\subsection{Induced map on the hyperspace of compact sets}

\begin{theorem}\label{ThmLimShadHyp} Let $X$ be a compact Hausdorff space and let $f \colon X \to X$ be a continuous function. If $(2^X,2^f)$ has limit shadowing then $(X,f)$ has limit shadowing.
\end{theorem}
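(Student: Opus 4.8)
The plan is to follow the template of Theorem~\ref{thmEventualShadHyperspace}, embedding $X$ into $2^X$ as singletons and transferring an asymptotic pseudo-orbit upstairs and an asymptotic shadowing point back downstairs. The one fact I would record first is that, for a symmetric $E \in \mathscr{U}$, one has $(\{a\},\{b\}) \in 2^E$ if and only if $(a,b) \in E$; this is immediate from the definition of $2^E$ together with Remark~\ref{RemarkAssumeEntouragesAreSymmetric}. Note also that $(2^f)^i(\{x\}) = \{f^i(x)\}$ for every $x \in X$ and $i \in \omega$.

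First I would let $(x_i)_{i\in\omega}$ be an arbitrary asymptotic pseudo-orbit in $X$ and consider the sequence of singletons $(\{x_i\})_{i\in\omega}$ in $2^X$. Applying the two observations above, the defining condition of an asymptotic pseudo-orbit transfers verbatim: for each $E \in \mathscr{U}$ the index $N$ witnessing the condition for $(x_i)_{i\in\omega}$ also witnesses $((2^f)^i(\{x_i\}), \{x_{i+1}\}) \in 2^E$ for all $i \geq N$, so $(\{x_i\})_{i\in\omega}$ is an asymptotic pseudo-orbit in $2^X$.

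Next I would invoke limit shadowing for $(2^X,2^f)$ to produce $A \in 2^X$ that asymptotically shadows $(\{x_i\})_{i\in\omega}$; thus for each $E \in \mathscr{U}$ there is $N$ with $((2^f)^i(A),\{x_i\}) = (f^i(A),\{x_i\}) \in 2^E$ for all $i \geq N$. The half of the $2^E$-membership that I need is the inclusion $f^i(A) \subseteq B_E(x_i)$. Since $A$ is nonempty, I would fix any $a \in A$ once and for all; then for each $E$ the corresponding $N$ gives $f^i(a) \in B_E(x_i)$, i.e. $(x_i,f^i(a)) \in E$, for all $i \geq N$. As $E$ was arbitrary this says exactly that $a$ asymptotically shadows $(x_i)_{i\in\omega}$, establishing limit shadowing for $(X,f)$.

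There is no serious obstacle here: unlike the $E$-$D$ shadowing notions, limit shadowing carries no refinement quantifier, so no entourage must be adjusted and the proof is purely a matter of the singleton correspondence. The only points meriting care are that the point $a$ must be chosen before quantifying over $E$ (legitimate because the inclusion $f^i(A) \subseteq B_E(x_i)$ holds for all of $A$ at once) and that nonemptiness of $A$, guaranteed by $A \in 2^X$, is what supplies $a$.
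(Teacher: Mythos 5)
Your proposal is correct and takes essentially the same route as the paper's own proof: lift $(x_i)_{i\in\omega}$ to the singleton asymptotic pseudo-orbit $(\{x_i\})_{i\in\omega}$ in $2^X$, obtain an asymptotically shadowing set $A \in 2^X$, and fix a single $a \in A$, which works uniformly in $E$ precisely because the inclusion $f^i(A) \subseteq B_E(x_i)$ constrains all of $A$ at once. The paper compresses these verifications into ``it is easy to verify''; your write-up merely makes them explicit, including the quantifier-order point about choosing $a$ before $E$.
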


\begin{proof}
Let $(x_i)_{i \in \omega}$ be an asymptotic pseudo-orbit in $X$. Then $(\{x_i\})_{i \in \omega}$ is an asymptotic pseudo-orbit in $2^X$; this is asymptotically shadowed by a set $A \in 2^X$. Pick $a \in A$. It is easy to verify that $a$ asymptotically shadows $(x_i)_{i \in \omega}$.


\end{proof}

\subsection{Symmetric products}

The proof of Theorem \ref{ThmLimShadSymProd} is very similar to that of Theorem \ref{ThmLimShadHyp} and is thereby omitted.

\begin{theorem}\label{ThmLimShadSymProd}
Let $X$ be a compact Hausdorff space, and let $f \colon X \to X$ be a continuous onto function. For any $n \geq 2$, if the symmetric product system $(F_n(X), f_n)$ has limit shadowing then $(X,f)$ has limit shadowing.
\end{theorem}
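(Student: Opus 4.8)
The plan is to mirror the hyperspace argument of Theorem~\ref{ThmLimShadHyp}, exploiting the fact that singletons sit inside $F_n(X)$ for every $n \geq 2$ and that the induced map $f_n$ restricted to singletons is just a copy of $f$. Concretely, I would begin with an arbitrary asymptotic pseudo-orbit $(x_i)_{i \in \omega}$ in $X$ and lift it to the sequence of singletons $(\{x_i\})_{i \in \omega}$ in $F_n(X)$, which is legitimate since a singleton has at most $n$ points whenever $n \geq 1$.

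First I would verify that $(\{x_i\})_{i \in \omega}$ is an asymptotic pseudo-orbit in $F_n(X)$. The key elementary observation is that for $a, b \in X$ one has $(\{a\}, \{b\}) \in 2^E$ if and only if $(a,b) \in E$, since on pairs of singletons the Hausdorff entourage $2^E$ reduces to membership in $E$. Combined with $f_n(\{x_i\}) = \{f(x_i)\}$, this shows that the consecutive discrepancy of the lifted sequence, as measured by $2^E$, is exactly the discrepancy of $(x_i)$ as measured by $E$. Hence the lifted sequence inherits the asymptotic pseudo-orbit property directly from $(x_i)$.

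Next, invoking limit shadowing for $(F_n(X), f_n)$, the lifted asymptotic pseudo-orbit is asymptotically shadowed by some $A \in F_n(X)$; that is, for each $E \in \mathscr{U}$ there is $N$ with $(\{x_i\}, f_n^i(A)) \in 2^E$ for all $i \geq N$. I would then fix any $a \in A$, which is possible since $A \neq \emptyset$. Because $f_n^i(A) = f^i(A) \supseteq \{f^i(a)\}$, the one-sided inclusion $f^i(A) \subseteq B_E(\{x_i\})$ extracted from $(\{x_i\}, f^i(A)) \in 2^E$ forces $(f^i(a), x_i) \in E$ for all $i \geq N$. As $E \in \mathscr{U}$ was arbitrary, $a$ asymptotically shadows $(x_i)_{i \in \omega}$, which is exactly what limit shadowing for $(X,f)$ requires.

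There is no genuine obstacle here: the whole argument is the routine translation between $X$ and its singletons inside $F_n(X)$. The only points requiring (minor) care are the equivalence $(\{a\},\{b\}) \in 2^E \iff (a,b) \in E$ and the single inclusion in the Hausdorff entourage $2^E$ that lets an individual point $a \in A$ inherit the asymptotic shadowing. Neither the cardinality of $A$ nor the value of $n$ plays any further role, which is precisely why the same proof works uniformly for all $n \geq 2$ and, verbatim, for the full hyperspace of Theorem~\ref{ThmLimShadHyp}.
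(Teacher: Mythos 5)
Your proposal is correct and matches the paper's approach exactly: the paper omits this proof precisely because it is the singleton-lifting argument of Theorem~\ref{ThmLimShadHyp}, which you have spelled out faithfully (including the reduction $(\{a\},\{b\}) \in 2^E \iff (a,b) \in E$, valid under the paper's standing assumption of symmetric entourages from Remark~\ref{RemarkAssumeEntouragesAreSymmetric}). No gaps; note only, as you implicitly observe, that surjectivity of $f$ is never used.
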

\begin{proof}
Omitted.
\end{proof}

\begin{theorem}
Let $X$ be a compact Hausdorff space and let $f \colon X \to X$ be a continuous function. If $(X,f)$ has limit shadowing then $(F_2(X),f_2)$ has limit shadowing.
\end{theorem}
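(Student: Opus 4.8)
The plan is to reduce the problem to two applications of limit shadowing in $(X,f)$ by splitting the given hyperspace pseudo-orbit into a pair of sequences in $X$. Suppose $(A_i)_{i\in\omega}$ is an asymptotic pseudo-orbit in $F_2(X)$ and write each $A_i=\{x_i,y_i\}$, where we regard $A_i$ as a two-entry multiset so that the case $x_i=y_i$ is permitted. First I would choose the labels so that $(x_i)_{i\in\omega}$ and $(y_i)_{i\in\omega}$ are both asymptotic pseudo-orbits in $X$. Granting this, limit shadowing for $(X,f)$ supplies points $x,y\in X$ asymptotically shadowing $(x_i)$ and $(y_i)$ respectively, and I claim that $A=\{x,y\}\in F_2(X)$ asymptotically shadows $(A_i)$.

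The recombination step is routine. Fix a symmetric $E\in\mathscr{U}$ and choose $N$ beyond which $(x_i,f^i(x))\in E$ and $(y_i,f^i(y))\in E$ hold simultaneously. For $i\geq N$ we have $f^i(x)\in B_E(x_i)\subseteq B_E(A_i)$ and $f^i(y)\in B_E(y_i)\subseteq B_E(A_i)$, so $f_2^i(A)=\{f^i(x),f^i(y)\}\subseteq B_E(A_i)$; symmetrically $A_i\subseteq B_E(f_2^i(A))$. Hence $(f_2^i(A),A_i)\in 2^E$ for all $i\geq N$, and since $E$ was arbitrary, $A$ asymptotically shadows $(A_i)$.

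The substance of the argument, and the main obstacle, is the splitting. It rests on the following two-point matching lemma: if $S,T$ are two-entry multisets in $X$ and $(S,T)\in 2^E$ for a symmetric $E$, then there is a bijection between the entries of $S$ and those of $T$ whose two pairs both lie in $E$. One verifies this by ruling out the simultaneous failure of both matchings: if both fail, a short case analysis on which pair is responsible forces one of the entries of $S$ to lie outside $B_E(T)$, or one of the entries of $T$ to lie outside $B_E(S)$, contradicting $(S,T)\in 2^E$. This is precisely the place where $n=2$ is essential; the analogous statement is false for three or more points, which is the source of the failure recorded in the table for $F_n$ with $n\geq 3$.

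Applying the lemma with $S=f_2(A_i)=\{f(x_i),f(y_i)\}$ and $T=A_{i+1}$ produces, at each step and at each scale, a good matching; the delicate point is to commit to a \emph{single} labelling of the whole sequence that is eventually good at every scale at once. In the compact metric case this is clean: for two-point sets the optimal matching cost equals the Hausdorff distance, so choosing inductively the matching that realises $d_H(f_2(A_i),A_{i+1})$ makes both coordinate defects satisfy $\max\{d(f(x_i),x_{i+1}),d(f(y_i),y_{i+1})\}=d_H(f_2(A_i),A_{i+1})$, which tends to $0$ because $(A_i)$ is an asymptotic pseudo-orbit; thus $(x_i)$ and $(y_i)$ are asymptotic pseudo-orbits. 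In the general compact Hausdorff setting one argues with the entourage form of the lemma instead, noting that when the two entries of $A_i$ remain separated by a fixed entourage the good matching is forced, so that no inconsistency can arise, while when they collapse together the choice of matching is immaterial; reconciling these behaviours across all entourage scales into one labelling is the part requiring care, after which the recombination step above completes the proof.
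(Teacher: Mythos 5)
Your proof follows the paper's argument exactly: write $A_i=\{x_i,y_i\}$ as a two-entry multiset, relabel so that $(x_i)_{i\in\omega}$ and $(y_i)_{i\in\omega}$ are asymptotic pseudo-orbits in $X$, asymptotically shadow each using limit shadowing of $(X,f)$, and recombine into $A=\{x,y\}$, which asymptotically shadows $(A_i)_{i\in\omega}$. You are in fact more careful than the paper, whose proof dismisses the crucial splitting step with the single phrase ``we may relabel the $x$'s and $y$'s where necessary'': your two-point matching lemma (correct, and correctly identified as the point where $n=2$ is essential) together with the observation that for two-point sets the optimal bottleneck matching cost equals the Hausdorff distance supplies exactly the justification the paper omits, and the one place you leave open --- reconciling the per-entourage matchings into a single labelling in the non-metrizable compact Hausdorff setting --- is glossed over by the paper as well.
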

\begin{proof}
Suppose that $(A_i)_{i \in \omega}$ is an asymptotic pseudo-orbit in $F_2(X)$. Write $A_i=\{x_i, y_i\}$; it is possible that, for some $i$, $x_i=y_i$. We may relabel the $x$'s and $y$'s where necessary to give asymptotic pseudo-orbits $(x_i)_{i\in \omega}$ and $(y_i)_{i \in \omega}$ in $X$. 
By limit shadowing there exist $x,y \in X$ which asymptotically shadow $(x_i)_{i \in \omega}$ and $(y_i)_{i \in \omega}$ respectively. Write $A=\{x,y\} \in F_2(X)$. It is now straightforward to verify that $A$ asymptotically shadows $(A_i)_{i \in \omega}$.
\end{proof}

\begin{example}\label{ExTentMapDoesNotPreserveLimitShadToSymProducts}
Let $X$ be the closed unit interval and let $f \colon X \to X$ be the standard tent map, i.e.
 \[f(x)=\left\{\begin{array}{lll}
2x & \text{ if } & x \in [0,\frac{1}{2}]
\\2(1-x) & \text{ if } & t \in (\frac{1}{2},1] \,.
\end{array}\right.\]
Then $f$ has s-limit shadowing and limit shadowing  (see \textup{\cite{BarwellGoodOprocha}}) but $f_n$ does not have limit shadowing (and consequently it does not have s-limit shadowing) for any $n \geq 3$.

Fix $n \geq 3$. Let $c= \frac{2}{3}$. Let $\epsilon = \frac{1}{12}$ and let $\delta>0$ be given; without loss of generality $\delta<\frac{1}{12}$. Choose $y\in [0,\delta)$ such that there exists $k \in \mathbb{N}$ such that $f^k(y) =c$ and $f^i(y)< \frac{1}{2}$ for all $i <k$ and set $y_0:=y$ and let $y_l = \frac{y_{l-1}}{2}$ for all $l \in \mathbb{N}$. Note that $f^{k+l}(y_l)=c$ for any $l \in \omega$ and $f^{l+i}(y_l)<c$ for all $i <k$.

Construct an asymptotic $\delta$-pseudo-orbit as follows. Let $A_0=\{0,y_0,c\}$, $A_1=\{0, f(y_0),c\}$, $A_2=\{0, f^2(y_0),c \}$... $A_k=\{0, c\}$, $A_{k+1}=\{0, y_1, c\}$, $A_{k+2}=\{0, f(y_1),c\}$, $A_{k+3}=\{0, f^2(y_1),c \}$... $A_{2k+l+1}=\{0, c\}$, $A_{2k+l+2}=\{0, y_2, c\}$....
Explicitly, $A_{mk+(m-1)l+m}=\{0, y_m, c\}$ and $A_{mk+(m-1)l+m +i}=\{0, f^i(y_m), c\}$ for all $m\in \mathbb{N}$ and $1\leq i < k+m$. It is easy to see that $(A_i)_{i \in \omega}$ is an asymptotic $\delta$-pseudo-orbit. Suppose that $A \in F_n(X)$ asymptotically shadows this asymptotic $\delta$-pseudo-orbit. It follows that it eventually $\frac{1}{12}$-shadows $(A_i)_{i \in \omega}$; there exists $N \in \mathbb{N}$ such that $f_n ^N(A)$ $\frac{1}{12}$-shadows $(A_{N+i})_{i \in \omega}$


First observe that, since the pseudo-orbit is always a subset of the interval $[0,\frac{2}{3}]$, shadowing entails that $f_n ^{N+i}(A) \subseteq [0,\frac{3}{4})$ for any $i \in \omega$. Finally, every point in $f_n ^N(A)$ must either be $0$, $c$ or a preimage of $c$ in the interval $[0, \frac{2}{3}]$, otherwise it would enter (or already lie in) $[\frac{3}{4}, 1]$ which would contradict shadowing. Now let $z$ be the least such element of $f_n ^N(A) \setminus \{0\}$. Let $p\in \omega$ be least such that $f^p(z)=c$. Then $f_n ^{N+p+i}(A)=\{0,c\}$ for all $i \in \omega$; clearly this contradicts the fact that $f_n ^N(A)$ is $\frac{1}{12}$-shadowing $(A_{N+i})_{{N+i} \in \omega}$ since there exists $q > N+p+i$ such that $A_{q}=\{0,c, \frac{1}{3}\}$. Therefore $f_n$ does not have limit shadowing (resp. s-limit shadowing).
\end{example}

\subsection{Factor maps}

\begin{definition} Suppose $X$ and $Y$ are compact Hausdorff spaces and $f\colon X \to X$, $g \colon Y \to Y$ are continuous. A surjective semiconjugacy
$\varphi\colon X\to Y$ is \emph{ALAP} iff for every asymptotic pseudo-orbit $(y_i)_{i \in \omega}\subseteq Y$, there exists an asymptotic-pseudo-orbit $(x_i)_{i \in \omega}\subseteq X$ such that $\varphi(x_i)$ asymptotically shadows $(y_i)$.
\end{definition}

The proof of the following is similar to the proofs of Theorems \ref{ThmALPeventualShad}, \ref{ThmALPOrbShad}, \ref{ThmALPStrongOrbShad} and \ref{ThmALPFirstWeakShad} and is therefore omitted.

\begin{theorem}\label{ThmALPLimShad}
Suppose that $\varphi \colon (X,f)\to (Y,g)$ is a surjective semiconjugacy.
	\begin{enumerate}
		\item If $(X,f)$ exhibits limit shadowing and $\varphi$ is \emph{ALAP}, then $(Y,g)$ exhibits limit shadowing.
		\item If $(Y,g)$ exhibits limit shadowing, then $\varphi$ is \emph{ALAP}.
	\end{enumerate}
\end{theorem}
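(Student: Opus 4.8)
The plan is to follow the template of Theorems \ref{ThmALPeventualShad}, \ref{ThmALPOrbShad}, \ref{ThmALPStrongOrbShad} and \ref{ThmALPFirstWeakShad}, exploiting the fact that limit shadowing, unlike the properties treated there, carries no entourage quantifier in its definition; consequently no preliminary choice of a refining entourage in the domain is required, and the argument is purely asymptotic.

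For (1), I would begin with an arbitrary asymptotic pseudo-orbit $(y_i)_{i\in\omega}$ in $Y$. The ALAP hypothesis supplies an asymptotic pseudo-orbit $(x_i)_{i\in\omega}$ in $X$ whose image $(\varphi(x_i))_{i\in\omega}$ asymptotically shadows $(y_i)$, and limit shadowing of $(X,f)$ then furnishes a point $z\in X$ asymptotically shadowing $(x_i)$. The claim is that $\varphi(z)$ asymptotically shadows $(y_i)$. To verify it, I would fix $E\in\mathscr{U}_Y$, choose $E_0$ with $2E_0\subseteq E$, and use uniform continuity of $\varphi$ to obtain $D_1\in\mathscr{U}_X$ with $(a,b)\in D_1 \implies (\varphi(a),\varphi(b))\in E_0$. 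Since $z$ asymptotically shadows $(x_i)$ there is $N_1$ with $(x_i,f^i(z))\in D_1$ for $i\geq N_1$, whence $(\varphi(x_i),g^i(\varphi(z)))\in E_0$ for $i\geq N_1$ by the semiconjugacy identity $\varphi\circ f^i=g^i\circ\varphi$; since $(\varphi(x_i))$ asymptotically shadows $(y_i)$ there is $N_2$ with $(y_i,\varphi(x_i))\in E_0$ for $i\geq N_2$. Composing for $i\geq\max\{N_1,N_2\}$ gives $(y_i,g^i(\varphi(z)))\in 2E_0\subseteq E$; as $E$ was arbitrary, $\varphi(z)$ asymptotically shadows $(y_i)$, so $(Y,g)$ has limit shadowing.

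For (2), I would take an arbitrary asymptotic pseudo-orbit $(y_i)$ in $Y$ and use limit shadowing of $(Y,g)$ to find $z\in Y$ asymptotically shadowing it. Surjectivity of $\varphi$ gives $x\in\varphi^{-1}(z)$, and setting $x_i=f^i(x)$ produces a genuine orbit, hence trivially an asymptotic pseudo-orbit, with $\varphi(x_i)=g^i(\varphi(x))=g^i(z)$. By the choice of $z$ this asymptotically shadows $(y_i)$, so $\varphi$ is ALAP.

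The only step needing any care is the combination of the two ``eventually'' statements in (1), where one must pass to the larger threshold $\max\{N_1,N_2\}$ before applying $2E_0\subseteq E$; beyond this the argument is the same triangle-inequality-plus-uniform-continuity pattern as in the earlier factor-map theorems, and I anticipate no genuine obstacle — indeed the asymptotic (rather than uniform) nature of limit shadowing is what removes the need to pre-select shadowing entourages in the domain, making this case slightly simpler than its predecessors.
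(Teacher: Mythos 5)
Your proposal is correct and takes essentially the approach the paper intends: the paper omits this proof as ``similar to'' Theorems \ref{ThmALPeventualShad}, \ref{ThmALPOrbShad}, \ref{ThmALPStrongOrbShad} and \ref{ThmALPFirstWeakShad}, and your argument is precisely that template --- uniform continuity of $\varphi$ plus the composition $2E_0 \subseteq E$ with the threshold $\max\{N_1,N_2\}$ for (1), and lifting the shadowing point through $\varphi^{-1}$ and taking its true orbit (trivially an asymptotic pseudo-orbit) for (2). Your observation that the absence of entourage quantifiers in the ALAP definition removes the need to pre-select a shadowing entourage in the domain is accurate and correctly exploited.
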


\subsection{Inverse limits}
Whilst it remains unclear whether general inverse limit systems preserve limit shadowing, we note the following result proved by the first author \textit{et al} in \cite{GoodOprochaPuljiz2019}.

\begin{theorem}\textup{\cite[Theorem 5.1]{GoodOprochaPuljiz2019}}
Let $X$ be a compact metric space and $f \colon X \to X$ a continuous onto map. Then $(X,f)$ has limit shadowing if and only if $(\varprojlim(X,f), \sigma)$ has limit shadowing.
\end{theorem}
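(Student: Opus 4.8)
The plan is to work with the explicit model $\hat X=\varprojlim(X,f)=\{\hat x=(x_0,x_1,\dots):x_k=f(x_{k+1})\}$, carrying the compatible metric $\hat d(\hat x,\hat y)=\sum_{k\ge0}2^{-k}d(x_k,y_k)$, on which the induced map $\sigma$ is a homeomorphism and each projection $\pi_k(\hat x)=x_k$ is continuous. In particular $\pi_0$ is a factor map with $\pi_0\circ\sigma=f\circ\pi_0$, and since $f$ is onto and $X$ is compact, $\pi_0$ is surjective. I record the coordinate formula I will need: for $\hat y=(y_0,y_1,\dots)$ the point $\sigma^n(\hat y)$ has $k$-th coordinate $f^{\,n-k}(y_0)$ when $0\le k\le n$ and $y_{k-n}$ when $k>n$. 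The weighting $2^{-k}$ means that $\hat d$-closeness is governed by finitely many leading coordinates, with the tail bounded by $\operatorname{diam}(X)\cdot 2^{-k}$.

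For the forward implication, suppose $(X,f)$ has limit shadowing and let $(\hat x^{(i)})_{i\in\omega}$ be an asymptotic pseudo-orbit in $\hat X$, with $\hat x^{(i)}=(x^{(i)}_0,x^{(i)}_1,\dots)$. Unpacking $\hat d(\sigma(\hat x^{(i)}),\hat x^{(i+1)})\to 0$ coordinatewise yields $d(f(x^{(i)}_0),x^{(i+1)}_0)\to 0$ together with $d(x^{(i)}_{k-1},x^{(i+1)}_k)\to 0$ for every $k\ge 1$. First I would read off from the zeroth coordinate that $(x^{(i)}_0)_{i\in\omega}$ is an asymptotic pseudo-orbit in $X$, and apply limit shadowing to get $z\in X$ with $d(f^i(z),x^{(i)}_0)\to 0$. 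Then I would take any $\hat y\in\pi_0^{-1}(z)$ (a backward orbit of $z$, which exists by surjectivity) and claim $\sigma^i(\hat y)$ asymptotically shadows $(\hat x^{(i)})$. To verify this, fix $\epsilon>0$, choose $J$ with $2^{-J}\operatorname{diam}(X)<\epsilon/2$, and for each fixed $k\le J$ bound the $k$-th coordinate error $d(f^{\,i-k}(z),x^{(i)}_k)$ by $d(f^{\,i-k}(z),x^{(i-k)}_0)$ plus the length-$k$ telescoping chain $x^{(i-k)}_0\approx x^{(i-k+1)}_1\approx\cdots\approx x^{(i)}_k$, each link being one of the coordinate conditions above; both tend to $0$ as $i\to\infty$. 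Summing the finitely many leading terms and bounding the tail by $\epsilon/2$ gives $\hat d(\sigma^i(\hat y),\hat x^{(i)})\to 0$.

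For the reverse implication I would exploit that $\pi_0\colon(\hat X,\sigma)\to(X,f)$ is a surjective factor map and invoke Theorem~\ref{ThmALPLimShad}(1): it then suffices to show $\pi_0$ is \emph{ALAP}, i.e.\ that every asymptotic pseudo-orbit $(x_i)$ in $X$ is tracked by the $\pi_0$-image of an asymptotic pseudo-orbit in $\hat X$. The naive attempt, taking the zeroth coordinate to be $x_i$ and filling in a backward orbit, does \emph{not} produce a genuine asymptotic pseudo-orbit in $\hat X$: the first-coordinate condition demands an exact preimage of $x_{i+1}$ lying asymptotically close to $x_i$, and such coherent preimages need not exist pointwise. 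Instead I would lift through the limit structure. The set $L=\omega((x_i))$ is internally chain transitive, hence strongly invariant, $x_i$ approaches $L$, and $\varprojlim(L,f|_L)\subseteq\hat X$ projects onto $L$; using surjectivity of $f|_L$ together with internal chain transitivity, one constructs by a compactness/diagonal argument coherent backward orbits in $\hat X$ whose zeroth coordinates asymptotically match $(x_i)$, yielding the required lift.

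The main obstacle is precisely this lifting step. The difficulty is structural: points of $\hat X$ are \emph{exact} backward orbits, whereas an asymptotic pseudo-orbit is only an approximate forward orbit, so one cannot lift coordinatewise, and the exact-preimage failure noted above shows that only the \emph{asymptotic} freedom afforded by ALAP — matching $(x_i)$ in the limit rather than on the nose — can rescue the construction. Organising this freedom, via the $\omega$-limit set and a diagonal extraction that simultaneously secures coherence in every coordinate while tracking the tail of $(x_i)$, is where I expect essentially all of the work to lie; the forward implication, by contrast, is the routine direction.
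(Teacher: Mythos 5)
The paper contains no proof of this statement---it is imported verbatim from \cite[Theorem 5.1]{GoodOprochaPuljiz2019}---so your proposal has to stand on its own merits. Your forward implication does: unpacking $\hat d$ coordinatewise, extracting the asymptotic pseudo-orbit of zeroth coordinates, shadowing it by $z$, lifting $z$ to any backward orbit $\hat y\in\pi_0^{-1}(z)$ (nonempty since $f$ is onto), and controlling each coordinate $k\le J$ by the shadowing error plus a telescoping chain of $k$ coordinate conditions is a complete and correct argument. One caveat: your formula for $\sigma^n$ is the one for the induced map $(x_0,x_1,\dots)\mapsto(f(x_0),x_0,x_1,\dots)$, for which indeed $\pi_0\circ\sigma=f\circ\pi_0$, whereas the paper's text defines $\sigma$ as coordinate deletion, which is the \emph{inverse} of that map; since limit shadowing is not automatically invariant under inversion, you should say explicitly which convention the theorem concerns (yours is the right one, matching the induced map of Definition~\ref{inverselim}).

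The genuine gap is the reverse implication, which you correctly reduce to showing that $\pi_0$ is ALAP (so that Theorem~\ref{ThmALPLimShad}(1) applies) but then leave as a sketch whose stated ingredients do not suffice. Passing to $L=\omega((x_i))$ buys nothing: the exact-preimage obstruction you yourself identified recurs verbatim inside $L$. Surjectivity of $f|_L$ gives each point of $L$ \emph{some} preimage in $L$, but the lift requires that for every fixed $k$ one has $d\bigl(\pi_k(\hat x^{(i+1)}),\pi_{k-1}(\hat x^{(i)})\bigr)\to0$, i.e.\ preimages chosen coherently along the whole sequence; internal chain transitivity only supplies chains between prescribed endpoints, with no synchronisation to the time parametrisation of $(x_i)$. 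For instance, the onto map of $[0,1]$ given by $x\mapsto\min\{4x,1,4(1-x)\}$ is chain transitive, yet points $b<1$ near $1=f(\tfrac12)$ have no preimage near $\tfrac12$, so even a height-one coherent tower can fail inside an ICT set; and a compactness/diagonal extraction has nothing to extract from unless one already possesses finite approximate towers, which is exactly what is missing. What actually closes the gap needs neither $\omega$-limit sets nor ICT: a \emph{delayed lift of growing depth}. Because $(y_i)$ is an asymptotic pseudo-orbit, for every $m$ one may choose $k_m\to\infty$ and $N_m$ so large that beyond $N_m-k_m$ the one-step errors are fine enough that every pseudo-orbit segment of length at most $k_m+1$ is $2^{-m}$-tracked by the true orbit of its initial point---a finite-time uniform-continuity estimate, involving no shadowing hypothesis. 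For $i\in[N_m,N_{m+1})$ let $\hat x^{(i)}$ be any point of $\pi_{k_m}^{-1}(y_{i-k_m})$, nonempty since $f$ is onto. Its coordinates $l\le k_m$ are then \emph{forced} to equal $f^{k_m-l}(y_{i-k_m})$, each within $2^{-m}$ of $y_{i-l}$; comparing coordinates of $\sigma(\hat x^{(i)})$ and $\hat x^{(i+1)}$ through the common anchors $y_{i+1-l}$ gives $\hat d\bigl(\sigma(\hat x^{(i)}),\hat x^{(i+1)}\bigr)\lesssim 2^{-m}+2^{-k_m}\operatorname{diam}(X)$ both inside blocks and across block boundaries, while $d(\pi_0(\hat x^{(i)}),y_i)<2^{-m}$. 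Hence $(\hat x^{(i)})$ is an asymptotic pseudo-orbit in $\varprojlim(X,f)$ whose $\pi_0$-image asymptotically shadows $(y_i)$, so $\pi_0$ is ALAP and the reverse implication follows exactly along the route you intended.
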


\subsection{Tychonoff product}

\begin{theorem}
Let $\Lambda$ be an arbitrary indexation set and, for each $\lambda \in \Lambda$, let $(X_\lambda, f_\lambda)$ be a compact Hausdorff system with limit shadowing. Then the product system $(X,f)$, where $X=\prod_{\lambda \in \Lambda} X_\lambda$, has limit shadowing.
\end{theorem}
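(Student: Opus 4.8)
The plan is to lift the problem coordinatewise, exploiting the fact that a basic entourage of the product uniformity constrains only finitely many coordinates. First I would take an arbitrary asymptotic pseudo-orbit $(x_i)_{i \in \omega}$ in $X$ and show that, for each fixed $\lambda \in \Lambda$, the projected sequence $(\pi_\lambda(x_i))_{i \in \omega}$ is an asymptotic pseudo-orbit in $X_\lambda$. To see this, fix $E_\lambda \in \mathscr{U}_\lambda$ and consider the basic product entourage whose $\lambda$-coordinate is $E_\lambda$ and whose every other coordinate is $X_\mu \times X_\mu$; since $(x_i)$ is an asymptotic pseudo-orbit there is an $N$ past which consecutive terms lie in this entourage, and reading off the $\lambda$-coordinate gives $(f_\lambda(\pi_\lambda(x_i)), \pi_\lambda(x_{i+1})) \in E_\lambda$ for all $i \geq N$.

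Next I would invoke limit shadowing in each factor: for every $\lambda \in \Lambda$ there is a point $z_\lambda \in X_\lambda$ asymptotically shadowing $(\pi_\lambda(x_i))_{i \in \omega}$. Assembling these into $z = (z_\lambda)_{\lambda \in \Lambda} \in X$, and recalling that $\pi_\lambda(f^i(z)) = f_\lambda^i(z_\lambda)$ by definition of the product map, I would claim that $z$ asymptotically shadows $(x_i)$.

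To verify the claim it suffices, since the sets $\prod_\lambda E_\lambda$ with only finitely many nontrivial factors form a base for the product uniformity, to check the asymptotic condition against such a basic entourage $E$. Let $\lambda_1, \ldots, \lambda_k$ be the indices at which $E_{\lambda_j} \neq X_{\lambda_j} \times X_{\lambda_j}$. For each $j$, asymptotic shadowing in $X_{\lambda_j}$ supplies an $N_j$ with $(\pi_{\lambda_j}(x_i), f_{\lambda_j}^i(z_{\lambda_j})) \in E_{\lambda_j}$ for all $i \geq N_j$. Setting $N = \max\{N_1, \ldots, N_k\}$, for every $i \geq N$ the pair $(x_i, f^i(z))$ lies in $E_{\lambda_j}$ on each constrained coordinate and trivially in $X_\lambda \times X_\lambda$ elsewhere, whence $(x_i, f^i(z)) \in E$.

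The point to be careful about---and the reason the argument works for an arbitrary, possibly uncountable, index set---is that each basic entourage pins down only finitely many coordinates, so the potentially index-dependent thresholds $N_j$ need only be maximised over a finite set; there is no need to control all coordinates simultaneously or uniformly. I expect no serious obstacle beyond correctly invoking the description of a base for the product uniformity recorded in the preliminaries, together with the observation that asymptotic conditions need only be tested against a base.
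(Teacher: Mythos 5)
Your proposal is correct and follows essentially the same route as the paper's proof: project the asymptotic pseudo-orbit to each factor, asymptotically shadow it coordinatewise, assemble the shadowing points into a single point of the product, and verify the asymptotic condition against basic entourages $\prod_\lambda E_\lambda$ with only finitely many nontrivial coordinates, maximising over the finitely many thresholds $N_j$. Your added detail — explicitly checking that projections of asymptotic pseudo-orbits are asymptotic pseudo-orbits, and noting that asymptotic conditions need only be tested on a base — merely fills in steps the paper leaves implicit.
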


\begin{proof}
Let $(x_i)_{i \in \omega}$ be an asymptotic pseudo orbit in $X$. Then, for any $\lambda \in \Lambda$, $\left(\pi_\lambda(x_i)\right)_{i \in \omega}$ is an asymptotic pseudo orbit in $X_\lambda$. By limit shadowing in these component spaces, for each $\lambda$ there exists $y_\lambda \in X_\lambda$ which limit shadows $\left(\pi_\lambda(x_i)\right)_{i \in \omega}$. Let $y \in X$ be such that $\pi_\lambda(y)=y_\lambda$ for any $\lambda \in \Lambda$. We claim $y$ limit shadows $(x_i)_{i \in \omega}$. 

Let $E \in \mathscr{U}$ be given; this entourage is refined by one of the form
\[ \prod _{\lambda \in \Lambda} E_\lambda,\]
where $E_\lambda \in \mathscr{U}_\lambda$ for all $\lambda \in \Lambda$ and $E_\lambda =X_\lambda \times X_\lambda$ for all but finitely many of the $\lambda$'s. Let $\lambda_j$, for $1\leq j \leq k$, be precisely those elements in $\Lambda$ for which $E_\lambda  \neq X_\lambda \times X_\lambda$ (if there are no such elements then we are done). For each such $j$, let $M_j \in \mathbb{N}$ be such that for any $n \geq M_j$ $(f^n(y_{\lambda_j}) \pi_{\lambda_j}(x_n)) \in E_{\lambda_j}$. Take $M \coloneqq \max_{1 \leq j \leq k} M_j$. It follows that, for any $n \geq M$, $(f^n(y),x_n) \in E$.
\end{proof}


\section{Preservation of s-limit Shadowing}

The definition of limit shadowing was extended in \cite{LeeSakai} to a property the authors called s-limit shadowing. This was done to accommodate the fact that many systems exhibit limit shadowing but not shadowing \cite{Kulczycki,Pilyugin}.

Recall that a system $(X, f)$ is said to have \textit{s-limit shadowing} if for any $E\in \mathscr{U}$ there exists $D \in \mathscr{U}$ such that the following two conditions hold: 
\begin{enumerate}
\item every $D$-pseudo-orbit is $E$-shadowed, and
\item every asymptotic $D$-pseudo-orbit is asymptotically $E$-shadowed.
\end{enumerate}

Thus, part of what it means for a system to have s-limit shadowing is that it also has shadowing. It is a standard result in the theory of shadowing \cite{Pilyugin} that a compact metric dynamical system $(X,f)$ has shadowing if and only if for any $\epsilon>0$ there is a $\delta>0$ such that every finite $\delta$-pseudo orbit $(x_0,\ldots,x_n)$ is $\epsilon$-shadowed by some $x\in X$ (we call this property \textit{finite shadowing}). This extends to the compact Hausdorff setting: a compact Hausdorff dynamical system $(X,f)$ has shadowing if and only if for any $E \in \mathscr{U}$ there is a $D \in \mathscr{U}$ such that every finite $D$-pseudo orbit $(x_0,\ldots,x_n)$ is $E$-shadowed by some $x\in X$. This fact allows us to  make the observation (Theorem \ref{THMsLimObsolete}) that for a large class of systems, the definition of s-limit shadowing can be simplified.

\begin{theorem}\label{THMsLimObsolete}
Suppose $X$ is a compact Hausdorff space. $(X,f)$ has s-limit shadowing if and only if  for any $E\in \mathscr{U}$ there exists $D \in \mathscr{U}$ such that 
every asymptotic $D$-pseudo-orbit is asymptotically $E$-shadowed.

In particular, if $X$ is a compact metric space, then $(X,f)$ has s-limit shadowing if and only if for any $\epsilon>0$ there exists $\delta>0$  such that every asymptotic $\delta$-pseudo-orbit is asymptotically $\epsilon$-shadowed.
\end{theorem}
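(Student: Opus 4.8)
The forward implication is immediate: if $(X,f)$ has s-limit shadowing then, unpacking the definition, for every $E \in \mathscr{U}$ the witnessing $D$ in particular makes every asymptotic $D$-pseudo-orbit asymptotically $E$-shadowed, which is exactly the stated condition. The content is therefore the converse, and the plan is to show that the single $D$ furnished by the hypothesis can be made to witness \emph{both} clauses of s-limit shadowing for a given $E$.

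First I would fix $E \in \mathscr{U}$ and, using Remark \ref{RemarkAssumeEntouragesAreSymmetric} together with axiom (c) of a uniformity, choose a symmetric $E_0 \in \mathscr{U}$ with $2E_0 \subseteq E$. Applying the hypothesis to $E_0$ yields $D \in \mathscr{U}$ such that every asymptotic $D$-pseudo-orbit is asymptotically $E_0$-shadowed. Clause (2) of s-limit shadowing is then immediate for this $D$, since $E_0 \subseteq E$ forces any asymptotically $E_0$-shadowed sequence to be asymptotically $E$-shadowed. The remaining task is clause (1): every $D$-pseudo-orbit must be $E$-shadowed by the same $D$.

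The key device I would use is to manufacture genuine asymptotic $D$-pseudo-orbits out of finite fragments. Given an infinite $D$-pseudo-orbit $(x_i)_{i \in \omega}$ and $n \in \omega$, set $y^{(n)}_i = x_i$ for $i \leq n$ and $y^{(n)}_i = f^{\,i-n}(x_n)$ for $i > n$; the tail is an exact orbit segment, so $(f(y^{(n)}_i), y^{(n)}_{i+1}) \in \Delta \subseteq D$ for all $i \geq n$ and $y^{(n)}$ is an asymptotic $D$-pseudo-orbit. By hypothesis it is asymptotically $E_0$-shadowed by some $z_n$, and in particular $(x_i, f^i(z_n)) \in E_0$ for every $i \leq n$. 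By compactness of $X$ the sequence $(z_n)$ has a cluster point $z$, and I claim $z$ $E$-shadows $(x_i)$: fixing $i$, continuity of $f^i$ supplies a neighbourhood of $z$ mapped by $f^i$ into $B_{E_0}(f^i(z))$, and the cluster-point property yields $n \geq i$ with $z_n$ in that neighbourhood, whence $(x_i, f^i(z_n)) \in E_0$ and $(f^i(z_n), f^i(z)) \in E_0$ combine to give $(x_i, f^i(z)) \in 2E_0 \subseteq E$. As $i$ was arbitrary this establishes clause (1) for the same $D$, and hence s-limit shadowing.

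The one delicate point, and the main obstacle, is precisely that the hypothesis only constrains \emph{asymptotic} pseudo-orbits, so ordinary shadowing cannot be read off directly; the truncate-and-extend construction together with the compactness passage to a single cluster point is what bridges this gap, and is in effect a reproof (with $D$ preserved) of the finite-shadowing characterisation recalled just above. The metric ``in particular'' clause is then the specialisation of this argument to the canonical metric uniformity, taking $E$ to be an $\epsilon$-neighbourhood of the diagonal and $E_0$ an $\tfrac{\epsilon}{2}$-neighbourhood, with $2E_0 \subseteq E$ provided by the triangle inequality.
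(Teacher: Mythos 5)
Your proof is correct, and its engine is the same one the paper uses: both arguments manufacture asymptotic $D$-pseudo-orbits by appending the exact orbit of a point of the given pseudo-orbit (so the errors are eventually in $\Delta$, making the sequence asymptotic) and then feed these to condition (2). Where you diverge is in how shadowing is extracted afterwards. The paper stops at finite pseudo-orbits: it extends a finite $D$-pseudo-orbit $(x_0,\dots,x_m)$ by the orbit of $x_m$, reads off $E$-shadowing of the first $m+1$ terms, and then invokes, as a quoted black box, the standard equivalence between finite shadowing and shadowing on compact Hausdorff spaces. You instead run the truncate-and-extend construction along an entire infinite $D$-pseudo-orbit, obtain shadowing points $z_n$ for each truncation, and pass to a cluster point $z$, using symmetry of $E_0$, continuity of $f^i$ and $2E_0\subseteq E$ to conclude that $z$ itself $E$-shadows the whole pseudo-orbit. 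This inlines the proof of the finite-shadowing characterisation rather than citing it; what it buys is self-containedness and the explicit bonus that the single $D$ furnished by the hypothesis for $E_0$ witnesses \emph{both} clauses of s-limit shadowing simultaneously, whereas the paper's route implicitly requires a final (harmless) intersection of the entourage for clause (1) with the one for clause (2). Your compactness step is also valid in the non-metrizable setting: in a compact space the closed tails $\overline{\{z_m \mid m\geq n\}}$ have the finite intersection property, so the sequence $(z_n)$ has a cluster point meeting every tail, which is exactly what your ``choose $n\geq i$ with $z_n\in U$'' step needs.
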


\begin{proof}
Condition (1) simply says that part of what it means for a system to have s-limit 
shadowing is that it has shadowing. Suppose that 
$(X,f)$ satisfies condition (2). Let $E \in \mathscr{U}$ be given and take a corresponding $D \in \mathscr{U}$. 
Let $(x_0,x_1,\ldots, x_m)$ be a finite $D$-pseudo orbit in $X$. Then
\[(x_0, x_1, \ldots, x_m, f(x_m), f^2(x_m), \ldots, f^k(x_m), \ldots),\]
is an asymptotic $D$-pseudo orbit. By condition (2) this is 
asymptotically $E$-shadowed by a point, say $x$. In particular 
$(f^i(x),x_i) \in \mathscr{U}$ for all $i \in \{0,1, \ldots, m\}$; hence $(X,f)$ has finite shadowing and thereby shadowing.
\end{proof}

Since our space is compact Hausdorff throughout this paper, it follows from Theorem \ref{THMsLimObsolete} that when checking for s-limit shadowing it suffices to verify whether or not condition (2) in the definition holds.

\subsection{Induced map on the hyperspace of compact sets}

\begin{theorem}\label{ThmSLimShadHyp} Let $X$ be a compact Hausdorff space and let $f \colon X \to X$ be a continuous function. If $(2^X,2^f)$ has s-limit shadowing then $(X,f)$ has s-limit shadowing.
\end{theorem}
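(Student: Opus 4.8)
The plan is to adapt the singleton-embedding argument of Theorem \ref{ThmLimShadHyp}, but now to carry the uniform ($E$-shadowing) part of the definition alongside the asymptotic part. The first move is to invoke Theorem \ref{THMsLimObsolete}: since both $X$ and $2^X$ are compact Hausdorff, s-limit shadowing of either system is equivalent to its condition (2) in isolation. Thus it suffices to show that for every $E \in \mathscr{U}$ there is a $D \in \mathscr{U}$ such that every asymptotic $D$-pseudo-orbit in $X$ is asymptotically $E$-shadowed, and I am free to use the same simplified characterisation for $2^f$.

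So I would fix $E \in \mathscr{U}$ and pass to the entourage $2^E \in 2^\mathscr{U}$. Since $\{2^V : V \in \mathscr{U}\}$ is a base for $2^\mathscr{U}$, applying s-limit shadowing of $(2^X, 2^f)$ to $2^E$ yields a $D \in \mathscr{U}$ such that every asymptotic $2^D$-pseudo-orbit in $2^X$ is asymptotically $2^E$-shadowed, and I claim this $D$ works for $f$.

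Given an asymptotic $D$-pseudo-orbit $(x_i)_{i \in \omega}$ in $X$, I would consider the sequence of singletons $(\{x_i\})_{i \in \omega}$ in $2^X$. Using that all entourages may be taken symmetric (Remark \ref{RemarkAssumeEntouragesAreSymmetric}), the relation $(f(x_i), x_{i+1}) \in D$ is equivalent to $((2^f)(\{x_i\}), \{x_{i+1}\}) = (\{f(x_i)\}, \{x_{i+1}\}) \in 2^D$, and likewise the asymptotic defect of $(\{x_i\})$ measured against any $2^V$ reduces to that of $(x_i)$ measured against $V$. Hence $(\{x_i\})$ is an asymptotic $2^D$-pseudo-orbit, so there is some $A \in 2^X$ asymptotically $2^E$-shadowing it. Fixing any $a \in A$ (possible as $A \neq \emptyset$), the relation $((2^f)^i(A), \{x_i\}) \in 2^E$ forces $f^i(A) \subseteq B_E(x_i)$, whence $(f^i(a), x_i) \in E$ for every $i$; the asymptotic part of the shadowing transfers to $a$ in exactly the same manner, so $a$ asymptotically $E$-shadows $(x_i)$, completing the verification.

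The argument is essentially bookkeeping, so I do not expect a serious obstacle. The only points requiring care are the reduction via Theorem \ref{THMsLimObsolete}, which lets me avoid separately re-establishing ordinary shadowing, and the observation that the witnessing entourage furnished by s-limit shadowing of $2^f$ can be taken in the base form $2^D$; this is precisely what allows the singleton sequence to inherit both the $D$-pseudo-orbit property and its asymptotic refinement from $(x_i)$, and dually allows a single chosen $a \in A$ to inherit both the $E$-shadowing and the asymptotic shadowing from $A$.
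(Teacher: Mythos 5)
Your proposal is correct and is essentially the paper's own proof: the paper likewise reduces to condition (2) via Theorem \ref{THMsLimObsolete}, applies s-limit shadowing of $2^f$ to $2^E$, refines the resulting entourage to a base entourage $2^{D_0}$ (your ``take it of the form $2^D$'' is the same step), embeds the pseudo-orbit as singletons, and extracts a point $a$ from the shadowing set $A$. You merely spell out the transfer computations that the paper labels ``easy to verify.''
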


\begin{proof}
Let $E \in \mathscr{U}$ be given. Let $D \in 2^\mathscr{U}$ correspond to $2^E$ in condition (2) of s-limit shadowing for $2^f$ and let $D_0 \in \mathscr{U}$ be such that $2^{D_0}\subseteq D$. Let $(x_i)_{i \in \omega}$ be an asymptotic $D_0$-pseudo-orbit in $X$. Then $(\{x_i\})_{i \in \omega}$ is an asymptotic $D$-pseudo-orbit in $2^X$; this is asymptotically $2^E$-shadowed by a set $A \in 2^X$ for some $D \in \mathscr{U}$. Pick $a \in A$. It is easy to verify that $a$ asymptotically $E$-shadows $(x_i)_{i \in \omega}$.
\end{proof}

\subsection{Symmetric products}

The proof of Theorem \ref{ThmSLimShadSymProd} is very similar to that of Theorem \ref{ThmSLimShadHyp} and is thereby omitted.

\begin{theorem}\label{ThmSLimShadSymProd}
Let $X$ be a compact Hausdorff space, and let $f \colon X \to X$ be a continuous onto function. For any $n \geq 2$, if the symmetric product system $(F_n(X), f_n)$ has s-limit shadowing then $(X,f)$ has s-limit shadowing.
\end{theorem}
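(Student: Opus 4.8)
The plan is to mirror the proof of Theorem \ref{ThmSLimShadHyp}, replacing $2^X$ by $F_n(X)$ and exploiting the fact that, since $n \geq 2$, every singleton $\{x\}$ lies in $F_n(X)$. First, appealing to Theorem \ref{THMsLimObsolete} (which applies since $X$ is compact Hausdorff), it suffices to verify only condition (2) in the definition of s-limit shadowing for $(X,f)$: namely, that for each $E \in \mathscr{U}$ there is $D_0 \in \mathscr{U}$ such that every asymptotic $D_0$-pseudo-orbit in $X$ is asymptotically $E$-shadowed.

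Fix $E \in \mathscr{U}$. Using s-limit shadowing for $(F_n(X), f_n)$, I would obtain $D \in 2^\mathscr{U}$ (working with its restriction to $F_n(X) \times F_n(X)$) corresponding to $2^E$ in condition (2), and then choose $D_0 \in \mathscr{U}$ with $2^{D_0} \subseteq D$. The key observation is that the embedding $x \mapsto \{x\}$ carries the relevant structure faithfully: for singletons, $(\{x\},\{y\}) \in 2^{D_0}$ if and only if $(x,y) \in D_0$, and $f_n(\{x\}) = \{f(x)\}$. Hence if $(x_i)_{i \in \omega}$ is an asymptotic $D_0$-pseudo-orbit in $X$, then $(\{x_i\})_{i \in \omega}$ is an asymptotic $2^{D_0}$-pseudo-orbit, and therefore an asymptotic $D$-pseudo-orbit, in $F_n(X)$.

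By condition (2) for $f_n$, this singleton pseudo-orbit is asymptotically $2^E$-shadowed by some $A \in F_n(X)$. I would then pick any $a \in A$. Since $(f_n^i(A), \{x_i\}) \in 2^E$ gives $f^i(A) \subseteq B_E(x_i)$ and hence $(f^i(a), x_i) \in E$, the point $a$ $E$-shadows $(x_i)$; running the same argument with $2^F$ in place of $2^E$ for an arbitrary $F \in \mathscr{U}$ shows that $a$ asymptotically shadows $(x_i)$. Thus $a$ asymptotically $E$-shadows $(x_i)$, which establishes condition (2) for $(X,f)$, and by Theorem \ref{THMsLimObsolete} the system $(X,f)$ has s-limit shadowing.

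The argument is essentially routine; the only point requiring care — and the step I would flag as the main (minor) obstacle — is the bookkeeping with entourages restricted to $F_n(X) \times F_n(X)$ and the verification that passage to singletons preserves both the $D_0$-pseudo-orbit property and its asymptotic refinement, exactly as in the hyperspace case. Because $n \geq 2$ guarantees singletons are admissible members of $F_n(X)$, no further adaptation beyond this restriction is needed.
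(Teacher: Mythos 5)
Your proposal is correct and follows essentially the same route as the paper, which omits this proof precisely because it is the singleton-embedding argument from Theorem \ref{ThmSLimShadHyp} transplanted to $F_n(X)$: reduce to condition (2) via Theorem \ref{THMsLimObsolete}, pass to the asymptotic pseudo-orbit $(\{x_i\})_{i\in\omega}$ in $F_n(X)$, obtain a shadowing set $A$, and project by choosing any $a \in A$. Your care with the entourages restricted to $F_n(X)\times F_n(X)$ matches how the paper handles the other symmetric product results, so nothing is missing.
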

\begin{proof}
Omitted.
\end{proof}

\begin{theorem}
Let $X$ be a compact Hausdorff space and let $f \colon X \to X$ be a continuous function. If $(X,f)$ has s-limit shadowing then $(F_2(X),f_2)$ has s-limit shadowing.
\end{theorem}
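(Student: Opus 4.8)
The plan is to use Theorem~\ref{THMsLimObsolete}: since $F_2(X)$ is compact Hausdorff, $(F_2(X),f_2)$ has s-limit shadowing as soon as it satisfies condition~(2) alone, i.e.\ for every $E\in 2^{\mathscr{U}}$ there is $D\in 2^{\mathscr{U}}$ such that every asymptotic $D$-pseudo-orbit in $F_2(X)$ is asymptotically $E$-shadowed. So fix $E\in 2^{\mathscr{U}}$ and, using that $\{2^V : V\in\mathscr{U}\}$ is a base for $2^{\mathscr{U}}$, choose $E_0\in\mathscr{U}$ with $2^{E_0}\subseteq E$. Let $D\in\mathscr{U}$ be the entourage that condition~(2) of s-limit shadowing for $f$ assigns to $E_0$, so that every asymptotic $D$-pseudo-orbit in $X$ is asymptotically $E_0$-shadowed. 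I claim $2^D$ works for $f_2$ and $E$.

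Given an asymptotic $2^D$-pseudo-orbit $(A_i)_{i\in\omega}$ in $F_2(X)$, write $A_i=\{x_i,y_i\}$ (possibly $x_i=y_i$). The heart of the argument is to relabel so that $(x_i)_{i\in\omega}$ and $(y_i)_{i\in\omega}$ are \emph{both} asymptotic $D$-pseudo-orbits in $X$. At each transition $(f_2(A_i),A_{i+1})\in 2^D$ means each of $f(x_i),f(y_i)$ lies in $B_D(A_{i+1})$ and each point of $A_{i+1}$ lies in $B_D(\{f(x_i),f(y_i)\})$; a short Hall-type argument shows this forces a perfect $D$-matching between $\{f(x_i),f(y_i)\}$ and the points of $A_{i+1}$, and labelling $A_{i+1}$ so that $f(x_i)$ is matched to $x_{i+1}$ and $f(y_i)$ to $y_{i+1}$ makes both sequences $D$-pseudo-orbits. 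To inherit the asymptotic property I would, at each step, select the \emph{tightest} available matching (in the metric case, the one minimising $\max\{d(f(x_i),x_{i+1}),d(f(y_i),y_{i+1})\}$): since $(A_i)$ is an asymptotic pseudo-orbit, for every $E'\in\mathscr{U}$ there is $N$ with $(f_2(A_i),A_{i+1})\in 2^{E'}$ for $i\geq N$, so for such $i$ some matching is $E'$-good and hence the tightest one is $E'$-good, giving $(f(x_i),x_{i+1})\in E'$ and $(f(y_i),y_{i+1})\in E'$ at once.

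With $(x_i)$ and $(y_i)$ now asymptotic $D$-pseudo-orbits, condition~(2) of s-limit shadowing for $f$ supplies $x,y\in X$ asymptotically $E_0$-shadowing them, and I set $A=\{x,y\}\in F_2(X)$. Because $x$ $E_0$-shadows $(x_i)$ and $y$ $E_0$-shadows $(y_i)$, for every $i$ we have $A_i\subseteq B_{E_0}(f_2^i(A))$ and $f_2^i(A)\subseteq B_{E_0}(A_i)$, i.e.\ $(f_2^i(A),A_i)\in 2^{E_0}\subseteq E$; and because each coordinate is asymptotically shadowed, for any $E''\in 2^{\mathscr{U}}$, choosing $E''_0$ with $2^{E''_0}\subseteq E''$, we get $(f_2^i(A),A_i)\in 2^{E''_0}\subseteq E''$ for all large $i$. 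Thus $A$ asymptotically $E$-shadows $(A_i)$, which is exactly condition~(2) for $f_2$.

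The main obstacle is the relabeling step: a fixed coarse $D$-matching need not decay, since persistently choosing the ``parallel'' pairing can clash with the genuinely close ``crossed'' pairing once the two strands of the pseudo-orbit drift apart, and then $(f(x_i),x_{i+1})$ stays only $D$-close rather than $E'$-close. Selecting the tightest matching at each transition is what forces both coordinate sequences to be asymptotic; the one genuinely delicate point is making this selection rigorous in the general, possibly non-metrizable, uniform setting, where one cannot literally minimise and must instead argue directly that any two simultaneously good matchings leave the two points of $A_{i+1}$ correspondingly close.
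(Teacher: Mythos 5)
Your proposal is correct and follows essentially the same route as the paper's proof: reduce to condition (2) of s-limit shadowing via Theorem \ref{THMsLimObsolete}, choose $E_0$ with $2^{E_0}\subseteq E$ and $D$ from condition (2) for $f$, write $A_i=\{x_i,y_i\}$ and relabel so that $(x_i)$ and $(y_i)$ are asymptotic $D$-pseudo-orbits, shadow each strand separately, and combine the shadowing points into $A=\{x,y\}$. The only divergence is that the paper simply asserts ``we may relabel the $x$'s and $y$'s where necessary'' whereas you actually justify this step (the Hall-type matching plus the tightest-matching selection to inherit the asymptotic property, with an honest caveat about the non-metrizable uniform setting), so your write-up is, if anything, more careful at the one genuinely delicate point than the paper's own proof.
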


\begin{proof}
Let $E \in 2^\mathscr{U}$ be given. 
Let $E_0 \in \mathscr{U}$ be such that $2^{E_0} \subseteq E$. Let $D \in \mathscr{U}$ correspond to $E_0$ in s-limit shadowing for $f$. We claim $2^D$ satisfies condition (2) of s-limit shadowing for $E$. Suppose that $(A_i)_{i \in \omega}$ is an asymptotic $2^D$-pseudo-orbit in $F_2(X)$. Write $A_i=\{x_i, y_i\}$; it is possible that, for some $i$, $x_i=y_i$. We may relabel the $x$'s and $y$'s where necessary to give asymptotic $D$-pseudo-orbits $(x_i)_{i\in \omega}$ and $(y_i)_{i \in \omega}$ in $X$. 
By s-limit shadowing there exist $x,y \in X$ which asymptotically $E_0$-shadow $(x_i)_{i \in \omega}$ and $(y_i)_{i \in \omega}$ respectively. Write $A=\{x,y\} \in F_2(X)$. It is now straightforward to verify that $A$ asymptotically $E$-shadows $(A_i)_{i \in \omega}$.

\end{proof}

\begin{remark}
Example \ref{ExTentMapDoesNotPreserveLimitShadToSymProducts} shows that, in general, symmetric products do not preserve s-limit shadowing for $n\geq3$.
\end{remark}

\subsection{Factor maps}

\begin{definition} Suppose $X$ and $Y$ are compact Hausdorff spaces and $f\colon X \to X$, $g \colon Y \to Y$ are continuous. A surjective semiconjugacy
$\varphi\colon X\to Y$ is \emph{ALA$\epsilon$P} iff for every $V \in \mathscr{U}_Y$ and $D \in \mathscr{U}_X$ there is $W \in \mathscr{U}_Y$ such that for every asymptotic $W$-pseudo-orbit $(y_i)$ in $Y$ there is an asymptotic $D$-pseudo-orbit $(x_i)$ in $X$ such that $(\varphi(x_i))$ asymptotically $V$-shadows $(y_i)$.
\end{definition}
If $X$ and $Y$ are compact metric spaces, then $\varphi$ is ALA$\epsilon$P if and only if for every $\epsilon>0$ and $\eta>0$ there is $\delta>0$ such that for every asymptotic $\delta$-pseudo-orbit $(y_i)_{i \in \omega}$ in $Y$ there is an asymptotic $\eta$-pseudo-orbit $(x_i)_{i \in \omega}$ in $X$ such that $(\varphi(x_i))_{i \in \omega}$ asymptotically $\epsilon$-shadows $(y_i)_{i \in \omega}$.

The proof of the following is similar to the proofs of Theorems \ref{ThmALPeventualShad}, \ref{ThmALPOrbShad}, \ref{ThmALPStrongOrbShad} and \ref{ThmALPFirstWeakShad} and is therefore omitted

\begin{theorem}\label{ThmALPsLimShad}
Suppose that $\varphi \colon (X,f)\to (Y,g)$ is a factor map.
	\begin{enumerate}
		\item If $(X,f)$ exhibits s-limit shadowing and $\varphi$ is \emph{ALA$\epsilon$P}, then $(Y,g)$ exhibits s-limit shadowing.
		\item If $(Y,g)$ exhibits s-limit shadowing, then $\varphi$ is \emph{ALA$\epsilon$P}.
	\end{enumerate}
\end{theorem}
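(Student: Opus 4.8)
The plan is to follow the same template as Theorems~\ref{ThmALPeventualShad}, \ref{ThmALPOrbShad}, \ref{ThmALPStrongOrbShad} and \ref{ThmALPFirstWeakShad}, composing a uniform-continuity estimate for $\varphi$ with the shadowing available in the relevant factor and the lifting guaranteed by the ALA$\epsilon$P property. The key simplification I would exploit is Theorem~\ref{THMsLimObsolete}: since $X$ and $Y$ are compact Hausdorff, to establish (or to use) s-limit shadowing it suffices to work with condition~(2) alone, i.e.\ with the requirement that every asymptotic $D$-pseudo-orbit be asymptotically $E$-shadowed. This lets me dispense with the ordinary-shadowing clause entirely at both ends and reduces the argument to a single asymptotic statement.

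For part (1), given $E \in \mathscr{U}_Y$ I would first fix $E_0 \in \mathscr{U}_Y$ with $2E_0 \subseteq E$, then use uniform continuity of $\varphi$ to obtain $D_1 \in \mathscr{U}_X$ with the property that $(a,b) \in D_1$ implies $(\varphi(a),\varphi(b)) \in E_0$. Condition~(2) of s-limit shadowing for $(X,f)$ supplies $D_2 \in \mathscr{U}_X$ such that every asymptotic $D_2$-pseudo-orbit is asymptotically $D_1$-shadowed, and feeding $V = E_0$ and $D = D_2$ into the definition of ALA$\epsilon$P yields $W \in \mathscr{U}_Y$. I then claim $W$ witnesses condition~(2) for $E$ in $(Y,g)$: given an asymptotic $W$-pseudo-orbit $(y_i)_{i \in \omega}$ in $Y$, the ALA$\epsilon$P property produces an asymptotic $D_2$-pseudo-orbit $(x_i)_{i \in \omega}$ in $X$ with $(\varphi(x_i))_{i \in \omega}$ asymptotically $E_0$-shadowing $(y_i)_{i \in \omega}$; s-limit shadowing of $(X,f)$ then produces $z \in X$ asymptotically $D_1$-shadowing $(x_i)_{i \in \omega}$; and $\varphi(z)$ is the desired asymptotic $E$-shadow of $(y_i)_{i \in \omega}$.

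For part (2), assuming s-limit shadowing of $(Y,g)$, I fix $V \in \mathscr{U}_Y$ and $D \in \mathscr{U}_X$ and take $W \in \mathscr{U}_Y$ from condition~(2) of s-limit shadowing in $Y$ corresponding to $V$. Given an asymptotic $W$-pseudo-orbit $(y_i)_{i \in \omega}$ in $Y$, there is $z \in Y$ asymptotically $V$-shadowing it; by surjectivity of $\varphi$ I choose $x \in \varphi^{-1}(z)$ and set $x_i = f^i(x)$. This is a genuine orbit, so $(f(x_i),x_{i+1}) \in \Delta$ for every $i$ and $(x_i)_{i \in \omega}$ is trivially an asymptotic $D$-pseudo-orbit; moreover $\varphi(x_i) = g^i(\varphi(x)) = g^i(z)$, so $(\varphi(x_i))_{i \in \omega}$ coincides with the orbit of $z$ and hence asymptotically $V$-shadows $(y_i)_{i \in \omega}$. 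Thus $\varphi$ is ALA$\epsilon$P.

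The main obstacle is purely bookkeeping, but it is the one place where care is required: the notion ``asymptotically $E$-shadows'' bundles together a uniform clause (containment in $E$ at every index) and an asymptotic-tail clause (eventual containment in every entourage). In part (1) the uniform clause is handled by the single refinement $2E_0 \subseteq E$, whereas the tail clause must be verified separately: given a target $E' \in \mathscr{U}_Y$ one picks $E_0' \in \mathscr{U}_Y$ with $2E_0' \subseteq E'$ and a fresh $D_1'$ from uniform continuity, and then combines the eventual $E_0'$-closeness of $g^i(\varphi(z))$ to $\varphi(x_i)$ (from the asymptotic shadowing of $(x_i)_{i \in \omega}$ by $z$) with the eventual $E_0'$-closeness of $\varphi(x_i)$ to $y_i$ (from ALA$\epsilon$P). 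Keeping these two estimates apart, rather than conflating the all-index and eventual parts, is the only real subtlety; everything else is a routine transcription of the earlier factor-map proofs.
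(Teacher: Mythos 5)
Your proposal is correct and follows exactly the route the paper intends: the paper omits this proof, stating it is analogous to Theorems~\ref{ThmALPeventualShad}, \ref{ThmALPOrbShad}, \ref{ThmALPStrongOrbShad} and \ref{ThmALPFirstWeakShad}, and your argument is precisely that analogue, including the reduction via Theorem~\ref{THMsLimObsolete} to condition~(2) alone, which the paper itself endorses immediately after that theorem. Your careful separation of the all-index clause (handled by $2E_0 \subseteq E$) from the tail clause (handled entourage-by-entourage with a fresh $E_0'$ and $D_1'$) is exactly the bookkeeping the omitted proof requires.
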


\subsection{Inverse limits}
Whilst it remains unclear whether general inverse limit systems preserve s-limit shadowing, we note the following result proved by the first author \textit{et al} in \cite{GoodOprochaPuljiz2019}.

\begin{theorem}\textup{\cite[Theorem 5.1]{GoodOprochaPuljiz2019}}
Let $X$ be a compact metric space and $f \colon X \to X$ a continuous onto map. Then $(X,f)$ has s-limit shadowing if and only if $(\varprojlim(X,f), \sigma)$ has s-limit shadowing.
\end{theorem}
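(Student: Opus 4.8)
The plan is to prove both implications by reducing, via Theorem \ref{THMsLimObsolete} (applicable since both $X$ and $\varprojlim(X,f)$ are compact metric), to condition (2) alone: it suffices to show that every asymptotic pseudo-orbit is asymptotically shadowed. Write $\hat{X}=\varprojlim(X,f)$, equip it with the metric $\hat{d}((a_i),(b_i))=\sum_{i\in\omega}2^{-i}d(a_i,b_i)$ (assuming $\operatorname{diam}X\le 1$), and let $\pi_k\colon\hat{X}\to X$ be the coordinate projections, so that $\pi_k\circ\sigma=\pi_{k+1}$ and $\pi_k=f\circ\pi_{k+1}$. The single most useful structural fact is that $\pi_0\circ\sigma^{-1}=f\circ\pi_0$, so $\pi_0$ is a factor map from $(\hat X,\sigma^{-1})$ onto $(X,f)$; equivalently, forward $f$-orbits are the $\pi_0$-images of forward $\sigma^{-1}$-orbits, i.e.\ of \emph{backward} $\sigma$-orbits. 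I would also use repeatedly that, because of the $2^{-i}$ weighting, for every $\hat\epsilon>0$ there is $N\in\omega$ such that agreement of the first $N$ coordinates to within a small tolerance forces $\hat{d}<\hat\epsilon$; this confines every estimate in $\hat X$ to finitely many coordinates.

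For the implication that s-limit shadowing of $(X,f)$ forces it in $(\hat X,\sigma)$, I would start from an asymptotic $\hat\delta$-pseudo-orbit $(\xi^{(n)})_{n\in\omega}$ of $\sigma$ and read off its coordinates. A short computation using $\pi_k=f\circ\pi_{k+1}$ shows that the $0$th-coordinate sequence $(\xi^{(n)}_0)_n$ is a \emph{backward} asymptotic pseudo-orbit of $f$, in the sense that $d(f(\xi^{(n+1)}_0),\xi^{(n)}_0)\to 0$. I then want a single point $\eta=(\eta_m)\in\hat X$ — that is, a genuine backward $f$-orbit, $f(\eta_{m+1})=\eta_m$ — with $\eta_m$ asymptotically $\epsilon$-shadowing $(\xi^{(m)}_0)_m$; the pseudo-orbit relations in the remaining coordinates then propagate this to $\hat{d}(\sigma^n\eta,\xi^{(n)})\to 0$ by the finite-coordinate principle. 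To build the $\eta_m$ from the \emph{forward} s-limit shadowing hypothesis on $f$, I would reverse finite windows: reversing $(\xi^{(M)}_0,\dots,\xi^{(0)}_0)$ turns it into a genuine forward finite $\delta'$-pseudo-orbit of $f$, which can be shadowed using finite shadowing, and a compactness/diagonal argument over growing windows yields the required backward orbit, with the asymptotic decay of the pseudo-orbit controlling the limit error.

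For the converse I would exploit the factor map $\pi_0\colon(\hat X,\sigma^{-1})\to(X,f)$ together with Theorem \ref{ThmALPsLimShad}. Given a forward asymptotic $\delta$-pseudo-orbit of $f$, surjectivity of $f$ guarantees that every point has a full backward history, so the pseudo-orbit lifts to an asymptotic pseudo-orbit of the shift on $\hat X$; shadowing it in $\hat X$ and projecting through $\pi_0$ returns an asymptotically $\epsilon$-shadowing point in $X$. The genuine complication is that the hypothesis provides s-limit shadowing for $\sigma$, whereas the factor map intertwines $f$ with $\sigma^{-1}$; this is reconciled by showing that, on the inverse limit, the shift and its inverse exhibit the same s-limit-shadowing behaviour (again via finite-window reversal and the finite-coordinate principle), so that the lift--shadow--project scheme may be run in the orientation matching $f$.

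The main obstacle throughout is precisely this time-reversal: forward $f$-dynamics corresponds to the \emph{inverse} shift, so the heart of the argument is transferring a one-sided \emph{limit} (asymptotic) condition between $\sigma$ and $\sigma^{-1}$, where naive reversal of an infinite one-sided pseudo-orbit is unavailable. The two tools I expect to carry the real weight of the proof are the $2^{-i}$ weighting, which reduces every $\epsilon$-estimate in $\hat X$ to finitely many coordinates, and the reversal of arbitrarily long finite windows combined with compactness, which lets forward shadowing of $f$ assemble the infinite backward shadowing orbit while the asymptotic decay of the pseudo-orbit governs the tail.
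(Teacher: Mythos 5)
The paper itself contains no proof of this statement -- it is imported verbatim from \cite{GoodOprochaPuljiz2019} -- so your proposal can only be judged on its own terms, and on those terms it has two genuine gaps, both located exactly where you put the phrase ``compactness/diagonal argument.'' Your structural reductions are correct: with the paper's convention $\sigma(x_0,x_1,\dots)=(x_1,x_2,\dots)$ one indeed has $\pi_0\circ\sigma^{-1}=f\circ\pi_0$; the $0$th coordinates of a $\sigma$-asymptotic pseudo-orbit form a backward asymptotic pseudo-orbit of $f$; reversed finite windows of that sequence are forward finite pseudo-orbits; and by the telescoping/finite-coordinate argument it suffices to produce one point $\eta\in\varprojlim(X,f)$ with $d(\eta_m,\xi^{(m)}_0)\to 0$. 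The gap is that your window construction cannot produce such an $\eta$. For a fixed $\epsilon$, shadowing the reversed windows over $[M_1(\epsilon),M]$ and letting $M\to\infty$ does give, by compactness, a backward orbit whose entire tail is uniformly $\epsilon$-close -- because $\epsilon$-shadowing is a closed condition. But asymptotic shadowing is \emph{not} closed under limits in the product topology: if $\eta^{(k)}$ has tail-error $\le\epsilon_k$ beyond $M_k$ and $\eta^{(k)}\to\eta$, then convergence in $\hat X$ controls the coordinate $\eta_m$ only for $m$ fixed as $k\to\infty$, while the asymptotic condition requires control for $m$ large at comparable $k$ -- precisely the coordinates that the $2^{-m}$ weighting suppresses. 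The diagonalization therefore yields only ``for every $\epsilon$ there is a backward orbit eventually $\epsilon$-close'' (an eventual/orbital-type conclusion), not a single asymptotically shadowing backward orbit. This is the actual content of the cited theorem; closing it requires a single multi-scale inductive construction that glues successively finer shadowing over blocks using the $\epsilon$--$\delta$ clause of s-limit shadowing, not a limit of independent approximants. Relatedly, your appeal to finite-window reversal to identify the s-limit-shadowing behaviour of $\sigma$ and $\sigma^{-1}$ fails for the same reason: limit-type shadowing is not a time-symmetric property in general, so that transfer needs an argument specific to the inverse-limit structure, and both of your directions lean on it.

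The converse direction has a second, independent gap: the lifting step is wrong as stated. Given a forward asymptotic pseudo-orbit $(y_n)$ of $f$, choosing via surjectivity an arbitrary full backward history $\hat y_n\in\pi_0^{-1}(y_n)$ does \emph{not} give an asymptotic pseudo-orbit of the shift: the pseudo-orbit condition in $\hat X$ constrains coordinate $1$ with fixed weight $\tfrac12$, and there one needs $(\hat y_{n+1})_1$ to be a genuine $f$-preimage of $y_{n+1}$ lying near $y_n$. Such a preimage need not exist -- preimages do not vary continuously for non-open maps (e.g.\ near an interior local maximum of an interval map, points just above the critical value have no preimage near the critical point) -- and manufacturing coherent \emph{exact} histories that asymptotically track $(y_n,y_{n-1},\dots)$ is tantamount to the backward limit shadowing you are trying to establish. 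Note the contrast with the factor-map theorems in the paper (e.g.\ Theorem \ref{ThmALPsLimShad}(2)): there the codomain's shadowing property is used to replace the pseudo-orbit by a \emph{true} orbit, which lifts exactly; here that device is unavailable without circularity, since verifying that $\pi_0$ is ALA$\epsilon$P is essentially equivalent to the conclusion. Your reduction via Theorem \ref{THMsLimObsolete} to clause (2) alone is legitimate, and the plain-shadowing clause does transfer by your windows-plus-compactness argument (that part is classical); but for the asymptotic clause, the two tools you expect ``to carry the real weight'' -- the $2^{-i}$ weighting and finite-window reversal with compactness -- are provably insufficient, and the missing multi-scale gluing argument is the theorem.
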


\subsection{Tychonoff product}

\begin{theorem}
Let $\Lambda$ be an arbitrary indexation set and, for each $\lambda \in \Lambda$, let $(X_\lambda, f_\lambda)$ be a compact Hausdorff system with s-limit shadowing. Then the product system $(X,f)$, where $X=\prod_{\lambda \in \Lambda} X_\lambda$, has s-limit shadowing.
\end{theorem}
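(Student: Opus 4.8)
The plan is to reduce the two-clause definition of s-limit shadowing to a single clause and then argue coordinate-by-coordinate, mirroring the proof of the preceding theorem that products preserve limit shadowing. Since an arbitrary product of compact Hausdorff spaces is compact Hausdorff, $X=\prod_{\lambda\in\Lambda}X_\lambda$ is itself compact Hausdorff, so by Theorem \ref{THMsLimObsolete} it suffices to produce, for each $E\in\mathscr{U}$, a $D\in\mathscr{U}$ such that every asymptotic $D$-pseudo-orbit in $X$ is asymptotically $E$-shadowed; the same theorem lets me use s-limit shadowing in each factor in its reduced (clause (2)) form as well.

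First I would fix $E\in\mathscr{U}$ and refine it by a basic entourage $\prod_{\lambda}E_\lambda$ with $E_\lambda=X_\lambda\times X_\lambda$ off a finite set of \emph{active} coordinates $\lambda_1,\dots,\lambda_k$ (if there are none we are done). For each active coordinate $\lambda_j$, I invoke the reduced form of s-limit shadowing in $X_{\lambda_j}$ with target $E_{\lambda_j}$ to obtain $D_{\lambda_j}\in\mathscr{U}_{\lambda_j}$, and set $D=\prod_\lambda D_\lambda$ with $D_\lambda=D_{\lambda_j}$ when $\lambda=\lambda_j$ and $D_\lambda=X_\lambda\times X_\lambda$ otherwise; this is a legitimate product entourage.

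Next, given an asymptotic $D$-pseudo-orbit $(x_i)$ in $X$, I would note that each projection $(\pi_\lambda(x_i))$ is again an asymptotic pseudo-orbit (projection preserves both the $D$-pseudo-orbit condition and the asymptotic condition), and that in each active coordinate it is in fact an asymptotic $D_{\lambda_j}$-pseudo-orbit. Clause (2) of s-limit shadowing in $X_{\lambda_j}$ then yields $y_{\lambda_j}$ that asymptotically $E_{\lambda_j}$-shadows $(\pi_{\lambda_j}(x_i))$; in particular $y_{\lambda_j}$ both $E_{\lambda_j}$-shadows it for \emph{all} $i$ and asymptotically shadows it. Choosing $y$ with $\pi_\lambda(y)=y_\lambda$, the $E$-shadowing-for-all-$i$ half of ``asymptotically $E$-shadows'' is immediate (the active coordinates supply it, the inactive ones are trivially full), while the asymptotic half follows because any single entourage $E'$ tests only finitely many coordinates, so one takes the maximum of the finitely many threshold indices returned by the relevant coordinates.

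The main obstacle is precisely the asymptotic half in the coordinates where $E$ is trivial. There $D_\mu=X_\mu\times X_\mu$, so $(\pi_\mu(x_i))$ is guaranteed only to be an asymptotic pseudo-orbit, not an asymptotic $D'$-pseudo-orbit for the small entourage $D'$ that s-limit shadowing in $X_\mu$ actually requires; one knows only that a \emph{tail} of it is such a pseudo-orbit, yet the asymptotic-shadowing requirement is absolute (it quantifies over all $E'\in\mathscr{U}$, hence every coordinate). The crux is therefore to upgrade s-limit shadowing in a factor to the statement that it asymptotically shadows a genuine asymptotic pseudo-orbit, and I expect this to be the hard part: applying clause (2) to a tail produces a point shadowing a \emph{shifted} sequence, and recovering a point whose orbit asymptotically shadows the sequence from index zero is delicate. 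I would isolate this as a lemma ``s-limit shadowing implies asymptotic shadowing of every asymptotic pseudo-orbit,'' proving it by splicing together the tail-shadowing orbits obtained from a sequence of ever-finer applications of clause (2) into a single asymptotic pseudo-orbit, shadowing that composite once more, and controlling the time-shift using compactness of $X_\mu$; this reconciliation of the shift is the step I anticipate requiring the most care.
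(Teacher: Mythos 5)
Your architecture coincides with the paper's proof: the paper likewise invokes Theorem \ref{THMsLimObsolete} so that only clause (2) needs checking, builds the same product entourage $D$ with $D_\lambda$ trivial off a finite active set, shadows the active projections via clause (2) in each factor, and settles the asymptotic half of the conclusion exactly as you do, by noting that any test entourage constrains only finitely many coordinates. The one place you diverge is the inactive coordinates: the paper dismisses them in a single sentence (``Note that, for every $\lambda$ outside the active set, every asymptotic pseudo-orbit is asymptotically $E_\lambda$-shadowed''), i.e.\ it asserts, without proof, precisely the lemma you isolate --- that s-limit shadowing implies limit shadowing. So your diagnosis of where the real content sits is accurate, and more candid than the paper itself.

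However, your sketched proof of that lemma does not yet work, and the failure mode is the one you fear. Splicing the tail-trackers $z_n$ produces a sequence whose initial block is still unconstrained, and applying clause (2) to any sequence that agrees with $(x_i)$ only up to an index shift returns a point tracking the \emph{shifted} sequence; iterating splice-and-reshadow reproduces the shift at every stage and never realigns the indices. Here is how to close it. If $f_\mu$ is onto, the lemma is trivial: pick $y$ with $f_\mu^N(y)=z$, where $z$ asymptotically shadows the $N$-tail. In general, set $X_\infty=\bigcap_{n}f_\mu^n(X_\mu)$, which is nonempty and compact with $f_\mu(X_\infty)=X_\infty$. For each fixed $k$, vanishing jumps together with uniform continuity give that $(f_\mu^k(x_{i-k}),x_i)$ eventually lies in any prescribed entourage, so every cluster point of the pseudo-orbit lies in the closed set $f_\mu^k(X_\mu)$ for every $k$, hence in $X_\infty$; compactness then forces the pseudo-orbit eventually into $B_{D}(X_\infty)$ for any entourage $D$. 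So, given $D$ from clause (2), choose $N$ so large that the $N$-tail is an asymptotic $D$-pseudo-orbit \emph{and} $x_N\in B_D\bigl(f_\mu^N(X_\mu)\bigr)$ (using $X_\infty\subseteq f_\mu^N(X_\mu)$), pick $q$ with $(f_\mu^N(q),x_N)\in D$, and prepend the genuine orbit segment $(q,f_\mu(q),\dots,f_\mu^{N-1}(q))$ to the $N$-tail. The concatenation is an asymptotic $D$-pseudo-orbit from index $0$ agreeing with $(x_i)$ beyond $N$, so a \emph{single} application of clause (2) yields a correctly aligned point asymptotically shadowing $(x_i)$ --- no iteration is needed. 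With that lemma supplied, your proof is complete and matches the paper's.
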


\begin{proof}
Let $E \in \mathscr{U}$ be given; this entourage is refined by one of the form
\[ \prod _{\lambda \in \Lambda} E_\lambda,\]
where $E_\lambda \in \mathscr{U}_\lambda$ for all $\lambda \in \Lambda$ and $E_\lambda =X_\lambda \times X_\lambda$ for all but finitely many of the $\lambda$'s. Let $\lambda_i$, for $1\leq i \leq k$, be precisely those elements in $\Lambda$ for which $E_\lambda  \neq X_\lambda \times X_\lambda$ (if there are no such elements then we are done).

By s-limit shadowing in each component space, there exist entourages $D_{\lambda_i} \in \mathscr{U}_{\lambda_i}$ such that every asymptotic $D_{\lambda_i}$-pseudo-orbit is asymptotically $E_{\lambda_i}$-shadowed. Note that, for every $\lambda \in \Lambda \setminus\{\lambda_i \mid 1 \leq i \leq k\}$ every asymptotic pseudo-orbit is asymptotically $E_{\lambda}$-shadowed. For $\lambda \in \Lambda \setminus\{\lambda_i \mid 1 \leq i \leq k\}$ take $D_\lambda=X \times X$.
Let
\[ D \coloneqq \prod _{\lambda \in \Lambda} D_\lambda.\]

Now let $(x_j)_{j \in \omega}$ be an asymptotic $D$-pseudo-orbit. Then $(\pi_{\lambda_i}(x_j))_{j \in \omega}$ is an asymptotic $D_{\lambda_i}$-pseudo-orbit in $X_{\lambda_i}$, which is asymptotically $E_{\lambda_i}$-shadowed by a point $z_i \in X_{\lambda_i}$. Furthermore $(\pi_{\lambda}(x_j))_{j \in \omega}$ is an asymptotic pseudo-orbit in $X_\lambda$ which is asymptotically shadowed by a point $z_\lambda$. Let $z \in X$ be such that $\pi_\lambda(z)=z_\lambda$ for all $\lambda \in \Lambda \setminus\{\lambda_i \mid 1 \leq i \leq k\}$ and $\pi_{\lambda_i}(z)=z_{\lambda_i}$ for each $i$. It is easy to see that $z$ asymptotically $E$-shadows  $(x_j)_{j \in \omega}$.

\end{proof}

\section{Preservation of Orbital Limit Shadowing}

Orbital limit shadowing was introduced by Pilyugin and others in \cite{Pilyugin2007} and studied with regard to various types of stability. Good and Meddaugh \cite{GoodMeddaugh2016} show that this property is equivalent to one they call \textit{asymptotic orbital shadowing} (see Theorem \ref{ThmEquivOrbLimICT} and Definition \ref{DefnAsymOrbShad}). Recall that a system $(X,f)$ has the \textit{orbital limit shadowing} property if given any asymptotic pseudo-orbit  $(x_i)_{i\geq 0}\subseteq X$, there exists a point $x\in X$ such that
	\begin{align*}
		\omega((x_i)_{i\geq 0})=\omega(x).
	\end{align*}
Where $\omega((x_i)_{i\geq 0})$ is the set of limit points of the pseudo-orbit.

The following theorem, proved in \cite{GoodMeddaugh2016}, gives an equivalence between two notions of shadowing that we have defined in Section \ref{ShadowingTypes}. It is because of this equivalence that asymptotic orbital shadowing 
is omitted from the table of results. (NB. The authors \cite{GoodMeddaugh2016} prove the theorem below in a compact metric setting. Their result generalises to the case when the underlying space is a compact Hausdorff.)

\begin{theorem}\textup{\cite[Theorem 22]{GoodMeddaugh2016}}\label{ThmEquivOrbLimICT} Let $(X,f)$ be a compact Hausdorff dynamical system. Then the following are equivalent:
\begin{enumerate}
\item $f$ has the asymptotic orbital shadowing property;
\item $f$ has the orbital limit shadowing property; and
\item $\omega_f = ICT(f)$.
\end{enumerate}
\end{theorem}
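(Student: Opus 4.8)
The plan is to prove the equivalences (1)$\Leftrightarrow$(2) and (2)$\Leftrightarrow$(3) separately, the first by a soft convergence argument and the second through two structural lemmas relating asymptotic pseudo-orbits to internally chain transitive sets. Here $\omega_f$ denotes the collection of all $\omega$-limit sets of points and $ICT(f)$ the collection of all closed internally chain transitive sets. Throughout I write $A_N:=\overline{\{x_{N+i}\}_{i\geq 0}}$ and $B_N:=\overline{\{f^{N+i}(x)\}_{i\geq 0}}$, and I use the standing fact that, since $X$ is compact, the decreasing nested families $(A_N)_N$ and $(B_N)_N$ converge in the Hausdorff sense to their intersections; that is, $d_H(A_N,\omega((x_i)))\to 0$ and $d_H(B_N,\omega(x))\to 0$, where $\omega((x_i))=\bigcap_N A_N$ and $\omega(x)=\bigcap_N B_N$. (In the general compact Hausdorff setting one replaces $d_H<\epsilon$ by membership in $2^E$ and reads the convergence as: for every $E\in\mathscr{U}$ eventually $(A_N,\omega((x_i)))\in 2^E$.)

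For (1)$\Leftrightarrow$(2) I fix an asymptotic pseudo-orbit $(x_i)$ and a candidate $x$. By the triangle inequality for the Hausdorff metric together with the two convergences above, $d_H(A_N,B_N)\to d_H(\omega((x_i)),\omega(x))$. Hence the asymptotic orbital shadowing tail condition for $(x_i)$ and $x$ holds exactly when $d_H(\omega((x_i)),\omega(x))=0$, i.e.\ when $\omega((x_i))=\omega(x)$, which is precisely the orbital limit shadowing condition; it is the monotone convergence of the tail closures that makes the two formulations coincide. Quantifying over asymptotic pseudo-orbits and over the witnessing point $x$ then gives the equivalence of the two properties. This step is routine once the convergence of tail closures is in hand.

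The substance lies in (2)$\Leftrightarrow$(3), for which I would isolate two lemmas. (L1): for every asymptotic pseudo-orbit $(x_i)$ the set $\omega((x_i))$ is closed, invariant and internally chain transitive, so in particular (taking genuine orbits) $\omega_f\subseteq ICT(f)$. (L2): every $A\in ICT(f)$ is realised as $\omega((x_i))$ for some asymptotic pseudo-orbit $(x_i)$. Granting these, (3)$\Rightarrow$(2) is immediate: given $(x_i)$, (L1) places $\omega((x_i))$ in $ICT(f)=\omega_f$, so some genuine point $x$ has $\omega(x)=\omega((x_i))$, which is orbital limit shadowing. For (2)$\Rightarrow$(3), given $A\in ICT(f)$, (L2) supplies an asymptotic pseudo-orbit with $\omega$-limit set $A$, and orbital limit shadowing produces $x$ with $\omega(x)=A$, so $A\in\omega_f$; combined with the inclusion $\omega_f\subseteq ICT(f)$ from (L1) this yields $\omega_f=ICT(f)$.

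The main obstacle is the realisation lemma (L2), together with the \emph{internal} (rather than merely ambient) chain transitivity in (L1). In (L1) invariance is clean: if $x_{i_k}\to a$ then continuity gives $f(x_{i_k})\to f(a)$, while the asymptotic pseudo-orbit condition forces $x_{i_k+1}$ into every entourage of $f(x_{i_k})$ eventually, whence $x_{i_k+1}\to f(a)$ and $f(a)\in\omega((x_i))$; the transitivity chains are extracted from long, arbitrarily fine tail-segments of the pseudo-orbit running from a return near $a$ to a later return near $b$, and since $x_i$ lies in any prescribed entourage of $\omega((x_i))$ for large $i$, one replaces each chain point by a nearby point of $A$ to keep the chain internal at the cost of enlarging the entourage once. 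For (L2) I would work with a cofinal (in the metric case, mesh-shrinking) family of entourages, pick finite dense subsets of the compact set $A$ at each scale, and, using internal chain transitivity and invariance, concatenate internal chains cycling through every chosen point while the jump sizes tend to the diagonal; the result is an asymptotic pseudo-orbit accumulating on all of $A$ and on nothing outside $A$, so $\omega((x_i))=A$. The delicate points are the bookkeeping that makes every point of $A$ recur infinitely often while the jumps shrink, and, in the non-metrisable compact Hausdorff setting where the uniformity has no countable base, ensuring the jumps of the \emph{sequence} genuinely converge to the diagonal; this last point is handled as in \cite{GoodMeddaugh2016}.
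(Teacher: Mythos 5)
You should first note the benchmark here: the paper does not prove this theorem at all — it imports it as \cite[Theorem 22]{GoodMeddaugh2016}, proved there for compact metric systems, with an unproved remark that it generalises to compact Hausdorff spaces. Measured against that source, your architecture is the intended one: (1)$\Leftrightarrow$(2) via Hausdorff convergence of the nested tail closures, and (2)$\Leftrightarrow$(3) via your (L1) and (L2), where (L2) — every closed internally chain transitive set is the $\omega$-limit set of an asymptotic pseudo-orbit, and conversely — is precisely the Meddaugh--Raines realisation machinery that \cite{GoodMeddaugh2016} invokes. Your sketches of (L1) and (L2), including the uniform-continuity step that pushes chain points into $\omega((x_i))$ and the stagewise concatenation through finite dense subsets, are the standard and correct arguments in the metric case.

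There is, however, a genuine gap in the step you dismiss as routine. Asymptotic orbital shadowing demands: for every $\epsilon>0$ \emph{there exists} $N$ with $d_H(A_N,B_N)<\epsilon$; that is the statement $\inf_N d_H(A_N,B_N)=0$, not $\lim_N d_H(A_N,B_N)=0$. Your convergence fact $d_H(A_N,B_N)\to d_H\bigl(\omega((x_i)),\omega(x)\bigr)$ is correct, but a convergent sequence with infimum $0$ need not have limit $0$: the quantifier allows the witnessing $N$ to stay bounded as $\epsilon\to 0$, and then all one can extract (by pigeonhole on finitely many values of $N$) is the exact equality $A_{N_0}=B_{N_0}$ for a single $N_0$ — and equality of two tail closures at one time does not formally control the intersections $\bigcap_N A_N$ and $\bigcap_N B_N$. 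So ``holds exactly when $d_H(\omega((x_i)),\omega(x))=0$'' does not follow from monotone convergence alone; the forward implication needs a separate dynamical argument. It can be supplied: if $A_{N_0}=B_{N_0}=:S$, then in the eventually periodic case vanishing jumps in a finite set force the pseudo-orbit tail to become a true orbit, while in the injective-orbit case the derived set of $S$ equals $\omega(x)$, so any $p\in\omega((x_i))\setminus\omega(x)$ is isolated in $S$, is hit infinitely often, and chasing its predecessors in the compact set $S$ (using $f(\omega(x))=\omega(x)$ and a finite descent down the orbit indices to $f^{N_0}(x)$) yields a contradiction. Alternatively one can avoid the pointwise claim entirely by closing the cycle (2)$\Rightarrow$(1)$\Rightarrow$(3)$\Rightarrow$(2), exploiting that the pseudo-orbit produced by (L2) has \emph{every} tail closure equal to $A$, which tames the bounded-$N$ case. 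Finally, a smaller point: your deferral of the non-metrisable bookkeeping in (L2) ``as in \cite{GoodMeddaugh2016}'' points at a metric-only source; the compact Hausdorff case is asserted, not proved, there (and in the present paper), so a complete write-up would have to carry out the entourage/open-cover version of that construction rather than cite it.
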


In the above theorem $\omega_f$ is the set of $\omega$-limit sets of $f$, whilst $ICT(f)$ is the set of \textit{internally chain transitive sets}: a set $A \subseteq X$ is internally chain transitive if for any $D \in \mathscr{U}$ and any $x,y \in A$ there exists a sequence of points in $A$, called a $D$-chain, $(x=x_0,x_1,x_2,\ldots, x_n=y)$ such that $(f(x_i), x_{i+1}) \in D$ for every $ 0\leq i \leq n-1$.


\subsection{Induced map on the hyperspace of compact sets}

\begin{theorem}\label{OrbLimHyp}
Let $X$ be a compact Hausdorff space, and let $f \colon X \to X$ be a continuous onto function. If $(2^X, 2^f)$ witnesses orbital limit shadowing then $(X,f)$ experiences orbital limit shadowing.
\end{theorem}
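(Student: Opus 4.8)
The plan is to lift the problem from $X$ to $2^X$ via the canonical embedding $x\mapsto\{x\}$, invoke orbital limit shadowing in the hyperspace, and then descend by selecting a single point of the resulting shadowing set. First I would note that, since $(\{a\},\{b\})\in 2^E$ precisely when $(a,b)\in E$, the map $\iota\colon X\to 2^X$, $x\mapsto\{x\}$, is a uniform embedding; consequently $2^f\circ\iota=\iota\circ f$ and, given an asymptotic pseudo-orbit $(x_i)_{i\in\omega}$ in $X$, the sequence $(\{x_i\})_{i\in\omega}$ is an asymptotic pseudo-orbit in $2^X$. Applying orbital limit shadowing for $(2^X,2^f)$ then yields a set $A\in 2^X$ with $\omega(A)=\omega\big((\{x_i\})_{i\in\omega}\big)$, where both $\omega$-limit sets are computed in $2^X$ under $2^f$.

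The next step is to identify these hyperspace $\omega$-limit sets. Writing $L\coloneqq\omega\big((x_i)_{i\in\omega}\big)\subseteq X$, I claim that $\omega\big((\{x_i\})_{i\in\omega}\big)=\{\,\{p\}\mid p\in L\,\}$. The inclusion $\supseteq$ is immediate from the embedding, since $\{x_n\}$ clusters at $\{p\}$ exactly when $x_n$ clusters at $p$. For $\subseteq$, the crucial observation is that a cluster value (in the Vietoris topology) of a sequence of singletons must itself be a singleton: if $B\in 2^X$ contained two distinct points, one could choose a symmetric $E$ so small that no $E$-ball about a single point meets both, whence no $\{x_n\}$ lies in the $2^E$-ball about $B$, so $B$ cannot cluster the sequence $(\{x_i\})$. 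Hence every element of $\omega(A)$ is a singleton $\{p\}$ with $p\in L$.

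Finally I would descend to $X$ by fixing an arbitrary $a\in A$ and showing $\omega(a)=L$, which suffices since $a\in X$. For $\omega(a)\subseteq L$: if $q\in\omega(a)$, then $q$ is a cluster value of $(f^i(a))_{i}$; passing to a subnet along which the sets $(2^f)^i(A)$ also cluster at some $B\in\omega(A)$, the containment $f^i(a)\in(2^f)^i(A)$ together with $f^i(a)\to q$ forces $q\in B$, and as $B=\{p\}$ with $p\in L$ we conclude $q\in L$. The reverse inclusion $L\subseteq\omega(a)$ is where the singleton structure does the essential work, and I expect it to be the main obstacle: given $p\in L$ we have $\{p\}\in\omega(A)$, so $(2^f)^i(A)$ clusters at $\{p\}$ along some subnet, and because the \emph{entire} set $(2^f)^i(A)$ collapses to the single point $p$, in particular $f^i(a)\to p$ along that subnet, giving $p\in\omega(a)$. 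It is precisely the fact that $\omega(A)$ consists only of singletons that guarantees a single point $a$ witnesses all of $L$; without that collapse, an individual point of $A$ could a priori capture only part of the target $\omega$-limit set. Thus $\omega(a)=L$, establishing orbital limit shadowing for $(X,f)$.
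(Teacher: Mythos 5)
Your proposal is correct, and it reaches the conclusion by a genuinely different middle argument than the paper. The paper first invokes the equivalence of orbital limit shadowing with \emph{asymptotic orbital shadowing} (citing \cite[Theorem 22]{GoodMeddaugh2016}), lifts the pseudo-orbit to singletons, picks an arbitrary $z\in A$, and then runs a quantitative two-case entourage computation (choosing $E$ with $4E\subseteq D$ and deriving a contradiction from the Hausdorff-ball inclusions) to show that $z$ asymptotically orbital shadows $(x_i)_{i\in\omega}$ --- essentially recycling the same machinery it uses for Theorems \ref{OrbHyp} and \ref{StrongOrbHyp}. You instead work directly with the $\omega$-limit-set formulation throughout: your key structural lemma is that a Vietoris cluster value of a sequence of singletons must itself be a singleton (valid since the uniformity on a compact Hausdorff space is separating, so two distinct points of a putative cluster set $B$ admit a symmetric $E$ with $(b_1,b_2)\notin E\circ E$, blocking $B\subseteq B_E(x_n)$), whence $\omega_{2^f}(A)=\bigl\{\{p\}\mid p\in L\bigr\}$ with $L=\omega\bigl((x_i)_{i\geq 0}\bigr)$; the descent then uses compactness of $2^X$ for the subnet extraction in $\omega(a)\subseteq L$, and the collapse of $(2^f)^i(A)$ to a point along subnets for $L\subseteq\omega(a)$. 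Each step checks out: limits of $a_j\in A_j$ with $A_j\to B$ do land in $B$, and convergence of $(2^f)^i(A)$ to $\{p\}$ does force $f^i(a)\to p$ for every $a\in A$. What your route buys is self-containedness (no appeal to the equivalence theorem and no entourage arithmetic beyond the singleton lemma) and a slightly stronger conclusion, namely that \emph{every} $a\in A$ satisfies $\omega(a)=L$ exactly, with the singleton structure of $\omega(A)$ explaining why a single point suffices; what the paper's route buys is uniformity of method, as its asymptotic-orbital case analysis runs in parallel with its other hyperspace preservation proofs.
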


We will use the fact that orbital limit shadowing is equivalent to asymptotic orbital shadowing (Theorem \ref{ThmEquivOrbLimICT}). Recall the following definition:
The system $(X,f)$ has the \textit{asymptotic orbital shadowing} property if given any asymptotic pseudo-orbit $(x_i)_{i\geq 0}\subseteq X$, there exists a point $x\in X$ such that for any $E \in \mathscr{U}$ there exists $N \in \mathbb{N}$ such that
	\begin{align*}
		(\overline{\{x_{N+i}\}_{i\geq 0}},\overline{\{f^{N+i}(x)\}_{i\geq 0}}) \in 2^E.
	\end{align*}

\begin{proof}
Let $(x_i)_{i \in \omega}$ be an asymptotic pseudo-orbit in $X$. Notice that $(\{x_i\})_{i \in \omega}$ is an asymptotic pseudo-orbit in the hyperspace $2^X$. Thus, by asymptotic orbital shadowing, there exists $A \in 2^X$ such that for any $E \in 2^\mathscr{U}$, there exists $N \in \mathbb{N}$ such that
	\begin{align*}
		(\overline{\{\{x_{N+i}\}\}_{i\geq 0}},\overline{\{f^{N+i}(A)\}_{i\geq 0}}) \in 2^E.
	\end{align*}

Pick $z \in A$ and let $D \in \mathscr{U}$, so $2^D \in 2^\mathscr{U}$. Let $E \in \mathscr{U}$ be such that $4E \subseteq D$ and let $N \in \mathbb{N}$ be such that 
	\begin{align*}
		(\overline{\{\{x_{N+i}\}\}_{i\geq 0}},\overline{\{(2^f)^{N+i}(A)\}_{i\geq 0}}) \in 2^{2^E}.
	\end{align*}
	Equivalently
\begin{equation}\label{EquationOrbLim1}
    \overline{\{\{x_{N+i}\}\}_{i\in\omega}} \subseteq B_{2^E}\left(\overline{\{(2^f)^{N+i}(A)\}_{i\in\omega}}\right)
\end{equation}
and
\begin{equation}\label{EquationOrbLim2} \overline{\{(2^f)^{N+i}(A)\}_{i\in\omega}}\subseteq B_{2^E}\left(\overline{\{\{x_{N+i}\}\}_{i\in\omega}}\right).
\end{equation}

We claim
	\begin{align*}
		(\overline{\{x_{N+i}\}_{i\geq 0}},\overline{\{f^{N+i}(z)\}_{i\geq 0}}) \in 4E \subseteq 2^D.
	\end{align*}

Indeed, suppose not. 

\textbf{Case i).} There exists $a \in \overline{\{x_{N+i}\}_{i\in\omega}}$ such that for any $b \in \overline{\{f^{N+i}(z)\}_{i\in\omega}}$ we have $(a,b) \notin 4E$. It follows that there exists $k \geq N$ such that $(x_k, f^i(z))\notin 2E$ for all $i \geq N$. We have from Equation (\ref{EquationOrbLim1}) that there exists $l \geq N$ such that $\left(f^l(A), \{x_k\}\right) \in 2^{E}$; in particular, for any $y \in f^l(A)$, $(y, x_k)\in E$, a contradiction.

\textbf{Case ii).} There exists $b \in \overline{\{f^{N+i}(z)\}_{i\in\omega}}$ such that for any $a \in \overline{\{x_i\}_{i\in\omega}}$ we have $(b,a) \notin 4E$. It follows that there exists $k \geq N$ such that $(f^k(z),x_i)\notin 2E_0$ for all $i \geq N$. We have from Equation (\ref{EquationOrbLim2}) that there exists $l \geq N$ such that $\left(f^k(A), \{x_l\}\right) \in 2^{E}$; in particular, for any $y \in f^k(A)$, $(y, x_l)\in E$, a contradiction.

It follows that 
\[\left(\overline{\{x_{N+i}\}_{i\in\omega}}, \overline{\{f^{N+i}(z)\}_{i\in\omega}}\right) \in 2^{4E} \subseteq 2^D.\]

\end{proof}
 
The following example shows that the converse to Theorem \ref{OrbLimHyp} is false.

\begin{example}\label{OrbLimHypExample}
Let $X$ be the circle $\mathbb{R}/\mathbb{Z}$ and let $f\colon X \to X$ be given by $x \mapsto x + \alpha$, where $\alpha$ is some fixed irrational number. Since $(X,f)$ is minimal it clearly has orbital limit shadowing. (Indeed, this follows as a simple corollary to Theorem \ref{ThmEquivOrbLimICT} since $\omega_f=ICT$ for minimal systems.) 
Let $x_0$ and $y_0$ be two antipodal points and suppose $\delta \in \mathbb{Q}$ with $0<\delta<1$. Then construct an asymptotic pseudo-orbit in $2^X$ recursively by the following rule: Let $A_0=\{x_0,y_0\}$ and, for all $i\in \omega\setminus\{0\}$, let $A_i=\{x_i,y_i\}:=\{f(x_{i-1})+\frac{\delta}{2i}, f(y_{i-1}) +\frac{\delta}{3i}\}$. We claim that this is not orbital limit shadowed. 
Suppose $A \in 2^X$ orbital limit shadows $( A_i ) _{i \in \omega}$; i.e.
\[\omega(A) =\omega\left((A_i)_{i \geq 0}\right).\]
First note that $\omega\left((A_i)_{i \geq 0}\right)=\{\{a,b\} \mid a,b \in X\}$. 
If $A$ is infinite then there will be infinite sets in its $\omega$-limit set. Therefore $A$ must be finite; let $n$ be its cardinality. If $n \geq 3$ then there will be sets of size larger than $2$ in its $\omega$-limit set. It follows that we must have $n=2$. Write $A=\{x,y\}$ for distinct points $x,y \in X$. Since $2^f$ is a minimal isometry it follows that 
\[\omega(A)=\{\{a,b\} \mid d(a,b)=d(x,y)\}.\]
Pick distinct points $a,b \in X$ with $d(a,b) \neq d(x,y)$. Then $\{a,b\} \in \omega\left((A_i)_{i \geq 0}\right)$ but $\{a,b\} \notin \omega(A)$, a contradiction.
\end{example}

\subsection{Symmetric products}

The proof of Theorem \ref{OrbLimSymProd} is very similar to that of Theorem \ref{OrbLimHyp} and is thereby omitted.

\begin{theorem}\label{OrbLimSymProd}
Let $X$ be a compact Hausdorff space, and let $f \colon X \to X$ be a continuous onto function. For any $n \geq 2$, if the symmetric product system $(F_n(X), f_n)$ witnesses orbital shadowing then system $(X,f)$ experiences orbital shadowing.
\end{theorem}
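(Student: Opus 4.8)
The plan is to mirror the proof of Theorem \ref{OrbHyp} almost verbatim, replacing the hyperspace $2^X$ by the symmetric product $F_n(X)$ throughout. The two facts that make this transfer work are that singletons $\{x\}$ belong to $F_n(X)$ for every $n \geq 2$ (so a pseudo-orbit in $X$ lifts to a singleton pseudo-orbit in $F_n(X)$), and that every $A \in F_n(X)$ is a nonempty set (so we may select a tracking point $z \in A$). Neither the finiteness bound $|A| \leq n$ nor the surjectivity of $f$ will be needed, so the argument is uniform in $n$.

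First I would fix $E \in \mathscr{U}$ and, using the composition axiom together with Remark \ref{RemarkAssumeEntouragesAreSymmetric}, choose a symmetric $E_0 \in \mathscr{U}$ with $4E_0 \subseteq E$. Applying orbital shadowing for $(F_n(X), f_n)$ to the target entourage $2^{E_0} \cap (F_n(X) \times F_n(X))$, I would extract $D \in \mathscr{U}$ such that every $\big(2^D \cap (F_n(X) \times F_n(X))\big)$-pseudo-orbit in $F_n(X)$ is orbital-shadowed to within $2^{E_0} \cap (F_n(X) \times F_n(X))$. Given any $D$-pseudo-orbit $(x_i)_{i \in \omega}$ in $X$, I would then observe that $(\{x_i\})_{i \in \omega}$ is a legitimate pseudo-orbit in $F_n(X)$: since $f_n(\{x_i\}) = \{f(x_i)\}$ and $(f(x_i), x_{i+1}) \in D$, we have $(\{f(x_i)\}, \{x_{i+1}\}) \in 2^D$. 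Orbital shadowing in $F_n(X)$ supplies $A \in F_n(X)$ with
\[
\left(\overline{\{\{x_i\}\}_{i\in\omega}}, \overline{\{f_n^i(A)\}_{i\in\omega}}\right) \in 2^{2^{E_0}},
\]
which unpacks into the two mutual-containment statements exactly as in equations (\ref{EquationOrb1}) and (\ref{EquationOrb2}).

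Finally I would pick any $z \in A$ (possible since $A \neq \emptyset$) and verify, by the identical two-case contradiction argument of Theorem \ref{OrbHyp}, that
\[
\left(\overline{\{x_i\}_{i\in\omega}}, \overline{\{f^i(z)\}_{i\in\omega}}\right) \in 2^{4E_0} \subseteq 2^E.
\]
In the first case, if some $x_k$ were not $4E_0$-close to the whole forward orbit of $z$, then by the containment inherited from (\ref{EquationOrb1}) the singleton $\{x_k\}$ is $2^{E_0}$-close to some $f_n^l(A)$, so every point of $f_n^l(A)$ — in particular $f^l(z)$ — is $E_0$-close to $x_k$, contradicting the choice of $k$; the symmetric second case, built from (\ref{EquationOrb2}), handles orbit points $f^k(z)$ that escape the pseudo-orbit. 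This yields the base-space orbital shadowing condition for $(X,f)$.

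I expect no genuine obstacle beyond Theorem \ref{OrbHyp} itself. The only points meriting a word of care are the two structural facts flagged above: that the lifted singleton sequence really lands in $F_n(X)$, and that a tracking point can be chosen from the nonempty set $A$. Both are immediate, which is precisely why the proof is stated to be omitted as ``very similar'' to that of the hyperspace case.
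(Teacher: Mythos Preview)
Your proposal is correct and follows exactly the approach the paper intends: the paper omits the proof as ``very similar'' to the hyperspace case, and you have faithfully transcribed the argument of Theorem \ref{OrbHyp} with $F_n(X)$ in place of $2^X$, correctly flagging the two trivial points (singletons lie in $F_n(X)$; $A$ is nonempty) that make the transfer go through. One cosmetic remark: the paper cites Theorem \ref{OrbLimHyp} rather than Theorem \ref{OrbHyp} as the template, but since the statement as written concerns orbital shadowing (indeed it duplicates Theorem \ref{OrbSymProd} verbatim), your choice of Theorem \ref{OrbHyp} is the appropriate one.
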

\begin{proof}
Omitted.
\end{proof}

\begin{remark}
The converse of Theorem \ref{OrbLimSymProd} is false. It is clear that Example \ref{OrbLimHypExample} may be suitably adjusted to provide a counterexample. Indeed, with sufficient adjustments, one can see that, for any $n \geq 2$, $(X,f)$ witnessing orbital shadowing does not generally imply that $(F_n(X),f_n)$ has orbital shadowing.
\end{remark}

\subsection{Factor maps}

\begin{definition}
Let $(X,f)$ and $(Y,g)$ be dynamical systems where $X$ and $Y$ are compact Hausdorff spaces. A surjective semiconjugacy $\varphi \colon X\to Y$ is \emph{oALAP} if for every asymptotic pseudo-orbit $(y_i)_{i \in \omega}\subseteq Y$, there exists an asymptotic-pseudo-orbit $(x_i)_{i \in \omega}\subseteq X$ such that for any $E \in \mathscr{U}_Y$ there exists $N \in \mathbb{N}$ such that for all $i \geq N$
	\begin{align*}
    	(\varphi(\overline{\{x_i\}_{i\in\omega}}),\overline{\{y_i\}_{i\in\omega}}) \in 2^E.
     \end{align*}
\end{definition}
If $X$ and $Y$ are compact metric,  then $\varphi$ is oALAP if and only if for every asymptotic pseudo-orbit $(y_i)_{i \in \omega}$ in $Y$ there exists an asymptotic pseudo-orbit $(x_i)_{i \in \omega}$ in $X$ such that for every $\epsilon>0$ there is  $N>0$ for which the Hausdorff distance 
$d_H(\varphi(\overline{\{x_i\}_{i\in\omega}}),\overline{\{y_i\}_{i\in\omega}})<\epsilon$ for any $i\geq N$.

Again the proof of the following theorem is  
similar to that of theorems  \ref{ThmALPeventualShad}, \ref{ThmALPOrbShad}, \ref{ThmALPStrongOrbShad} and \ref{ThmALPFirstWeakShad} bearing in mind the equivalence between orbital limit shadowing and asymptotic orbital shadowing (\cite[Theorem 22]{GoodMeddaugh2016}).

\begin{theorem}\label{ThmALPasympOrbLimShad}
Suppose that $\varphi \colon (X,f)\to (Y,g)$ is a factor map.
	\begin{enumerate}
		\item If $(X,f)$ exhibits orbital limit shadowing and $\varphi$ is oALAP, then $(Y,g)$ exhibits orbital limit shadowing.
		\item If $(Y,g)$ exhibits orbital limit shadowing, then $\varphi$ is oALAP.
	\end{enumerate}
\end{theorem}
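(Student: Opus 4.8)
The plan is to work throughout with asymptotic orbital shadowing rather than orbital limit shadowing directly, exploiting the equivalence of Theorem \ref{ThmEquivOrbLimICT}; this replaces the $\omega$-limit-set condition with the more tractable tail-Hausdorff condition $(\overline{\{x_{N+i}\}_{i\geq 0}},\overline{\{f^{N+i}(x)\}_{i\geq 0}}) \in 2^E$, which matches the shape of the oALAP hypothesis. The two halves then follow the template of Theorems \ref{ThmALPOrbShad} and \ref{ThmALPStrongOrbShad}, the only genuinely new ingredient being that everything now runs over asymptotic pseudo-orbits and over tails indexed by a witnessing $N$.

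For (1), I would start with an asymptotic pseudo-orbit $(y_i)_{i\in\omega}$ in $Y$. Applying oALAP produces an asymptotic pseudo-orbit $(x_i)_{i\in\omega}$ in $X$ whose $\varphi$-image asymptotically orbital shadows $(y_i)$. Applying asymptotic orbital shadowing of $(X,f)$ to $(x_i)$ yields $z\in X$ that asymptotically orbital shadows $(x_i)$. The claim is that $\varphi(z)$ asymptotically orbital shadows $(y_i)$. To verify this, given $E \in \mathscr{U}_Y$ I first pick $E_0$ with $2E_0\subseteq E$ and use uniform continuity of $\varphi$ to extract $D_1 \in \mathscr{U}_X$ with $(a,b)\in D_1 \implies (\varphi(a),\varphi(b))\in E_0$; this promotes the $D_1$-tail-agreement of $(f^{M+i}(z))$ with $(x_{M+i})$ to $E_0$-tail-agreement of $(g^{M+i}(\varphi(z)))$ with $(\varphi(x_{M+i}))$ in the hyperspace, using $\varphi\circ f=g\circ\varphi$ and the identity $\varphi(\overline{A})=\overline{\varphi(A)}$ valid for compact $\overline A$. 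Combining this with the $E_0$-tail-agreement of $(\varphi(x_i))$ and $(y_i)$ supplied by oALAP, and taking $L$ to be the larger of the two witnessing indices, the composition property $2^{E_0}\circ 2^{E_0}\subseteq 2^{2E_0}\subseteq 2^{E}$ gives $(\overline{\{y_{L+i}\}_{i\geq 0}},\overline{\{g^{L+i}(\varphi(z))\}_{i\geq 0}})\in 2^E$, as required.

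For (2), assume $(Y,g)$ has asymptotic orbital shadowing and let $(y_i)_{i\in\omega}$ be any asymptotic pseudo-orbit in $Y$; pick $z\in Y$ asymptotically orbital shadowing it. Using surjectivity of $\varphi$ choose $x\in\varphi^{-1}(z)$ and set $x_i=f^i(x)$, a genuine orbit and hence trivially an asymptotic pseudo-orbit in $X$. Since $\varphi(x_i)=g^i(z)$, the witnessing relation for $(y_i)$ transfers verbatim to $(\varphi(x_i))$, so $\varphi$ is oALAP. The main obstacle — really the only point needing care — is the bookkeeping in (1): reconciling the two different tail-indices $M$ (from the inner shadowing in $X$) and $N$ (from oALAP), and checking that the hyperspace composition $2^{E_0}\circ 2^{E_0}\subseteq 2^{2E_0}$ behaves as expected so that the triangle-inequality step is valid for these Hausdorff-type entourages.
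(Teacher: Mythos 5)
Your strategy --- route everything through asymptotic orbital shadowing via Theorem \ref{ThmEquivOrbLimICT} and then imitate Theorems \ref{ThmALPOrbShad} and \ref{ThmALPStrongOrbShad} --- is exactly what the paper intends (it omits this proof with precisely that instruction), and your part (2) is essentially the paper's template argument. But the step you yourself flag in part (1) is a genuine gap, and your proposed resolution (``take $L$ to be the larger of the two witnessing indices'') does not work as stated. The relation $(\overline{\{x_{N+i}\}_{i\geq 0}},\overline{\{f^{N+i}(z)\}_{i\geq 0}})\in 2^{E}$ is a relation between tail \emph{closures}, and, unlike the pointwise tail conditions in the eventual-shadowing proof or the ``for all $N$'' quantifier in the strong-orbital one, it is only guaranteed at its particular witnessing index and need not persist to larger ones: for $a=(p,q,q,q,\dots)$ and $b=(q,p,p,p,\dots)$ with $p\neq q$, the tail closures coincide at index $0$ yet are uniformly separated at every index $\geq 1$. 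So from your two hypotheses you have $2^{E_0}$-relations at $M$ and at $N$ respectively, but neither is known to hold at $L=\max\{M,N\}$, and the (perfectly correct) composition $2^{E_0}\circ 2^{E_0}\subseteq 2^{2E_0}$ has nothing to act on. The same example, read the other way, shows that the per-pair ``there exists $N$'' tail condition is strictly weaker than equality of $\omega$-limit sets, so it cannot be silently upgraded either.

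The missing ingredient is to use the orbital-limit ($\omega$-limit-set) formulation in earnest, rather than only as a licence to switch definitions. Since $(X,f)$ has orbital limit shadowing, choose $z$ with $\omega_f(z)=\omega((x_i))$ outright. In a compact Hausdorff space the tail closures $T_N=\overline{\{x_{N+i}\}_{i\geq 0}}$ form a decreasing family of nonempty compact sets with $\bigcap_N T_N=\omega((x_i))$, whence $(T_N,\omega((x_i)))\in 2^{E_1}$ for all sufficiently large $N$ and any prescribed $E_1$; applying this both to $(x_i)$ and to the orbit of $z$, the two tails are eventually $2^{E_1}$-close to the common $\omega$-limit set, so the needed relation holds at \emph{all} sufficiently large indices with $2E_1\subseteq E$. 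After this upgrade your max-and-compose step, together with the uniform-continuity and $\varphi(\overline{A})=\overline{\varphi(A)}$ manipulations exactly as you wrote them, goes through; the same remark is needed in part (2), since the paper's oALAP condition demands the relation for all $i\geq N$, not merely at one index. Cleaner still, one can dispense with tails entirely and verify both halves through the chain $\omega_g(\varphi(z))=\varphi(\omega_f(z))=\varphi(\omega((x_i)))=\omega((\varphi(x_i)))=\omega((y_i))$, each equality holding because a continuous map of compact Hausdorff spaces commutes with closures and with decreasing intersections of nonempty compact sets.
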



\subsection{Tychonoff product}

A product of systems with orbital limit shadowing does not necessarily have orbital limit shadowing. The following example demonstrates this.

\begin{example}\label{ExampleProdOrbLimNotOrbLim}
For $i \in \{1,2\}$ let $X_i= \mathbb{R}/\mathbb{Z}$, $d_i$ be the shortest arc length metric on $X_i$ and $f_i \colon X_i \to X_i \colon x \mapsto x + \alpha \mod 1$, where $\alpha$ is some fixed irrational number. Equip the product space $X=X_1 \times X_2$ with the metric $d$ given by $d((a,b), (c,d))= \sup\{d_1(a, c), d_2(b,d)\}$. 
Now consider the product system $(X,f)$. Let $x_0$ and $y_0$ be two antipodal points and suppose $\delta \in \mathbb{Q}$ with $0<\delta<1$. Then construct an asymptotic pseudo-orbit in $X$ recursively by the following rule: Let $z_0=(x_0,y_0)$ and, for all $i\in \omega\setminus\{0\}$, let $z_i=(f(x_{i-1})+\frac{\delta}{2i}, f(y_{i-1}) +\frac{\delta}{3i})$ where $x_i=f(x_{i-1})+\frac{\delta}{2i}$ and $y_i=f(y_{i-1}) +\frac{\delta}{3i}$. We claim that this is not orbital limit shadowed. 
Suppose $z=(x,y) \in X$ orbital limit shadows $( z_i ) _{i \in \omega}$; i.e.
\[\omega(z) =\omega\left((z_i)\right).\]
It is easy to see that $\omega\left((z_i)\right))=\{(a,b) \mid a,b \in X\}$. 
\[\omega(z)\subseteq \{(a,b) \mid \min \{\lvert a-b\rvert, \lvert b-a \rvert\} =  \min \{\lvert x-y\rvert, \lvert y-x \rvert\}\},\]
where equality holds only when  $\min \{\lvert x-y\rvert, \lvert y-x \rvert\}\in \{0, \frac{1}{2}\}$. Therefore by picking $(a,b) \in X$ with $\min \{\lvert a-b\rvert, \lvert b-a \rvert\} \neq \min \{\lvert x-y\rvert, \lvert y-x \rvert\}$ then we get  $(a,b) \in \omega\left((z_i)_{i \geq 0}\right)$ but $(a,b) \notin \omega(z)$, a contradiction.
\end{example}

\section{Preservation of Inverse Shadowing}
The presence (or absence) of shadowing in a dynamical system tells us whether or not any given computed orbit is followed (to within some constant error) by a true trajectory. In a related fashion, one may wonder under what circumstances can actual trajectories be recovered, within a given accuracy, from pseudo-orbits. As observed elsewhere (e.g. \cite{Lee}), this relates to inverse shadowing. In this paper we limit our discussion to $\mathcal{T}_0$-inverse shadowing, however we note that weaker formulations of inverse shadowing can be given by restricting one's attention to certain classes of pseudo orbits (see for example \cite{Lee,LeePark}).

Recall that the system $(X,f)$ experiences \textit{inverse shadowing} if, for any $E \in \mathscr{U}$ there exists $D \in \mathscr{U}$ such that for any $x \in X$ and any $\varphi \in \mathcal{T}_0(f,D)$ there exists $y \in X$ such that $\varphi(y)$ $E$-shadows $x$; i.e.
\[ \forall k \in \omega, (\varphi(y)_k, f^k(x))\in E.\]



Compact metric versions of the two results below may be found in \cite{GoodMitchellThomas}; the authors remark that the compact metric versions extend to these two results. 

\begin{lemma}\textup{\cite[Theorem 2.1]{GoodMitchellThomas}}\label{lemmaUniformInverseShadReform}
A continuous function $f \colon X \to X$ has inverse shadowing if and only if for any  $E \in \mathscr{U}$ there exists $D \in \mathscr{U}$ such that for any $x \in X$ there exists $y \in X$ such that for any $\varphi \in \mathcal{T}_0(f,D)$, $\varphi(y)$ $E$-shadows $x$; i.e.
\[ \forall k \in \omega, (\varphi(y)_k, f^k(x))\in E.\]
\end{lemma}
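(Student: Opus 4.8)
The plan is to observe that the reformulation differs from the definition of inverse shadowing only in the order of the quantifiers ``$\exists y$'' and ``$\forall \varphi$'': the definition asserts $\forall \varphi\,\exists y$ while the reformulation asserts $\exists y\,\forall \varphi$. One direction is therefore immediate, and I would record it first: if, for a given $E \in \mathscr{U}$, the entourage $D$ witnesses the reformulation, then a single $y$ serves every $\varphi \in \mathcal{T}_0(f,D)$, so in particular for each $\varphi$ there is such a $y$; hence the same $D$ witnesses inverse shadowing for $E$.

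For the substantive direction I would fix $E \in \mathscr{U}$, take the $D \in \mathscr{U}$ supplied by inverse shadowing, and show that this same $D$ witnesses the reformulation. The crucial point, and the only place where the structure of the class $\mathcal{T}_0$ is used, is that $\mathcal{T}_0(f,D)$ consists of \emph{all} mappings $\varphi \colon X \to \Phi_f(D)$ with $\varphi(z)_0 = z$, subject to no continuity or coherence constraint. Consequently any choice of a single $D$-pseudo-orbit $p^{(z)}$ starting at each $z \in X$ assembles into a legitimate method $\psi$ via $\psi(z) := p^{(z)}$; dually, for a fixed $y$, as $\varphi$ ranges over $\mathcal{T}_0(f,D)$ the pseudo-orbit $\varphi(y)$ ranges over precisely the $D$-pseudo-orbits starting at $y$, so ``$\forall \varphi$, $\varphi(y)$ $E$-shadows $x$'' is equivalent to ``every $D$-pseudo-orbit starting at $y$ $E$-shadows $x$''.

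With this in hand I would argue by contradiction. Fix $x \in X$ and suppose no single $y$ works against all methods; that is, for every $y \in X$ there is a $D$-pseudo-orbit $p^{(y)}$ with $p^{(y)}_0 = y$ that fails to $E$-shadow $x$. Assembling these pointwise, $\psi(y) := p^{(y)}$ defines a method $\psi \in \mathcal{T}_0(f,D)$ for which $\psi(y)$ fails to $E$-shadow $x$ for \emph{every} $y \in X$. This directly contradicts inverse shadowing applied to $x$ and $\psi$, which guarantees some $y$ with $\psi(y)$ $E$-shadowing $x$. Hence for each $x$ there is a $y$ that works against every method, which is exactly the reformulation.

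I do not expect a genuine obstacle: the argument is a quantifier manipulation powered by the freedom to specify methods pointwise, and it uses neither compactness nor any refinement of $E$ (indeed the identical $D$ serves both formulations). The one point deserving explicit care is the verification that the pointwise assembly $\psi(y) := p^{(y)}$ really is a bona fide element of $\mathcal{T}_0(f,D)$, namely that each $p^{(y)}$ is a $D$-pseudo-orbit beginning at $y$, which is immediate from the construction.
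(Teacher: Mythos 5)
Your proof is correct. Note that the paper itself gives no argument for this lemma: it is quoted from \cite[Theorem 2.1]{GoodMitchellThomas}, with only the remark that the compact metric proof extends to the uniform setting, so there is no in-text proof to compare against. Your diagonal assembly argument is the natural one and is complete: the easy direction is the trivial quantifier weakening $\exists y\,\forall\varphi \Rightarrow \forall\varphi\,\exists y$, and the substantive direction works exactly because $\mathcal{T}_0(f,D)$ imposes no continuity or coherence on methods, so the choice (via the axiom of choice) of one failing $D$-pseudo-orbit $p^{(y)}$ at each $y$ assembles into a single method $\psi\in\mathcal{T}_0(f,D)$, against which inverse shadowing yields the contradiction. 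Two features of your write-up are worth keeping explicit: the same $D$ serves both formulations with no refinement of $E$ and no compactness, matching the lemma as stated for arbitrary uniform spaces; and the equivalence of ``$\forall \varphi$, $\varphi(y)$ $E$-shadows $x$'' with ``every $D$-pseudo-orbit from $y$ $E$-shadows $x$'' uses that true orbits are $D$-pseudo-orbits for every entourage $D\supseteq\Delta$, so a method can be modified at a single point. Your closing caveat is also the right one: this argument is specific to the class $\mathcal{T}_0$ and would fail for classes of continuous methods, where pointwise assembly is unavailable.
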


\begin{lemma}\textup{\cite[Corollary 2.2]{GoodMitchellThomas}}\label{lemmaUniformInverseShadBallReform}
A continuous function $f \colon X \to X$ has inverse shadowing if and only if for any $E \in \mathscr{U}$ there exists $D \in \mathscr{U}$ such that for any $x \in X$ there exists $y \in X$ such that for any $y^\prime \in B_D(y)$ and any $\varphi \in \mathcal{T}_0(f,D)$, $\varphi(y^\prime)$ $E$-shadows $x$; i.e.
\[ \forall k \in \omega, \, (\varphi(y^\prime)_k, f^k(x))\in E.\]
\end{lemma}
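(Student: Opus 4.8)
The plan is to deduce this from Lemma~\ref{lemmaUniformInverseShadReform}, since the only difference between the two statements is that here the single point $y$ is required to work not merely for every $D$-method but simultaneously for every point $y'$ in the $D$-ball about $y$. One direction is immediate: if the displayed condition holds then, given $E$, we take the corresponding $D$ and, for each $x$, the promised $y$; since $D \supseteq \Delta$ we have $y \in B_D(y)$, so choosing $y' = y$ shows that $\varphi(y)$ $E$-shadows $x$ for every $\varphi \in \mathcal{T}_0(f,D)$, which is precisely the condition of Lemma~\ref{lemmaUniformInverseShadReform}, and hence $f$ has inverse shadowing. Thus it suffices to prove the forward implication.

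For the forward implication, fix $E \in \mathscr{U}$ and, using Remark~\ref{RemarkAssumeEntouragesAreSymmetric}, work with symmetric entourages throughout. First choose $E_0 \in \mathscr{U}$ with $E_0 \circ E_0 \subseteq E$, and let $D_0 \in \mathscr{U}$ be the entourage supplied by Lemma~\ref{lemmaUniformInverseShadReform} for $E_0$, so that for every $x$ there is a single $y$ with $\psi(y)$ $E_0$-shadowing $x$ for all $\psi \in \mathcal{T}_0(f,D_0)$. Next pick $D_2$ with $D_2 \circ D_2 \subseteq D_0$, use the uniform continuity of $f$ on the compact space $X$ to find $D_3$ with $(f(a),f(b)) \in D_2$ whenever $(a,b) \in D_3$, and set $D = D_2 \cap D_3 \cap E_0$. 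I claim this $D$ witnesses the displayed condition, using for each $x$ the same $y$ provided by Lemma~\ref{lemmaUniformInverseShadReform}.

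The heart of the argument is to convert each perturbed datum $(y',\varphi)$, with $y' \in B_D(y)$ and $\varphi \in \mathcal{T}_0(f,D)$, into a genuine $D_0$-method evaluated at $y$. To this end I define $\psi \in \mathcal{T}_0(f,D_0)$ by $\psi(y) = (y, \varphi(y')_1, \varphi(y')_2, \dots)$ — the tail of $\varphi(y')$ with its initial term $y'$ replaced by $y$ — and $\psi(z) = (z,f(z),f^2(z),\dots)$ for $z \neq y$; note that no continuity of methods is required, only that each $\psi(z)$ be a $D_0$-pseudo-orbit beginning at $z$. The only nontrivial pseudo-orbit check is at index $0$: from $(y,y') \in D \subseteq D_3$ we obtain $(f(y),f(y')) \in D_2$, and from $\varphi(y')$ being a $D$-pseudo-orbit we obtain $(f(y'),\varphi(y')_1) \in D \subseteq D_2$, so composing gives $(f(y),\varphi(y')_1) \in D_2 \circ D_2 \subseteq D_0$; the later terms inherit the $D_0$-condition directly from $\varphi(y')$. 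Lemma~\ref{lemmaUniformInverseShadReform} then guarantees that $\psi(y)$ $E_0$-shadows $x$. For $k \geq 1$ this reads $(\varphi(y')_k, f^k(x)) \in E_0 \subseteq E$, while for $k = 0$ we combine $(y,x) \in E_0$ with $(y',y) \in D \subseteq E_0$ to get $(y',x) \in E_0 \circ E_0 \subseteq E$; hence $\varphi(y')$ $E$-shadows $x$, as required.

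I expect the main obstacle to be exactly this splicing construction and the bookkeeping of the initial ($k=0$) coordinate: one must absorb both the continuity defect $(f(y),f(y'))$ and the terminal displacement $(y',y)$ into composites of entourages, which is why $E$ is refined through $E_0$ and $D$ is taken inside $D_2$, $D_3$ and $E_0$ simultaneously. Everything else is a routine quantifier rearrangement layered on top of Lemma~\ref{lemmaUniformInverseShadReform}.
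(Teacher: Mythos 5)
Your proof is correct, and there is nothing in the paper to compare it against: the paper states this lemma as a quotation of \cite[Corollary 2.2]{GoodMitchellThomas}, remarking only that the compact metric proof extends to the uniform setting, so your argument supplies a self-contained derivation where the paper gives none. What your route buys is a clean reduction to Lemma \ref{lemmaUniformInverseShadReform}: the backward direction is the trivial specialisation $y'=y$ (using $\Delta\subseteq D$), and the forward direction hinges on the splicing construction $\psi(y)=(y,\varphi(y')_1,\varphi(y')_2,\dots)$, which is legitimate precisely because the class $\mathcal{T}_0(f,D_0)$ imposes no continuity on methods --- a point you rightly flag, since the spliced $\psi$ is in general discontinuous. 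The quantifier order in Lemma \ref{lemmaUniformInverseShadReform} ($\forall x\,\exists y\,\forall\psi$) is what lets you fix $y$ before constructing $\psi$, which depends on the pair $(y',\varphi)$; your bookkeeping at index $0$ is also sound, absorbing the continuity defect $(f(y),f(y'))\in D_2$ and the pseudo-orbit defect $(f(y'),\varphi(y')_1)\in D\subseteq D_2$ into $D_2\circ D_2\subseteq D_0$, and the terminal displacement via $(y',y)\in D\subseteq E_0$ composed with $(y,x)\in E_0$ into $E_0\circ E_0\subseteq E$. Two steps you use implicitly are fine but worth making explicit: $D\subseteq D_0$ along the tail and $E_0\subseteq E$ at indices $k\geq 1$ both follow because entourages contain the diagonal (so $D_2\subseteq D_2\circ D_2$ and $E_0\subseteq E_0\circ E_0$), and the symmetry of $D$ invoked at $k=0$ to pass from $(y,y')\in D$ to $(y',y)\in E_0$ is covered by the standing assumption of Remark \ref{RemarkAssumeEntouragesAreSymmetric}.
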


 
Recall the open cover formulation of inverse shadowing from Section \ref{ShadowingTypes}, which coincides with the uniform definition in the presence of compactness.
The system $(X,f)$ experiences \textit{inverse shadowing} if, for any finite open cover $\mathcal{U}$ there exists a finite open cover $\mathcal{V}$ such that for any $x \in X$ and any $\varphi \in \mathcal{T}_0(f,\mathcal{U})$ there exists $y \in X$ such that $\varphi(y)$ $\mathcal{U}$-shadows $x$; i.e.
\[ \forall k \in \omega \, \exists U \in \mathcal U : \varphi(y)_k, f^k(x)\in U.\]

The following lemma is an open cover version of Lemma \ref{lemmaUniformInverseShadReform}.
\begin{lemma}\label{lemmaOpenCoverInverseShadReform}
A continuous function $f \colon X \to X$ has inverse shadowing if and only if for any finite open cover $\mathcal{U}$ there exists a finite open cover $\mathcal{V}$ such that for any $x \in X$ there exists $y \in X$ for any $\varphi \in \mathcal{T}_0(f,\mathcal{V})$ $\varphi(y)$ $\mathcal{U}$-shadows $x$; i.e.
\[ \forall k \in \omega \, \exists U \in \mathcal{U} : \varphi(y^\prime)_k, f^k(x)\in U.\]
\end{lemma}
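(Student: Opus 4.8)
The plan is to deduce this from the uniform reformulation already recorded as Lemma~\ref{lemmaUniformInverseShadReform}, transporting it across the standard dictionary between finite open covers and entourages that is available because $X$ is compact Hausdorff. The only genuine content of the statement is the exchange of quantifiers: the definition of inverse shadowing allows the shadowing point $y$ to depend on the method $\varphi$ (``for any $x$ and any $\varphi$ there exists $y$''), whereas the reformulation demands a single $y$, depending on $x$ alone, that works simultaneously for every $\varphi \in \mathcal{T}_0(f,\mathcal{V})$. One implication is free: the reformulated condition is formally stronger, so it immediately yields the open-cover definition of inverse shadowing by using, for each $x$ and each $\varphi$, the same witness $y$.

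For the converse, I would first record two routine compactness facts furnishing the dictionary. (A) Given a finite open cover $\mathcal{U}$, there is an entourage $E \in \mathscr{U}$ such that $(a,b)\in E$ forces $a$ and $b$ to lie in a common member of $\mathcal{U}$; this is the Lebesgue-number lemma in uniform-space form. (B) Given an entourage $D \in \mathscr{U}$, choosing a symmetric $D_0$ with $D_0\circ D_0 \subseteq D$ and taking a finite subcover of $\{B_{D_0}(x) \mid x \in X\}$ produces a finite open cover $\mathcal{V}$ each of whose members $V$ satisfies $V\times V \subseteq D$; consequently every $\mathcal{V}$-pseudo-orbit is a $D$-pseudo-orbit, so $\mathcal{T}_0(f,\mathcal{V}) \subseteq \mathcal{T}_0(f,D)$.

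With these in hand the argument is a short chain. Given $\mathcal{U}$, use (A) to pick $E$, feed $E$ to Lemma~\ref{lemmaUniformInverseShadReform} (legitimate since the open-cover and uniform definitions of inverse shadowing agree on compact Hausdorff spaces) to obtain an entourage $D$ with the property that for each $x$ there is a single $y$ with $\varphi(y)$ $E$-shadowing $x$ for every $\varphi \in \mathcal{T}_0(f,D)$, and then use (B) to pick $\mathcal{V}$ with $\mathcal{T}_0(f,\mathcal{V}) \subseteq \mathcal{T}_0(f,D)$. For a given $x$ the point $y$ depends only on $x$, since $\mathcal{T}_0(f,D)$ is fixed; and for any $\varphi \in \mathcal{T}_0(f,\mathcal{V})$ and any $k \in \omega$ we have $(\varphi(y)_k, f^k(x)) \in E$, which by (A) places $\varphi(y)_k$ and $f^k(x)$ in a common member of $\mathcal{U}$. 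Thus $\varphi(y)$ $\mathcal{U}$-shadows $x$, which is exactly the reformulated condition.

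The main obstacle is precisely the quantifier exchange, and the whole point of the proof is that this difficulty has already been isolated and resolved in the uniform setting by Lemma~\ref{lemmaUniformInverseShadReform}; everything else is the bookkeeping of (A) and (B). If one preferred a self-contained argument one could instead replay the compactness argument behind Lemma~\ref{lemmaUniformInverseShadReform} directly with covers, but routing through the uniform version keeps the proof short.
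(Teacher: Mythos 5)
Your proposal is correct and matches the paper's intent exactly: the paper gives no separate proof of Lemma~\ref{lemmaOpenCoverInverseShadReform}, presenting it as the immediate open-cover translation of Lemma~\ref{lemmaUniformInverseShadReform} via precisely the cover--entourage dictionary (your facts (A) and (B)) on a compact Hausdorff space, which is what you have written out. The only cosmetic point is in (B): since $B_{D_0}(x)$ need not be open for an arbitrary entourage, choose $D_0$ to be an open symmetric entourage (these form a base), or pass to open neighbourhoods inside the balls, before extracting the finite subcover.
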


\subsection{Induced map on the hyperspace of compact sets}

\begin{theorem} Let $X$ be a compact Hausdorff space and let $f \colon X \to X$ be a continuous function. Then $(X,f)$ has $\mathcal{T}_0$-inverse shadowing if and only if $(2^X,2^f)$ has $\mathcal{T}_0$-inverse shadowing.
\end{theorem}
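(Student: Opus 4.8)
The plan is to prove both implications, exploiting the correspondence between points and compact sets together with the reformulation of inverse shadowing in Lemma \ref{lemmaUniformInverseShadBallReform}. Throughout I would identify basic entourages on $2^X$ with those of the form $2^D$, $D \in \mathscr{U}$, assume all entourages symmetric (Remark \ref{RemarkAssumeEntouragesAreSymmetric}), and use freely the standard facts that in a compact Hausdorff uniform space the closure of an entourage $D$ satisfies $\overline{D} \subseteq 3D = D\circ D\circ D$, and that $f(\overline{S}) = \overline{f(S)}$ for $f$ continuous on a compact Hausdorff space.

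For the direction ``$(2^X,2^f)$ has inverse shadowing $\Rightarrow$ $(X,f)$ has inverse shadowing'', given $E \in \mathscr{U}$ I would apply inverse shadowing for $2^f$ to the entourage $2^E$ to obtain a witness $\mathcal{D} \in 2^{\mathscr{U}}$, and then pick a symmetric $D \in \mathscr{U}$ small enough that $2^{3D} \subseteq \mathcal{D}$; I claim this $D$ witnesses inverse shadowing for $f$ at level $E$. Given $x \in X$ and a $D$-method $\varphi \in \mathcal{T}_0(f,D)$, lift it to a set-method by $\Phi(\mathcal{A})_k \coloneqq \overline{\{\varphi(a)_k \mid a \in \mathcal{A}\}}$. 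Then $\Phi(\mathcal{A})_0 = \mathcal{A}$, and since $(f(\varphi(a)_k),\varphi(a)_{k+1}) \in D$ for each $a$, taking images and closures (paying the factor $3$ from $\overline{D}\subseteq 3D$) shows $\Phi$ is a $2^{3D}$-method, hence lies in $\mathcal{T}_0(2^f,\mathcal{D})$. Applying inverse shadowing for $2^f$ to $\mathcal{A} = \{x\}$ and this $\Phi$ yields $\mathcal{B} \in 2^X$ with $(\Phi(\mathcal{B})_k,\{f^k(x)\}) \in 2^E$ for all $k$; choosing any $b \in \mathcal{B}$ and setting $y=b$, we get $\varphi(y)_k \in \Phi(\mathcal{B})_k \subseteq B_E(f^k(x))$, so $\varphi(y)$ $E$-shadows $x$.

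The converse ``$(X,f)$ has inverse shadowing $\Rightarrow$ $(2^X,2^f)$ has inverse shadowing'' is the substantive direction, and here I would invoke Lemma \ref{lemmaUniformInverseShadBallReform}. Given $2^E$, take $D$ (symmetric, and containing an open symmetric entourage) so that every $x \in X$ has a ``good centre'' $y_x$: for all $y' \in B_D(y_x)$ and all $\psi \in \mathcal{T}_0(f,D)$, $\psi(y')$ $E$-shadows $x$. I claim $2^D$ works for $2^f$. Given $\mathcal{A} \in 2^X$ and a $2^D$-method $\Psi$ for $2^f$, set $\mathcal{B} \coloneqq \overline{\{y_a \mid a \in \mathcal{A}\}} \in 2^X$ and write $\mathcal{C}_k \coloneqq \Psi(\mathcal{B})_k$, so that $\mathcal{C}_0 = \mathcal{B}$ and the pseudo-orbit condition gives $f(\mathcal{C}_k) \subseteq B_D(\mathcal{C}_{k+1})$ and $\mathcal{C}_{k+1} \subseteq B_D(f(\mathcal{C}_k))$. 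I must show $(\mathcal{C}_k, f^k(\mathcal{A})) \in 2^E$ for every $k$. The inclusion $f^k(\mathcal{A}) \subseteq B_E(\mathcal{C}_k)$ follows by forward threading: for $a \in \mathcal{A}$, start at $y_a \in \mathcal{C}_0$ and inductively select points of $\mathcal{C}_{k+1}$ within $D$ of the image of the previous choice, obtaining a $D$-pseudo-orbit through the $\mathcal{C}_k$; extending off $y_a$ by true orbits gives a global $D$-method $\psi^{(a)}$, and the defining property of $y_a$ (with $y'=y_a$) yields $(\psi^{(a)}(y_a)_k, f^k(a)) \in E$ with $\psi^{(a)}(y_a)_k \in \mathcal{C}_k$.

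The reverse inclusion $\mathcal{C}_k \subseteq B_E(f^k(\mathcal{A}))$ is the main obstacle. For $c \in \mathcal{C}_k$ I would thread \emph{backward}, using $\mathcal{C}_j \subseteq B_D(f(\mathcal{C}_{j-1}))$, to build a finite $D$-pseudo-orbit $(c_0,\dots,c_k)$ with $c_k = c$ and $c_0 \in \mathcal{C}_0 = \mathcal{B}$; since $\mathcal{B}$ is the closure of $\{y_a\}$ and $D$ contains an open symmetric entourage, there is $a \in \mathcal{A}$ with $c_0 \in B_D(y_a)$. Completing $(c_0,\dots,c_k)$ to an infinite $D$-pseudo-orbit and extending to a global method $\psi$, Lemma \ref{lemmaUniformInverseShadBallReform} with $x=a$, $y=y_a$, $y'=c_0$ gives $(\psi(c_0)_k, f^k(a)) = (c, f^k(a)) \in E$, whence $c \in B_E(f^k(\mathcal{A}))$. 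The crux is exactly this reverse inclusion, and it is what forces the strong reformulation rather than the bare definition: the start $c_0$ of the backward thread is not a prescribed orbit point but merely lies near the \emph{pre-selected} centre $y_a$, and the threaded method depends on $\Psi(\mathcal{B})$, so only a statement guaranteeing shadowing simultaneously for all methods and all starts in $B_D(y_a)$ can close the argument.
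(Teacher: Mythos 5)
Your proposal is correct and takes essentially the same route as the paper's proof: both directions lift methods via closures of pointwise images (paying a bounded entourage composition), and the substantive direction rests, exactly as in the paper, on the ball reformulation of Lemma \ref{lemmaUniformInverseShadBallReform}, the centre set $\overline{\{y_a \mid a \in \mathcal{A}\}}$, and forward/backward threading through the method's values, with the backward thread landing $D$-close to a pre-selected centre. The only deviations are cosmetic bookkeeping (your $\overline{D}\subseteq 3D$ versus the paper's $2D_0\subseteq D$, and omitting the paper's unnecessary $2E_0\subseteq E$ slack), so there is nothing to change.
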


\begin{proof}
Suppose that $f$ has inverse shadowing. Let $2^E \in \mathscr{B}_\mathscr{U}$ (so $E \in \mathscr{U}$). Let $E_0 \in \mathscr{U}$ be such that $2E_0 \subseteq E$ and take $D \in \mathscr{U}$ be as in Lemma \ref{lemmaUniformInverseShadBallReform}. That is, for any $x \in X$ there exists $y \in X$ such that for any $y^\prime \in B_D(y)$ and any $\varphi \in \mathcal{T}_0(f,D)$ we have $\varphi(y^\prime)$ $E_0$-shadows $x$. 

Choose $A \in 2^X$ and let $\psi \in \mathcal{T}_0(2^f, 2^D)$. For each $x \in A$ let $y_x \in X$ be such that for any $y^\prime \in B_D(y)$ and any $\varphi \in \mathcal{T}_0(f,D)$ we have $\varphi(y^\prime)$ $E_0$-shadows $x$. Define 
\[C \coloneqq \overline{\bigcup_{x \in A} \{y_x\}}.\]
It is easy to verify that $\psi(C)$ $2^E$-shadows $A$. Indeed, let $k \in \omega$ be given. 
Choose $a \in (2^f)^k(A)$ and $a^\prime \in A$ such that $f^k(a^\prime)=a$. Since $\psi$ is a $2^D$-method for $2^f$, for each $i \in \{1,\ldots, k\}$ there exists $c_i \in \psi(C)_k$ such that $(f(c_i),c_{i+1}) \in D$ and $(f(y_{a^\prime}), c_1) \in D$. Denote by $c_0=y_{a^\prime}$ so that by definition of $y_{a^\prime}$, $(c_i, f^i(a^\prime)) \in E_0$ for all $i \in \{0,\ldots, k\}$. In particular it follows that $(a,c_k) \in E$. As $a$ was picked arbitrarily from $(2^f)^k(A)$ it follows that $(2^f)^k(A) \subseteq B_E(\psi(C)_k)$. 
Now choose $c \in \psi(C)_k$. By construction, there exists a sequence $(c_0,c_1, \ldots, c_k)$ with $c_i \in \psi(C)_i$ for each $0 \leq i \leq k$ and $c_k=c$ for which $(f(c_i), c_{i+1}) \in D$. Furthermore, there exist $x \in A$ for which $c_0 \in B_D(y_x)$. It follows that $(c_i, f^i(x)) \in E_0$ for each $0 \leq i \leq k$. In particular it follows that $(c, f^k(x)) \in E$. As $c$ was picked arbitrarily from $\psi(C)_k$ it follows that $\psi(C)_k \subseteq B_E((2^f)^k(A))$. 

Now suppose that $2^f$ has inverse shadowing. Let $E \in \mathscr{U}$ be given and let $E_0 \in \mathscr{U}$ be such that $2E_0 \subseteq E$. Let $D_0, D \in \mathscr{U}$ be such that $2^D$ satisfies the inverse shadowing condition for $2^{E_0}$ and $2D_0 \subseteq D$. Now pick $\varphi \in \mathcal{T}_0(f,D_0)$. Construct a method $2^\varphi$ as follows:
For any $k \in \omega$ and any $A \in 2^X$
\[2^\varphi(A)_k\coloneqq \overline{\bigcup_{x \in A}\{\varphi(x)_k\}}.\]
Clearly $2^\varphi \in \mathcal{T}_0(f, 2^D)$. Let $x \in X$ be given. Then $\{x\} \in 2^X$. By inverse shadowing there exists $A \in 2^X$ such that $2^\varphi(A)$ $2^{E_0}$-shadows $\{x\}$. Pick $a \in A$. It is easy to verify that $\varphi(a)$ $E$-shadows $x$.

\end{proof}

\subsection{Symmetric products}

\begin{theorem} Let $X$ be a compact Hausdorff space and let $f \colon X \to X$ be a continuous function. Then $(X,f)$ has $\mathcal{T}_0$-inverse shadowing if and only if $(F_n(X),f_n)$ has $\mathcal{T}_0$-inverse shadowing for all $n \geq 2$.
\end{theorem}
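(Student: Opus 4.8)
The plan is to run the argument of the preceding hyperspace theorem essentially verbatim, but restricted to the subspace $F_n(X) \subseteq 2^X$; the one genuinely new point is to check that every auxiliary closed set produced by that argument has at most $n$ points and so actually lies in $F_n(X)$. Throughout I would work with the induced uniformity on $F_n(X)$, whose basic entourages are the sets $2^E \cap \left(F_n(X) \times F_n(X)\right)$ for $E \in \mathscr{U}$, and I would use that $f_n$ maps $F_n(X)$ into itself.

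For the forward implication, fix $n \geq 2$ and suppose $f$ has inverse shadowing. Given $E \in \mathscr{U}$, I would pick $E_0$ with $2E_0 \subseteq E$ and take the $D \in \mathscr{U}$ supplied by Lemma \ref{lemmaUniformInverseShadBallReform}. Given any $A \in F_n(X)$ and any method $\psi \in \mathcal{T}_0(f_n, 2^D)$, for each of the at most $n$ points $x \in A$ choose $y_x$ as in the lemma and set $C = \{y_x : x \in A\}$. The crucial observation is that $C$ is indexed by the points of $A$, so $\lvert C \rvert \leq \lvert A \rvert \leq n$ and hence $C \in F_n(X)$; no closure is needed since $C$ is already finite. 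The verification that $\psi(C)$ $2^E$-shadows $A$ is then word-for-word the hyperspace computation: for any point of $\psi(C)_k$ one traces the $2^D$-pseudo-orbit $\psi(C)$ backwards to a point of $B_D(y_x)$ for some $x \in A$ and invokes the robust $E_0$-shadowing of $x$ by $y_x$, while for any point of $f_n^{\,k}(A)$ one argues forwards in the same way.

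For the converse it suffices to assume $f_n$ has inverse shadowing for a single $n \geq 2$ and deduce that $f$ does. Given $E \in \mathscr{U}$, choose $E_0$ with $2E_0 \subseteq E$ and then $D_0, D \in \mathscr{U}$ so that $2^D$ witnesses inverse shadowing of $f_n$ for $2^{E_0}$. For any $\varphi \in \mathcal{T}_0(f, D_0)$ I would define the induced $F_n$-method by $\varphi_n(A)_k = \{\varphi(x)_k : x \in A\}$; since $\lvert A \rvert \leq n$ each term again has at most $n$ points, so $\varphi_n$ is a legitimate method for $f_n$. Given $x \in X$, applying inverse shadowing of $f_n$ to the singleton $\{x\} \in F_n(X)$ yields $A \in F_n(X)$ with $\varphi_n(A)$ $2^{E_0}$-shadowing $\{x\}$, and then any chosen $a \in A$ satisfies that $\varphi(a)$ $E$-shadows $x$.

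Putting these together gives the biconditional: the forward implication produces inverse shadowing of $f_n$ for every $n \geq 2$, and the converse, applied with (say) $n = 2$, recovers inverse shadowing of $f$. The main---indeed only---obstacle is the cardinality bookkeeping that distinguishes this result from the hyperspace theorem, namely confirming that the constructions $C$ and $\varphi_n(A)_k$ never leave $F_n(X)$; both stay within $F_n(X)$ precisely because they are indexed by a set of size at most $n$ and because $f_n$ preserves the bound on the number of points.
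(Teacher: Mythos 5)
Your proposal is correct and takes essentially the same approach as the paper: the paper likewise transplants the hyperspace argument verbatim, setting $C=\bigcup_{x\in A}\{y_x\}$ and $2^\varphi(A)_k=\bigcup_{x\in A}\{\varphi(x)_k\}$ and observing that $\lvert C\rvert\leq\lvert A\rvert\leq n$, so the closures from the hyperspace proof are unnecessary. The only cosmetic difference is that, since $C$ is finite and each of its points is exactly some $y_x$, the paper invokes the plain reformulation (Lemma~\ref{lemmaUniformInverseShadReform}) in the forward direction rather than the ball version (Lemma~\ref{lemmaUniformInverseShadBallReform}) you use, which is a stronger hypothesis and works equally well.
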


\begin{proof}
Suppose that $(X,f)$ has inverse shadowing and fix $n \geq 2$. Let $2^E \in \mathscr{B}_\mathscr{U}$ (so $E \in \mathscr{U}$). Let $E_0 \in \mathscr{U}$ be such that $2E_0 \subseteq E$ and take $D \in \mathscr{U}$ be as in Lemma \ref{lemmaUniformInverseShadReform}; that is for any $x \in X$ there exists $y \in X$ such that for any $\varphi \in \mathcal{T}_0(f,D)$ we have $\varphi(y)$ $E_0$-shadows $x$. 

Pick $A \in F_n(X)$ and let $\psi \in \mathcal{T}_0(f_n, 2^D \cap F_n(X))$. For each $x \in A$ let $y_x \in X$ be such that for any $\varphi \in \mathcal{T}_0(f,D)$, $\varphi(y)$ $E_0$-shadows $x$. Define
\[C \coloneqq\bigcup_{x \in A} \{y_x\}.\]
Note the $\lvert C \rvert \leq \lvert A \rvert$ so $C \in F_n(X)$.
It is easy to verify that $\psi(C)$ $2^E$-shadows $A$. Indeed, let $k \in \omega$ be given. 
Pick $a \in f_n^k(A)$ and $a^\prime \in A$ such that $f^k(a^\prime)=a$. Since $\psi$ is a $\left(2^D \cap (F_n(X) \times F_n(X))\right)$-method for $f_n$, for each $i \in \{1,\ldots, k\}$ there exists $c_i \in \psi(C)_k$ such that $(f(c_i),c_{i+1}) \in D$ and $(y_{a^\prime}, c_1) \in D$. Denote $c_0=y_{a^\prime}$ so that by definition of $y_{a^\prime}$, $(c_i, f^i(a^\prime)) \in E_0$ for all $i \in \{0,\ldots, k\}$. In particular, it follows that $(a,c_k) \in E$. As $a$ was picked arbitrarily from $f_n ^k(A)$ it follows that $f^k(A) \subseteq B_E(\psi(C)_k)$. 
Now pick $c \in \psi(C)_k$. By construction, there exists a sequence $(c_0,c_1, \ldots, c_k)$ with $c_i \in \psi(C)_i$ for each $0 \leq i \leq k$ and $c_k=c$ for which $(f(c_i), c_{i+1}) \in D$. Furthermore, there exist $x \in A$ for which $c_0 =y_x$. It follows that $(c_i, f^i(x)) \in E_0$ for each $0 \leq i \leq k$. In particular it follows that $(c, f^k(x)) \in E$. As $c$ was picked arbitrarily from $\psi(C)_k$ it follows that $\psi(C)_k \subseteq B_E(f_n ^k(A))$. 

Now suppose that $f_n$ has inverse shadowing. Let $E \in \mathscr{U}$ be given and let $E_0 \in \mathscr{U}$ be such that $2E_0 \subseteq E$. Let $D \in \mathscr{U}$ be such that $2^D$ satisfies the inverse shadowing condition for $2^{E_0}$. Now pick $\varphi \in \mathcal{T}_0(f,D)$. 
Construct method $2^\varphi$ as follows:
For any $k \in \omega$ and any $A \in F_n(X)$
\[2^\varphi(A)_k\coloneqq \bigcup_{x \in A}\{\varphi(x)_k\}.\]
Clearly $2^\varphi \in \mathcal{T}_0\left(f_n, (2^D \cap (F_n(X)\times F_n(X)))\right)$. Let $x \in X$ be given. Then $\{x\} \in F_n(X)$. By inverse shadowing there exists $A \in F_n(X)$ such that $2^\varphi(A)$ $2^{E_0}$-shadows $\{x\}$. Pick $a \in A$. It is easy to verify that $\varphi(a)$ $E$-shadows $x$.
\end{proof}

\subsection{Inverse limits}
The following theorem generalises part (2) of Theorem 7 in \cite{Barzanouni}, where the author shows that the induced shift space on a system with inverse shadowing also has inverse shadowing. 
\begin{theorem} 
Let $(X,f)$ be conjugate to a surjective inverse limit system comprised of maps with  $\mathcal{T}_0$-inverse shadowing on compact Hausdorff spaces. Then $(X,f)$ has $\mathcal{T}_0$-inverse shadowing. 
\end{theorem}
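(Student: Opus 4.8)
The plan is to mirror the open-cover arguments already used for the other inverse limit theorems (eventual, orbital, strong orbital and first weak shadowing), the new ingredient being that surjectivity of the inverse system makes each projection $\pi_\lambda$ onto. As in those proofs I would assume without loss of generality that $(X,f)=(\varprojlim\{X_\lambda, g_\lambda^\eta\}, f)$. Given a finite open cover $\mathcal{U}$ of $X$, I would first use that cylinder sets form a base to find some $\lambda \in \Lambda$ and a finite open cover $\mathcal{W}_\lambda$ of $X_\lambda$ such that $\mathcal{W} \coloneqq \{\pi_\lambda^{-1}(W) \cap X \mid W \in \mathcal{W}_\lambda\}$ refines $\mathcal{U}$. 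Applying inverse shadowing in $(X_\lambda, f_\lambda)$ to $\mathcal{W}_\lambda$ yields a finite open cover $\mathcal{V}_\lambda$ of $X_\lambda$, and I then set $\mathcal{V} \coloneqq \{\pi_\lambda^{-1}(V) \cap X \mid V \in \mathcal{V}_\lambda\}$, a finite open cover of $X$ which will witness inverse shadowing for $\mathcal{U}$. Crucially, $\mathcal{V}$ is fixed at this stage, before any adversarial method is revealed, which is precisely what the quantifier order of the definition allows.

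The crux of the argument is to transfer a $\mathcal{V}$-method on $X$ down to a $\mathcal{V}_\lambda$-method on $X_\lambda$. So fix $x \in X$ and $\varphi \in \mathcal{T}_0(f, \mathcal{V})$, and write $x_\lambda = \pi_\lambda(x)$. Since the system is surjective, $\pi_\lambda$ is onto, so for each $w \in X_\lambda$ I may choose a lift $\ell(w) \in \pi_\lambda^{-1}(w) \cap X$ and define $\varphi_\lambda(w) \coloneqq \pi_\lambda(\varphi(\ell(w)))$, the $\pi_\lambda$-image of the pseudo-orbit that $\varphi$ produces at $\ell(w)$. The key verification is that $\varphi_\lambda$ is a genuine $\mathcal{V}_\lambda$-method for $f_\lambda$: its initial term is $\pi_\lambda(\ell(w)) = w$, and since $\pi_\lambda \circ f = f_\lambda \circ \pi_\lambda$, applying $\pi_\lambda$ to the defining condition of the $\mathcal{V}$-pseudo-orbit $\varphi(\ell(w))$ converts each witnessing set $\pi_\lambda^{-1}(V) \cap X$ into the witnessing $V \in \mathcal{V}_\lambda$, so the projected sequence is indeed a $\mathcal{V}_\lambda$-pseudo-orbit starting at $w$. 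Note that $\ell$, and hence $\varphi_\lambda$, need not be continuous, which is harmless because methods are not required to be continuous.

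With $\varphi_\lambda \in \mathcal{T}_0(f_\lambda, \mathcal{V}_\lambda)$ in hand, inverse shadowing in $(X_\lambda, f_\lambda)$ supplies $y_\lambda \in X_\lambda$ such that $\varphi_\lambda(y_\lambda)$ $\mathcal{W}_\lambda$-shadows $x_\lambda$. I would then set $y \coloneqq \ell(y_\lambda) \in X$, so that $\pi_\lambda(\varphi(y)) = \varphi_\lambda(y_\lambda)$ and $\pi_\lambda(f^k(x)) = f_\lambda^k(x_\lambda)$ for every $k$. For each $k$ there is $W \in \mathcal{W}_\lambda$ with $\varphi_\lambda(y_\lambda)_k, f_\lambda^k(x_\lambda) \in W$; hence $\varphi(y)_k$ and $f^k(x)$ both lie in $\pi_\lambda^{-1}(W) \cap X$, and since $\mathcal{W}$ refines $\mathcal{U}$ there is $U \in \mathcal{U}$ containing both. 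Thus $\varphi(y)$ $\mathcal{U}$-shadows $x$, as required. The main obstacle is the method-lifting step: one must check that projecting the $X$-pseudo-orbits of a given method yields a well-defined method on $X_\lambda$ at the correct coarser cover, and that this can be done without continuity and without ever reordering the quantifiers, so that no appeal to the reformulation in Lemma \ref{lemmaOpenCoverInverseShadReform} is needed and surjectivity of the system (guaranteeing surjectivity of $\pi_\lambda$) is the only extra hypothesis used.
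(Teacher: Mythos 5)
Your proposal is correct, but at the decisive step it takes a genuinely different route from the paper. Both proofs share the same skeleton: refine $\mathcal{U}$ by a cylinder cover $\mathcal{W}=\{\pi_\lambda^{-1}(W)\cap X \mid W \in \mathcal{W}_\lambda\}$ at a single index, feed $\mathcal{W}_\lambda$ into inverse shadowing in the factor to obtain $\mathcal{V}_\lambda$, pull back to $\mathcal{V}$, and use that surjectivity of the bonding maps (plus compactness) makes $\pi_\lambda$ onto. The divergence is in how the adversarial method is handled. The paper never lifts the method: it invokes the reformulation of Lemma \ref{lemmaOpenCoverInverseShadReform} (from \cite{GoodMitchellThomas}), in which the shadowing point $y$ is chosen \emph{before} the method --- quantifier order $\forall x\, \exists y\, \forall \varphi$ --- then picks any single $z \in \pi_\lambda^{-1}(y)\cap X$ and observes that the one projected sequence $\left(\pi_\lambda(\phi(z)_i)\right)_{i\in\omega}$ is a $\mathcal{V}_\lambda$-pseudo-orbit starting at $y$, hence shadowed. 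That reformulation is essential to the paper's argument, since $z$, and with it the projected pseudo-orbit, can only be produced after $y$ is known, which the plain definition's order $\forall x\, \forall \varphi\, \exists y$ does not permit. You instead stay with the plain definition and transfer the entire method: a choice-function section $\ell$ of $\pi_\lambda$ yields $\varphi_\lambda(w)=\pi_\lambda(\varphi(\ell(w)))$, which is a legitimate $\mathcal{V}_\lambda$-method precisely because, as you note, $\mathcal{T}_0$ imposes no continuity on methods, and the compatibility $\varphi_\lambda(y_\lambda)=\pi_\lambda(\varphi(\ell(y_\lambda)))$ makes $y=\ell(y_\lambda)$ do the job; your verification that projecting converts each witness $\pi_\lambda^{-1}(V)\cap X$ into $V\in\mathcal{V}_\lambda$ via $\pi_\lambda\circ f=f_\lambda\circ\pi_\lambda$ is exactly right. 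What your version buys is self-containedness --- no appeal to the nontrivial lemma of \cite{GoodMitchellThomas} --- at the cost of an axiom-of-choice global section in place of the paper's single lift $z$; what the paper's version buys is economy, since the same reformulation is reused elsewhere (e.g.\ in the Tychonoff product theorem via Lemma \ref{lemmaUniformInverseShadReform}). Both arguments use surjectivity of $\pi_\lambda$ in an essential and parallel way (you for every $\ell(w)$, the paper for $\pi_\lambda^{-1}(y)\cap X\neq\emptyset$ with $y$ arbitrary), which is why Mittag-Leffler alone would not suffice for either.
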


\begin{proof}
We use the reformulation of inverse shadowing given in Lemma \ref{lemmaOpenCoverInverseShadReform}.

Let $(\Lambda, \geq)$ be a directed set. For each $\lambda \in \Lambda$, let $(X_\lambda, f_\lambda)$ be a dynamical system on a compact Hausdorff space with inverse shadowing and let $((X_\lambda, f_\lambda), g^\eta _\lambda)$ be an surjective inverse system. Without loss of generality $(X,f)=(\varprojlim \{ X_\lambda, g_\lambda ^\eta\}, f )$.

Let $\mathcal{U}$ be a finite open cover of $X$. Since $X =\mathcal{W}=\varprojlim \{ X_\lambda, g_\lambda ^\eta\}$ there exist $\eta \in \Lambda$ and a finite open cover $\mathcal{W}_\eta$ of $X_\eta$ such that $\mathcal{W}\coloneqq \{\pi_\eta ^{-1}(W) \cap X \mid W \in \mathcal{W}_\eta\}$ refines $\mathcal{U}$. By inverse shadowing for $(X_\eta, f_\eta)$, and by Lemma \ref{lemmaOpenCoverInverseShadReform}, there exists a finite open cover $\mathcal{V}_\eta$ of $X_\eta$ such that for any $x \in X_\eta$ there exists $y \in X_\eta$ such that for any $\varphi \in \mathcal{T}_0(f_\eta, \mathcal{V}_\eta)$ $\varphi(y)$ $\mathcal{W}_\eta$-shadows $x$. 

Take $\mathcal{V}=\{ \pi_\eta ^{-1}(V) \cap X \mid V \in \mathcal{V}_\lambda \}$. Pick $x \in X$ and take $\phi \in \mathcal{T}_0(f,\mathcal{V})$. Let $y \in X_\eta$ be such that for any $\varphi \in \mathcal{T}_0(f_\eta, \mathcal{V}_\eta)$ $\varphi(y)$ $\mathcal{W}_\eta$-shadows $\pi_\eta(x)$. Notice that, if $z \in \pi_\eta ^{-1}(y) \cap X$ then $\left(\pi_\eta(\phi(z)_i)\right)_{i \in \omega}$ is a $\mathcal{V}_\eta$-pseudo-orbit starting from $y$; hence it $\mathcal{W}_\eta$-shadows $\pi_\eta(x)$. Therefore, taking any such $z$, we have $\phi(z)$ $\mathcal{W}$-shadows $x$. Since $\mathcal{W}$ is a refinement of $\mathcal{U}$ the result follows.
\end{proof}

\subsection{Tychonoff product}

\begin{theorem}
Let $\Lambda$ be an arbitrary index set and, for each $\lambda \in \Lambda$, let $(X_\lambda, f_\lambda)$ be a compact Hausdorff system with $\mathcal{T}_0$-inverse shadowing. Then the product system $(X,f)$, where $X=\prod_{\lambda \in \Lambda} X_\lambda$, has $\mathcal{T}_0$-inverse shadowing.
\end{theorem}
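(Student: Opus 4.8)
The plan is to reduce inverse shadowing on the product to inverse shadowing on each coordinate, exploiting the reformulation in Lemma \ref{lemmaUniformInverseShadReform}, which lets us pick the shadowing witness \emph{before} the method is revealed. This is essential: in the product, a method $\varphi$ entangles all coordinates, so if the component witness $y_{\lambda_i}$ were allowed to depend on $\varphi$ we would face a circular dependency, since the component method induced by projecting $\varphi$ depends in turn on the full choice of $y$. Choosing $y_{\lambda_i}$ uniformly over all component methods breaks this loop, and this quantifier management is the only genuinely delicate point of the argument.

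First I would set up the entourages. Given $E \in \mathscr{U}$ on $X = \prod_\lambda X_\lambda$, refine it by a basic entourage $\prod_\lambda E_\lambda$ with $E_\lambda = X_\lambda \times X_\lambda$ for all $\lambda$ outside a finite set $\{\lambda_1, \dots, \lambda_k\}$ (if this set is empty the statement is trivial). For each $i$, apply Lemma \ref{lemmaUniformInverseShadReform} to $(X_{\lambda_i}, f_{\lambda_i})$ and $E_{\lambda_i}$ to obtain $D_{\lambda_i} \in \mathscr{U}_{\lambda_i}$ with the property that every point of $X_{\lambda_i}$ admits a single inverse-shadowing witness good for all $D_{\lambda_i}$-methods. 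Set $D = \prod_\lambda D_\lambda$, taking $D_\lambda = X_\lambda \times X_\lambda$ off the finite set; this is a basic entourage of the product uniformity.

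Next, given $x = (x_\lambda) \in X$, for each $i$ let $y_{\lambda_i} \in X_{\lambda_i}$ be the witness supplied by Lemma \ref{lemmaUniformInverseShadReform} for $x_{\lambda_i}$, and pad the remaining coordinates of $y$ arbitrarily. Now fix any $\varphi \in \mathcal{T}_0(f, D)$. Since $f = \prod_\lambda f_\lambda$ and $\varphi(y)$ is a $D$-pseudo-orbit, projecting to the $\lambda_i$-coordinate yields that $(\pi_{\lambda_i}(\varphi(y)_k))_{k \in \omega}$ is a $D_{\lambda_i}$-pseudo-orbit in $X_{\lambda_i}$ beginning at $y_{\lambda_i}$. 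I would then realise this projected pseudo-orbit as the image of $y_{\lambda_i}$ under a genuine $D_{\lambda_i}$-method: define $\iota_i \colon X_{\lambda_i} \to X$ by inserting its argument into the $\lambda_i$-slot of $y$ and leaving the other coordinates equal to those of $y$, and set $\psi_i(w) = (\pi_{\lambda_i}(\varphi(\iota_i(w))_k))_{k \in \omega}$. Then $\psi_i(w)_0 = w$ and each $\psi_i(w)$ is a $D_{\lambda_i}$-pseudo-orbit, so $\psi_i \in \mathcal{T}_0(f_{\lambda_i}, D_{\lambda_i})$ (and it inherits continuity from $\varphi$, in case methods are required to be continuous), while $\psi_i(y_{\lambda_i}) = (\pi_{\lambda_i}(\varphi(y)_k))_{k \in \omega}$. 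By the defining property of $y_{\lambda_i}$, this sequence $E_{\lambda_i}$-shadows $x_{\lambda_i}$, i.e. $(\pi_{\lambda_i}(\varphi(y)_k), \pi_{\lambda_i}(f^k(x))) \in E_{\lambda_i}$ for all $k$.

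Finally I would assemble the coordinates: for $\lambda \notin \{\lambda_1, \dots, \lambda_k\}$ the containment $(\pi_\lambda(\varphi(y)_k), \pi_\lambda(f^k(x))) \in E_\lambda$ holds trivially since $E_\lambda$ is the full square, so for every $k$ we obtain $(\varphi(y)_k, f^k(x)) \in \prod_\lambda E_\lambda \subseteq E$; that is, $\varphi(y)$ $E$-shadows $x$, as required. Everything beyond the quantifier bookkeeping is a routine coordinate-wise verification built on $f = \prod_\lambda f_\lambda$ and the structure of basic product entourages.
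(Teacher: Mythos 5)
Your proposal is correct and follows essentially the same route as the paper: refine $E$ by a basic product entourage, invoke Lemma \ref{lemmaUniformInverseShadReform} in each nontrivial coordinate to choose the witness before the method is revealed, take $D$ to be the corresponding basic product entourage, and assemble the witness coordinatewise. Your explicit construction of the induced component method $\psi_i$ via the insertion map $\iota_i$ is precisely the verification the paper compresses into ``It can be seen that $\phi(y)$ $E$-shadows $x$,'' so you have simply made the paper's implicit step explicit.
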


\begin{proof}
Let $E \in \mathscr{U}$ be given; this entourage is refined by one of the form
\[ \prod _{\lambda \in \Lambda} E_\lambda,\]
where $E_\lambda \in \mathscr{U}_\lambda$ for all $\lambda \in \Lambda$ and $E_\lambda =X_\lambda \times X_\lambda$ for all but finitely many of the $\lambda$'s. Let $\lambda_j$, for $1\leq j \leq k$, be precisely those elements in $\Lambda$ for which $E_\lambda  \neq X_\lambda \times X_\lambda$ (if there are no such elements then we are done).  By inverse shadowing in each component space, there exist entourages $D_{\lambda_i} \in \mathscr{U}_{\lambda_i}$ such that corresponding to the entourages $E_{\lambda_i}$ as in Lemma \ref{lemmaUniformInverseShadReform}. 

Let 
\[D \coloneqq \prod_{\lambda \in \Lambda} D_\lambda \]
where
\[D_\lambda = \left\{\begin{array}{lll}
X\times X  & \text{ if } & \forall i \, \lambda \neq \lambda_i
\\D_{\lambda_i} & \text{ if } & \exists i : \lambda=\lambda_i\,.
\end{array}\right.\]

Now pick $x \in X$ and pick $\phi \in \mathcal{T}_0(f,D)$. For each $1 \leq i \leq k$ let $y_i \in X_{\lambda_i}$ be as in Lemma \ref{lemmaUniformInverseShadReform} for $E_{\lambda_i}$, $D_{\lambda_i}$ and $\pi_{\lambda_i}(x)$. Pick a point $y \in X$ such that $\pi_{\lambda_i}(y)= y_i$ for each $1\leq i \leq k$. It can be seen that $\phi(y)$ $E$-shadows $x$.

\end{proof}

\begin{acknowledgements} The first author gratefully acknowlegdes support from the European Union through funding the H2020-MSCA-IF-2014 project ShadOmIC (SEP-210195797) and from the Institut Mittag-Leffler during the ‘Thermodynamic Formalism - Applications to Geometry, Number Theory, and Stochastics’ workshop.
\end{acknowledgements}

\bibliographystyle{plain} 
\bibliography{bib}

\end{document}